\newcommand\reallywidehat[1]{%
\savestack{\tmpbox}{\stretchto{%
  \scaleto{%
    \scalerel*[\widthof{\ensuremath{#1}}]{\kern-.6pt\bigwedge\kern-.6pt}%
    {\rule[-\textheight/2]{1ex}{\textheight}}
  }{\textheight}%
}{0.5ex}}%
\stackon[1pt]{#1}{\tmpbox}%
}
\definecolor{myred}{rgb}{0.75,0,0}
\definecolor{mygreen}{rgb}{0,0.5,0}
\definecolor{myblue}{rgb}{0,0,0.65}
\theoremstyle{plain}
\newtheorem{theorem}[subsubsection]{Theorem}
\newtheorem{thmx}{Theorem}
\newtheorem{proposition}[subsubsection]{Proposition}
\newtheorem{lemma}[subsubsection]{Lemma}
\newtheorem{corollary}[subsubsection]{Corollary}
\theoremstyle{definition}
\newtheorem{definition}[subsubsection]{Definition}
\newtheorem{remark}[subsubsection]{Remark}
\newtheorem{construction}[subsubsection]{Construction}
\newtheorem{example}[subsubsection]{Example}
\newtheorem{question}[subsubsection]{Question}
\newtheorem{conjecture}[subsubsection]{Conjecture}
\theoremstyle{remark}
\numberwithin{equation}{subsubsection}
\newcommand\nc{\newcommand}
\nc\on{\operatorname}
\nc\renc{\renewcommand}
\newcommand\dR{\on{dR}}
\newcommand\Dol{\on{Dol}}
\newcommand\FDol{\on{FDol}}
\newcommand\conj{\on{conj}}
\newcommand\abs{\on{abs}}
\newcommand\Hod{\on{Hod}}
\newcommand\tf{\on{tf}}
\newcommand\At{\on{At}}
\newcommand\KS{\on{KS}}
  \newcommand\eq{\on{eq}}
\DeclareMathOperator\spec{\text{Spec}}
\newcommand*{\shom}{\mathscr{H}\kern -.5pt om}
\newcommand*{\stor}{\mathscr{T}\kern -.5pt or}
\newcommand*{\sext}{\mathscr{E}\kern -.5pt xt}
\newcommand{\piconj}{\pi_{\conj}}
\newcommand{\PD}{\mathrm{PD}}
\newcommand{\sxspd}{\widehat{(S\times S)}_{\PD}}
\providecommand\@dotsep{5}
\renewcommand{\listoftodos}[1][\@todonotes@todolistname]{%
\@starttoc{tdo}{#1}}
\newcommand{\customlabel}[2]{\protected@write \@auxout {}{\string \newlabel {#1}{{#2}{\thepage}{#2}{#1}{}} }\hypertarget{#1}{#2}}
\DeclareMathOperator\id{id}
\renewcommand\hom{\mathrm{Hom}}
\DeclareMathOperator\bun{Bun}
\DeclareMathOperator\End{End}
\DeclareMathOperator\tr{tr}
\DeclareMathOperator\aut{Aut}
\DeclareMathOperator\gl{GL}
\renewcommand\sl{\mathrm{SL}}
\DeclareMathOperator\su{SU}
\DeclareMathOperator\ad{ad}
\DeclareMathOperator\et{\acute et}
\DeclareMathOperator\an{an}
\DeclareMathOperator\Sym{Sym}
\renewcommand\top{\mathrm{top}}
\DeclareFontFamily{U}{wncy}{}
\DeclareFontShape{U}{wncy}{m}{n}{<->wncyr10}{}
\DeclareSymbolFont{mcy}{U}{wncy}{m}{n}
\DeclareMathSymbol{\Sha}{\mathord}{mcy}{"58}
\def\listtodoname{List of Todos}
\def\listoftodos{\@starttoc{tdo}\listtodoname}
\title{$p$-curvature and non-abelian cohomology}
\author{Yeuk Hay Joshua Lam and Daniel Litt}
\begin{document}

\begin{abstract} 
Let $X\to S$ be a smooth projective morphism. Katz proved the Grothendieck-Katz $p$-curvature conjecture for the Gauss-Manin connection on the $i$-th cohomology of $X/S$: if its $p$-curvature vanishes mod $p$ for infinitely many $p$, then the action of $\pi_1(S,s)$ on $H^i(X_s, \mathbb{Z})$ factors through a finite group. We prove a non-abelian analogue of this statement: if the $p$-curvature of the isomonodromy foliation on the moduli of flat bundles of rank $r$ on $X/S$ vanishes mod $p$ for infinitely many $p$, then the action of $\pi_1(S,s)$ on the rank $r$ integral characters of $\pi_1(X_s)$ factors through a finite group. We deduce many new cases of the Bost/Ekedahl--Shepherd-Barron--Taylor conjecture.

The proofs rely on a non-abelian version of Katz's formula, and a non-abelian version of the Hodge index theorem.
\end{abstract}

\maketitle

\setcounter{tocdepth}{1}
\tableofcontents

\section{Introduction}\label{sec:intro}
Let $R\subset \mathbb{C}$ be a finitely-generated $\mathbb{Z}$-algebra and let $f: X\to S$ be a smooth projective morphism of smooth $R$-schemes. There are close relationships between the topology of $f_{\mathbb{C}}^{\an}$ and its arithmetic (for example, via the Weil conjectures). Typically these relationships are mediated by the various realizations of the cohomology of the fibers of $f$, e.g.~singular cohomology $R^if^{\an}_*\underline{\mathbb{Z}}$, $\ell$-adic cohomology $R^if^{\et}_*\underline{\mathbb{Q}_\ell}$, or algebraic de Rham cohomology $R^if_*\Omega^\bullet_{X/S, \dR}$, and the compatibilities between them; here $\Omega^\bullet_{X/S, \dR}$ denotes the relative de Rham complex of $f$. Notable among these relationships---and the starting point for this paper---is Katz's resolution of the $p$-curvature conjecture for Gauss-Manin connections \cite{katz-p-curvature}, which asserts that the monodromy of $R^if^{\an}_*\underline{\mathbb{Z}}$ is controlled by the arithmetic of $R^if_*\Omega^\bullet_{X/S, \dR}$.

Our goal in this paper is to study relations  between the topology and arithmetic of $f$ mediated not by cohomology, but instead by \emph{non-abelian cohomology}, i.e.~the moduli of local systems on the fibers of $f$, in their various (Betti, de Rham, and Dolbeault) incarnations. As applications,  we prove a precise non-abelian analogue of Katz's theorem, as well as many new cases of  a  conjecture attributed to Bost and Ekedahl--Shepherd-Barron--Taylor on $p$-curvatures of foliations. Doing so requires the development of a number of structures on non-abelian cohomology, most notably the non-abelian counterparts to Katz's formula and  the Hodge index theorem.
\subsection{Main results (applications)}
Katz's theorem  is as follows:
\begin{theorem}[Katz {\cite{katz-p-curvature}}]\label{thm:katz-p-curvature-Gauss-Manin}
	Let $R\subset \mathbb{C}$ be a finitely-generated $\mathbb{Z}$-algebra, and let $f: X\to S$ be a smooth projective morphism of smooth $R$-schemes. Fix an integer $i\geq 0$ and denote by $(\mscr{E}, \nabla):= (R^if_*\Omega_{X/S, \dR}^\bullet, \nabla_{GM})$ the $i$-th relative de Rham cohomology of $f$, equipped with its Gauss-Manin connection. Fix a point $s\in S(\mathbb{C})$. Suppose that  the $p$-curvature of $(\mscr{E}, \nabla) \bmod p$ vanishes for infinitely many primes $p$. Then the monodromy action of $\pi_1(S(\mathbb{C})^{\an}, s)$ on $H^i(X_s(\mathbb{C})^{\an}, \mathbb{Z})$ factors through a finite group.
\end{theorem}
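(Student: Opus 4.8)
I would argue as Katz does, reducing the theorem to a single assertion about the Hodge filtration and then finishing by Hodge theory.

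\emph{Step 1: reduce to horizontality of the Hodge filtration.} Write $(\mscr E,\nabla):=(R^if_*\Omega^\bullet_{X/S,\dR},\nabla_{GM})$ and let $F^\bullet$ be its Hodge filtration. I claim it suffices to show that the hypothesis forces $F^\bullet$ to be horizontal over $\bc$, i.e. $\nabla(F^j)\subseteq F^j\otimes\Omega^1_{S/R}$ for all $j$. Indeed, over $S(\bc)^{\an}$ (and we may assume $S$ connected) the pair $(\mscr E,\nabla)$ corresponds to a local system $\bv$ with $\bv_s=H^i(X_s(\bc)^{\an},\bc)$, and horizontality of $F^\bullet$ means that each $F^j_s\subseteq\bv_s$ is a sub-local-system, hence is preserved by the monodromy $\rho\colon\pi_1(S(\bc)^{\an},s)\to\gl(\bv_s)$. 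The image of $\rho$ preserves the lattice $H^i(X_s,\bz)$, and (via the Lefschetz decomposition into primitive parts, the polarization class of $f$ being monodromy-invariant --- if necessary after replacing $S$ by a connected finite \'etale cover, which does not change the conclusion) it preserves a polarization $Q$. Since every $g\in\im\rho$ is $\br$-rational it commutes with complex conjugation, hence preserves $\overline{F}^\bullet_s$ and therefore the Hodge decomposition $H^{p,q}_s=F^p_s\cap\overline{F}^{\,q}_s$; on each primitive $H^{p,q}_s$ the form $i^{p-q}Q(\cdot,\overline{\cdot})$ is definite by the Hodge--Riemann relations, so $g$ lies in a product of unitary groups. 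Thus $\im\rho$ lies in a compact subgroup of $\gl(H^i(X_s,\br))$ and simultaneously in the discrete group $\gl(H^i(X_s,\bz))$, and is therefore finite.

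\emph{Step 2: the characteristic $p$ input (Katz's formula).} The crux is the implication ``$p$-curvature of $\nabla_{GM}$ modulo $p$ vanishes $\Rightarrow$ $F^\bullet$ is horizontal modulo $p$,'' which is a statement purely in characteristic $p$. Here I would use that on the reduction of $(\mscr E,\nabla)$ modulo $p$ there is, besides $F^\bullet$, the conjugate filtration $F^{\mathrm{conj}}_\bullet$ arising from the second spectral sequence of the relative Frobenius, with two properties: $F^{\mathrm{conj}}_\bullet$ is horizontal for $\nabla_{GM}$ (Katz--Oda), and the Cartier isomorphism identifies $\mathrm{gr}^{\mathrm{conj}}$ with a Frobenius pullback of $\mathrm{gr}_F$. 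Katz's computation of the $p$-curvature $\psi$ of $\nabla_{GM}$ expresses it, up to this Frobenius twist, in terms of the Kodaira--Spencer maps $\mathrm{gr}^j_F\to\mathrm{gr}^{j-1}_F\otimes\Omega^1$ that record the position of $F^\bullet$ relative to $F^{\mathrm{conj}}_\bullet$; combined with the global nilpotence of $\psi$, one gets that $\psi=0$ forces every Kodaira--Spencer map to vanish, which is precisely $\nabla(F^j)\subseteq F^j\otimes\Omega^1$. Setting up the conjugate filtration and the Cartier isomorphism in families and carrying out this $p$-curvature calculation is the step I expect to be the main obstacle --- and it is exactly the classical ingredient whose non-abelian analogue the rest of the paper must develop.

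\emph{Step 3: descent to characteristic $0$.} Finally I would spread out. Horizontality of $F^\bullet$ over $S_R$ is the vanishing of a global section of a vector bundle (the second fundamental form $F^j\to(\mscr E/F^j)\otimes\Omega^1_{S/R}$), hence a closed condition, and by Chevalley's theorem the set of $p$ over which it fails is constructible in $\spec\bz$, so finite or cofinite. If $F^\bullet$ were not horizontal over $S_\bc$ it would already fail over $S_K$ for $K=\on{Frac}R$, and hence over $S_{\bbf_p}$ for all but finitely many $p$; but Step 2 makes $F^\bullet$ horizontal modulo each of the infinitely many $p$ furnished by the hypothesis, a contradiction. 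Therefore $F^\bullet$ is horizontal over $S_\bc$, and the theorem follows by Step 1. (Alternatively one can run the whole argument on a smooth curve $C\subseteq S$ through $s$ with $\pi_1(C)\twoheadrightarrow\pi_1(S)$, but this reduction is not necessary.)
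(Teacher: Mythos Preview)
Your proposal is correct and follows essentially the same three-step outline the paper attributes to Katz (your Step~2 is the paper's Step~1, and your Step~1 combines the paper's Steps~2 and~3): use Katz's formula comparing $\on{gr}_{\conj}\psi_p$ with $F_{\abs}^*\on{gr}_{F_{\Hod}}\nabla_{GM}$ to deduce horizontality of the Hodge filtration modulo infinitely many $p$, spread out to characteristic zero, and then combine the resulting unitarity (via the Hodge--Riemann relations) with the integral lattice to conclude finiteness. Your remark that the characteristic-$p$ input is ``exactly the classical ingredient whose non-abelian analogue the rest of the paper must develop'' is spot on.
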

Here the statement that the $p$-curvature of $(R^if_*\Omega_{X/S, \dR}^\bullet, \nabla_{GM}) \bmod p$ vanishes means that if $x_1, \cdots, x_{\dim S}$ are a system of local coordinates on $S$, then $$\nabla_{GM}\left(\frac{\partial}{\partial x_i}\right)^p\equiv 0 \bmod p$$ for all $i$; here we are implicitly assuming that $p$ is sufficiently large so that we can reduce $R$, and therefore $f$,  modulo $p$. More generally, for any flat connection $(\mscr{E}, \nabla)$ on $X/S$ in characteristic $p$, one can define an invariant 
\[
\psi_p: F^*_{\abs}T_{X/S}\rightarrow \End(\mscr{E}),
\]
known as the $p$-curvature; one remarkable property of $\psi_p$ is that it is $\mscr{O}_X$-linear, whereas $\nabla: T_{X/S}\rightarrow \End(\mscr{E})$, an a priori similar looking map, is certainly not. We say that the  $p$-curvature of $(\mscr{E}, \nabla)$ vanishes if $\psi_p$ is the zero map. See the beginning of \autoref{subsec:M_conj} for the definition of $\psi$ and further generalities on it. 

We now describe our non-abelian analogue of \autoref{thm:katz-p-curvature-Gauss-Manin}. Given a smooth projective morphism of smooth $R$-schemes $f: X\to S$, we first discuss the analogues of the two main objects, namely  the relative de Rham cohomology $(\mscr{E}, \nabla)$ equipped with its Gauss-Manin connection, and Betti cohomology $H^i(X_s(\mb C)^{\an}, \mb Z)$ equipped with its $\pi_1(S(\mb C)^{\an})$-action.  The non-abelian de Rham cohomology is the moduli stack $\mathscr{M}_{\dR}(X/S, r)$ of rank $r$ flat vector bundles on $X/S$, with trivial determinant; it is equipped with a \emph{non-abelian Gauss-Manin connection} constructed by Simpson \cite[\S8]{simpson-moduli-representations-2}, or, equivalently, the  \emph{isomonodromy foliation}. This is a foliation on $\mscr{M}_{\dR}(X/S, r)$ whose complex-analytic leaves are the families of flat bundles whose monodromy representations are locally constant (up to conjugation). To sum up, the stack $\mathscr{M}_{\dR}(X/S, r)$ with its isomonodromy foliation is the non-abelian analogue of de Rham cohomology with its Gauss-Manin connection. 

Now fix $s\in S(\mathbb{C})$. The analogue of Betti cohomology is then $$M_B(X_s, r):=\hom(\pi_1(X_s(\mathbb{C})^{\an}), \sl_r)\sslash \sl_r,$$ the character variety of $\pi_1(X_s)$, parametrizing conjugacy classes of semisimple representations of $\pi_1(X_s)$ into $\sl_r$. There is a natural action of $\pi_1(S(\mathbb{C})^{\an}, s)$ on $M_B(X_s, r)$ (through its action by outer automorphisms on $\pi_1(X_s)$), which is the analogue of the $\pi_1(S(\mathbb{C})^{\an}, s)$-action on usual Betti cohomology. 

With these analogies in mind, our first main theorem is a precise analogue of Katz's result \autoref{thm:katz-p-curvature-Gauss-Manin}:

\begin{thmx} \label{thm:main-theorem-integral-points}
	Let $R\subset \mathbb{C}$ be a finitely-generated $\mathbb{Z}$-algebra, and $f: X\to S$ a smooth projective morphism of smooth $R$-schemes. Fix a point $s\in S(\mathbb{C})$ and a positive integer $r\in \mathbb{Z}_{>0}$. If the $p$-curvature of the isomonodromy foliation on $\mathscr{M}_{\dR}(X/S, r)$ mod $p$ vanishes for infinitely many primes $p$, then the action of $\pi_1(S(\mathbb{C})^{\an},s)$ on $M_B(X_s, r)(\overline{\mathbb{Z}})$ factors through a finite group.
\end{thmx}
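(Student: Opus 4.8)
The plan is to run Katz's strategy for Gauss--Manin connections, with the Hodge filtration on $R^if_*\Omega^\bullet_{X/S,\dR}$ replaced by the Hodge-type structure on non-abelian de Rham cohomology coming from Simpson's theory of $\lambda$-connections --- the family $\mathscr{M}_{\Hod}(X/S,r)\to\mathbb{A}^1_S$ interpolating between $\mathscr{M}_{\Dol}(X/S,r)$ and $\mathscr{M}_{\dR}(X/S,r)$, together with its $\mathbb{G}_m$-action. The first step is a reduction. By the relative Riemann--Hilbert correspondence, the $\pi_1(S(\mathbb{C})^{\an},s)$-action on $M_B(X_s,r)$ is identified with the monodromy (holonomy) of the isomonodromy foliation $\mathscr{F}$ on $\mathscr{M}_{\dR}(X/S,r)$ acting on the fiber over $s$; a $\overline{\mathbb{Z}}$-point $\xi\in M_B(X_s,r)(\overline{\mathbb{Z}})$ corresponds to a $\overline{\mathbb{Q}}$-point $x_\xi$ of $\mathscr{M}_{\dR}(X/S,r)$ over $s$, and the orbit of $\xi$ is exactly the fiber over $s$ of the leaf $L_\xi$ of $\mathscr{F}$ through $x_\xi$. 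Hence it suffices to show that under the hypothesis every such $L_\xi$ is an algebraic subvariety of $\mathscr{M}_{\dR}(X/S,r)$ that is finite over $S$; the passage from ``every $\overline{\mathbb{Z}}$-orbit is finite'' to ``the action factors through a finite group'' I would get either from Zariski-density of $M_B(X_s,r)(\overline{\mathbb{Z}})$ together with the fact that the monodromy acts by automorphisms of the finite-type $\overline{\mathbb{Z}}$-scheme $M_B(X_s,r)$, or, better, by arranging that the whole family $(\mathscr{M}_{\dR}(X/S,r),\mathscr{F})$ becomes constant after a finite \'etale base change $S'\to S$.

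Spreading $f$ out over a finitely generated $\mathbb{Z}$-algebra $R'\supseteq R$ and reducing mod a large prime $p$, the crux of the argument is a non-abelian analogue of Katz's $p$-curvature formula. At a point of $\mathscr{M}_{\dR}(X/S,r)$ over $\overline{s}$ corresponding to a flat bundle $(E,\nabla)$ on $X_{\overline{s}}$, I would compute $\psi_p(\mathscr{F})$ in terms of the Cartier operator and the conjugate filtration as a Frobenius-twist of a ``non-abelian Kodaira--Spencer'' tensor $\kappa^{\mathrm{nab}}$ built from the Kodaira--Spencer class $\kappa_{X/S}\in H^1(X_{\overline{s}},T_{X_{\overline{s}}})$, the Higgs field (resp.\ $\lambda$-connection), and the cup product on $H^\bullet(X_{\overline{s}},\End E)$; the key structural fact --- proved exactly as in the abelian case --- is that, like the abelian $p$-curvature, $\psi_p(\mathscr{F})$ is $\mathscr{O}$-linear, hence controlled by a genuine tensor rather than by a differential operator.

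Given the formula, the hypothesis that $\psi_p(\mathscr{F})=0$ for infinitely many $p$ forces a certain quadratic expression in $\kappa^{\mathrm{nab}}$ to vanish mod $p$ for infinitely many $p$; as everything is defined over $R'$, this expression then vanishes in characteristic $0$. This is where the non-abelian Hodge index theorem enters: it supplies the Hodge--Riemann-type positivity on $H^\bullet(X_s,\End E)$ (equivalently on the tangent space to $\mathscr{M}_{\Dol}$) needed to conclude that $\kappa^{\mathrm{nab}}$ itself --- not just the quadratic expression --- vanishes along $L_\xi$. (In the abelian setting this is automatic because the relevant expression is already linear; the non-abelian $p$-curvature is genuinely quadratic, so a positivity input is unavoidable.) Vanishing of $\kappa^{\mathrm{nab}}$ says that along $L_\xi$ the underlying Higgs bundle does not deform as $s$ moves, i.e.\ the non-abelian period map is constant along $L_\xi$; using that such points are conjugate to $\mathbb{G}_m$-fixed points (complex variations of Hodge structure) and the Hodge index theorem once more to bound the associated metric, one concludes that $L_\xi$ is the graph of an algebraic section over a finite cover of $S$, hence algebraic and finite over $S$. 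Running this uniformly as in the first step yields the theorem, and also the announced cases of the Bost/Ekedahl--Shepherd-Barron--Taylor conjecture.

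The two new ingredients --- the non-abelian Katz formula and the non-abelian Hodge index theorem --- are the hard part. Of these, I expect the Hodge index theorem to be the main obstacle: one needs a precise positivity statement for the ``middle'' cohomology of the pair $(X_s,\End E)$ that simultaneously upgrades the quadratic vanishing to linear vanishing of $\kappa^{\mathrm{nab}}$ and forces honest isotriviality of the leaf. Getting the normalizations in the $p$-curvature computation exactly right --- so that $\psi_p(\mathscr{F})$ equals the correct Frobenius-twisted tensor on the nose, with no uncontrolled lower-order terms --- is the delicate point in the Katz-formula step.
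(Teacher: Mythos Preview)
Your broad outline---non-abelian Katz formula, then non-abelian Hodge index theorem, then integrality---matches the paper's, but two of the three steps are misidentified in a way that matters.

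First, the role of the Hodge index theorem is not what you describe. The non-abelian Katz formula in the paper is \emph{not} quadratic in the non-abelian Kodaira--Spencer class $\Theta_{X/S}$: it identifies $\frac{1}{\lambda}\Psi_{\conj}|_{\lambda=0}$ directly with the Frobenius pullback of $\Theta_{X/S}$, and the deduction ``$p$-curvature vanishes for infinitely many $p$ $\Rightarrow$ $\Theta_{X/S}\equiv 0$'' uses no positivity at all. The genuine difficulty here---which you do not mention---is making sense of $\Psi_{\conj}/\lambda$: one needs an analogue of the degeneration of the conjugate spectral sequence, and the paper obtains it by constructing \emph{canonical sections} to $\mathscr{M}_{\conj}(X/S)\to\mathbb{A}^1_S$ via the Ogus--Vologodsky inverse Cartier transform, yielding an equisingularity statement ($\lambda$-torsion-freeness of the tangent sheaf along those sections). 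Your ``getting the normalizations right'' remark understates this; it is the core of the arithmetic step.

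Second, the paper's Hodge index theorem enters only \emph{after} $\Theta_{X/S}=0$ is established, and its content is analytic rather than algebraic-positivity: via Gardiner's formula and Chen's formula, $\Theta_{X/S}=0$ forces the energy functional to be constant along isomonodromy leaves, whence (i) every $\pi_1(S,s)$-orbit in $M_B(X_s,r)(\mathbb{C})$ has compact closure, and (ii) the action on the smooth locus preserves the hyperk\"ahler metric. Your proposed endgame---show each leaf $L_\xi$ is algebraic and finite over $S$---is stronger than what is proved and in fact only holds for relative curves (where $\Theta_{X/S}=0$ forces isotriviality). The paper's endgame is instead: compact orbit closure $+$ discreteness of $\overline{\mathbb{Z}}$-points (all Galois embeddings at once) gives finite orbits; then at a smooth $\overline{\mathbb{Z}}$-fixed point, the tangent action is both unitary (by metric preservation) and integral, hence finite; then a Riemannian-geometry lemma (an isometry fixing a point and its tangent space is trivial) propagates this to the whole component. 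Your suggestion to pass from ``all $\overline{\mathbb{Z}}$-orbits finite'' to ``action factors through a finite group'' via Zariski-density of $\overline{\mathbb{Z}}$-points assumes exactly the open conjecture the paper is trying to avoid.
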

Here we say that an algebraic foliation $\mathscr{F}$ has \emph{vanishing $p$-curvature mod $p$} if, for every vector field $\vec v$ (in characteristic $p$) tangent to $\mathscr{F}\bmod p$, we have that $\vec v^p$ is tangent to $\mathscr{F}\bmod p$ as well, i.e.~$\mathscr{F}$ is $p$-closed. We denote by $\overline{\mathbb{Z}}$  the ring of algebraic integers.

\autoref{thm:katz-p-curvature-Gauss-Manin} has the immediate consequence that the Grothendieck-Katz $p$-curvature conjecture is true for Gauss-Manin connections. We briefly recall this conjecture and its analogue for foliations, and then explain our results towards a non-abelian $p$-curvature conjecture for isomonodromy foliations.

\begin{conjecture}[Grothendieck-Katz]\label{conj:gk}
	Let $R\subset \mathbb{C}$ be a finitely-generated $\mathbb{Z}$-algebra, and $S$ a smooth $R$-scheme. Let $(\mathscr{E}, \nabla)$ be a flat vector bundle on $S/R$. There exists a finite \'etale cover $S'$ of $S_{\mathbb{C}}$ trivializing $(\mathscr{E}, \nabla)$  if and only if $(\mathscr{E}, \nabla)$ mod $p$  has vanishing $p$-curvature for almost all primes $p$.
\end{conjecture}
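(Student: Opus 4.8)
Since the statement is a biconditional and only the forward implication is unconditional, I would treat the two directions separately.

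\textbf{Finite étale trivialization implies vanishing $p$-curvature.} Suppose there is a finite étale surjection $\pi\colon S'\to S_{\mathbb C}$ with $\pi^*(\mathscr E,\nabla)\cong(\mathscr O_{S'}^{\oplus n},d)$. First spread out: $S'$, the morphism $\pi$, and the trivialization are all defined over a finitely generated ring $R\subset R'\subset\mathbb C$, and after inverting finitely many elements of $R'$ one may assume $\pi\colon S'\to S_{R'}$ is finite étale surjective with $\pi^*(\mathscr E,\nabla)$ the trivial flat bundle over $R'$. For $p$ avoiding the excluded primes, reduce modulo $p$: $\pi_{\mathbb F_q}$ remains finite étale surjective, hence faithfully flat. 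The $p$-curvature $\psi_p(\mathscr E,\nabla)$ is a global section of the coherent sheaf $\shom(F^*_{\abs}T_{S},\End\mathscr E)$, and its formation commutes with pullback of flat connections, so $\pi^*\psi_p(\mathscr E,\nabla)=\psi_p(\pi^*(\mathscr E,\nabla))=\psi_p(\mathscr O^{\oplus n},d)=0$; by faithfully flat descent a section of a coherent sheaf killed by a faithfully flat pullback is itself zero, so $\psi_p(\mathscr E,\nabla)=0$ for almost all $p$. This direction uses only standard spreading-out and descent, with no real obstacle.

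\textbf{Vanishing $p$-curvature implies finite étale trivialization.} This is the substance of the conjecture and is open in general; I would recall the known cases, which also frame the rest of the paper. For $\rank\mathscr E=1$ it follows from global class field theory and Chebotarev, the congruence conditions imposed by vanishing $p$-curvature pinning down the associated character of $\pi_1^{\et}$ and forcing finite Betti monodromy. For solvable (geometric) monodromy it is known by arithmetic algebraization -- a Hermite--Minkowski / Bost-slope argument showing that a formal leaf carrying enough integrality is algebraic (Chudnovsky--Chudnovsky, André, Bost). For Gauss--Manin connections it is Katz's \autoref{thm:katz-p-curvature-Gauss-Manin}, whose engine is Katz's formula expressing the $p$-curvature through the Kodaira--Spencer map on the Hodge filtration, together with quasi-unipotence of local monodromy and semisimplicity of polarizable variations of Hodge structure. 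The strategy of this paper is to transplant the Katz-type argument to the non-abelian setting: a non-abelian Katz formula identifying the $p$-curvature of the isomonodromy foliation on $\mathscr M_{\dR}(X/S,r)$ with a Higgs-type operator, and a non-abelian Hodge index theorem constraining it, which together yield control of the $\pi_1(S)$-action on the character variety as in Theorem~A.

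\textbf{Main obstacle.} For the forward direction there is nothing beyond bookkeeping. For the conjecture itself the obstruction is the missing archimedean input in the non-abelian world: $\mathscr M_{\dR}(X/S,r)$ is noncompact and singular, there is no honest variation of Hodge structure to polarize, and the isomonodromy foliation is not a connection on a vector bundle -- so both ``Katz's formula'' and ``the Hodge index theorem'' must be rebuilt from scratch, and even then one expects only the integral-points conclusion of Theorem~A rather than the full Grothendieck--Katz analogue.
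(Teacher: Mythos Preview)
The statement is labeled a \emph{Conjecture} in the paper and the paper does not offer a proof of it; it is stated as background, with a remark that Katz's theorem (\autoref{thm:katz-p-curvature-Gauss-Manin}) resolves it for Gauss--Manin connections. So there is no ``paper's proof'' to compare against.

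Your treatment is appropriate for a conjecture: you correctly separate the two directions, give a clean proof of the easy implication (finite \'etale trivialization $\Rightarrow$ vanishing $p$-curvature for almost all $p$) via spreading out and faithfully flat descent, and then accurately survey the status of the hard direction. Your summary of the known cases (rank one, solvable monodromy via Chudnovsky--Andr\'e--Bost, Gauss--Manin via Katz) matches the literature and the paper's own framing. One minor refinement: your closing paragraph frames the obstacle as the absence of archimedean input ``in the non-abelian world,'' but \autoref{conj:gk} itself is the \emph{abelian} statement---the obstacle there is the absence of any Hodge-theoretic structure on a general $(\mathscr{E},\nabla)$, not non-abelianity. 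The non-abelian difficulties you describe pertain to the paper's Theorems A and B, not to \autoref{conj:gk} directly.
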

Katz's \autoref{thm:katz-p-curvature-Gauss-Manin} implies that, if $(R^if_*\Omega_{X/S,\dR}^\bullet, \nabla_{GM})$ has vanishing $p$-curvatures mod $p$ for infinitely many $p$, then after passing to a finite cover, one may arrange that it has trivial monodromy. Then the conjecture follows for $(R^if_*\Omega_{X/S,\dR}^\bullet, \nabla_{GM})$ from the theory of regular singularities.

We now explain a version of the $p$-curvature conjecture for foliations, due to Ekedahl--Shepherd-Barron--Taylor and Bost \cite[p. 165]{bost-foliations}, which applies in our non-abelian setting.

\begin{conjecture}[Ekedahl--Shepherd-Barron--Taylor {\cite[Conjecture F]{esbt}}, Bost \cite{bost-foliations}]\label{conj:esbt}
	Let $R\subset \mathbb{C}$ be a finitely-generated $\mathbb{Z}$-algebra, and $M$ a smooth $R$-scheme. Let $\mathscr{F}\subset T_{M/R}$ be a  foliation, i.e.~a sub-bundle of Lie algebras. Then the complex-analytic leaves of $\mathscr{F}$ are analytifications of algebraic subvarieties of $M_{\mathbb{C}}$ if and only if $\mathscr{F}\bmod p$ is closed under $p$-th powers mod $p$ for almost all primes $p$, i.e.~it has vanishing $p$-curvature.
\end{conjecture}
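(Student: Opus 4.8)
The plan is to prove the two implications separately; the forward implication is soft, and the converse carries essentially all the difficulty.

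\textbf{Algebraic leaves imply vanishing $p$-curvature.} Suppose the complex-analytic leaves of $\mathscr{F}$ are analytifications of algebraic subvarieties of $M_{\mathbb{C}}$. Near any point of $M_{\mathbb{C}}$ an algebraic transversal slice $N$ to the leaf through it exists, and intersecting leaves with the slice exhibits $\mathscr{F}$, \'etale-locally on $M$, as the relative tangent bundle $T_{M/N}$ of a smooth morphism $M\to N$ of smooth schemes; after shrinking $\spec R$ this description holds integrally. One then only has to recall that for any morphism of $\mathbb{F}_p$-schemes the relative tangent sheaf is closed under $p$-th powers --- if a derivation annihilates the functions pulled back from the base, so does its $p$-th power, by Jacobson's formula --- so $\mathscr{F}\bmod p$ is $p$-closed for every prime of good reduction.

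\textbf{Vanishing $p$-curvature implies algebraic leaves.} Here the plan is to combine the structure theory of $p$-closed foliations with an arithmetic algebraization criterion of Bost type. Fix $x\in M(\overline{\mathbb{Q}})$; after enlarging $R$ (inside $\mathbb{C}$) we may assume $x\in M(R)$. Integrating $\mathscr{F}$ formally produces a smooth formal leaf $\widehat{L}_x\subset\widehat{M}_x$ over $R$ whose base change to $\mathbb{C}$ is the completion of the analytic leaf $L_x$; it suffices to show $\widehat{L}_x$ is the completion of an algebraic subvariety $\overline{L}_x\subseteq M$, for then $L_x$, being the analytic continuation of a branch of $\overline{L}_x^{\an}$, is algebraic, and a spreading-out argument upgrades this to algebraicity of every leaf: the set of $y\in M_{\mathbb{C}}$ whose leaf is an $\mathscr{F}$-invariant algebraic cycle of bounded dimension and degree is closed in $M_{\mathbb{C}}$, stable under moving along $\mathscr{F}$, and --- by running the algebraization at every $\overline{\mathbb{Q}}$-point together with a countability argument --- Zariski dense, hence all of $M_{\mathbb{C}}$. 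To algebraize $\widehat{L}_x$ one invokes the hypothesis prime by prime: for $p$ of good reduction, $\mathscr{F}\bmod p$ is a $p$-closed involutive sub-bundle of $T_{M/\mathbb{F}_p}$, hence by the theory of foliations in characteristic $p$ (relative Frobenius factors through the finite quotient $M_{\mathbb{F}_p}\to M_{\mathbb{F}_p}/\mathscr{F}$, and \'etale-locally $\mathscr{F}$ is spanned by commuting vector fields with vanishing $p$-th powers) the reduction $\widehat{L}_x\bmod p$ is the completion of an honest algebraic subvariety of $M_{\mathbb{F}_p}$, of degree bounded in terms of $p$ and the fixed geometry. Feeding this into Bost's algebraicity criterion for formal-analytic subschemes --- via the slope inequalities for the evaluation maps on the jet spaces of $\widehat{L}_x$ along $\overline{L}_x$ --- the existence of infinitely many primes at which $\widehat{L}_x$ reduces to something algebraic of controlled complexity forces these evaluation maps to degenerate, which yields the algebraicity of $\widehat{L}_x$ over the fraction field of $R$.

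\textbf{The main obstacle.} The serious difficulty --- and the reason the statement is only a conjecture --- is that Bost's criterion requires, beyond the $p$-adic input above, an \emph{archimedean} bound: a bound on the size of the Taylor coefficients of $\widehat{L}_x$, equivalently on the heights of the equations cutting out $\overline{L}_x$, strong enough to beat the entropy terms in the slope inequalities. The hypothesis that $\mathscr{F}$ is $p$-closed for almost all $p$ does not by itself supply such a bound, and producing one in general appears to be at least as hard as the Grothendieck--Katz conjecture, which is a special case (apply the statement to the horizontal foliation on the total space of a flat bundle). One can push the argument through only when extra structure pins down the archimedean size of the leaves: when $\mathscr{F}$ is a Gauss--Manin connection, Hodge theory bounds periods --- this is Katz's theorem --- and, as developed in the body of the paper, when $M$ is a moduli space of flat bundles and $\mathscr{F}$ the isomonodromy foliation, non-abelian Hodge theory together with the non-abelian Hodge index theorem supplies the missing estimate. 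This is precisely the mechanism behind the new cases obtained here, and it falls short of the full conjecture exactly where no such archimedean bound is available.
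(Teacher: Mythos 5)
The statement you were asked about is a \emph{conjecture} (\autoref{conj:esbt}); the paper offers no proof of it and does not claim one --- it only establishes special cases for the isomonodromy foliation (\autoref{thm:main-theorem-esbt}, \autoref{cor:main-theorem-esbt-cases}). So there is nothing in the paper to compare your argument against line by line, and the honest verdict is that your write-up is not a proof, but you already know that and say so explicitly. Given that, your account is essentially accurate. The forward implication you sketch (algebraic leaves $\Rightarrow$ $p$-closed for almost all $p$) is indeed the easy, known direction: one does need a little care to pass from ``every leaf is algebraic'' to the existence, generically and after an \'etale base change, of a morphism $M\to N$ with $\mathscr{F}=T_{M/N}$ (this uses a Chow/Hilbert-scheme family of leaves rather than a pointwise statement), but once that is in place the $p$-closedness of relative tangent sheaves via Jacobson's formula finishes it. Your description of the converse is the standard Ekedahl--Shepherd-Barron--Taylor/Bost strategy, and your diagnosis of the obstruction is the correct one: the mod-$p$ hypothesis supplies the non-archimedean input to Bost's slope method (the formal leaf reduces mod $p$ to a fiber of $M_{\mathbb{F}_p}\to M_{\mathbb{F}_p}/\mathscr{F}$, of degree at most $p^{\operatorname{rk}\mathscr{F}}$), but no archimedean bound on the formal leaf comes for free, and this is exactly why the statement remains open --- and why it subsumes the Grothendieck--Katz conjecture (\autoref{conj:gk}), as the paper notes. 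Your closing observation also matches the architecture of the paper: the new cases proved here do not run Bost's criterion at all, but instead substitute non-abelian Hodge theory (the compactness and triholomorphy statements of \autoref{sec:NAHI}) together with integrality of points on the character variety for the missing archimedean control. In short: no gap beyond the one you correctly identify as the reason this is a conjecture rather than a theorem.
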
 
It is a pleasant exercise to check that \autoref{conj:esbt} implies \autoref{conj:gk} by taking $M$ to be the total space of the vector bundle $(\mathscr{E},\nabla)$, with foliation induced by $\nabla$. 

Our main result is that \autoref{conj:esbt} (or rather a mild stacky generalization thereof) is true for $\mathscr{M}_{\dR}(X/S)$, under hypotheses that are arguably mild.
\begin{thmx}\label{thm:main-theorem-esbt}
	With notation and assumptions as in \autoref{thm:main-theorem-integral-points}, suppose in addition that every irreducible component of $M_B(X_s, r)$ has a $\overline{\mathbb{Z}}$-point. Then the action of $\pi_1(S(\mathbb{C})^{\an},s)$ on  $M_B(X_s, r)(\mathbb{C})$ factors through a finite group. Consequently all leaves of the isomonodromy foliation are algebraic.
\end{thmx}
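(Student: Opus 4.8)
The plan is to bootstrap from Theorem A. By Theorem A, the $p$-curvature vanishing hypothesis already forces the $\pi_1(S(\mathbb{C})^{\an},s)$-action on the $\overline{\mathbb{Z}}$-points $M_B(X_s,r)(\overline{\mathbb{Z}})$ to factor through a finite quotient $Q$; I want to upgrade this to an action on all of $M_B(X_s,r)(\mathbb{C})$. First I would observe that $M_B(X_s,r)$ is an affine variety of finite type over $\mathbb{Z}$ (a GIT quotient of a representation scheme), and the $\pi_1(S)$-action is by algebraic automorphisms defined over $\overline{\mathbb{Q}}$ (in fact, a Galois-descended version is defined over a number field). The key point is a density/rigidity statement: an algebraic automorphism of a finite-type affine $\overline{\mathbb{Q}}$-scheme is determined by its action on $\overline{\mathbb{Z}}$-points \emph{provided those points are Zariski dense} — and more precisely, provided every irreducible component contains a $\overline{\mathbb{Z}}$-point and the automorphism permutes components in a way compatible with what it does on $\overline{\mathbb{Z}}$-points.

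The steps, in order: (1) Reduce to a single mapping class $\gamma \in \pi_1(S(\mathbb{C})^{\an},s)$ and its induced automorphism $\gamma_* \colon M_B(X_s,r) \to M_B(X_s,r)$; it suffices to show that $\gamma_*$ has finite order as an automorphism of $M_B(X_s,r)(\mathbb{C})$, with the order bounded uniformly (so that the image of the whole group is finite — here one uses that $\pi_1(S)$ is finitely generated and that the relevant bound comes from the finite group $Q$ acting on $\overline{\mathbb{Z}}$-points). (2) By Theorem A, $\gamma_*^N = \id$ on $M_B(X_s,r)(\overline{\mathbb{Z}})$ for some fixed $N$ (the exponent of $Q$). (3) Invoke the hypothesis that every irreducible component of $M_B(X_s,r)$ has a $\overline{\mathbb{Z}}$-point: since $\gamma_*$ is an algebraic automorphism it permutes the (finitely many) irreducible components, and because it fixes a $\overline{\mathbb{Z}}$-point on each component, $\gamma_*^N$ preserves each component and fixes a $\overline{\mathbb{Z}}$-point on it. (4) On each component, $\gamma_*^N$ is an algebraic automorphism fixing the set of $\overline{\mathbb{Z}}$-points; but $\overline{\mathbb{Z}}$-points of a positive-dimensional affine $\overline{\mathbb{Q}}$-variety of finite type are Zariski dense (a point here is that $M_B(X_s,r)$, being a character variety, is defined over $\mathbb{Z}$ and its $\overline{\mathbb{Z}}$-points are dense — one can exhibit many integral representations, e.g.\ those valued in $\sl_r(\overline{\mathbb{Z}})$, or use a Noether-normalization argument over $\spec \overline{\mathbb{Z}}$), hence a morphism agreeing with the identity on them agrees everywhere. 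Therefore $\gamma_*^N = \id$ on $M_B(X_s,r)(\mathbb{C})$. (5) Run this for a finite generating set of $\pi_1(S(\mathbb{C})^{\an},s)$ to conclude the image in $\aut(M_B(X_s,r))$ is finite. (6) For the last sentence: finiteness of the monodromy action means that, after a finite \'etale base change $S' \to S_{\mathbb{C}}$, the family of flat bundles has monodromy-invariant (hence constant) conjugacy class of representation along $S'$, so each leaf of the isomonodromy foliation, being (the image of) such a locally constant family, is the analytification of an algebraic subvariety — this is the standard deduction, analogous to how Theorem A yields Conjecture \ref{conj:gk} for Gauss–Manin via regular singularities, now applied to $\mathscr{M}_{\dR}(X/S,r)$ with its isomonodromy foliation.

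The main obstacle I anticipate is Step (4): establishing Zariski density of $\overline{\mathbb{Z}}$-points of the relevant components of $M_B(X_s,r)$, together with the bookkeeping needed to handle the stacky aspects and the fact that $M_B(X_s,r)$ may be reducible, non-reduced, or have components of varying dimension. The density itself should follow because $M_B(X_s,r)$ is a finite-type affine scheme over $\spec \mathbb{Z}$ whose generic fiber has the components in question, so by spreading out and a Noether-normalization argument each such component dominates an affine space over $\spec \mathbb{Z}[1/n]$, which manifestly has dense $\overline{\mathbb{Z}}$-points; the hypothesis that every component has at least one $\overline{\mathbb{Z}}$-point is exactly what lets one pin down which component gets mapped where. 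A secondary subtlety is making sure the finite bound $N$ is genuinely uniform over all of $\pi_1(S)$ rather than element-by-element, but this is immediate once one knows the action on the dense set $M_B(X_s,r)(\overline{\mathbb{Z}})$ factors through the \emph{fixed} finite group $Q$ from Theorem A.
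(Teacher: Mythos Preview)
Your overall strategy---use Theorem~A to get a finite quotient $Q$ acting on $\overline{\mathbb{Z}}$-points, then argue that $\overline{\mathbb{Z}}$-points are Zariski dense so that any $\gamma\in\ker(\pi_1(S)\to Q)$, being an algebraic automorphism fixing a dense set, is the identity on $M_B(X_s,r)(\mathbb{C})$---is correct and gives a clean deduction of Theorem~B from Theorem~A. However, Step~(4) has a real gap: the Noether normalization argument you sketch only produces a finite map $\mathcal{W}\to\mathbb{A}^n$ over $\mathscr{O}_K[1/N]$ for some $N$, and lifting integral points along a finite map gives density of $\overline{\mathbb{Z}}[1/N]$-points, not $\overline{\mathbb{Z}}$-points. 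This matters: Theorem~A controls only honest $\overline{\mathbb{Z}}$-points (the discreteness input behind \autoref{lemma:compact-and-discrete} fails for $\mathscr{O}_K[1/N]$), so you cannot weaken to $S$-integers. The density of $\overline{\mathbb{Z}}$-points from the existence of one is a genuinely arithmetic fact requiring Rumely's local-global principle; this is exactly the content of the paper's \autoref{lemma:smooth-integral-point}. Once you invoke Rumely here, your argument goes through.

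For comparison, the paper proves Theorems~A and~B simultaneously and takes a different route after invoking Rumely: rather than using the full density statement, it uses Rumely only to locate a single \emph{smooth} $\overline{\mathbb{Z}}$-point $[\rho_W]$ on each component, passes to a finite-index subgroup fixing these points (via the weak finite-orbit statement \autoref{thm:main-theorem-integral-points-weak}), and then shows the induced action on each tangent space $T_{[\rho_W]}M_{B,0}$ is both unitary (via the triholomorphic action, \autoref{thm:triholomorphic}) and integral, hence finite; a Riemannian rigidity lemma (\autoref{lemma:riemannian-tangent}) then forces triviality on the whole component. Your approach trades this Riemannian/tangent-space step for the full force of density plus the observation that the action is by algebraic automorphisms. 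Both routes need Rumely; yours is arguably more direct once Theorem~A is in hand, while the paper's yields A and B in one stroke and makes the role of the hyperk\"ahler geometry explicit.
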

\begin{remark}
While we do not know how to verify the hypothesis that each irreducible component of $M_B(X_s, r)$ has a $\overline{\mathbb{Z}}$-point in general, it is true in important special cases. For example, we verify the Ekedahl--Shepherd-Barron--Taylor conjecture for the isomonodromy foliation on $\mathscr{M}_{\dR}(X/S, r)$ if:
\begin{enumerate}
	\item $r=2$, or
	\item $M_B(X_s, r)_{\mathbb{C}}$ is  irreducible, or
	\item $X/S$ is a relative curve.
\end{enumerate}
See \autoref{cor:main-theorem-esbt-cases}. 
\end{remark}

Note that the hypothesis that each irreducible component of $M_B(X_s, r)$ contains a $\overline{\mb Z}$-point  is a purely  topological one---that is, it depends only on $\pi_1(X_s)$. We conjecture that it is in fact always satisfied, which is a slight  extension of Simpson's integrality conjecture; see \cite[Conjecture 1.1.1]{coccia2025density}, \autoref{conj:integral-points}, and \autoref{rem:coccia}.

\subsection{Main structural results on non-abelian cohomology}
Katz's proof of \autoref{thm:katz-p-curvature-Gauss-Manin} relies on various structures on the cohomology of algebraic varieties and the interrelations between them. The main purpose of this work is to develop and exploit analogous structures in the non-abelian setting. In particular, we give non-abelian versions of both Katz's formula and the Hodge index theorem.
\subsubsection{Katz's proof of the $p$-curvature conjecture for Gauss-Manin connections---the  abelian setting}
We briefly recall the various structures that go into Katz's proof of \autoref{thm:katz-p-curvature-Gauss-Manin} and how they are used; we will explain the analogous results in our setting in \autoref{subsubsec:non-abelian-analogue}. We retain notation as in \autoref{thm:katz-p-curvature-Gauss-Manin}.

\emph{Step 1. (Arithmetic)} Katz first shows that if the $p$-curvature of $\nabla_{GM}$ mod $p$ vanishes for infinitely many $p$, then, in characteristic zero, $\nabla_{GM}$ preserves the Hodge filtration on $R^if_*\Omega_{X/S, \dR}^\bullet$. The primary input is a comparison, known as \emph{Katz's formula} \cite[Theorem 3.2]{katz-p-curvature}, between 
\begin{enumerate}
\item 	the associated graded of the $p$-curvature of $\nabla_{GM}$ mod $p$ with respect to the \emph{conjugate filtration}, and
\item the Frobenius pullback of the associated graded of $\nabla_{GM}$ itself with respect to the Hodge filtration, that is, the Kodaira-Spencer map.
\end{enumerate}
  This comparison itself has been the subject of much development, and is notably given a beautiful explanation in \cite{ogus-vologodsky}.

\emph{Step 2. (Geometry)} Katz next observes that if $\nabla_{GM}$ preserves the Hodge filtration on $(R^if_*\Omega_{X/S, \dR}^\bullet)_{\mathbb{C}}$ then the monodromy representation $$\rho: \pi_1(S(\mathbb{C})^{\an}, s)\to \gl(H^i(X_s, \mathbb{C}))$$ is unitary, i.e.~it preserves a definite Hermitian form. This is a consequence of the Hodge index theorem, i.e.~the fact that $(R^if_*\Omega_{X/S, \dR}^\bullet)_{\mathbb{C}}$ underlies a polarizable variation of Hodge structure.

\emph{Step 3. (Topology)} Finally, Katz makes use of the integral structure on singular cohomology, i.e. the fact that $\rho$ factors through a representation into $\gl(H^i(X_s, \mathbb{Z}))$. That is, $\rho$ factors as 
$$\xymatrix{
\pi_1(S(\mathbb{C})^{\an}, s)\ar[rr] \ar[rd] & & \gl(H^i(X_s, \mathbb{C}))\\
& U(r)\cap \gl(H^i(X_s, \mathbb{Z})/\text{tors}).\ar[ur] &
}$$
But this intermediate group is both discrete and compact, hence finite, completing the proof.
\subsubsection{A non-abelian analogue---Leitfaden}\label{subsubsec:non-abelian-analogue}
The main idea of our proofs of \autoref{thm:main-theorem-integral-points} and \autoref{thm:main-theorem-esbt} is in some sense to simply copy Katz's proof glossed above. The primary challenge is to make sense of the non-abelian analogues of the objects Katz encounters, e.g.~the Hodge and conjugate filtrations, the Kodaira-Spencer map, the $p$-curvature, the Hodge index theorem, and the various compatibilities between them. We now explain these analogues and how they are used.

\emph{Step 1. (Arithmetic---a non-abelian Katz formula)}
  Recall that Katz compared (via \emph{Katz's formula} \cite[Theorem 3.2]{katz-p-curvature}) the Frobenius pullback of $\on{gr}_{F_{\Hod}} \nabla_{GM}$ with $\on{gr}_{\conj} \psi_p(\nabla_{GM})$, the associated graded of the $p$-curvature $\psi_p(\nabla_{GM})$ with respect to the conjugate filtration. To make sense of this in the non-abelian setting, we need 
  \begin{enumerate} 
  	\item to find analogues of the Hodge, resp.~ conjugate filtrations,
	\item to understand how to take the associated graded of the isomonodromy foliation, resp.~its $p$-curvature with respect to these, and
	\item to compare these associated gradeds up to Frobenius twist. 
  \end{enumerate}
  The analogue of a \emph{filtration} on $\mathscr{M}_{\dR}(X/S)$ is well-known (see e.g.~\cite{simpson1996hodge}); it is a $\mathbb{G}_m$-equivariant stack over $\mathbb{A}^1_S$ equipped with an isomorphism between its fiber over $1$ and $\mathscr{M}_{\dR}(X/S)$, analogous to the Rees construction. The fiber over $0$ is referred to as the \emph{associated graded}. The Hodge filtration has been understood (since work of Simpson and Deligne) to be analogous to the moduli stack of $\lambda$-connections $\mathscr{M}_{\Hod}(X/S)/\mathbb{A}^1_S$, recalled in \autoref{defn:M_Hod}.  In this case the associated graded---the $0$-fiber---is the moduli stack $\mathscr{M}_{\Dol}(X/S)$ of Higgs bundles on $X/S$
  
  For a morphism $f: X\to S$ in characteristic $p>0$ the analogue of the conjugate filtration is a stack we denote by $\mathscr{M}_{\conj}(X/S)/\mathbb{A}^1_S$, introduced in \autoref{defn: mconj}; it is a slight variant of a stack previously studied by Menzies \cite{menzies2019p} in a closely related context. The associated graded is the moduli stack of Higgs bundles on $X^{(1)}/S$; here $X^{(1)}$ denotes the pullback of $X$ along absolute Frobenius on $S$.
  
  The analogue of the Kodaira-Spencer map $\on{gr}_{F_{\Hod}} \nabla_{GM}$ is an object on $\mathscr{M}_{\Dol}(X/S)$ referred to as a \emph{lifting of tangent vectors}, previously studied by a number of authors, e.g.~\cite{chen2012associated, collier2025higgs, sheng2025nonlinearharmonic, sheng2025nonlinear, fu2025nonabelian}, which one may loosely speaking understand as the obstruction to extending the isomonodromy foliation on $\mathscr{M}_{\dR}(X/S)$ over the central fiber $\mathscr{M}_{\Dol}(X/S)$ of $\mathscr{M}_{\Hod}(X/S)$, just as the Kodaira-Spencer map is the obstruction to $\nabla_{GM}$ preserving the Hodge filtration. This is explained in \autoref{rem:theta-is-KS}. We denote this lifting of tangent vectors by $\Theta_{X/S}$.
  
  Just as the conjugate filtration on de Rham cohomology is preserved by the Gauss-Manin connection, the isomonodromy foliation extends over all of $\mathscr{M}_{\conj}(X/S)/\mathbb{A}^1_S$. Its $p$-curvature $\Psi_{\conj}$ vanishes at the zero fiber, analogous to the fact that the $p$-curvature of the Gauss-Manin connection vanishes on the associated graded of the conjugate filtration. Our version of Katz's formula is:
  
\begin{thmx}[Non-abelian Katz formula]\label{thm:intro-katz-formula}
	There is an identification between $\frac{1}{\lambda}\Psi_{\conj}|_{\mathscr{M}_{\conj}(X/S)_0}$ and the Frobenius pullback of  $\Theta_{X/S}$, where $\lambda$ is the coordinate on $\mathbb{A}^1_S$.
\end{thmx}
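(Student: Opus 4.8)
The plan is to reduce the statement to a local computation on $\mathscr M_{\dR}(X/S)$ and then invoke the classical (abelian) Katz formula fiberwise, using that everything in sight is built from the de Rham complex of the universal family of flat bundles. First I would set up the comparison stack carefully: recall that $\mathscr M_{\Hod}(X/S)$ and $\mathscr M_{\conj}(X/S)$ are both $\mathbb G_m$-equivariant over $\mathbb A^1_S$, with generic fiber $\mathscr M_{\dR}(X/S)$ and with central fibers $\mathscr M_{\Dol}(X/S)$ and $\mathscr M_{\Dol}(X^{(1)}/S)$ respectively. The isomonodromy foliation on $\mathscr M_{\dR}$ extends to a foliation on each of these one-parameter degenerations; on $\mathscr M_{\conj}$ this is the content of the construction of $\mathscr M_{\conj}$ (it is designed so that the foliation extends, analogous to Gauss--Manin preserving the conjugate filtration), and $\Psi_{\conj}$ is its $p$-curvature. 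The key structural input is that, just as in Ogus--Vologodsky, the $p$-curvature of the extended foliation vanishes on the central fiber $\mathscr M_{\Dol}(X^{(1)}/S)$, so that $\frac{1}{\lambda}\Psi_{\conj}$ is the meaningful leading term; this is what $\Theta_{X/S}$ should match up to Frobenius twist.

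Next I would make both sides of the desired identification explicit as maps of $\mathscr O$-modules on $\mathscr M_{\Dol}(X^{(1)}/S)$. The object $\Theta_{X/S}$ on $\mathscr M_{\Dol}(X/S)$ is the lifting of tangent vectors: to a tangent vector on $S$ it assigns, via the Atiyah/Kodaira--Spencer class of $X/S$ and the deformation theory of Higgs bundles, an endomorphism-valued section measuring the failure of the Higgs bundle to deform flatly. Its Frobenius pullback lives on $\mathscr M_{\Dol}(X^{(1)}/S)$. On the other hand, $\frac{1}{\lambda}\Psi_{\conj}|_0$ is computed by taking a local vector field $\vec v$ on $S$ mod $p$, lifting it to a vector field tangent to the extended conjugate foliation near the central fiber, taking its $p$-th power, and reading off the resulting (by $p$-closedness, $\mathscr O$-linear) endomorphism modulo $\lambda^2$. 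The crucial point is that on the universal flat bundle, these are exactly the two sides of the classical Katz formula \cite[Theorem 3.2]{katz-p-curvature}: the Hodge-graded of the Gauss--Manin connection (= Kodaira--Spencer) versus the conjugate-graded of its $p$-curvature, Frobenius-twisted. So I would argue that $\mathscr M_{\dR}(X/S)$ carries a universal de Rham object $(\mathscr E^{\mathrm{univ}}, \nabla^{\mathrm{univ}})$ over $X\times_S \mathscr M_{\dR}(X/S)$, that the isomonodromy foliation is precisely the Gauss--Manin connection of this universal object relative to the base $\mathscr M_{\dR}(X/S)\to S$, and that the Hodge and conjugate filtrations on $\mathscr M_{\dR}$ are induced by the corresponding filtrations on $(\mathscr E^{\mathrm{univ}},\nabla^{\mathrm{univ}})$. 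Then Theorem D is the classical Katz formula applied to this universal object, pushed down appropriately.

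The main obstacle, and where I expect most of the work to go, is matching the two deformation-theoretic descriptions precisely — i.e.\ checking that the stacky/non-abelian $\Theta_{X/S}$ as defined via liftings of tangent vectors on $\mathscr M_{\Dol}$ really does agree, under the identification of the tangent complex of $\mathscr M_{\Dol}(X/S)$ with $\mathbb H^*$ of a Higgs complex, with the ``$\mathrm{gr}_{F_{\Hod}}\nabla_{GM}$ of the universal object'' appearing on the right-hand side of Katz's formula, including getting the normalization by $\lambda$ and the Frobenius twist on $X^{(1)}$ exactly right. This requires care because the conjugate filtration stack $\mathscr M_{\conj}$ is a variant of Menzies' construction rather than literally the Rees construction of a filtration on a vector bundle, so one must verify that its formation is compatible with the classical conjugate filtration on the de Rham complex of the universal bundle, i.e.\ that $\mathscr M_{\conj}$ represents $\lambda$-deformations whose associated graded is $\mathscr M_{\Dol}(X^{(1)}/S)$ in a way compatible with Cartier descent. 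A secondary technical point is functoriality in the stacky setting: one must ensure the universal object exists with the right properties on the moduli stack (as opposed to only on a good quotient or after rigidification), and that pushforward/base-change along $X\times_S\mathscr M_{\dR}\to \mathscr M_{\dR}$ behaves well; passing to the stack rather than a coarse space is what lets the universal object exist globally, so this should be a feature rather than a bug, but it needs to be spelled out. Once these identifications are in place, the identity itself is the classical Katz formula and requires no new computation.
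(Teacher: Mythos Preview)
Your proposal has a genuine conceptual gap. The heart of your plan is the claim that ``the isomonodromy foliation is precisely the Gauss--Manin connection of this universal object relative to the base $\mathscr{M}_{\dR}(X/S)\to S$,'' and that consequently the non-abelian Katz formula is the classical one applied to $(\mathscr{E}^{\mathrm{univ}},\nabla^{\mathrm{univ}})$. But this identification does not hold in the form you need. The isomonodromy foliation is a crystal structure on the \emph{stack} $\mathscr{M}_{\dR}(X/S)$ (an isomorphism $\pi_1^*\mathscr{M}_{\dR}\simeq\pi_2^*\mathscr{M}_{\dR}$ over the PD envelope $S(1)$), not a connection on a vector bundle of de Rham cohomology. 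The Gauss--Manin connection you could extract from the universal object would be a connection on $R^1 f_{\mathrm{univ}*}(\mathscr{E}^{\mathrm{univ}}_{\dR})$, i.e.\ on the tangent sheaf $T_{\mathscr{M}_{\dR}/S}$; its $p$-curvature is a different invariant from the $p$-curvature $\Psi$ of the crystal. Similarly, the ``filtrations'' $\mathscr{M}_{\Hod}$ and $\mathscr{M}_{\conj}$ are one-parameter degenerations of the moduli stack (moduli of $\lambda$-connections, moduli of flat bundles with $\psi_p=\lambda\theta$), not Rees constructions of the Hodge/conjugate filtrations on a de Rham complex, so there is no direct way to feed them into the classical statement.

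The paper proceeds entirely differently: it is a hands-on \v{C}ech cocycle computation. One writes the crystal structure on $\mathscr{M}_{\conj}$ explicitly via the PD Taylor formula on overlaps, extracts $\Psi_{\conj}$ as the degree-$p$ divided-power term $\sum_k\nabla_{\partial_{x_k}}^p\otimes\gamma_p(f_\beta^*x_k-f_\gamma^*x_k)$, and then uses the \emph{defining relation} of $\mathscr{M}_{\conj}$, namely $\psi_p(\mathscr{E},\nabla)=\lambda\theta$, to see this equals $\lambda\sum_k\theta(\partial_{x_k})\otimes\gamma_p(f_\beta^*x_k-f_\gamma^*x_k)$. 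Dividing by $\lambda$ and restricting to $\lambda=0$ gives exactly the Frobenius pullback of the Kodaira--Spencer cocycle composed with the Higgs field, which is $G^*\Theta_{X/S}$ by the identification $F_{\abs}^*\Omega^1_S\simeq\bar I/(\bar I^{[p+1]}+I)$. No appeal to the classical Katz formula is made; rather, the computation is a non-abelian \emph{parallel} of Katz's original cocycle argument. Your intuition that a conceptual reduction should exist is shared by the authors (they remark that the formula is likely implicit in the geometry of the syntomification), but extracting it is not what you sketched and would require substantial new work.
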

 See \autoref{subsec:non-abelian-katz-formula} and in particular \autoref{lemma:katz-formula} for a precise statement. We think it is plausible that this formula is implicit in some modern developments in $p$-adic Hodge theory and its characteristic $p$ avatars, though some work would be required to extract it.
    
\begin{remark}
 In Katz's proof, one deduces  immediately from Katz's formula that the vanishing of infinitely many $p$-curvatures implies the vanishing of the Kodaira--Spencer map, i.e. the Higgs field. This is a consequence of Hodge-de Rham degeneration, and subsequently  the degeneration of the conjugate spectral sequence.
 
 
 In our setup, we would similarly like to conclude that  $\Theta_{X/S}$ vanishes in characteristic zero, given that  $\Psi_{\conj}$ vanishes mod $p$ for infinitely many $p$. However, things are rather more complicated in our non-abelian situation: we do not know what the non-abelian analogue of the degeneration of the conjugate spectral sequence is, and more generally the geometry of $\mathscr{M}_{\conj}(X/S)$ is poorly understood. We get around this and prove the statement, using Ogus-Vologodsky's non-abelian Hodge theory in positive characteristic \cite{ogus-vologodsky} to construct canonical sections to $\mathscr{M}_{\conj}(X/S)/\mathbb{A}^1_S$, which we use to prove a kind of isosingularity principle for $\mathscr{M}_{\conj}(X/S)$, in \autoref{subsec:NAHTcharp}. This argument has some precursor in Menzies' thesis \cite{menzies2019p}, who proved a related result under the additional assumption that $X/S$ admits a Frobenius lift.
 \end{remark}
  
  Ultimately we conclude in \autoref{thm:p-curvature-vanishing-implies-theta-vanishing} that under the hypotheses of \autoref{thm:main-theorem-integral-points}, the lifting of tangent vectors $\Theta_{X/S}$ vanishes identically: loosely speaking, the isomonodromy foliation preserves the Hodge filtration. 
  
\emph{Step 2. (Geometry---a non-abelian Hodge index theorem)} 
Having proven that $\Theta_{X/S}$ vanishes, we wish to show that the action of $\pi_1(S(\mathbb{C})^{\an}, s)$ on $M_B(X_s, r)$ is \emph{unitary}; in the abelian setting this was a consequence of the Hodge index theorem. In \autoref{sec:NAHI},  we prove \emph{two} analogues of the Hodge index theorem.

The first says:
\begin{thmx}[Non-abelian Hodge index theorem I]\label{thm:intro-hodge-1}
	If $\Theta_{X/S}$ vanishes on $\mathscr{M}_{\Dol}(X/S, r)$, then the orbits of $\pi_1(S(\mathbb{C})^{\an},s)$ on $M_B(X_s, r)(\mathbb{C})$ have compact closure (in the analytic topology).
\end{thmx}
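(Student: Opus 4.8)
The plan is to transplant Katz's Step~2 to the Dolbeault side: there, $\nabla_{GM}$ preserving the Hodge filtration forces the Hodge decomposition---and hence the positive-definite Hodge metric---to be flat, so the monodromy is unitary and every orbit is bounded; the non-abelian incarnation of ``the Hodge metric'' is the hyperkähler structure on the moduli of Higgs bundles, and the non-abelian incarnation of ``$\nabla$ preserves the Hodge filtration'' is precisely the hypothesis $\Theta_{X/S}=0$. \emph{Step 1 (descend to $\lambda=0$).} Since $\Theta_{X/S}=0$, the isomonodromy foliation extends to a foliation $\mathscr F$ on all of $\mscr M_{\Hod}(X/S,r)$, relative to $S$ and $\mathbb{G}_m$-equivariant for the Rees action lying over scaling on $\mathbb{A}^1_\lambda$. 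Applying the same to the conjugate family $\overline X/\overline S$ (for which $\Theta$ also vanishes, being the conjugate of $\Theta_{X/S}$) and gluing along $\mscr M_{\dR}$, one gets that $\mathscr F$ extends over the entire Deligne--Hitchin twistor family $\mscr M_{DH}(X/S,r)\to S\times\mathbb{P}^1_\lambda$, compatibly with the antiholomorphic involution. The holonomy of $\mathscr F$ around $\gamma\in\pi_1(S(\bc)^{\an},s)$ preserves $\lambda$, hence is a holomorphic, real, $\mathbb{G}_m$-equivariant automorphism $\Phi_\gamma$ of $\mscr M_{DH}(X_s,r)$; its restriction to $\lambda=1$ is the isomonodromy holonomy on $\mscr M_{\dR}(X_s,r)$, which under Riemann--Hilbert is the given $\pi_1(S(\bc)^{\an},s)$-action on $M_B(X_s,r)(\bc)$, while $\phi_\gamma:=\Phi_\gamma|_{\lambda=0}$ acts on $\mscr M_{\Dol}(X_s,r)$. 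Because $\Phi_\gamma$ lives on the whole twistor line and respects the $\mathbb{G}_m$-flow, the $\langle\gamma\rangle$-orbit of a point of $M_B(X_s,r)(\bc)$ matches, flow-equivariantly, the $\langle\phi_\gamma\rangle$-orbit of the corresponding point of $\mscr M_{\Dol}(X_s,r)$.

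\emph{Step 2 (boundedness on the Dolbeault side).} Two inputs about $\mscr M_{\Dol}(X_s,r)$ make the argument run. First, it is semiprojective: $\lim_{t\to 0}t\cdot x$ exists for every $x$, and the $\mathbb{G}_m$-fixed locus $F$---the locus of complex variations of Hodge structure---is projective (Simpson), hence compact. Second, $\mscr M_{\Dol}(X_s,r)$ carries a $\mathbb{G}_m$-invariant Kähler form $\omega$ (the Kähler form in the Higgs complex structure), whose $\mathbb{G}_m$-moment map $\mu$ is a proper exhaustion function with $\mu^{-1}(\min\mu)=F$. Since $\Phi_\gamma$ is a holomorphic, real automorphism of the twistor space it preserves the hyperkähler structure, so $\phi_\gamma$ preserves $\omega$; being in addition $\mathbb{G}_m$-equivariant it preserves $F$, so $\phi_\gamma^\ast\mu$ is again a moment map for the $\mathbb{G}_m$-action and agrees with $\mu$ on $F$, forcing $\phi_\gamma^\ast\mu=\mu$. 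Hence every $\pi_1(S(\bc)^{\an},s)$-orbit in $\mscr M_{\Dol}(X_s,r)$ lies in a single fiber of the proper map $\mu$, so has compact closure; by Step~1 the same holds for the orbits on $M_B(X_s,r)(\bc)$.

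\emph{Main obstacle.} The heart of the matter is Step~1: upgrading the hypothesis $\Theta_{X/S}=0$---a priori only the statement that $\mathscr F$ extends over the single fiber $\mscr M_{\Hod}(X/S,r)|_{\lambda=0}=\mscr M_{\Dol}(X/S,r)$---to the twistor-compatible extension over all of $\mathbb{P}^1_\lambda$, and then identifying the resulting $\lambda=0$ holonomy with the genuine $\pi_1$-action on $M_B$ flow-equivariantly. This is exactly where the ``non-abelian Hodge index'' phenomenon enters: a hyperkähler metric (equivalently, a definite form) preserved by the action, mirroring the role of the Hodge index theorem in Katz's Step~2. A secondary technical point is establishing inputs (i)--(ii) for an arbitrary smooth projective fiber $X_s$, where $\mscr M_{\Dol}(X_s,r)$ may be singular; here one should run the moment-map argument rather than a metric-completeness argument, quoting Simpson's properness of the moduli of complex variations of Hodge structure and the structure theory of semiprojective varieties.
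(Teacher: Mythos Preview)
Your strategy is in the right spirit—the proper function you want is indeed the energy functional (which is the $S^1$-moment map for $\omega_I$), and the endgame ``orbits lie in a level set of a proper function'' is exactly what the paper does. But your route to ``$\phi_\gamma$ preserves $\mu$'' has two genuine gaps.

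First, the ``flow-equivariant matching'' in Step~1 is wrong as stated. The $\mathbb{G}_m$-flow on $\mathscr{M}_{\Hod}$ does \emph{not} carry a point at $\lambda=1$ to its non-abelian-Hodge partner at $\lambda=0$; by Simpson's Langton-type argument the limit $\lim_{t\to 0}t\cdot(\mathscr{E},\nabla)$ is always a $\mathbb{G}_m$-\emph{fixed} point (a complex VHS), not the Higgs bundle corresponding to $(\mathscr{E},\nabla)$ under NAHT. The correct transport from $\lambda=1$ to $\lambda=0$ is along the \emph{twistor lines} (preferred sections), which are not $\mathbb{G}_m$-orbits. So to compare the $\lambda=1$ and $\lambda=0$ holonomies you would need to show $\Phi_\gamma$ preserves twistor lines, which is essentially equivalent to showing it is a hyperk\"ahler isometry—precisely the ``main obstacle'' you flag, and which you have not resolved. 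The claim ``holomorphic, real automorphism of the twistor space $\Rightarrow$ preserves the hyperk\"ahler structure'' also needs that $\Phi_\gamma$ preserves the $\mathscr{O}(2)$-twisted fiberwise symplectic form; this is plausible (isomonodromy preserves the Goldman form at $\lambda=1$) but you have not argued it.

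The paper sidesteps all of this. It first reduces to a family of curves via Lefschetz, and then proves directly that the energy is constant along isomonodromy leaves, using two explicit formulas: Chen's formula, which identifies $\Theta_{X/S}(v)$ with the Hamiltonian vector field of $h_2^*v^\vee$ (the quadratic Hitchin map paired with $v$), and Gardiner's formula, which identifies $\partial_v E_\rho$ with (a multiple of) the real part of that same pairing evaluated at the Higgs bundle. Thus $\Theta=0$ forces $h_2^*v^\vee$ to have vanishing Hamiltonian, hence (by homogeneity under $\mathbb{G}_m$) to vanish at the point, hence $\partial_v E_\rho=0$. Properness of $E$ then gives compactness. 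No twistor extension, no triholomorphicity, no comparison of $\lambda=0$ and $\lambda=1$ holonomies is needed; in fact the paper proves the triholomorphic statement \emph{afterwards}, as a consequence of the same energy-constancy lemma. Your approach, if the gaps were filled, would essentially amount to proving triholomorphicity first and deducing compactness from it—logically possible, but the missing step is exactly the hard one.
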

See \autoref{thm:compactness-thm} for a precise statement. One should think of this as analogous to the unitarity of a variation of Hodge structure with vanishing Higgs field: if $$\rho: \Gamma\to GL_r(\mathbb{C})$$ is unitary, then the $\Gamma$-orbit of any vector in $\mathbb{C}^r$ has compact closure.

The second analogue of the Hodge index theorem is:
\begin{thmx}[Non-abelian Hodge index theorem II]\label{thm:intro-hodge-2}
	Under the same assumptions as \autoref{thm:intro-hodge-1}, the action of $\pi_1(S(\mathbb{C})^{\an},s)$ on the reduced smooth locus of $M_B(X_s, r)(\mathbb{C})$ preserves a natural Riemannian metric.
\end{thmx}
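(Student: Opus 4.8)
The plan is to exhibit the invariant metric as the natural hyperkähler metric coming from non-abelian Hodge theory, and to reduce its invariance to a single statement about complex structures. Recall that Simpson's non-abelian Hodge correspondence identifies $M_B(X_s,r)(\mathbb{C})$ real-analytically with the Dolbeault moduli space $M_{\Dol}(X_s,r)$, and that under this identification the reduced smooth locus $M$ carries a natural hyperkähler metric $g$, with complex structures $I$ (the Dolbeault structure, depending on the complex structure of $X_s$) and $J$ (the Betti, equivalently de Rham, structure, depending only on the topology of $X_s$), $K=IJ$, and associated Kähler forms $\omega_I,\omega_J,\omega_K$ with $g(\cdot,\cdot)=\omega_I(\cdot,I\cdot)$; moreover the holomorphic symplectic form for $J$ is $\omega_K+\sqrt{-1}\,\omega_I$, and up to a constant it is the topological Atiyah--Bott--Goldman form $\Omega_G$ on $M_B(X_s,r)$. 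Write $\Gamma=\pi_1(S(\mathbb{C})^{\an},s)$. With no hypotheses at all, $\Gamma$ acts on $M_B(X_s,r)$ through the mapping class group of $X_s$, hence by $J$-biholomorphisms preserving $\Omega_G$; consequently $\Gamma$ preserves $J$, $\omega_I$ and $\omega_K$. Since $g(\cdot,\cdot)=\omega_I(\cdot,I\cdot)$, it therefore suffices to prove that $\Gamma$ \emph{preserves the Dolbeault complex structure $I$}, and this is the only place the hypothesis $\Theta_{X/S}=0$ is needed.

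To prove that, the plan is as follows. By \autoref{rem:theta-is-KS} and the works cited around it, the vanishing of the lifting of tangent vectors $\Theta_{X/S}$ means precisely that the isomonodromy foliation on $\mathscr{M}_{\dR}(X/S,r)$ extends over the Hodge filtration, i.e.\ to a foliation on $\mathscr{M}_{\Hod}(X/S,r)/\mathbb{A}^1_S$; its restriction to the central fibre is an algebraic foliation $\mathscr{F}_{\Dol}$ on $\mathscr{M}_{\Dol}(X/S,r)$ transverse to the projection to $S$. I would then argue: (i) over $S(\mathbb{C})^{\an}$, parallel transport along the leaves of $\mathscr{F}_{\Dol}$ is holomorphic for the fibrewise Dolbeault complex structure, because $\mathscr{M}_{\Dol}(X/S,r)\to S$ is algebraic — hence fibrewise $I$-holomorphic — and the leaves of $\mathscr{F}_{\Dol}$ are, locally over $S^{\an}$, graphs of fibrewise-holomorphic sections, their compactness (hence non-degeneration) being guaranteed by \autoref{thm:intro-hodge-1}; and (ii) the monodromy of $\mathscr{F}_{\Dol}$ around a loop in $S$ based at $s$ coincides, under non-abelian Hodge theory over the base $S$, with the given $\Gamma$-action on $M_{\Dol}(X_s,r)\cong M_B(X_s,r)$. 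Granting (i) and (ii), the monodromy automorphisms are $I$-holomorphic, so $\Gamma$ preserves $I$, hence $g$, and the theorem follows with $g$ the natural metric.

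The main obstacle is to make (i) and (ii) precise. Step (ii) requires a genuine comparison of two a priori different monodromy actions — the monodromy of the Dolbeault isomonodromy foliation versus the mapping-class-group action on the Betti side — for which one must use the compatibility of Simpson's non-abelian Gauss--Manin connection with the Hodge and conjugate filtrations, in the spirit of \autoref{thm:intro-katz-formula}; this is also where one must keep careful track of the singular and non-reduced loci, since $g$, $I$ and the relevant identifications live only on $M$, so one must check that $\mathscr{F}_{\Dol}$ preserves $M$. Step (i) is delicate because $\mathscr{F}_{\Dol}$ is a priori only a foliation over $S$, and one needs its analytic leaves to be fibrewise-holomorphic graphs; here \autoref{thm:intro-hodge-1} enters again, to rule out leaves escaping to the boundary. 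A last, bookkeeping-level point concerns naturality: for $\dim X_s>1$ the hyperkähler metric depends on a choice of Kähler class on $X_s$, but $\Gamma$ fixes the fibre $X_s$, so this choice is immaterial and the argument is unaffected; for $X/S$ a relative curve the metric is canonical. If one prefers to bypass (i)--(ii) entirely, a softer alternative is available: \autoref{thm:intro-hodge-1} shows every $\Gamma$-orbit on the affine variety $M_B(X_s,r)$ is precompact, whence the closure of the image of $\Gamma$ in $\aut(M_B(X_s,r))$ is a compact group, over which one may average any Riemannian metric on $M$; this produces an invariant metric, though a less canonical one, and still demands some care at the singular locus.
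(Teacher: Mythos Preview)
Your overall strategy---show that $\Gamma$ preserves $I$ and then read off $g$ from $\omega_I$ and $I$---is a reasonable heuristic, but step (ii) is a genuine gap, and it is precisely where the content of the theorem lies. The non-abelian Hodge correspondence $M_{\Dol}(X_t)\simeq M_B(X_t)$ is transcendental and depends on the complex structure of $X_t$; there is no a priori reason the \emph{algebraic} parallel transport along your $\mathscr{F}_{\Dol}$ should agree, under this correspondence, with the \emph{topological} Betti monodromy. Concretely, the Betti leaf through $[\rho]$ transports, under fiberwise NAH, to a real-analytic section of $M_{\Dol}(X/S)\to S$, namely the family of harmonic bundles associated to $\rho$ as the complex structure varies; your step (ii) is the claim that this real-analytic section is a leaf of the algebraic foliation $\mathscr{F}_{\Dol}$. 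That statement is essentially equivalent to what you are trying to prove, and your gesture towards \autoref{thm:intro-katz-formula} (a characteristic-$p$ statement) does not supply an argument. In fact, once the theorem is proved by the paper's route, triholomorphicity does imply that $I$ is $\Gamma$-invariant, and one can then deduce your step (ii) a posteriori; but you are using it as an input. (There is also a smaller issue: the extension of the isomonodromy foliation over $\lambda=0$ in \autoref{rem:theta-is-KS} is only asserted on the $\lambda$-torsion-free locus of the tangent sheaf, so $\mathscr{F}_{\Dol}$ is not obviously defined everywhere.)

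The paper's proof goes through $\omega_J$ rather than $I$: it shows that the energy functional $E$ is constant along isomonodromy leaves when $\Theta_{X/S}=0$ (\autoref{lemma:energy-constant}, via Gardiner's formula and Chen's formula), and since $E$ is a K\"ahler potential for $\omega_J$ (Hitchin, Spinaci), $\omega_J$ is $\Gamma$-invariant (\autoref{thm:kahler-preserved}). As $J$ and $\Omega_J=\omega_K+i\omega_I$ are always $\Gamma$-invariant, and $g(v,w)=-\omega_J(Jv,w)$, the metric $g$---and then also $I,K$---are $\Gamma$-invariant (\autoref{thm:triholomorphic}). So the key analytic input you are missing is the constancy of the energy along leaves, which is what substitutes for your unproven comparison (ii).

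Your closing ``softer alternative'' also has a gap: precompactness of every orbit on an affine variety does not by itself imply that the closure of the image of $\Gamma$ in $\aut(M_B(X_s,r))$ is compact (one would need an equicontinuity statement), so the averaging argument as stated is incomplete; and in any case it would not produce the \emph{natural} hyperk\"ahler metric referred to in the statement.
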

See \autoref{thm:kahler-preserved} and \autoref{thm:triholomorphic} for more precise statements. In particular, at a fixed point  of the $\pi_1(S(\mb C)^{\an}, s)$-action on  $M_B(X_s, r)(\mathbb{C})$, the action on the tangent space is in fact unitary.

Both of these results rely on Simpson's complex non-abelian Hodge theory and ultimately have the same source, \autoref{lemma:energy-constant}: if $\Theta_{X/S}$ vanishes along a leaf of the isomonodromy foliation, then the \emph{energy functional} is constant along that leaf. This result appears closely related to \cite[Theorem E]{collier2025higgs}, which appeared while this paper was in preparation.

\emph{Step 3. (Topology)} We now observe that for a $\mathbb{Z}$-point of $M_B(X_s, r)$, its orbit under $\pi_1(S(\mathbb{C})^{\an},s)$ is discrete (as the $\mathbb{Z}$-points of an affine algebraic variety are discrete in its complex points) and compact, by \autoref{thm:compactness-thm}. The case of $\overline{\mathbb{Z}}$-points is handled analogously, by considering all Galois conjugates at once. This is done in \autoref{thm:main-theorem-integral-points-weak}.

The proof of \autoref{thm:main-theorem-integral-points} and \autoref{thm:main-theorem-esbt} now uses an analysis of the action on the tangent spaces of integral points combined with some Riemannian geometry, again relying on the integral structure and the unitarity from \autoref{thm:triholomorphic}. This last step is performed in \autoref{subsec:esbt}.

\begin{table}[t]
\centering
\small
\setlength{\tabcolsep}{6pt}
\renewcommand{\arraystretch}{1.5}
\begin{tabular}{|p{0.475\textwidth}|p{0.475\textwidth}|}
\hline
\textbf{Abelian (cohomological) setting} & \textbf{Non-abelian setting} \\
\hline
 $R^if^{\on{an}}_*\underline{\mathbb{Z}}$ on $S(\mathbb{C})$, with monodromy
$\pi_1(S(\mathbb{C})^{\an},s)\to GL(H^i(X_s,\mathbb{Z}))$ &
 $M_B(X_s,r)$, with its natural $\pi_1(S(\mathbb{C})^{\an},s)$-action (via the outer automorphism action on $\pi_1(X_s)$) \\
\hline
 $R^if_*\Omega^\bullet_{X/S,\dR}$ with Gauss--Manin connection $\nabla_{GM}$ &
$\mathscr{M}_{\dR}(X/S,r)$ with the isomonodromy foliation \\
\hline
Hodge filtration $F_{\Hod}$ on $R^if_*\Omega^\bullet_{X/S,\dR}$, with associated graded $\bigoplus_{p+q=i} R^pf_*\Omega^q_{X/S}$ &
$\mathscr{M}_{\Hod}(X/S,r)\to \mathbb{A}^1_S$ with $0$-fiber $\mathscr{M}_{\Dol}(X/S,r)$\\
\hline
Conjugate filtration on $R^if_*\Omega^\bullet_{X/S,\dR}$ in char.~$p>0$, with associated graded $\bigoplus_{p+q=i} R^qf_*\Omega^p_{X^{(1)}/S}$ &
 $\mathscr{M}_{\conj}(X/S,r)\to \mathbb{A}^1_S$ with $0$-fiber $\mathscr{M}_{\Dol}(X^{(1)}/S,r)$. \\
\hline
Kodaira--Spencer map 
$\on{gr}_{F_{\Hod}}\nabla_{GM}$, i.e. the Higgs field &
Lifting of tangent vectors $\Theta_{X/S}$  \\
\hline
$p$-curvature $\psi_p(\nabla_{GM})$ &
$p$-curvature of isomonodromy foliation $\Psi$  \\
\hline
\emph{Katz's formula} \cite[Thm.\ 3.2]{katz-p-curvature}:
 comparison between $\on{gr}_{\conj}\psi_p(\nabla_{GM})$ and $F_{\abs}^*(\on{gr}_{F_{\Hod}}\nabla_{GM})$. &
\autoref{thm:intro-katz-formula},
 comparing $\frac{1}{\lambda}\Psi_{\conj}|_{\mathscr{M}_{\conj}(X/S)_0}$ with the Frobenius pullback of $\Theta_{X/S}$. \\
\hline
Hodge index theorem  & \autoref{thm:intro-hodge-1} and \autoref{thm:intro-hodge-2} \\
\hline
\end{tabular}
\caption{Dictionary between Katz's proof of \autoref{thm:katz-p-curvature-Gauss-Manin} and our results}
\label{tab:intro-analogies}
\end{table}

\begin{remark}
We have opted to state our results in the setting of a smooth projective morphism $f: X\to S$. It is natural to ask about the quasi-projective setting as well. Indeed, suppose we are given a smooth projective morphism $f$ as above, and $X/S$ is equipped with a relative SNC divisor $D$. We expect that everything in this paper works more or less verbatim if one considers flat bundles on $X/S$ with regular singularities along $D$ and fixed semisimple residues, with eigenvalues in $\mathbb{Q}$ (i.e.~semisimple, finite order, local monodromy). We have opted not to try to write the paper in this generality both for simplicity of notation and because the analytic preliminaries in \autoref{sec:NAHI} are more involved in this case.

On the other hand, one pleasant aspect of this situation is that the hypotheses of \autoref{thm:main-theorem-esbt} on existence of integral points may be verified whenever the moduli of local systems with fixed local monodromy along $D$ (i.e.~the relative character variety) is irreducible, by the main result of \cite{de2024integrality}.
\end{remark}

\subsection{Questions}
There are a number of interesting questions raised by this work. The first is purely a question of complex algebraic geometry:
\begin{conjecture}\label{conj:theta-vanishing}
	Let $f: X\to S$ be a smooth projective morphism of smooth complex varieties, and fix $s\in S$. If $\Theta_{X/S}\equiv 0$ on $\mathscr{M}_{\Dol}(X/S, r)$, then the action of $\pi_1(S(\mathbb{C})^{\an},s)$ on $M_B(X_s, r)(\mathbb{C})$ factors through a finite group.
\end{conjecture}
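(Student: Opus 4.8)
The plan is to reduce \autoref{conj:theta-vanishing} to \autoref{thm:main-theorem-esbt}, i.e.\ to the case already handled. Granting $\Theta_{X/S}\equiv 0$ on $\mathscr{M}_{\Dol}(X/S,r)$, the first step is to feed this into \autoref{thm:intro-hodge-1} and \autoref{thm:intro-hodge-2}: the former gives that every $\pi_1(S(\mathbb{C})^{\an},s)$-orbit on $M_B(X_s,r)(\mathbb{C})$ has compact closure, and the latter gives a $\pi_1(S(\mathbb{C})^{\an},s)$-invariant Riemannian metric on the reduced smooth locus, unitary on tangent spaces at fixed points. These are precisely the geometric inputs used in \autoref{subsec:esbt}; the only further ingredient there is the existence of a $\overline{\mathbb{Z}}$-point on each irreducible component of $M_B(X_s,r)$, which is what makes the Northcott-type finiteness for orbits of integral points, and the ensuing analysis of the (unitary, arithmetically defined) representations on their tangent spaces, go through. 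So, conditionally on that ingredient, the argument of \autoref{thm:main-theorem-esbt} applies verbatim and yields the conclusion.

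The substance of the problem is therefore to prove that each irreducible component of $M_B(X_s,r)$ automatically carries a $\overline{\mathbb{Z}}$-point, i.e.\ to establish \autoref{conj:integral-points}; this is the step I expect to be the main obstacle. It is a purely topological assertion about $\pi_1(X_s(\mathbb{C})^{\an})$ — a slight strengthening of Simpson's integrality conjecture — and none of the analytic machinery of this paper bears on it: \autoref{thm:intro-hodge-1} controls $M_B(X_s,r)$ only at the archimedean place, whereas producing a $\overline{\mathbb{Z}}$-point requires simultaneous integrality at all finite places. What one can already do is verify \autoref{conj:integral-points} in broad families of cases — when the relevant (relative) character variety is irreducible, by \cite{de2024integrality} (see also \cite{coccia2025density}), when $r=2$, or for relative curves (cf.\ \autoref{cor:main-theorem-esbt-cases}) — and in each such case \autoref{conj:theta-vanishing} follows immediately from \autoref{thm:main-theorem-esbt}; a full proof of \autoref{conj:integral-points} would give \autoref{conj:theta-vanishing} unconditionally.

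Finally, it is worth recording why the naive attempt to bypass integral points fails, since this suggests that an unconditional proof needs a genuinely new idea. In the abelian setting one linearizes: monodromy acts on a finite-dimensional cohomology group, and compactness together with the integral lattice forces the image into a bounded, hence finite, subgroup of $\operatorname{GL}_n(\overline{\mathbb{Z}})$. If there existed a finite-dimensional $\pi_1(S(\mathbb{C})^{\an},s)$-invariant subspace $V\subset\mathcal{O}(M_B(X_s,r))$ generating the coordinate ring and defined over $\mathbb{Q}$, the same argument would work — compactness of orbit closures, the $\mathbb{Q}$-structure of the action, and finite generation of $\pi_1(S(\mathbb{C})^{\an},s)$ would force the image into a bounded subgroup of $\operatorname{GL}(L)$ for a lattice $L$, hence a finite group — and \autoref{conj:theta-vanishing} would follow. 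But the $\pi_1(S(\mathbb{C})^{\an},s)$-action factors through the image of $\pi_1(S(\mathbb{C})^{\an},s)$ in $\operatorname{Out}(\pi_1(X_s(\mathbb{C})^{\an}))$, and when that image is infinite no such $V$ exists: the degree of $\operatorname{tr}(\rho(\phi(g)))$, as a polynomial in a fixed finite generating set of trace functions, grows with the word length of $\phi$. Thus one cannot linearize the action, and the passage from ``compact orbit closures plus an invariant metric'' to ``finite image'' genuinely requires either the integral structure of \autoref{conj:integral-points} or some substitute for it not visible from Steps 1 and 2.
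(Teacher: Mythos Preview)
Your analysis is correct and matches the paper exactly: the statement is an open \emph{conjecture}, and the paper's own treatment is precisely the reduction you give---it is packaged as \autoref{thm:theta-vanishing} (the $\Theta_{X/S}\equiv 0$ version of \autoref{thm:main-theorem-esbt}), and immediately after stating \autoref{conj:theta-vanishing} the paper observes that it reduces to \autoref{conj:integral-points}. Your identification of the existence of $\overline{\mathbb{Z}}$-points on each component as the sole missing step, and of why the naive linearization via invariant subspaces of $\mathcal{O}(M_B)$ fails, is on the mark.
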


Note that if $f$ is a relative curve, we in fact show in \autoref{thm:relative-curves} that the vanishing of $\Theta_{X/S}$ implies that $X/S$ is isotrivial.

The proof of \autoref{thm:main-theorem-esbt} (see \autoref{thm:theta-vanishing}) reduces \autoref{conj:theta-vanishing}  to an \emph{arithmetic} conjecture on the existence of enough integral points on $M_B(X_s, r)$; it is a weak form of \cite[Conjecture 1.1.1]{coccia2025density}.
\begin{conjecture}\label{conj:integral-points}
	Let $X$ be a smooth projective complex variety. Then each component of $M_B(X, r)$ has a $\overline{\mathbb{Z}}$-point.
\end{conjecture}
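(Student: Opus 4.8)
The plan is to reduce \autoref{conj:integral-points} to a concrete integrality statement about representations and then to attack that statement by combining reduction modulo $p$ with complex and $p$-adic Hodge theory. Recall first that $M_B(X,r)$ carries a natural model as an affine scheme of finite type over $\mathbb{Z}$: fixing a finite presentation of $\pi_1(X)$ with $g$ generators, the representation scheme $R(\pi_1(X),\sl_r)$ is the closed subscheme of $\sl_r^g$ cut out by the relators, and $M_B(X,r)=R(\pi_1(X),\sl_r)\sslash\sl_r$, whose ring of invariants is finitely generated over $\mathbb{Z}$ (Seshadri) and, by Procesi, generated by the trace functions $[\rho]\mapsto\tr\rho(\gamma)$. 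By Bass's integrality criterion, a semisimple point $[\rho]$ lies in $M_B(X,r)(\overline{\mathbb{Z}})$ precisely when $\tr\rho(\gamma)\in\overline{\mathbb{Z}}$ for all $\gamma$, i.e.\ when $\rho$ is conjugate into $\sl_r(\overline{\mathbb{Z}})$. Since $M_B(X,r)$ depends only on $\pi_1(X)$, and $\pi_1$ is locally constant in smooth proper families, one may spread $X$ out over an irreducible base over $\overline{\mathbb{Q}}$ and specialize to an algebraic point, so we may assume $X$ is defined over a number field $K$, and thus $M_{\Dol}(X,r)$, $M_{\dR}(X,r)$ and $M_B(X,r)$ acquire compatible $K$-models. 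The goal then becomes: on each irreducible component $Z\subset M_B(X,r)$, produce a semisimple representation $\pi_1(X)\to\sl_r(\overline{\mathbb{Q}})$ with integral character.

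Every closed point of the finite-type $K$-scheme $M_B(X,r)$ has residue field a number field, so $Z(\overline{\mathbb{Q}})$ is dense in $Z(\mathbb{C})$; the only difficulty is to control denominators. This is \emph{easy at any one place}: spreading $Z$ to $\mathscr{Z}\to\spec\mathscr{O}_K[1/N]$, for $\mathfrak p\nmid N$ the fibre $\mathscr{Z}_{\mathfrak p}$ is a nonempty finite-type scheme over a finite field, hence contains a closed point with finite residue field lying in its smooth locus (dense for $\mathfrak p\gg 0$); by smoothness this lifts to an $\mathscr{O}_L$-point with $L/\mathbb{Q}_p$ finite, so $Z$ has points integral \emph{at $\mathfrak p$}. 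What the conjecture demands is a single point integral at \emph{all} places at once, and there is no purely formal reason the local models at different $\mathfrak p$ should be reconciled by one $\overline{\mathbb{Z}}$-point.

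For this one must use the geometry of $X$. By Simpson's theorem the $\mathbb{G}_m$-action $\lambda\cdot(E,\theta)=(E,\lambda\theta)$ on $M_{\Dol}(X,r)$ contracts every point to a fixed point, and $\mathbb{G}_m$-fixed points underlie polarized complex variations of Hodge structure; as $\mathbb{G}_m$ is connected the limit stays in the same component, so every component of $M_B(X,r)$ contains a VHS point. I would then try to (i) arrange this VHS point to be defined over $\overline{\mathbb{Q}}$ --- the fixed locus $M_{\Dol}(X,r)^{\mathbb{G}_m}$ is a $\overline{\mathbb{Q}}$-subscheme and has a $\overline{\mathbb{Q}}$-point in each component --- and (ii) prove that the corresponding flat bundle, equivalently its monodromy representation, is integral. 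Step (ii) is a case of \emph{Simpson's motivicity/integrality conjecture}; it is known when $Z$ is a point (cohomologically rigid local systems are integral, by Esnault--Groechenig, via $\ell$-adic companions), when $M_B(X,r)$ is irreducible (use the trivial representation in the unique component), for $r=2$, and for relative curves (cf.\ \autoref{cor:main-theorem-esbt-cases} and \cite{de2024integrality}), but open in general.

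The main obstacle is precisely step (ii): integrality of the VHS point of a positive-dimensional component. The Riemann--Hilbert correspondence is transcendental, so a $\overline{\mathbb{Q}}$-point of $M_{\Dol}$ need not visibly give a $\overline{\mathbb{Q}}$-point of $M_B$, and even granting this one must bound denominators at the finitely many bad primes. The hope would be to do so by combining a boundedness statement --- the VHS lands in a fixed period domain, with bounded Hodge numbers and Mumford--Tate group --- with $p$-adic Hodge theory, namely that a de Rham $p$-adic local system arising from a smooth proper model of $X$ carries integral crystalline lattices for $p$ large, and to dispose of the residual primes using the explicit structure of the irreducible constituents (finite image, tensor and symmetric constructions, rigid pieces). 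Short of Simpson's conjecture, a realistic target is the case where the generic point of each component has a rigid or finite constituent, or a refinement of the reduction-mod-$p$ argument that produces a common $\overline{\mathbb{Z}}$-model of the fibres $\mathscr{Z}_{\mathfrak p}$ --- which is exactly the crux the conjecture isolates.
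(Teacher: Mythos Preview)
The paper does not prove this statement: \autoref{conj:integral-points} is presented as an open conjecture, not a theorem. The paper explicitly says it ``is a strengthening of Simpson's integrality conjecture, which is the same statement for $0$-dimensional components,'' and records only the known cases (reduced $0$-dimensional components via Esnault--Groechenig, and $r=2$ via \cite{coccia2025density}). There is nothing in the paper for your proposal to be compared against.

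Your write-up is not a proof either, and to your credit you say so: you correctly isolate step (ii) --- integrality of a $\overline{\mathbb{Q}}$-VHS point on a positive-dimensional component --- as the genuine gap, and you correctly identify it with (a case of) Simpson's integrality conjecture. The surrounding reductions you sketch are standard and sound: the integral model of $M_B$ via Procesi/Seshadri, Bass's criterion reducing to integrality of traces, spreading out and specializing $X$ to a number field, and using the $\mathbb{G}_m$-contraction on $M_{\Dol}$ to land on a VHS point in each component. But none of this touches the actual difficulty. In particular, the ``hope'' you describe in the last paragraph --- bounding denominators via period-domain boundedness plus crystallinity for large $p$ --- does not work as stated: crystallinity at good primes gives you integrality at those primes one at a time, which you already had from the spreading-out argument, and says nothing about the finitely many bad primes or about finding a single global $\overline{\mathbb{Z}}$-point. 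The known cases you cite (rigid local systems, $r=2$, relative character varieties as in \cite{de2024integrality}) match what the paper records, so your assessment of the state of the art is accurate; just be clear that what you have written is a discussion of the problem, not a proof.
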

This conjecture is a strengthening of Simpson's integrality conjecture, which is the same statement for $0$-dimensional components. It is known for reduced $0$-dimensional components \cite{eg-selecta} and if $r=2$ \cite{coccia2025density}.

Note that \autoref{conj:theta-vanishing} appears closely related to \cite[Conjecture B]{hu2025kodaira}, which appeared while this paper was in preparation, and in particular that conjecture also follows from  \autoref{conj:integral-points}, by the methods of this paper. Indeed, by e.g. \autoref{thm:triholomorphic} or~\cite[Theorem E]{collier2025higgs}, the hypothesis of \cite[Conjecture B]{hu2025kodaira} is the same as that of our \autoref{conj:theta-vanishing}, and the conclusion follows from \autoref{thm:theta-vanishing} if one assumes \autoref{conj:integral-points}.

\autoref{conj:theta-vanishing} would follow from some compatibility between complex non-abelian Hodge theory and the theory of harmonic maps to buildings associated to $p$-adic groups, due to Gromov--Schoen \cite{gromov1992harmonic} and others. In particular, by a $p$-adic analogue of \autoref{lemma:energy-constant}, it would suffice to know the following:
\begin{question}\label{question: hopf-diff}
	Let $X$ be a smooth projective variety over $\mb{C}$, and $K$ a $p$-adic field. Is the set of Hopf differentials (see e.g.~\cite[p.~154]{schoen}) arising from harmonic maps from $X$ to the building for $\sl_n(K)$ contained in the span of the image of the quadratic Hitchin map for $\sl_n$-Higgs bundles on  $X$?
\end{question}

A related question was asked by He, Liu, and Mok in  \cite[Question 4.5]{he2024spectral}. In the formulation of \autoref{question: hopf-diff}, the quadratic Hitchin map refers to  the part of the Hitchin map, i.e. the map giving rise to the Hitchin fibration, landing in quadratic differentials on $X$--see \autoref{section: chen's-formula} for the definition in the case of curves, and the  case of a general smooth projective variety  is identical.
\subsection{Related works}
This work takes inspiration from a number of authors.  Most relevant are the theses of Papaioannou \cite{papaioannou2013algebraic} and Menzies \cite{menzies2019p}, in which they studied precisely the Bost/Ekedahl--Shepherd-Barron--Taylor conjecture for the isomonodromy foliation. The work of Shankar \cite{shankar-p-curvature} studied the $p$-curvature conjecture when one degenerates a curve to a nodal curve. These were all PhD theses written under Mark Kisin's supervision.  As far as we understand, those works are part of a broader strategy of Kisin's to probe the Grothendieck--Katz $p$-curvature conjecture via isomonodromy: that is, the idea is to study isomonodromic deformations of the flat bundle as one varies the complex structure of the base, under the assumptions of vanishing $p$-curvatures. The paper \cite{patel2021rank} also studies the $p$-curvature conjecture in the context of isomonodromy.
The above-mentioned works, as well as \cite{katz-p-curvature} of course, were the starting point for this paper.

More broadly, this work is part of a large literature on the $p$-curvature conjecture. Aside from Katz's work, significant work on the conjecture was done by the Chudnovsky brothers \cite{chudnovsky2006applications}, Andr\'e \cite{andre2004conjecture}, and Bost \cite{bost-foliations}. Bost's work \cite{bost-foliations} (as well as \cite{esbt}) in particular pioneered the study of the $p$-curvature conjecture in the context of foliations.

The philosophy that one should view moduli spaces such as $\mscr{M}_{\dR}$ and $\mscr{M}_{B}$ as non-abelian analogues of cohomology was pioneered by Simpson and Deligne, especially through Simpson's construction \cite{simpson-homotopy-de-rham} of the \emph{de Rham stack}.  The latter has been  a great source of inspiration in recent studies of cohomology theories of algebraic varieties, notably in the works of  Bhatt, Drinfeld, Lurie, Scholze, and that of  many others \cite{drinfeld2018stacky, bhatt-f-gauge, bhatt-lurie2022prismatization, scholze-six-functor}; this approach has been coined  \emph{transmutation} by Bhatt. As explained in \autoref{subsubsec:non-abelian-analogue}, we expect that part of \autoref{thm:intro-katz-formula} is implicit in such works. 

The complex-analytic aspects of this paper are related to a number of recent works, notably \cite{chen2012associated, collier2025higgs, sheng2025nonlinearharmonic, fu2025nonabelian, hu2025kodaira}, as explained in  \autoref{subsubsec:non-abelian-analogue}.

This paper is also in some sense a sequel to \cite{p-painleve}. There, we formulated a conjectural arithmetic characterization of algebraic leaves   for arbitrary foliations, or equivalently algebraic solutions to (possible non-linear) differential equations;  this conjecture would yield a proof of the $p$-curvature conjecture via isomonodromy, as envisioned by Kisin.

The main result of \cite{p-painleve} is the proof of this conjecture for the isomonodromic foliation at Gauss-Manin connections. We think of  \cite{p-painleve} and the current paper as part of a general program aimed at understanding the geometry and arithmetic of ``motivic subvarieties" of moduli of local systems---\cite{p-painleve} studies the $p$-curvature at ``motivic points",  while this paper focuses on the opposite extreme---the $p$-curvature on the  entire moduli space. Motivated by a question of Michael Groechenig, we explain how to deduce a consequence of \autoref{thm:main-theorem-esbt} from the results of \cite{p-painleve} in \autoref{subsec:motivic-points}, as well as  obstructions to going further with those methods.

\subsection{Outline} We give a brief summary of each of the sections. We begin in \autoref{sec:filtrations} by defining precisely the stacks which will be the main players of this work, namely the de Rham, Dolbeault, Hodge, and conjugate moduli stacks $\mscr{M}_{\dR}, \mscr{M}_{\Dol}, \mscr{M}_{\Hod}, \mscr{M}_{\conj}$; the latter two are viewed as filtrations on $\mscr{M}_{\dR}$. We also collect various properties of these stacks, such as Hitchin maps and $\mb{G}_m$-actions. In \autoref{sec:NAGM} we recall the construction of the non-abelian Gauss-Manin connection on $\mscr{M}_{\dR}$, and extend it to $\mscr{M}_{\conj}$. We introduce the lifting of tangent vectors $\Theta$ on $\mscr{M}_{\Dol}$, which is the non-abelian analogue of the Higgs field. We then study the $p$-curvatures of the non-abelian connections on $\mscr{M}_{\dR}$ and $\mscr{M}_{\conj}$, and prove the non-abelian Katz formula \autoref{lemma:katz-formula}, the main result of this section.   

In \autoref{sec: vanish-p-curv}, we construct \emph{canonical sections} on $\mscr{M}_{\conj}$ passing through nilpotent Higgs bundles (of small order of nilpotence); this should be viewed as the $\bmod p$ analogue of canonical sections on $\mscr{M}_{\Hod}$ constructed by Simpson using non-abelian Hodge theory. Using these, we deduce that the vanishing of the non-abelian $p$-curvature for infinitely many $p$ implies the vanishing of the lifting of tangent vectors. In \autoref{sec:NAHI} we prove our versions of the Hodge index formula by studying  the energy functional on the moduli space of Higgs bundles; one consequence is that, assuming the vanishing of $\Theta$, we prove that the $\pi_1(S(\mb{C})^{\an}, s)$-action on the moduli of Higgs bundles is \emph{tri-holomorphic}, i.e. the action is holomorphic in all three natural complex structures of this hyperk\"ahler manifold. This section is of a purely complex analytic nature. Finally in \autoref{sec:finiteness-na-monodromy} we combine the ingredients and prove our main theorems.


\subsection{Acknowledgments} To be added after the referee process is complete.
\section{The non-abelian Hodge and conjugate filtrations}\label{sec:filtrations}
In this section we introduce the non-abelian analogues of algebraic de Rham and Hodge cohomologies,   the Hodge and conjugate filtrations on the former, as well as  some related structures; this point of view was pioneered by Simpson. Throughout we let $S$ be a scheme and $f: X\to S$ a smooth projective morphism. 
\subsection{Flat bundles and the de Rham moduli stack}\label{subsec:M_dr}
The non-abelian analogue of de Rham cohomology is the \emph{de Rham moduli stack} of $X/S$, defined as follows.
\begin{definition}\label{defn: mdr}
	The \emph{de Rham moduli stack} $\mathscr{M}_{\dR}(X/S)$ is the $S$-stack whose $T$-points are the groupoid of vector bundles with flat connection and trivialized determinant on $X_T/T$, i.e.~triples $(\mathscr{E},\nabla, \xi)$ with $\mathscr{E}$ a vector bundle on $X_T$, 
    \begin{equation}\label{eqn: flat-conn} 
    \nabla: \mathscr{E}\to \mathscr{E}\otimes \Omega^1_{X_T/T}
    \end{equation}
    a flat connection,
	and $\xi: \det(\mathscr{E},\nabla)\overset{\sim}{\to} (\mathscr{O}_{X_T}, d)$ an isomorphism where the latter is the trivial rank one flat connection. Recall that, on the vector bundle $\mscr{E}_T$, a flat connection  relative to $T$ is a $\mathscr{O}_T$-linear map $\nabla$ as in \eqref{eqn: flat-conn} such that
	\begin{enumerate}
		\item $\nabla(fs)=f\nabla(s)+s\otimes df$ for $f$ a local section of $\mathscr{O}_{X_T}$ and $s$ a local section of $\mathscr{E}$, and
		\item $\nabla\circ\nabla: \mscr{E}\rightarrow \mscr{E}\otimes \Omega^2_{X_T/T}$ vanishes.
	\end{enumerate}
\end{definition} 
\subsection{The Dolbeault moduli stack}\label{subsec:M_Dol}
We now move onto the non-abelian analogue of Hodge cohomology. 
\begin{definition}
	The \emph{Dolbeault moduli stack} $\mathscr{M}_{\Dol}(X/S)$ is the $S$-stack whose $T$-points are the groupoid of Higgs bundles with trivialized determinant on $X_T/T$, i.e.~triples $(\mathscr{E}, \theta, \xi)$ with $\mathscr{E}$ a vector bundle on $X_T$, $$\theta: \mathscr{E}\to \mathscr{E}\otimes \Omega^1_{X_T/T}$$ an $\mathscr{O}_{X_T}$-linear map with $$\theta\circ \theta: \mathscr{E}\to \mathscr{E}\otimes \Omega^2_{X_T/T}$$ identically zero, and $\xi: \det(\mathscr{E},\theta)\overset{\sim}{\to} (\mathscr{O}_{X_T}, 0)$. We refer to $\theta$ as the \emph{Higgs field} on $\mathscr{E}$. There is a natural $\mb{G}_{m, S}$-action on $\mscr{M}_{\Dol}(X/S)$, obtained by scaling $\theta$.
\end{definition}

\subsubsection{The Hitchin morphism}\label{section: hitchin-map}
Let $\mscr{M}_{\Dol}(X/S, r)\subset \mscr{M}_{\Dol}(X/S)$ be the substack of rank $r$ Higgs bundles. The Hitchin morphism is the map
\begin{align*}
    h: \mscr{M}_{\Dol}(X/S, r)&\rightarrow \bigoplus_{i=2}^r f_*\Sym^i\Omega^1_{X/S} \\
    [(\mscr{E}, \theta)] &\mapsto (h_2, \cdots, h_r).
\end{align*}
where the $h_i$'s are the coefficients of the characteristic polynomial $\det(t-\theta)$; that is,
\[
\det(t-\theta)=t^r+h_2t^{r-2}+\cdots+h_r.
\]

\subsection{Filtrations on non-abelian cohomology}\label{subsec:filtrations}
Recall that there is a natural equivalence of categories between vector spaces with a finite, exhaustive filtration over a field $k$ and $\mathbb{G}_m$-equivariant vector bundles on $\mathbb{A}^1_k$. Here if $(V, F^\bullet)$ is a filtered vector space and $\mathscr{E}_{(V, F^\bullet)}$ the $\mathbb{G}_m$-equivariant vector bundle on $\mathbb{A}^1$ associated to it via the Rees construction, there are natural identifications $$\mathscr{E}_{(V, F^\bullet)}|_1\overset{\sim}{\to} V,\  \mathscr{E}_{(V, F^\bullet)}|_0\overset{\sim}{\to} \on{gr}_{F^\bullet}V.$$ (See e.g.~\cite[\S5]{simpson1996hodge}, as well as \cite[Prop. 2.2.6]{bhatt-f-gauge} for a more general statement on the filtered derived category.) Thus, given a $\mathbb{G}_m$-equivariant $\mathbb{A}^1$-scheme or stack $\mathscr{M}$ and an identification $\mathscr{M}_1\overset{\sim}{\to} X,$ we will refer to $\mathscr{M}$ as a \emph{filtration} on $X$ and $\mathscr{M}_0$ as the \emph{associated graded of the filtration}.

\subsubsection{The Hodge moduli stack}\label{subsec:M_Hod}
We first discuss the analogue of the Hodge filtration on de Rham cohomology. 
\begin{definition}\label{defn:M_Hod}
	The \emph{Hodge moduli stack} $\mathscr{M}_{\Hod}(X/S)$ is the $\mathbb{A}^1_S$-stack whose $T$-points (over a map $\lambda: T\to \mathbb{A}^1_S$) are pairs $(\mathscr{E}, \gamma)$ with trivialized determinant,
	 where $\mathscr{E}$ is a vector bundle on $X_T$ and $$\gamma: \mathscr{E}\to \mathscr{E}\otimes\Omega^1_{X_T/T}$$ is a flat $\lambda$-connection on $\mathscr{E}$ over $S$, i.e.~for local sections $s$ of $\mathscr{E}$ and $f$ of $\mathscr{O}_{X_T}$, 
	 we have \begin{equation}\label{eqn:lambda-liebniz} \gamma(fs)=f\gamma(s)+\lambda s\otimes df,\end{equation}
	  and that $$\gamma\circ \gamma: \mathscr{E}\to \mathscr{E}\otimes \Omega^2_{X_T/T}$$ is identically zero. Here we extend $\gamma$ to a sequence of maps $$\gamma: \mathscr{E}\otimes \Omega^p_{X_T/T}\to \mathscr{E}\otimes \Omega^{p+1}_{X_T/T}$$ as in \eqref{eqn:lambda-liebniz}; that is, we set \begin{equation}\label{eqn:lambda-extension} \gamma(s\otimes \omega)=\gamma(s)\wedge \omega+\lambda s\otimes d\omega.\end{equation}
	  \end{definition}

By scaling the $\lambda$-connections, we obtain a natural $\mb{G}_{m, S}$-action on $\mathscr{M}_{\Hod}(X/S)$: precisely, $\mu\in \mb{G}_{m, S}(T)$ acts on a $\lambda$-connection $(\mscr{E}, \gamma)/(X_T/T)$ by 
\[
\mu\cdot [(\mscr{E}, \gamma)]  = [(\mscr{E}, \mu \gamma)].
\]
This gives a $\mb{G}_{m, S}$-action on $\mscr{M}_{\Hod}(X/S)$ for which  the structure map 
\[
\mscr{M}_{\Hod}(X/S)\rightarrow \mb{A}^1_S
\]
is equivariant, where the latter is equipped with the obvious $\mb{G}_{m, S}$-action; this action evidently extends the natural action of $\mb{G}_{m, S}$ on $\mscr{M}_{\Dol}(X/S)$.

Note that there are canonical isomorphisms $$\mathscr{M}_{\Hod}(X/S)_1\overset{\sim}{\to} \mathscr{M}_{\dR}(X/S)$$ $$\mathscr{M}_{\Hod}(X/S)_0\overset{\sim}{\to} \mathscr{M}_{\Dol}(X/S),$$ as a $1$-connection is just a flat connection, and a $0$-connection is a Higgs field.

That is, in the language of \autoref{subsec:filtrations}, $\mathscr{M}_{\Hod}(X/S)$ is a filtration on $\mathscr{M}_{\dR}(X/S)$, with associated graded $\mathscr{M}_{\Dol}(X/S)$.
\subsubsection{The conjugate moduli stack}\label{subsec:M_conj}

We come to the analogue of the conjugate filtration, which is specific to characteristic $p$ (although see \cite[\S 2]{ogus-motives}, \cite{bloch-ogus-gersten} for some fascinating investigations on the conjugate filtration in characteristic zero). Recall that for a smooth projective variety $Y$ over a field $k$ of positive characteristic, $H^i_{\dR}(Y)$ carries a conjugate filtration $F^*_{\conj}$, whose associated graded is $\bigoplus_{p+q=i} H^q(Y^{(1)}, \Omega^p_{Y^{(1)}})$ under  mild assumptions; this filtration was introduced by Katz in \cite[\S 2.3]{katz-p-curvature}.

We now describe the non-abelian analogue, and assume $S$ is in characteristic $p>0$. In this case, there is a natural invariant of a flat connection $(\mathscr{E},\nabla)$ on $X/S$: its $p$-curvature, which is an $\mathscr{O}$-linear map $$\psi_p(\mathscr{E},\nabla): F_{\abs}^*T_{X/S}\to \on{End}(\mathscr{E}),$$ defined by $$\partial\mapsto \nabla(\partial)^p-\nabla(\partial^p)$$ for a vector field $\partial$. By adjunction, we may view $\psi_p=\psi_p(\mathscr{E}, \nabla)$ as an $\mathscr{O}$-linear map $$\psi_p(\mathscr{E}, \nabla): \mathscr{E}\to \mathscr{E}\otimes F_{\abs}^*\Omega^1_{X/S}.$$ A local computation, performed for example in \cite[Proof of Theorem 1.2.1]{ogus-p-curvature}, shows that the composition $$\mathscr{E}\overset{\psi_p}{\longrightarrow} \mathscr{E}\otimes F_{\abs}^*\Omega^1_{X/S}\overset{\psi_p}{\longrightarrow}\mathscr{E}\otimes F_{\abs}^*\Omega^2_{X/S}$$ vanishes. Moreover, by the same local computation $\psi_p$ is horizontal, i.e.~it respects the natural connections on $\mathscr{E}$, $\mathscr{E}\otimes F_{\abs}^*\Omega^1_{X/S}$ (where $F_{\abs}^*\Omega^1_{X/S}$ is given its canonical Frobenius-pullback connection).

We exploit this extra structure to make the following definition:

\begin{definition}\label{defn: mconj}
	The \emph{conjugate moduli stack} $\mathscr{M}_{\conj}(X/S)$ is the stack over $\mathbb{A}^1_S$ whose $T$-points (over a map $\lambda: T\to \mathbb{A}^1_S$) consist of the groupoid of triples $$(\mathscr{E}, \nabla, \theta, \xi)$$ where $(\mathscr{E},\nabla)$ is a flat bundle on $X_T/T$, 
    \begin{equation}\label{eqn: f-higgs-field}
    \theta: \mathscr{E}\to \mathscr{E}\otimes F_{\abs}^*\Omega^1_{X_T/T}
    \end{equation} is a horizontal map with trace zero and with
    \begin{equation}\label{eqn: f-higgs-integrable}
    \theta\circ\theta: \mathscr{E}\to \mathscr{E}\otimes F_{\abs}^*\Omega^2_{X_T/T}
    \end{equation} identically zero, and $$\psi_p(\mathscr{E},\nabla)=\lambda\cdot \theta,$$ and $$\xi: \det(\mathscr{E}, \nabla, \theta)\overset{\sim}{\to} (\mathscr{O}_{X_T}, d, 0)$$ is an isomorphism. Here the requirement that $\theta$ be horizontal means that it respects the connections on $\mathscr{E}, \mathscr{E}\otimes F_{\abs}^*\Omega^1_{X_T/T}$ (where $F_{\abs}^*\Omega^1_{X_T/T}$ is equipped with its canonical Frobenius-pullback connection).
\begin{remark}
    As far as we are aware, this definition first appeared in \cite[Definition 7.12]{menzies2019p}, although the horizontality of $\theta$, which is a crucial part of the definition for us, was missing there.
\end{remark}
\begin{remark}
    One can show that $\mscr{M}_{\conj}$ is an Artin stack, as well as construct a good moduli space for it, by considering an auxiliary  \emph{framed} moduli space, as in \cite[Theorem 4.10]{simpson-moduli-i}. We will not need these statements in this paper.
\end{remark}
    We equip $\mscr{M}_{\conj}(X/S)$ with the  $\mb{G}_{m, S}$ action, which on $T$-points is given by the following:  $\mu\in \mb{G}_{m, S}(T)$ acts  on a point  $[(\mscr{E}, \nabla, \theta)]\in \mscr{M}_{\conj}(X/S)(T)$ by
    \[
    \mu \cdot [(\mscr{E}, \nabla, \theta)] = [(\mscr{E}, \nabla, \mu^{-1}\theta)].
    \]
    It is straightforward to check that the structure map $\mscr{M}_{\conj}(X/S)\rightarrow \mb{A}^1_S$ is equivariant with respect to this $\mb{G}_{m, S}$-action and the obvious $\mb{G}_{m, S}$-action on $\mb{A}^1_S$. 
\end{definition}
\begin{remark}
    We will refer to a map $\theta$ as in \eqref{eqn: f-higgs-field} satisfying \eqref{eqn: f-higgs-integrable} as a \emph{F-Higgs field} on the bundle $\mscr{E}$.
\end{remark}
Note that $\mathscr{M}_{\conj}(X/S)_1$ is naturally identified with $\mathscr{M}_{\dR}(X/S)$ by forgetting $\theta$ (as in this case $\theta$ is forced to be  the $p$-curvature of $(\mathscr{E},\nabla)$). Thus $\mathscr{M}_{\conj}(X/S)$ is, in the terminology of \autoref{subsec:filtrations}, a filtration on $\mathscr{M}_{\dR}(X/S)$.

Set $\mathscr{M}_{\conj}(X/S)_0$ to be the fiber $\lambda^{-1}(0)$ of the natural map $\mathscr{M}_{\conj}(X/S)\overset{\lambda}{\longrightarrow} \mathbb{A}^1_S$ --- that is, the associated graded of this filtration. Note that there is a natural equivalence  \begin{equation} \label{eqn:non-abelian-f-zip} F_{X/S}^*: \mathscr{M}_{\Dol}(X^{(1)}/S)\overset{\sim}{\to} \mathscr{M}_{\conj}(X/S)_0, \end{equation}

where $X^{(1)}$ denotes the Frobenius twist of $X$ over $S$, and the map in \eqref{eqn:non-abelian-f-zip} is the pullback along  relative Frobenius $F_{X/S}: X\to X^{(1)}$. Indeed, if $(\mathscr{E}, \theta)$ is a Higgs bundle on $X^{(1)}$, then the triple $(F_{X/S}^*\mathscr{E}, \nabla_{\text{can}}, F_{X/S}^*\theta)$, where $\nabla_{\text{can}}$ is the canonical Frobenius-pullback connection on $F_{X/S}^*\mathscr{E}$, is a point of $\mathscr{M}_{\conj}(X/S)$ over $0$, as $\nabla_{\text{can}}$ has identically zero $p$-curvature. On the other hand, any point of $\mathscr{M}_{\conj}(X/S)_0$ arises via this construction, by Cartier theory: indeed, if $(\mscr{E}, \nabla, \theta)$ is a triple on $X/S$, as in \autoref{defn: mconj}, living over $\lambda=0$, then by definition $(\mscr{E}, \nabla)$ has vanishing $p$-curvature and hence descends to $X^{(1)}$; moreover, $\theta$ also descends, since it is by definition a flat map. 

\begin{remark}\label{rem:non-abelian-f-zip}
The isomorphism \eqref{eqn:non-abelian-f-zip} above is the non-abelian analogue of the \emph{$F$-zip structure} on de Rham cohomology, in the sense of \cite{moonen2004discrete}. Indeed, recall that if $R^if_*\Omega^\bullet_{X/S, \dR}$ is locally free for each $i$, and the Hodge-de Rham spectral sequence degenerates,  then each flat bundle $R^if_*\Omega^\bullet_{X/S, \dR}$ is equipped with the structure of an $F$-zip, i.e.~a decreasing filtration $F^\bullet$ (the Hodge filtration), an increasing filtration $C_\bullet$ (the conjugate filtration) and isomorphisms $$\on{gr}_j^{C_\bullet} R^if_*\Omega^\bullet_{X/S}\overset{\sim}{\to} (\on{gr}^j_{F^\bullet}R^if_*\Omega^\bullet_{X/S})^{(1)}.$$ That is, the associated graded of the conjugate filtration on the de Rham cohomology of $X/S$ is identified (up to Frobenius twist) with the associated graded of the Hodge filtration. Identifying $$\mathscr{M}_{\Dol}(X^{(1)}/S)\simeq \mathscr{M}_{\Dol}(X/S)^{(1)}\simeq \mathscr{M}_{\Hod}(X/S)_0^{(1)}$$ (where the first isomorphism arises via pullback along the natural map $X^{(1)}\to X$ over absolute Frobenius on $S$), we see that \eqref{eqn:non-abelian-f-zip} says precisely that there is a natural isomorphism $$\mathscr{M}_{\Hod}(X/S)^{(1)}_0\overset{\sim}{\to} \mathscr{M}_{\conj}(X/S)_0,$$ i.e.~the associated graded of the conjugate filtration on $\mathscr{M}_{\dR}(X/S)$ is identified, up to Frobenius twist, with the associated graded of the Hodge filtration, in the language of \autoref{subsec:filtrations}. 

We will see later, in \autoref{subsec:non-abelian-katz-formula}, that there is additional structure here---a non-abelian version of Katz's formula intertwining the Kodaira-Spencer map (the associated graded of the Gauss-Manin connection with respect to the Hodge filtration) with the associated graded of the $p$-curvature of the Gauss-Manin connection with respect to the conjugate filtration.
\end{remark}
\begin{remark}
The following remark is due to Sasha Petrov. There is a natural scaling action of the monoid scheme $\mathbb{A}^1_S$ (with the monoid structure given by multiplication) on the cotangent bundle $T^*(X^{(p)}/S)$ of $X^{(p)}/S$; let $$a: \mathbb{A}^1\times 	T^*(X^{(p)}/S)\to T^*(X^{(p)}/S)$$ be the action map. Let $\mathscr{D}_{X/S}$ be the sheaf of algebras on $T^*(X^{(p)}/S)$ associated to $F_{\abs *}D_{X/S}$, as in \cite{ogus-vologodsky}, so that a flat bundle on $X/S$ is the same as a $\mathscr{D}_{X/S}$-module which is finite flat over $X$. Then the conjugate moduli stack is the same as the moduli of $a^*\mathscr{D}_{X/S}$-modules which are finite flat over $X$. 
\end{remark}

\subsection{Semistability and deformation to a $\mathbb{G}_m$-fixed point}\label{section: semistable-deform}
For all the stacks above, we will use the decoration $\mathscr{M}^{\on{ss}}$ (respectively $\mscr{M}^{\on{s}}$) to denote the semistable (respectively stable) locus with respect to some fixed polarization on $X/S$; in this generality such moduli stacks and spaces were constructed and studied  by  Langer  \cite{langer2014semistable}.

Let $k$ be a field and fix $m\in \mscr{M}^{\on{ss}}_{\Hod}(X/S)(k)$ over some point $\tau$ of $\mathbb{A}^1_k$, corresponding to a $\tau$-connection $(\mathscr{E}, \gamma)$.

Using Langton theory, Simpson showed that the family of $\lambda\tau$-connections $(\mscr{E}, \lambda\gamma)$ over $\mathbb{G}_{m,k}$ extends over $0\in \mathbb{A}^1_k$. 
\begin{theorem}\label{thm: simpson-langton}
    $(\mscr{E}, \gamma)$ extends to a $\lambda \tau$-connection $(\mscr{E}_{\Hod}, \gamma_{\Hod})$ on $X_k\times_k \mb{A}^1_k$, such that  $\mscr{E}_{\Hod}|_{X_k\times \{0\}}$ is semistable with vanishing rational Chern classes. Moreover, $(\mscr{E}_{\Hod}, \gamma_{\Hod})|_{X_k\times \{0\}}$ is a $\mb{G}_m$-fixed point of $\mscr{M}_{\Dol}(X/S)$.
\end{theorem}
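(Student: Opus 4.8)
The plan is to obtain $(\mscr{E}_{\Hod},\gamma_{\Hod})$ as the limit at $\lambda=0$ of the $\mb{G}_m$-orbit $\lambda\mapsto(\mscr{E},\lambda\gamma)$ of $m$ in $\mscr{M}_{\Hod}(X/S)$, that is, via a valuative criterion for the moduli of semistable $\lambda$-connections, which one establishes by Langton's method as adapted to $\lambda$-connections by Simpson \cite{simpson-moduli-representations-2, simpson1996hodge} (and available in the present generality from Langer \cite{langer2014semistable}). By the valuative criterion it suffices to extend the map $\spec k[\lambda,\lambda^{-1}]\to\mscr{M}^{\on{ss}}_{\Hod}(X/S)$, $\lambda\mapsto(\mscr{E},\lambda\gamma)$ (which lies over $\lambda\mapsto\lambda\tau$ in $\mb{A}^1_S$), across the closed point of $\spec k[[\lambda]]$. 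Note first that for $\lambda\neq 0$ the $\lambda\tau$-connection $(\mscr{E},\lambda\gamma)$ is semistable: rescaling $\gamma$ by a unit changes neither the subsheaves preserved by $\gamma$ nor any slopes. The naive candidate for the extension is the \emph{constant} family $(\mscr{E}\otimes_k k[[\lambda]],\gamma_\lambda)$ with $\gamma_\lambda:=\lambda\cdot\gamma$ — which is a $\lambda\tau$-connection because $\gamma$ is a $\tau$-connection — whose special fibre is the Higgs bundle $(\mscr{E},0)$.

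If $\mscr{E}$ is itself semistable this already works; in general it need not be, so I would run Langton's algorithm, replacing $\mscr{E}\otimes k[[\lambda]]$ by the kernel of its reduction mod $\lambda$ onto the maximal destabilizing quotient of the special fibre, and iterating. The content of Langton's theorem, in Simpson's $\lambda$-connection form, is that this terminates after finitely many steps with semistable special fibre; the one point requiring care is that each modification must carry along a $\lambda\tau$-connection with trivialized determinant. At the first step this is automatic, because $\gamma_\lambda$ is divisible by $\lambda$ and hence sends any submodule of $\mscr{E}\otimes k[[\lambda]]$ containing $\lambda\,(\mscr{E}\otimes k[[\lambda]])$ — in particular every Langton modification — into itself tensored with $\Omega^1_{X/S}$; at subsequent steps one takes the destabilizing quotients in the category of $\lambda\tau$-connections (equivalently, as Higgs quotients on the special fibre), following Simpson, and keeps $\det$ trivial by the usual $\sl_r$-bookkeeping. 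Since the modifications are supported over $\lambda=0$, the restriction of the resulting family to $\lambda\neq 0$ is unchanged, so spreading $\spec k[[\lambda]]$ back out gives $(\mscr{E}_{\Hod},\gamma_{\Hod})$ on $X_k\times_k\mb{A}^1_k$ restricting to $(\mscr{E},\lambda\gamma)$ away from $0$. Its special fibre lies over $0\in\mb{A}^1$, so $\gamma_{\Hod}|_{X_k\times\{0\}}$ is a $0$-connection, i.e.\ a Higgs field, whence $(\mscr{E}_{\Hod},\gamma_{\Hod})|_{X_k\times\{0\}}\in\mscr{M}_{\Dol}(X/S)$; its underlying bundle is semistable by construction, and has vanishing rational Chern classes since these are deformation-invariant in the flat family $\mscr{E}_{\Hod}/\mb{A}^1_k$ and hence coincide with those of the general fibre $\mscr{E}$.

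It remains to see that $(\mscr{E}_{\Hod},\gamma_{\Hod})|_{X_k\times\{0\}}$ is a $\mb{G}_m$-fixed point of $\mscr{M}_{\Dol}(X/S)$. The orbit map $\mb{G}_{m,k}\to\mscr{M}_{\Hod}(X/S)$ is equivariant for the scaling actions and compatible with the equivariant structure map to $\mb{A}^1_S$; passing to the separated good moduli space, the point $\lim_{\lambda\to 0}\lambda\cdot m$ is visibly unchanged upon replacing $\lambda$ by $\mu\lambda$ for a unit $\mu$, so its class is $\mb{G}_m$-fixed, and running the Langton procedure $\mb{G}_m$-equivariantly (which one can, since its input is equivariant and the destabilizing quotients are canonical) upgrades this to an honest fixed point of $\mscr{M}_{\Dol}(X/S)$. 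I expect the only genuinely substantial ingredient to be the termination of the Langton modification process while retaining the $\lambda\tau$-connection structure and the trivialization of the determinant — Simpson's enhancement of Langton's theorem — which I would invoke rather than reprove.
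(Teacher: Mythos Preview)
Your proposal is correct and matches the paper's approach: the paper simply cites \cite[Corollary 10.2]{simpson1996hodge}, and what you have sketched is precisely Simpson's Langton-type argument behind that corollary. Your outline of the modification procedure, the preservation of the $\lambda\tau$-connection structure, and the $\mb{G}_m$-fixed point conclusion are all faithful to Simpson's proof, so there is nothing to add beyond the citation.
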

\begin{proof}
 See \cite[Corollary 10.2]{simpson1996hodge}.   
\end{proof}
\section{The non-abelian Gauss-Manin connection}\label{sec:NAGM}
In this section we discuss the geometry of the stacks described in \autoref{sec:filtrations}, and recall some of the structures they carry, in particular the isomonodromy foliations on $\mathscr{M}_{\dR}$ and $\mathscr{M}_{\conj}$, and the lifting of tangent vectors on $\mathscr{M}_{\Dol}$. We conclude with a proof of our non-abelian version of Katz's formula, \autoref{lemma:katz-formula}.
\subsection{Deformation theory}
In this section we will describe the deformation theory of some of the stacks described in \autoref{sec:filtrations}.
\subsubsection{$\lambda$-Atiyah complexes}
We first describe the deformation theory of triples $(X, \mathscr{E}, \gamma)$, where $X$ is a smooth $S$-scheme and $(\mathscr{E},\gamma)$ is a $\lambda$-connection on $X$. Note that this includes both the deformation theory of flat bundles and the deformation theory of Higgs bundles (when $\lambda=1,0$ respectively). 

The deformation theory of a vector bundle $\mathscr{E}$ with $\lambda$-connection $\gamma$ is controlled by the \emph{$\lambda$-de Rham complex} of the adjoint bundle.

\begin{definition}\label{defn:lambda-de-rham}
	Let $(\mathscr{E}, \gamma)$ be a $\lambda$-connection on $X/S$, where $X$ is a smooth $S$-scheme and $\lambda$ is a regular function on $X$. The \emph{$\lambda$-de Rham complex} of $(\mathscr{E}, \gamma)$ is the complex $$\mathscr{E}^\bullet_{\lambda-\dR}: \mathscr{E}\overset{\gamma}{\longrightarrow} \mathscr{E}\otimes \Omega^1_{X/S}\overset{\gamma}{\longrightarrow} \mathscr{E}\otimes \Omega^2_{X/S} \overset{\gamma}{\longrightarrow} \cdots $$
	Here the differentials $\gamma$ are defined as in \eqref{eqn:lambda-extension}. 
\end{definition}
If $(\mathscr{E}, \gamma)$ is a $\lambda$-connection, the adjoint bundle $\on{ad}(\mathscr{E})$ is naturally equipped with a $\lambda$-connection, given by $\on{ad}(\gamma)$, the commutator with $\gamma$. We will need to define a variant of $\ad(\mathscr{E})^\bullet_{\lambda-\dR}$ where the first term, $\on{ad}(\mathscr{E})$, is replaced by the Atiyah bundle of $\mathscr{E}$, whose definition we now recall.
\begin{definition}
	Let $\pi: X\to S$ be a morphism and $\mathscr{E}$ be a vector bundle on $X$. Recall that a first-order differential operator on $\mathscr{E}$ is a $\pi^{-1}\mathscr{O}_S$-linear map $\tau: \mathscr{E}\to \mathscr{E}$ with symbol $$\tau_f: \mathscr{E}\to \mathscr{E}$$ $$\tau_f: s\mapsto \tau(fs)-f\tau(s)$$ an $\mathscr{O}_X$-linear map for all local sections $f\in \mathscr{O}_X(U)$. The \emph{Atiyah bundle} of $\mathscr{E}$, denoted $\on{At}_{X/S}(\mathscr{E})$ is the subsheaf of the sheaf $\on{Diff}^1_{X/S}(\mathscr{E})$ of first-order differential operators, consisting of first-order differential operators $\tau$ whose symbol is a scalar---that is, for all $f\in \mathscr{O}_X(U)$, $\tau_f(s)=\delta_\tau(f)s$ for some $\delta_\tau(f)\in \mathscr{O}_X(U)$. 
	
	If $X/S$ is clear from context, we will omit it from the notation and write only $\on{At}(\mathscr{E})$.
	\end{definition}

	The map $f\mapsto \delta_\tau(f)$ is a derivation, and hence gives rise to a map $$\delta: \on{At}(\mathscr{E})\to T_{X/S}$$ $$\tau\mapsto \delta_\tau.$$
	
	The Atiyah bundle sits in a short exact sequence \begin{equation} \label{eqn:atiyah-exact-sequence} 0\to \on{End}(\mathscr{E})\to \on{At}(\mathscr{E})\overset{\delta}{\longrightarrow} T_{X/S}\to 0\end{equation} where the inclusion $\on{End}(\mathscr{E})\hookrightarrow\on{At}(\mathscr{E})$ is given by viewing $\mathscr{O}$-linear endomorphisms as (zeroth order) differential operators. 	Note that taking commutators of differential operators endows $\on{At}(\mathscr{E})$ with the structure of a sheaf of Lie algebras, compatible with that on $T_{X/S}$, $\on{End}(\mathscr{E})$.
	
	The following is well-known (see e.g.~\cite[\S4]{martens2024determinant}):
	
	\begin{proposition}\label{prop:splitting-atiyah-sequence}
		There is a natural bijection between flat $\lambda$-connections $\gamma$ on $\mathscr{E}$ and maps $$q^\gamma: T_{X/S}\to \on{At}(\mathscr{E})$$ that respect the Lie algebra structure and such that $\delta\circ q^\gamma=\lambda\cdot \on{id}_{T_{X/S}}$, given by sending $\gamma$ to the map $$q^{\gamma}: T_{X/S}\to \on{At}(\mathscr{E})$$ $$\partial\mapsto \gamma(\partial).$$
	\end{proposition}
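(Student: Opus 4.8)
The statement is essentially a bookkeeping exercise, and the plan is to write down the two maps by hand, check they are mutually inverse at the level of (not necessarily integrable) $\lambda$-connections, and then match the integrability condition $\gamma\circ\gamma=0$ with the appropriate bracket condition on $q^\gamma$.

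First I would treat the forward direction. Given a flat $\lambda$-connection $\gamma$ on $\mathscr{E}$, I define $q^\gamma(\partial)$ to be the composite $\mathscr{E}\xrightarrow{\gamma}\mathscr{E}\otimes\Omega^1_{X/S}\xrightarrow{\iota_\partial}\mathscr{E}$, i.e.\ contraction of $\gamma$ against $\partial\in T_{X/S}$. The $\lambda$-Leibniz rule of \autoref{defn:lambda-de-rham} gives $q^\gamma(\partial)(fs)-f\,q^\gamma(\partial)(s)=\lambda\,\partial(f)\cdot s$ for local sections $f$ of $\mathscr{O}_X$ and $s$ of $\mathscr{E}$, so $q^\gamma(\partial)$ is a first-order differential operator with \emph{scalar} symbol $f\mapsto\lambda\,\partial(f)$; hence $q^\gamma(\partial)\in\on{At}(\mathscr{E})$ and $\delta(q^\gamma(\partial))=\lambda\partial$. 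Since $\gamma$ is $\mathscr{O}_X$-linear into $\mathscr{E}\otimes\Omega^1_{X/S}$, contraction is $\mathscr{O}_X$-linear in $\partial$, so $q^\gamma\colon T_{X/S}\to\on{At}(\mathscr{E})$ is $\mathscr{O}_X$-linear with $\delta\circ q^\gamma=\lambda\cdot\id_{T_{X/S}}$. For the inverse: since $X/S$ is smooth, $\mathscr{E}\otimes\Omega^1_{X/S}$ is locally the sheaf of $\mathscr{O}_X$-linear maps $T_{X/S}\to\mathscr{E}$, so given $\mathscr{O}_X$-linear $q\colon T_{X/S}\to\on{At}(\mathscr{E})$ with $\delta\circ q=\lambda\,\id$ one defines $\gamma^q\colon\mathscr{E}\to\mathscr{E}\otimes\Omega^1_{X/S}$ by $\langle\gamma^q(s),\partial\rangle:=q(\partial)(s)$; the constraint $\delta(q(\partial))=\lambda\partial$ unwinds to exactly the $\lambda$-Leibniz rule for $\gamma^q$. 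The assignments $\gamma\mapsto q^\gamma$ and $q\mapsto\gamma^q$ are visibly inverse, so they identify $\lambda$-connections on $\mathscr{E}$ with $\mathscr{O}_X$-linear lifts $q$ of $\id_{T_{X/S}}$ scaled by $\lambda$ through \eqref{eqn:atiyah-exact-sequence}.

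It then remains to match the integrability $\gamma\circ\gamma=0$ with a compatibility of $q^\gamma$ with brackets. Here I would be careful: the correct condition is the $\lambda$-twisted one, $[q^\gamma(\partial),q^\gamma(\partial')]=\lambda\,q^\gamma([\partial,\partial'])$ for all local vector fields $\partial,\partial'$ on $X/S$ (equivalently, for $\partial,\partial'$ in a local frame) — this is consistent with the symbol constraint, since applying $\delta$ to both sides gives $\lambda^2[\partial,\partial']$, and it specializes to the usual Lie-algebra splitting condition when $\lambda=1$ and to $[\theta,\theta]=0$ when $\lambda=0$. To prove the equivalence I would work in local coordinates $x_1,\dots,x_n$ for $X/S$, write $\gamma(s)=\sum_i\gamma_i(s)\otimes dx_i$ with $\gamma_i:=q^\gamma(\partial_i)$, and use the extension rule \eqref{eqn:lambda-extension} (the $d(dx_i)$ terms vanish) to compute $\gamma(\gamma(s))=\sum_{i<j}[\gamma_j,\gamma_i](s)\otimes(dx_j\wedge dx_i)$; thus $\gamma\circ\gamma=0$ iff $[\gamma_i,\gamma_j]=0$ for all $i,j$, which is precisely the displayed bracket identity on the commuting frame $\{\partial_i\}$, and a one-line check using $\mathscr{O}_X$-linearity and the symbol rule promotes it to arbitrary $\partial,\partial'$.

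I expect no real obstacle here — the argument is entirely formal. The one point that needs care is the $\lambda$-twist in the bracket condition: ``respecting the Lie algebra structure'' in the proposition must be read in the $\lambda$-twisted sense above (which is the only reading compatible with $\delta\circ q^\gamma=\lambda\,\id$), and it is this twisted condition, rather than a literal Lie-algebra homomorphism, that is equivalent to $\gamma\circ\gamma=0$ for general $\lambda$. Keeping the identification of $\mathscr{E}\otimes\Omega^1_{X/S}$ with $\mathscr{O}_X$-linear maps out of $T_{X/S}$ and the coordinate signs straight is the only other thing to watch.
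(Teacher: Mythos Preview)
Your argument is correct and complete. The paper does not give a proof of this proposition at all: it introduces the statement with ``The following is well-known (see e.g.~\cite[\S4]{martens2024determinant})'' and moves on, so you have supplied what the paper omits.

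Your observation about the $\lambda$-twist is a genuine improvement over the statement as written. The phrase ``respect the Lie algebra structure'' is imprecise for $\lambda\neq 0,1$: as you note, applying $\delta$ to a literal bracket-preserving condition $[q^\gamma(\partial),q^\gamma(\partial')]=q^\gamma([\partial,\partial'])$ yields $\lambda^2[\partial,\partial']=\lambda[\partial,\partial']$, which forces $\lambda\in\{0,1\}$. Your twisted condition $[q^\gamma(\partial),q^\gamma(\partial')]=\lambda\,q^\gamma([\partial,\partial'])$ is the correct one, and your local-coordinate check that it is equivalent to $\gamma\circ\gamma=0$ is clean and accurate. This is worth flagging, since the proposition is used later (e.g.\ in \autoref{defn:Hodge-lifting}) at general $\lambda$.
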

	
	Given an isomorphism $\det(\mathscr{E}, \gamma)\overset{\sim}{\to}(\mathscr{O}_X, \lambda d)$, 
	we obtain a natural map $$\on{At}(\mathscr{E})\to \on{At}(\mathscr{O}_X)$$ induced by the map sending a differential operator on $\mathscr{E}$ 
	to its induced action on $\det(\mathscr{E})$. The canonical connection $d$ on $\mathscr{O}_X$ gives rise to an isomorphism $$\on{At}(\mathscr{O}_X)\simeq \mathscr{O}_X\oplus T_{X/S}.$$ 
	We define $$\on{At}^0(\mathscr{E}):=\ker(\on{At}(\mathscr{E})\to \on{At}(\det(\mathscr{E}))\simeq \on{At}(\mathscr{O}_X)\simeq \mathscr{O}_X\oplus T_{X/S}\to \mathscr{O}_X).$$
	
	\begin{definition}[$\lambda$-Atiyah complexes]
		Let $(\mathscr{E}, \gamma)$ be a $\lambda$-connection on $X/S$, we define the $\lambda$-Atiyah complex of $\mathscr{E}$, 
		$$\on{At}_{X/S}(\mathscr{E})^\bullet_{\lambda}: \on{At}_{X/S}(\mathscr{E})\to \on{End}(\mathscr{E})\otimes \Omega^1_{X/S}\to  \on{End}(\mathscr{E})\otimes \Omega^2_{X/S}\to \cdots.$$
		Here away from degree zero the differentials are defined as in \autoref{defn:lambda-de-rham}, and in degree $0$ the differential is given by the commutator with $\gamma$. There is also a variant $\on{At}^0_{X/S}(\mathscr{E})^\bullet_{\lambda}$, where we replace $\on{At}_{X/S}(\mathscr{E})$ with $\on{At}^0_{X/S}(\mathscr{E})$, and $\on{End}(\mathscr{E})$ with $$\on{End}^0(\mathscr{E}):=\ker(\on{End}(\mathscr{E})\overset{\tr}{\longrightarrow}\mathscr{O}_X).$$ See 
		\cite[Definition 7.3.1]{p-painleve} for more details.
	\end{definition}
	
\subsubsection{Tangent spaces to moduli stacks}
Let $A$ be an arbitrary ring, and $S$ a scheme over $A$. All fiber products will be taken over $\spec A$ unless otherwise specified; similarly, tangent sheaves and tangent complexes will be taken relative to $A$ unless otherwise specified, so that, for example, $T_S$ means $T_{S/A}$, and so on.
\begin{proposition}\label{prop:tangent-spaces}
	Let $X$ be a smooth $S$-scheme and $(\mathscr{E}, \gamma)$ a $\lambda$-connection on $X$ (resp.~with a fixed isomorphism $\det(\mathscr{E}, \gamma)\overset{\sim}{\to} (\mathscr{O}_X, \lambda d)$).
	\begin{enumerate}
    \item There is a natural bijection between  infinitesimal  automorphisms of $(\mathscr{E},\gamma)$  (resp. with trivial determinant) and $H^0(X, \on{End}(\mathscr{E})_{\lambda-\dR}^\bullet)$ (resp.~$H^0(X, \on{End}^0(\mathscr{E})_{\lambda-\dR}^\bullet)$). By infinitesimal automorphisms, we mean  automorphisms of the trivial deformation of $(\mscr{E}, \gamma)$ to $X\otimes_S S[\epsilon]/\epsilon^2$, which restrict to the identity modulo $\epsilon$. 
		\item There is a natural bijection between  deformations of $(\mathscr{E},\gamma)$ over $S[\epsilon]/\epsilon^2$ (resp.~with fixed determinant) and $H^1(X, \on{End}(\mathscr{E})_{\lambda-\dR}^\bullet)$ (resp.~$H^1(X, \on{End}^0(\mathscr{E})_{\lambda-\dR}^\bullet)$).
        \item There is a natural bijection between  infinitesimal  automorphisms of $(X, \mathscr{E},\gamma)$  (resp. with trivial determinant) and $H^0(X, \on{At}(\mathscr{E})_{\lambda}^\bullet)$ (resp.~$H^0(X, \on{At}^0(\mathscr{E})_{\lambda}^\bullet)$). 
		\item There is a natural bijection between deformations of $(X, \mathscr{E},\gamma)$ over $S[\epsilon]/\epsilon^2$ (resp.~with fixed determinant) and $H^1(X, \on{At}_{X/S}(\mathscr{E})^\bullet_\lambda)$ (resp.~$H^1(X, \on{At}^0_{X/S}(\mathscr{E})^\bullet_\lambda)$).
	\end{enumerate}
\end{proposition}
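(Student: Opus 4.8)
The plan is to deduce all four statements from a single Čech dictionary: in each case the groupoid of infinitesimal automorphisms, resp.\ the set of first-order deformations, is identified with the set of degree-$0$, resp.\ degree-$1$, Čech hypercocycles of the appropriate complex modulo hypercoboundaries, and one simply matches the two sides. For (1) and (2), where $X$ is held fixed, I would choose an affine open cover $\{U_i\}$ of $X$ trivializing $\mathscr{E}$, together with compatible identifications over each $U_i$ of a given deformation with the constant deformation $(\mathscr{E}|_{U_i},\gamma|_{U_i})\otimes_S S[\epsilon]/\epsilon^2$. The gluing isomorphisms on double overlaps then have the form $\mathrm{id}+\epsilon\,\alpha_{ij}$ with $\alpha_{ij}\in\on{End}(\mathscr{E})(U_{ij})$, and the deformed $\lambda$-connection on $U_i$ differs from the constant one by $\epsilon\,\beta_i$ with $\beta_i\in(\on{End}(\mathscr{E})\otimes\Omega^1_{X/S})(U_i)$. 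The conditions that the $\alpha_{ij}$ glue a vector bundle and that the $\beta_i$ patch to a flat $\lambda$-connection on it are exactly that $(\{\alpha_{ij}\},\{\beta_i\})$ be a degree-$1$ Čech hypercocycle for $\on{End}(\mathscr{E})^\bullet_{\lambda-\dR}$ (a $1$-cocycle relation for $\alpha$ on triple overlaps, $\on{ad}(\gamma)(\alpha_{ij})=\beta_j-\beta_i$ on double overlaps, and $\gamma(\beta_i)=0$, i.e.\ first-order integrability), and two such data define isomorphic deformations precisely when they differ by a hypercoboundary; this gives (2), and taking all $\alpha_{ij}=0$ so that the datum is a global $\alpha\in\on{End}(\mathscr{E})(X)$ with $\on{ad}(\gamma)(\alpha)=0$ gives (1). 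For the trivialized-determinant variants one replaces $\on{End}(\mathscr{E})^\bullet_{\lambda-\dR}$ by $\on{End}^0(\mathscr{E})^\bullet_{\lambda-\dR}$: the induced deformation of $\det(\mathscr{E},\gamma)$ is classified by the image of the hypercocycle under the trace, so preserving the trivialization of the determinant is exactly the condition that the cochains be traceless, compatibly with $0\to\on{End}^0(\mathscr{E})\to\on{End}(\mathscr{E})\xrightarrow{\tr}\mathscr{O}_X\to 0$.

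For (3) and (4) I would run the same argument now also deforming $X$. On the cover $\{U_i\}$ a deformation of $X/S$ is given by automorphisms $\mathrm{id}+\epsilon\,v_{ij}$ of $U_{ij}\otimes_S S[\epsilon]/\epsilon^2$ with $v_{ij}\in T_{X/S}(U_{ij})$; adjoining $\mathscr{E}$ on the deformed total space amounts, over $U_{ij}$, to a lift of the gluing to a first-order differential operator whose symbol is $v_{ij}$, i.e.\ an element $\tau_{ij}\in\on{At}(\mathscr{E})(U_{ij})$ with $\delta(\tau_{ij})=v_{ij}$ in the notation of \eqref{eqn:atiyah-exact-sequence} --- this is exactly where the Atiyah sheaf enters; the $\lambda$-connection (note $\lambda$ is pulled back from $\mathbb{A}^1_S$ and so deforms canonically) again contributes local classes $\beta_i\in(\on{End}(\mathscr{E})\otimes\Omega^1_{X/S})(U_i)$. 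The pair $(\{\tau_{ij}\},\{\beta_i\})$ is then precisely a degree-$1$ Čech hypercocycle for $\on{At}_{X/S}(\mathscr{E})^\bullet_\lambda$, the degree-$0$ differential being the commutator with $\gamma$, which restricts to $\on{ad}(\gamma)$ on the subsheaf $\on{End}(\mathscr{E})\subset\on{At}(\mathscr{E})$ via \autoref{prop:splitting-atiyah-sequence}; isomorphisms of deformations correspond to hypercoboundaries, degree-$0$ hypercocycles to infinitesimal automorphisms of $(X,\mathscr{E},\gamma)$, and passing to $\on{At}^0$, $\on{End}^0$ gives the trivialized-determinant versions. Equivalently, one can deduce (3)--(4) from (1)--(2) formally: the inclusion $\on{End}(\mathscr{E})^\bullet_{\lambda-\dR}\hookrightarrow\on{At}_{X/S}(\mathscr{E})^\bullet_\lambda$, an isomorphism in every positive degree, has quotient complex $T_{X/S}$ concentrated in degree $0$, yielding a distinguished triangle $\on{End}(\mathscr{E})^\bullet_{\lambda-\dR}\to\on{At}_{X/S}(\mathscr{E})^\bullet_\lambda\to T_{X/S}\xrightarrow{+1}$ whose long exact hypercohomology sequence matches, term by term, the exact sequence relating the automorphisms and first-order deformations of $(\mathscr{E},\gamma)$, of $(X,\mathscr{E},\gamma)$, and of $X$, the connecting map $H^0(X,T_{X/S})\to H^1(X,\on{End}(\mathscr{E})^\bullet_{\lambda-\dR})$ being the Kodaira--Spencer/Atiyah obstruction to extending a deformation of $X$ to the pair.

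The step I expect to be the main obstacle is the bookkeeping in (3) and (4): verifying that simultaneously deforming $X$, $\mathscr{E}$ and $\gamma$ really produces transition data valued in $\on{At}(\mathscr{E})$ with the asserted differential to $\on{End}(\mathscr{E})\otimes\Omega^1_{X/S}$, and that two such data define isomorphic deformations exactly when they differ by a hypercoboundary, requires carefully unwinding the definition of the Atiyah sheaf, of a $\lambda$-connection as a splitting as in \autoref{prop:splitting-atiyah-sequence}, and the observation that $\lambda$, being pulled back from $\mathbb{A}^1_S$, carries no deformations of its own to track. Everything else is the standard Čech dictionary, and the traceless variants only require the routine compatibility of the determinant with the trace. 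Arguments of exactly this type are carried out in \cite[\S 7.3]{p-painleve}, and for the Atiyah sequence and its splittings in \cite[\S 4]{martens2024determinant}; I would cite those rather than reproduce the computations.
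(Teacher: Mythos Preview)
Your proposal is correct and is precisely the standard \v{C}ech-theoretic argument the paper has in mind: the paper's own proof consists of the single sentence ``Standard; see e.g.~\cite[Lemma 4.4]{katzarkov1999non} for the case $\lambda=1$; the proof in general is identical,'' and what you have written is an accurate outline of that standard argument, including the distinguished-triangle reformulation relating the $\on{End}$ and $\on{At}$ complexes.
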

\begin{proof}
	Standard; see e.g.~\cite[Lemma 4.4]{katzarkov1999non} for the case $\lambda=1$; the proof in general is identical.
\end{proof}

We can rephrase \autoref{prop:tangent-spaces} in terms of tangent stacks and their cohomology sheaves. Let $\pi: \mscr{M}\rightarrow S$ be a  stack. In \cite[D\'efinition 17.13]{laumon-moret-bailly-champs}, Laumon--Moret-Bailly define the tangent complex  $\mscr{T}_{\mscr{M}/S}$ of $\pi$ (their notation for this stack is  $\mscr{T}(\mscr{M}/S)$); it is an object in the derived category of $\mscr{M}$, and we can take its cohomology sheaves $H^i(\mscr{T}_{\mscr{M}/S})$.

\begin{definition}\label{defn: tangent-sheaf}
    We write $T_{\mscr{M}/S}$ for $H^0(\mscr{T}_{\mscr{M}/S})$, and refer to it as the \emph{tangent sheaf} of $\mscr{M}$ over $S$. 
\end{definition}
Recall that we have moduli stacks $\pi_{\dR}: \mscr{M}_{\dR}(X/S) \rightarrow S$, $\pi_{\Hod}: \mscr{M}_{\Hod}(X/S) \rightarrow \mathbb{A}^1_S$. 
Let $f_{univ}: X\times_S \mscr{M}_{\dR}\rightarrow \mscr{M}_{\dR}$ denote the natural projection map, and  $(\mscr{E}_{univ}, \nabla_{univ})$  the universal flat bundle on $X\times_S \mscr{M}_{\dR}/\mscr{M}_{\dR}$. 

Similarly, write $f_{univ}^{\Hod}: X\times_S \mscr{M}_{\Hod}\rightarrow \mscr{M}_{\Hod}$ for  the natural projection map, and  $(\mscr{E}^{\Hod}_{univ}, \gamma_{univ})$  the universal $\lambda$-connection bundle on $X\times_S \mscr{M}_{\Hod}/\mscr{M}_{\Hod}$.

\begin{corollary}
\begin{enumerate}
    \item   There are canonical isomorphisms 
    \begin{align*}
        T_{\mscr{M}_{\dR}(X/S)/S} &\simeq R^1f_{univ *} \End^0(\mscr{E}_{univ})_{\dR}, \\
        T_{\mscr{M}_{\dR}(X/S)} &\simeq \pi^*T_S \times_{R^1f_{univ *}T_{X\times \mscr{M}_{\dR}/\mscr{M}_{\dR} }} R^1f_{univ *} (\At^0(\mscr{E}_{univ})_{1}^\bullet),
    \end{align*}
    \item and similarly
    \begin{align*}
    T_{\mscr{M}_{\Hod}/S\times \mb{A}^1_A} &\simeq R^1f^{\Hod}_{univ *} \End^0(\mscr{E}^{\Hod}_{univ})_{\lambda-\dR},   \\
     T_{\mscr{M}_{\Hod}/\mb{A}^1_A} &\simeq \pi_{\Hod}^*T_S \times_{R^1f_{univ*}^{\Hod}T_{X\times \mscr{M}_{\Hod}/\mscr{M}_{\Hod}}}  R^1f^{\Hod}_{univ *} (\At^0(\mscr{E}^{\Hod}_{univ})_{\lambda}^\bullet ).       
    \end{align*}

\end{enumerate}
\end{corollary}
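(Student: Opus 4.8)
The plan is to obtain the corollary as the sheaf-theoretic globalization of \autoref{prop:tangent-spaces}: upgrade those pointwise deformation-theoretic identifications to identifications of tangent complexes, and then take $H^0$, as in \autoref{defn: tangent-sheaf}.

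For the two relative identifications, the key input is the standard description of the Laumon--Moret-Bailly tangent complex of a moduli stack of objects: it is the derived pushforward along $f_{univ}$ of the deformation complex of the universal object, placed in the cohomological degree where $H^{-1}$ records infinitesimal automorphisms and $H^0$ records first-order deformations. So the first step is to establish
\[
\mathscr{T}_{\mscr{M}_{\dR}(X/S)/S}\simeq Rf_{univ*}\bigl(\End^0(\mscr{E}_{univ})^\bullet_{\dR}\bigr)[1]
\]
and
\[
\mathscr{T}_{\mscr{M}_{\Hod}(X/S)/S\times\mb{A}^1_A}\simeq Rf^{\Hod}_{univ*}\bigl(\End^0(\mscr{E}^{\Hod}_{univ})^\bullet_{\lambda-\dR}\bigr)[1]
\]
as objects of the derived categories of $\mscr{M}_{\dR}(X/S)$, resp.~$\mscr{M}_{\Hod}(X/S)$. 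On $T$-valued points these are exactly parts (1) and (2) of \autoref{prop:tangent-spaces} --- identifying $H^0$, resp.~$H^1$, of the (relative, $\lambda$-)de Rham complex with infinitesimal automorphisms, resp.~first-order deformations --- and, those identifications being functorial in $T$, they sheafify; the one extra ingredient is compatibility with base change along $T\to\mscr{M}$, which holds since $f_{univ}$ is proper and the complexes involved are perfect. Taking $H^0$ then yields $T_{\mscr{M}_{\dR}(X/S)/S}=R^1f_{univ*}(\End^0(\mscr{E}_{univ})_{\dR})$ and the analogous statement for $\mscr{M}_{\Hod}$.

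For the two absolute identifications the plan is to use, in place of a single fiber, the deformation theory of the \emph{triple} $(X,\mscr{E},\gamma)$ from parts (3)--(4) of \autoref{prop:tangent-spaces}. A tangent vector of $\mscr{M}_{\dR}(X/S)$ over $A$ above a point over $s\in S$ is the datum of a tangent vector $\tilde s$ of $S$ at $s$ together with a first-order deformation of $(\mscr{E},\nabla)$ over the deformation $X_{\tilde s}:=X\times_{S,\tilde s}S[\epsilon]$ of $X_s$ that $\tilde s$ induces. By \autoref{prop:tangent-spaces}(4) together with \autoref{prop:splitting-atiyah-sequence}, the data of a deformation of $X_s$ with a compatible deformation of $(\mscr{E},\nabla)$ form $H^1(X_s,\At^0(\mscr{E})^\bullet_1)$, and the map $\delta$ of \eqref{eqn:atiyah-exact-sequence} induces on $H^1$ the assignment sending such a datum to the Kodaira--Spencer class in $H^1(X_s,T_{X_s})$ of the underlying deformation of $X_s$. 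Imposing that this underlying deformation be $X_{\tilde s}$ --- that its Kodaira--Spencer class equal the image of $\tilde s$ under the Kodaira--Spencer map $\pi^*T_S\to R^1f_{univ*}T_{X\times\mscr{M}_{\dR}/\mscr{M}_{\dR}}$ of $X/S$ --- is exactly the condition defining the stated fiber product; sheafifying over $\mscr{M}_{\dR}$ gives the formula. The $\mscr{M}_{\Hod}$ case is identical with $\lambda$ (the $\mb{A}^1$-coordinate) held fixed, so $T_S$ is replaced by $\pi_{\Hod}^*T_S$ and $\At^0(\mscr{E})^\bullet_1$ by the $\lambda$-Atiyah complex $\At^0(\mscr{E}^{\Hod}_{univ})^\bullet_\lambda$.

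The main obstacle I anticipate is verifying that the connecting map $\delta$ of \eqref{eqn:atiyah-exact-sequence} genuinely computes the Kodaira--Spencer class on $H^1$, so that the fiber product is formed over the \emph{correct} map and not a twist of it, together with the bookkeeping showing that this description is compatible with the transitivity triangle $\mathscr{T}_{\mscr{M}/S}\to\mathscr{T}_{\mscr{M}/A}\to\pi^*\mathscr{T}_{S/A}\xrightarrow{+1}$ (using smoothness of $S/A$ to place $\pi^*\mathscr{T}_{S/A}=\pi^*T_S$ in degree $0$). The cleanest route is a local computation with first-order deformations of the pair $(X,\mscr{E})$, which for $\lambda=1$ appears in \cite[\S 4]{katzarkov1999non} and extends with no essential change to general $\lambda$; everything else reduces to \autoref{prop:tangent-spaces}, the Laumon--Moret-Bailly formalism (\cite[D\'efinition 17.13]{laumon-moret-bailly-champs}), and proper base change for $Rf_{univ*}$.
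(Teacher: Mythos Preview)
Your proposal is correct and is precisely the intended argument: the paper gives no proof for this corollary, treating it as an immediate sheafification of \autoref{prop:tangent-spaces} via the Laumon--Moret-Bailly formalism. Your outline makes explicit the steps the paper leaves implicit, including the role of the Atiyah exact sequence and the Kodaira--Spencer map in producing the fiber-product description; there is nothing to add or correct.
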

\subsection{Recollections on isomonodromy}
\subsubsection{Isomonodromy in the analytic category}
Let $\pi: X\to S$ be a smooth projective morphism of complex-analytic spaces, with connected fibers, and with $S$ smooth and connected. Fix $s\in S$. Let $$M_B(X_s, r)=\text{Hom}(\pi_1(X_s), SL_r)\sslash SL_r$$ be the Betti moduli space of $X_s$.  Its complex points are in bijection with conjugacy classes of semisimple representations of $\pi_1(X_s)$ into $SL_r(\mathbb{C})$.

As $\pi$ is a fiber bundle (by Ehresmann's theorem), there is a natural outer action of $\pi_1(S,s)$ on $\pi_1(X_s)$, and hence $\pi_1(S,s)$ acts on $M_B(X_s, r)$. Setting $\widetilde{S}$ to be the universal cover of $S$, we write $$M_B(X/S, r):=(M_B(X_s, r)_{\mathbb{C}}^{\an}\times \widetilde{S})/\pi_1(S,s).$$ Projection onto the second factor gives a natural map $$p_B: M_B(X/S,r)\to S,$$ whose fibers are each isomorphic to $M_B(X_s, r)_{\mathbb{C}}^{\an}$. As $M_B(X/S, r)$ is, analytically-locally on $S$, a product, the smooth locus of $p_B$ is equipped with a foliation horizontal over $S$. And more generally, $M_B(X/S, r)$ is a crystal in complex-analytic spaces over $S$, see \cite[\S8]{simpson-moduli-representations-2}.

\subsubsection{Isomonodromy in the algebraic category}
We recall some relevant facts about the algebraic structure of isomonodromy.  
Let $k$ be a field, $S$ a smooth scheme over $k$, and $f: X\rightarrow S$ a smooth projective morphism.

Let $I$ denote the ideal sheaf of the diagonal in $S \times S$, and let  $(S(1), \bar{I}, \gamma)$ be the completed PD envelope of $(S\times S, I)$; in other words,  $S(1)$ is a formal scheme with a map $j: S(1)\rightarrow S\times S$, $\bar{I}$ an ideal sheaf on $S(1)$, and $\gamma$ a divided power structure on $\bar{I}$, which are required to satisfy a universal property as in \cite[\href{https://stacks.math.columbia.edu/tag/07H8}{Tag 07H8}]{stacks-project}. Recall that a divided power structure $\gamma$ on an ideal $\bar{I}$ consists of a collection of maps $\{\gamma_n: \bar{I}\rightarrow \bar{I}\} $ indexed by the positive integers $n\geq 1$, and satisfying several conditions--see \cite[\href{https://stacks.math.columbia.edu/tag/07GK}{Tag 07GK}]{stacks-project} for details.

 Using the map $j: S(1)\rightarrow S\times S$, we can define  the ideal sheaf $j^{-1}I\cdot \mscr{O}_{S(1)}$ on $S(1)$; we sometimes also denote this ideal sheaf by $I$. In the case when $k$ has  characteristic zero, the completed PD envelope  is simply the usual completion of $S\times S$ along the diagonal.   Let $\pi_i: S(1)\rightarrow S$, $i=1,2$, denote the two projection maps.
\begin{remark}
    The two maps $\pi_{1,2}$ make $S(1)$ into a groupoid over $S$, and it is possible to formulate many of the statements in this section in this language; we will not take this point of view here.
\end{remark}

Let $T_0$ be a scheme. Fix a map $t_0: T_0\to S$ and $T$ a nilpotent PD-thickening of $T_0$. Fix two maps $T\to S$ extending $t_0$, or equivalently a map  $t: T\to S(1)$ extending $\Delta\circ t_0$, where $\Delta$ is the diagonal. For each $i$, define $X_{T,i}$ and $X_{S(1),i}$ via the diagram, where each square is  a pullback square:
\[
    \begin{tikzcd}
X_{T, i} \arrow[r, ""] \arrow[d]
& X_{S(1), i} \arrow[r] \arrow[d] &X \arrow[d] \\
T \arrow[r, "t"]
& S(1) \arrow[r, "\pi_i"] &  S.
\end{tikzcd}
\]

For the reader's convenience and as a warm up to more involved constructions later, we recall Simpson's description of the crystal structure on $\mscr{M}_{\dR}(X/S)$; the latter is defined in \autoref{defn: mdr}.
\begin{construction}\label{construction: dR-crystal}
%
Note that the pullbacks of $X_{T,1}$ and $X_{T,2}$ to $T_0$ are canonically isomorphic, and so we denote both by $X_{T,0}$.
    Suppose $(\mscr{E}, \nabla)$ is a flat bundle on $X_{T,1}/T$. Let $
    \{U_{\beta}\}_{\beta \in I}$ be a covering of $X$ by open affines, and $U_{T, \beta, 0}, \widehat{U}_{T, \beta, 1}, \widehat{U}_{T, \beta, 2}$ the pullbacks to $X_{T, 0}, X_{T,1}, X_{T,2}$ respectively. Then each of $\widehat{U}_{T, \beta, 1}$ and $\widehat{U}_{T, \beta, 2}$ is a deformation of the smooth affine $U_{T, \beta,  0}$ along the PD thickening $T_0\rightarrow T$, and so there exists an isomorphism of $T$-schemes   $\tilde{f}_{\beta}: \widehat{U}_{T, \beta, 2}\rightarrow \widehat{U}_{T, \beta, 1}$. Write $f_{\beta}$ for the composition
    \[
     \widehat{U}_{T, \beta, 2}\xrightarrow{\tilde{f}_{\beta}} \widehat{U}_{T, \beta, 1} \rightarrow U_{T, \beta,  0}
    \]

    Now given a flat bundle $(\mscr{E}_T, \nabla_T)$ on $X_{T,1}/T$, we construct a flat bundle on $X_{T,2}/T$ as follows. First consider the  flat bundles $\tilde{f}_{\beta}^*((\mscr{E}, \nabla_T)|_{\widehat{U}_{T, \beta, 1}})$ on $\widehat{U}_{T, \beta, 2}/T$; for brevity we denote this flat bundle simply by $\mscr{F}_{\beta}$. We now glue these on the intersections $\widehat{U}_{T, \beta\gamma, 2}:= \widehat{U}_{T, \beta, 2}\cap \widehat{U}_{T, \gamma, 2}$, for each $\beta, \gamma\in I$: that is, we need to specify isomorphisms
\[
\alpha_{\beta\gamma}: \mscr{F}_{\beta}|_{\widehat{U}_{T, \beta\gamma, 2}} \simeq \mscr{F}_{\gamma}|_{\widehat{U}_{T, \beta\gamma, 2}}
\]
   such that $\alpha_{\gamma\delta}\circ \alpha_{\beta\gamma}= \alpha_{\beta\delta}$ for each choice of indices $\beta, \gamma, \delta \in I$, and where these maps are taken on the triple intersections $\widehat{U}_{T, \beta, 2}\cap \widehat{U}_{T, \gamma, 2} \cap \widehat{U}_{T, \delta, 2}$. In \'etale local coordinates $x_1, \cdots, x_d$ on $X/S$, and therefore on $\widehat{U}_{T, \beta, 2}/T$, we define $\alpha_{\beta \gamma}$ as 
   \begin{equation}\label{eqn: taylor-formula-cocycle}
   s\otimes 1\mapsto \sum_{\underline{\ell}} \nabla_{\partial_{x_{\underline{\ell}}}}(s)\otimes \prod_{m=1}^d \gamma_{\ell_m}(f_{\beta}^*(x_m)-f_{\gamma}^*(x_m)).
   \end{equation}
   Here the sum ranges over tuples $\underline{\ell}=(\ell_1, \cdots, \ell_d)$  of non-negative integers, $\gamma_i$ denote the divided power operations, and   $\nabla_{\partial_{x_{\underline{\ell}}}}$ denotes the operator $\nabla_{\partial_{x_1}}^{\ell_1}\circ \cdots \circ \nabla_{\partial_{x_d}}^{\ell_d}$ acting on sections of $\mscr{F}_{\beta}$.
\end{construction}

\begin{proposition}[Simpson]\label{prop: mdr-crystal}
    \autoref{construction: dR-crystal} gives  a crystal structure on $\mscr{M}_{\dR}(X/S)$ over $S$. Concretely, it gives  an equivalence  
    \[
    \epsilon: \pi_1^*\mscr{M}_{\dR}(X/S) \simeq \pi_2^*\mscr{M}_{\dR}(X/S),
    \]
    which moreover satisfies the cocycle condition. Furthermore, $\epsilon$ is functorial, i.e. for flat bundles $(\mscr{E}_1, \nabla_1), (\mscr{E}_2, \nabla_2)\in \pi_1^*\mscr{M}_{\dR}(X/S)$, viewed as flat bundles on $X_{T,1}$, and a horizontal morphism $h: (\mscr{E}_1, \nabla_1)\rightarrow (\mscr{E}_2, \nabla_2)$, there is a natural morphism $\epsilon(h): \epsilon(\mscr{E}_1, \nabla_1) \rightarrow \epsilon(\mscr{E}_2, \nabla_2)$. 
\end{proposition}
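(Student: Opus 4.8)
The plan is to deduce everything from the single classical input that a flat connection on a smooth scheme over a PD-base is the same as an HPD-stratification---a coherent system of \emph{Taylor isomorphisms}---and then to recognize \autoref{construction: dR-crystal} as the descent of such data along an affine cover. First I would record the following formalism. Let $V$ be a smooth affine $T$-scheme, with $T$ a nilpotent PD-thickening of $T_0$, and $(\mscr{E},\nabla)$ a flat bundle on $V/T$. For any PD-$T$-scheme $Z$ and any two $T$-morphisms $g_0,g_1\colon Z\to V$ agreeing modulo the PD ideal, there is a canonical isomorphism $\tau^{(\mscr{E},\nabla)}_{g_0,g_1}\colon g_0^*\mscr{E}\xrightarrow{\,\sim\,}g_1^*\mscr{E}$ which, in any \'etale coordinate system $x_1,\dots,x_d$ on $V/T$, is given by the formula \eqref{eqn: taylor-formula-cocycle} (with $f_\beta,f_\gamma$ replaced by $g_0,g_1$); the sum is finite because $T_0\to T$ is a nilpotent PD-thickening. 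I would then establish three properties of $\tau$: \textup{(i)} it is independent of the coordinates, is horizontal for the pulled-back connections, and equals the canonical identification when $g_0=g_1$; \textup{(ii)} the cocycle identity $\tau_{g_1,g_2}\circ\tau_{g_0,g_1}=\tau_{g_0,g_2}$ holds for any three pairwise-congruent maps; \textup{(iii)} $\tau$ commutes with base change $Z'\to Z$, with horizontal morphisms of flat bundles, and with $\det$. These are classical (going back to Grothendieck; cf.\ the crystalline formalism of Berthelot--Ogus). I expect the verification of (ii) directly from \eqref{eqn: taylor-formula-cocycle}---essentially a ``comultiplication of Taylor series'' identity in a divided-power polynomial ring---to be the main obstacle, with the coordinate-independence in (i) being of the same flavor and relying on the flatness of $\nabla$; once (i)--(iii) are in place the rest is bookkeeping.

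With this formalism, the gluing is routine: given $(\mscr{E},\nabla,\xi)$ on $X_{T,1}/T$ and choices of cover $\{U_\beta\}$ and lifts $\tilde f_\beta$ as in \autoref{construction: dR-crystal}, set $\mscr{F}_\beta=\tilde f_\beta^*(\mscr{E}|_{\widehat U_{T,\beta,1}})$ with its pulled-back flat connection. On an overlap $\widehat U_{T,\beta\gamma,2}$, the restrictions of $\tilde f_\beta$ and $\tilde f_\gamma$ are $T$-morphisms into $\widehat U_{T,\beta\gamma,1}$ lifting $\mathrm{id}_{U_{T,\beta\gamma,0}}$, hence congruent modulo the PD ideal, so one may set $\alpha_{\beta\gamma}:=\tau_{\tilde f_\beta,\tilde f_\gamma}$; unwinding the definitions in \'etale coordinates recovers exactly \eqref{eqn: taylor-formula-cocycle}. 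Property (ii) gives $\alpha_{\gamma\delta}\circ\alpha_{\beta\gamma}=\alpha_{\beta\delta}$ on triple overlaps---so each $\alpha_{\beta\gamma}$ is an isomorphism with inverse $\alpha_{\gamma\beta}$---and (i) makes the $\alpha_{\beta\gamma}$ horizontal; hence the $\mscr{F}_\beta$ glue to a flat bundle $(\mscr{E}',\nabla')$ on $X_{T,2}/T$. Applying $\det$ and using (iii): each $\det\mscr{F}_\beta$ is trivialized via $\tilde f_\beta^*\xi$, and the $\det\alpha_{\beta\gamma}$ are the Taylor isomorphisms of the trivial connection, which are identities, so the trivializations glue to some $\xi'$. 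This defines $\epsilon(\mscr{E},\nabla,\xi)\in\pi_2^*\mscr{M}_{\dR}(X/S)(T)$.

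Next I would verify independence of the auxiliary data and functoriality. For a second choice of lift $\tilde f_\beta'$, properties (i) and (ii) give a horizontal isomorphism $\mscr{F}_\beta\cong\mscr{F}_\beta'$ compatible with the gluing maps, hence a canonical isomorphism between the two glued bundles; passing to a common refinement handles independence of $\{U_\beta\}$, and (i) handles independence of the coordinates. By (iii) the construction commutes with base change in $T$, so $\epsilon$ is a morphism of $S(1)$-stacks $\pi_1^*\mscr{M}_{\dR}(X/S)\to\pi_2^*\mscr{M}_{\dR}(X/S)$. For a horizontal $h\colon(\mscr{E}_1,\nabla_1)\to(\mscr{E}_2,\nabla_2)$, the morphisms $\tilde f_\beta^*h$ are compatible with the respective $\alpha$'s by (iii), hence glue to $\epsilon(h)$, which is manifestly functorial. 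Finally, running the same construction with $\pi_1$ and $\pi_2$ exchanged (via the flip of $S(1)$) produces a functor the other way, and the identities $\tau_{g_1,g_0}\circ\tau_{g_0,g_1}=\mathrm{id}$ and $\tau_{g,g}=\mathrm{id}$ exhibit the two composites as canonically isomorphic to the identities---so $\epsilon$ is an equivalence.

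It remains to check the cocycle condition. Let $S(2)$ be the completed PD-envelope of the small diagonal in $S\times S\times S$, with projections $\pi_i\colon S(2)\to S$ and partial-diagonal maps $p_{ij}\colon S(2)\to S(1)$. Over a PD-$T$-scheme mapping to $S(2)$ one chooses lifts $\tilde f^{(1)}_\beta,\tilde f^{(2)}_\beta,\tilde f^{(3)}_\beta$ of the three pullbacks of $U_\beta$ at once; then the descent data computing $p_{13}^*\epsilon$ is $\tau_{\tilde f^{(1)}_\beta,\tilde f^{(3)}_\gamma}$, while that computing $p_{23}^*\epsilon\circ p_{12}^*\epsilon$ is $\tau_{\tilde f^{(2)}_\gamma,\tilde f^{(3)}_\gamma}\circ\tau_{\tilde f^{(1)}_\beta,\tilde f^{(2)}_\gamma}$, and these agree by (ii)---now applied ``in the $S$-direction.'' Hence $p_{13}^*\epsilon=p_{23}^*\epsilon\circ p_{12}^*\epsilon$ up to the evident canonical $2$-isomorphism, which is the required cocycle condition. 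To summarize: the entire content is carried by properties (i)--(iii) of the Taylor isomorphism $\tau$, the only genuinely technical point being the combinatorial cocycle identity (ii) for \eqref{eqn: taylor-formula-cocycle}; everything else is descent along open covers and compatibility with base change.
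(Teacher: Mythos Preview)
Your proposal is correct and is essentially the standard argument underlying the references the paper cites: the paper's own proof simply defers to Simpson \cite[\S8]{simpson-moduli-representations-2} and \cite[Appendix B.2]{p-painleve}, noting only that functoriality follows because the $\alpha_{\beta\gamma}$ are horizontal. Your write-up spells out exactly what those references contain---the HPD-stratification/Taylor-isomorphism formalism with its cocycle identity---so there is no substantive difference in approach, only in level of detail.
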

\begin{proof}
    This is due to Simpson \cite[\S8]{simpson-moduli-representations-2}, but see for example \cite[Appendix B.2]{p-painleve}, especially for the explication in characteristic $p$; the last claim follows  from the recipe in \autoref{construction: dR-crystal}, since the maps $\alpha_{
\beta\gamma
    }$ are maps of flat connections. 
\end{proof}

\begin{remark}
    There are several equivalent ways to formulate \autoref{prop: mdr-crystal}: one could say that $\mscr{M}_{\dR}(X/S)$ has a canonical descent to $S^{\dR}$, the de Rham stack of $S$ (appropriately defined, especially in characteristic $p$), or that there is a canonical  action of the groupoid $S(1)$ on $\mscr{M}_{\dR}(X/S)$.
\end{remark}
We now assume $k$ is of characteristic $p$. The map  $\epsilon$ in \autoref{prop: mdr-crystal}  is easy to describe  on those flat bundles with vanishing $p$-curvature, as we now spell out; this will be needed in \autoref{subsec:non-abelian-katz-formula} when we formulate the non-abelian version of Katz's formula.

As in the setup in \autoref{construction: dR-crystal}, suppose that $T_0$ is a scheme, $t_0: T_0\rightarrow S$ a map,  $T$ a nilpotent PD thickening of $T_0$ and  $t: T \rightarrow S(1)$ a map extending $\Delta \circ t_0$. Note that, since $T_0\to  T$ is a PD thickening, absolute Frobenius on $T$ factors as $T\rightarrow T_0 \to  T$, and for each $i=1, 2$ we have a diagram where each square is a pullback square:
\[
\begin{tikzcd}
X_{T, i}
\arrow[drrr, bend left, "F_{abs}"]
\arrow[dr, "F_{X_{T, i}/T}"] & & \\
& X_{T,i}^{(1)} \arrow[r, ""] \arrow[d, ""] & X_{T, 0} \arrow[r] \arrow[d] & X_{T,i} \arrow[d, "f"] \\
& T \arrow[r, "\phi"]  & T_0 \arrow[r, ""] & T.
\end{tikzcd}
\]
Moreover, the map $\phi$ in the above diagram  is independent of $i$, and hence we have a canonical isomorphism 
\begin{equation}\label{eqn: isom-two-frob-twists}
X_{T, 1}^{(1)} \simeq X_{T, 2}^{(1)}.
\end{equation}
\begin{proposition}\label{prop: isomonodromy-on-p-curv-0}
 Suppose that a flat bundle $(\mscr{V}, \nabla)$ on $X_{T,1}/T$ has zero $p$-curvature, so that we can write it in the form 
\[
(\mscr{V}, \nabla) = (F^*_{X_{T,1}/T}\mscr{V}', \nabla_{\text{can}}),
\]
where  $\mscr{V}'$ is a bundle on $X_{T, 1}^{(1)}$, and $\nabla_{\text{can}}$ denotes the canonical connection.  Then 
\begin{equation}
\epsilon(\mscr{V}, \nabla) = (F^*_{X_{T,2}/T} \mscr{V}', \nabla_{\text{can}}).
\end{equation}
Here we have used the isomorphism in \eqref{eqn: isom-two-frob-twists} to view $\mscr{V}'$ as a bundle on $X_{T, 2}^{(1)}$. 
\end{proposition}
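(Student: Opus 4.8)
The plan is to unwind the explicit description of the crystal structure from \autoref{construction: dR-crystal} in the case of a flat bundle with vanishing $p$-curvature, exploiting that such a bundle \'etale-locally admits a horizontal frame pulled back from $X^{(1)}$. Fix a cover $\{U_\beta\}$ of $X$ by affines with \'etale coordinates $x_1,\dots,x_d$ on $X/S$ (refined so that $\mscr V'$ is trivial over the preimages of the $U_\beta$), and lifts $\tilde f_\beta\colon \widehat U_{T,\beta,2}\to\widehat U_{T,\beta,1}$ as in \autoref{construction: dR-crystal}. Since $(\mscr V,\nabla)=(F^*_{X_{T,1}/T}\mscr V',\nabla_{\mathrm{can}})$, a local frame $e'_1,\dots,e'_r$ of $\mscr V'$ over $U_\beta$ yields a frame $1\otimes e'_1,\dots,1\otimes e'_r$ of $\mscr V$ which is horizontal for $\nabla_{\mathrm{can}}$; hence $\nabla_{\partial_{x_{\underline\ell}}}(1\otimes e'_j)=0$ for every multi-index $\underline\ell\neq 0$. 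Substituting this into the Taylor cocycle \eqref{eqn: taylor-formula-cocycle}, only the $\underline\ell=0$ term survives, so $\epsilon(\mscr V,\nabla)$ is the bundle obtained by gluing the $\mscr F_\beta:=\tilde f_\beta^*\big(F^*_{X_{T,1}/T}\mscr V'|_{\widehat U_{T,\beta,1}}\big)$ along the \emph{naive} transition maps determined by the frames $1\otimes e'_j$.

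The main point --- and the step I expect to be the crux --- is to identify $\mscr F_\beta$ with $F^*_{X_{T,2}/T}\mscr V'|_{\widehat U_{T,\beta,2}}$ under the canonical identification \eqref{eqn: isom-two-frob-twists}; equivalently, to show $F_{X_{T,1}/T}\circ\tilde f_\beta = F_{X_{T,2}/T}$ as maps $\widehat U_{T,\beta,2}\to X^{(1)}$. I would verify this in coordinates. Write $x^{(i)}_m$ for the image of $x_m$ in $\mscr O_{X_{T,i}}$ and $y_m$ for the coordinate on $X^{(1)}_{T,i}$ with $F^*_{X_{T,i}/T}y_m=(x^{(i)}_m)^p$; the $y_m$ agree on the two sides of \eqref{eqn: isom-two-frob-twists} because both $X^{(1)}_{T,i}$ are $X_{T,0}$ base-changed along the common map $\phi\colon T\to T_0$. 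The fact that $\tilde f_\beta$ lifts the identity on $X_{T,0}$ gives $\tilde f_\beta^*x^{(1)}_m = x^{(2)}_m+\delta_m$ with $\delta_m\in\bar I\cdot\mscr O_{\widehat U_{T,\beta,2}}$, where $\bar I$ is the PD ideal of $T_0\hookrightarrow T$. Then, using that Frobenius is additive in characteristic $p$ and that $\bar i^{\,p}=p!\,\gamma_p(\bar i)=0$ for $\bar i\in\bar I$,
\[
(F_{X_{T,1}/T}\circ\tilde f_\beta)^*y_m=(\tilde f_\beta^*x^{(1)}_m)^p=(x^{(2)}_m)^p+\delta_m^p=(x^{(2)}_m)^p=F^*_{X_{T,2}/T}y_m,
\]
which is the desired equality of maps. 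Consequently $\mscr F_\beta=(F_{X_{T,1}/T}\circ\tilde f_\beta)^*\mscr V'=F^*_{X_{T,2}/T}\mscr V'|_{\widehat U_{T,\beta,2}}$ canonically, and the pulled-back connection $\tilde f_\beta^*\nabla_{\mathrm{can}}$ is again $\nabla_{\mathrm{can}}$, since it has the horizontal frame $\tilde f_\beta^*(1\otimes e'_j)=1\otimes e'_j$.

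Finally I would assemble the gluing: over $U_\beta\cap U_\gamma$ the frames $\{e'_j\}$ coming from $\beta$ and $\gamma$ differ by the transition matrix $g'_{\beta\gamma}$ of $\mscr V'$, so under the identifications above $\alpha_{\beta\gamma}$ becomes multiplication by $F^*_{X_{T,2}/T}g'_{\beta\gamma}$; since $d\big((x^{(2)}_m)^p\big)=0$, this function is $\nabla_{\mathrm{can}}$-flat, so these are exactly the transition functions gluing $(F^*_{X_{T,2}/T}\mscr V',\nabla_{\mathrm{can}})$ from its restrictions to the $\widehat U_{T,\beta,2}$. Hence $\epsilon(\mscr V,\nabla)\cong (F^*_{X_{T,2}/T}\mscr V',\nabla_{\mathrm{can}})$, and by \autoref{prop: mdr-crystal} this is independent of the auxiliary choices of cover, coordinates, and lifts $\tilde f_\beta$. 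The one place where real care is needed is the coordinate computation in the middle paragraph: one must check that \eqref{eqn: isom-two-frob-twists} is precisely the identification that intervenes, and the vanishing $\delta_m^p=0$ is exactly where the divided-power (rather than merely nilpotent) structure on $T_0\hookrightarrow T$ is used --- the same feature already responsible for the $\gamma_{\ell_m}$ in \eqref{eqn: taylor-formula-cocycle}.
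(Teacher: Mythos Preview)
Your proof is correct and follows essentially the same approach as the paper: both compute $\epsilon(\mscr V,\nabla)$ via the explicit Taylor cocycle of \autoref{construction: dR-crystal}, observe that horizontality of the Frobenius-pullback sections kills all terms with $\underline\ell\neq 0$, and conclude that the resulting descent datum is that of $(F^*_{X_{T,2}/T}\mscr V',\nabla_{\mathrm{can}})$. Your treatment is in fact more explicit at one point: where the paper invokes ``the natural identifications'' coming from $X_{T,1}^{(1)}\simeq X_{T,2}^{(1)}$, you verify directly that $F_{X_{T,1}/T}\circ\tilde f_\beta=F_{X_{T,2}/T}$ via the computation $(\tilde f_\beta^*x_m^{(1)})^p=(x_m^{(2)}+\delta_m)^p=(x_m^{(2)})^p$, and correctly flag that the vanishing $\delta_m^p=0$ is exactly where the PD (rather than merely nilpotent) hypothesis enters.
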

\begin{proof}
We will use the following slightly abusive piece of notation: for a map of schemes $U\rightarrow V$ and $M$ a vector bundle (or flat bundle)  on $V$, we write $M\otimes U$ for the pullback to $U$.
We compute $\epsilon(\mscr{V},\nabla)$ using Simpson's explicit construction
(\autoref{construction: dR-crystal}).  With notation as in that construction,
for each index $\beta$ recall the notation
\[
\mscr{F}_\beta \;:=\; \tilde f_\beta^*(\mscr{V},\nabla)
\;=\; (\mscr{V},\nabla)\otimes_{\widehat U_{T,\beta,1}} \widehat U_{T,\beta,2},
\]
which is a flat bundle on $\widehat U_{T,\beta,2}$.  Then $\epsilon(\mscr{V},\nabla)$ is
obtained by gluing the $\mscr{F}_\beta$ along the isomorphisms
\[
\alpha_{\beta\gamma}:\mscr{F}_\beta|_{\widehat U_{T,\beta\gamma,2}}
\stackrel{\sim}{\longrightarrow}
\mscr{F}_\gamma|_{\widehat U_{T,\beta\gamma,2}}
\]
defined by the Taylor formula \eqref{eqn: taylor-formula-cocycle}.

Now assume $(\mscr{V},\nabla)=(F^*_{X_{T,1}/T}\mscr{V}',\nabla_{\text{can}})$.
By definition of the canonical connection, local sections of the form
$F^*_{X_{T,1}/T}(s')$ with $s'$ a local section of $\mscr{V}'$ are horizontal:
\[
\nabla_{\text{can}}\big(F^*_{X_{T,1}/T}(s')\big)=0.
\]
Consequently, for every local vector field $\partial$ on $X_{T,1}/T$ and every
integer $\ell>0$ we have
\[
\nabla_{\partial}^{\ell}\big(F^*_{X_{T,1}/T}(s')\big)=0.
\]

Let $s'$ be a local section of $\mscr{V}'$ on
$\widehat U^{(1)}_{T,\beta\gamma,1}$, and set
$s:=F^*_{X_{T,1}/T}(s')$, viewed (after pullback to $\widehat U_{T,\beta\gamma,2}$)
as a section of $\mscr{F}_\beta|_{\widehat U_{T,\beta\gamma,2}}$.
Plugging $s$ into \eqref{eqn: taylor-formula-cocycle}, all terms with some
$\ell_m>0$ vanish, so only the multi-index $\underline{\ell}=\underline{0}$
contributes.  Hence
\[
\alpha_{\beta\gamma}(s\otimes 1)=s\otimes 1.
\]
Since $\mscr{V}=F^*_{X_{T,1}/T}\mscr{V}'$ is (locally) generated over the
structure sheaf by such Frobenius-pullback sections, the preceding identity
implies that, under the natural identifications
\[
\mscr{F}_\beta|_{\widehat U_{T,\beta\gamma,2}}
\;\simeq\;
\mscr{V}'|_{\widehat U^{(1)}_{T,\beta\gamma,1}}
\otimes_{\widehat U^{(1)}_{T,\beta\gamma,1}}
\widehat U_{T,\beta\gamma,2}
\qquad\text{and similarly for }\gamma,
\]
the map $\alpha_{\beta\gamma}$ acts as the identity. 
In other words, the descent datum $\{\mscr{F}_\beta,\alpha_{\beta\gamma}\}$ is
exactly the descent datum $(F^*\mscr{V}'|_{\widehat{U}_{T, \beta\gamma, 2}}, \id ) 
$, and therefore we have an identification of the bundle underlying $\epsilon(\mscr{V}, \nabla)$ with $F^*_{X_{T,2}/T}\mscr{V}'$. Here we have made use of the canonical identifications $X_{T,1}^{(1)}\simeq X_{T,2}^{(1)}$, $\widehat{U}^{(1)}_{T, \beta\gamma, 1}\simeq \widehat{U}^{(1)}_{T, \beta\gamma, 2}$.


Finally, the glued connection is the
canonical one (it is characterized by horizontality of the Frobenius-pullback
sections, which we have just seen are preserved by the gluing).  Therefore
\[
\epsilon(\mscr{V},\nabla)\;=\;(F^*_{X_{T,2}/T}\mscr{V}',\nabla_{\text{can}}),
\]
as claimed.

\end{proof}

\subsection{Liftings of tangent vectors}\label{subsec:lifting-of-tangent}
We now explain how to take the \enquote{associated graded of a foliation on a filtered space} in the language of \autoref{subsec:filtrations}. The construction below is analogous to taking the associated graded of a connection on a vector bundle with respect to a Griffiths-transverse filtration.
\begin{definition}
	Let $\pi: \mathscr{M}\to S$ be a morphism and $$d\pi: T_{\mathscr{M}}\to \pi^*T_S$$ its derivative. Let $\lambda$ be a regular function on $\mathscr{M}$. A \emph{$\lambda$-lifting of tangent vectors} is a map $$p: \pi^*T_S\to T_{\mathscr{M}}$$ satisfying $$d\pi\circ p =\lambda\cdot \on{id}_{\pi^*T_S}.$$  Note that if $\lambda=0$ such a map factors through $\ker d\pi$; in particular if $S$ is smooth a $0$-lifting of tangent vectors amounts to a map $$p: \pi^*T_S\to T_{\mathscr{M}/S}.$$ We will say a $\lambda$-lifting of tangent vectors is \emph{integrable} if $p$ is a morphism of sheaves of Lie algebras.

\end{definition}
\begin{example}
	Let $P\to X$ be a principal $G$-bundle and $p: \pi^*T_X\to T_P$ a $G$-equivariant $\lambda$-lifting of tangent vectors. This is the same as a $\lambda$-connection on $P$; if it is integrable, it is the same data as a flat $\lambda$-connection. In the case when $G=GL_n$ and $P$ is the frame bundle of a vector bundle $\mathscr{E}$, $T_P$ descends to the vector bundle $\on{At}(\mathscr{E})$ on $X$, and this notion coincides with the correspondence between maps $q^\gamma: T_{X}\to \on{At}(\mathscr{E})$ and $\lambda$-connections discussed in \autoref{prop:splitting-atiyah-sequence}.
\end{example}
Note that an integrable $1$-lifting of tangent vectors is precisely a horizontal integrable foliation, i.e.~a splitting of the tangent exact sequence $$0\to T_{\mathscr{M}/S}\to T_{\mathscr{M}}\to \pi^*T_S\to 0$$ respecting the Lie algebra structure.

Now suppose we are given a filtration $\mathscr{M}'$ on $\mathscr{M}$ in the sense of \autoref{subsec:filtrations}, i.e.~a $\mathbb{G}_m$-equivariant map $\lambda: \mathscr{M}'\to \mathbb{A}^1_S$ and an identification of $\mathscr{M}'_1$ with $\mathscr{M}$. If $\mathscr{M}'/S$ is also equipped with a $\lambda$-lifting of tangent vectors (and hence $\mathscr{M}/S$ is equipped with a horizontal foliation $\mathscr{F}$), we will refer to the $0$-lifting of tangent vectors on $\mathscr{M}'_0$ as the \emph{associated graded} of $\mathscr{F}$ with respect to the filtration $\mathscr{M}'$.
\begin{definition}\label{defn:Hodge-lifting}
Suppose we are given a smooth projective morphism of smooth schemes $f: X\to S$.

	There is a natural $\lambda$-lifting of tangent vectors $\Xi_{X/S}$ on $\mathscr{M}_{\Hod}(X/S)\to S$. Namely, given a $S'$-point of $S$ and a $\lambda$-connection $(\mathscr{E},\gamma)$ on $X_{S'}/{S'}$ with trivialized determinant, one obtains as in \autoref{prop:splitting-atiyah-sequence} a map $q^\gamma: T_{X_{S'}/S'}\to \on{At}^0_{X_{S'}/S'}(\mathscr{E})$. The composition $$T_{S'}\to R^1f'_*T_{X_{S'}/S'} {\to} R^1f'_*\on{At}^0_{X_{S'}/S'}(\mathscr{E})^\bullet_\lambda$$ is a $\lambda$-lifting of tangent vectors, under the identifications of \autoref{prop:tangent-spaces}. Here the first arrow is the Kodaira-Spencer map classifying first-order deformations, and the second is induced by the composition $$T_{X_{S'}/S'}\overset{q^\gamma}{\to} \on{At}^0_{X_{S'}/S'}(\mathscr{E})\to \on{At}^0_{X_{S'}/S'}(\mathscr{E})^\bullet_\lambda.$$ Note that the second arrow above is not a map of complexes, though the composition is.
\end{definition}
Restricting to $0\in \mathbb{A}^1_S$, we obtain a $0$-lifting of tangent vectors $\Theta_{X/S}: \pi_{\Dol}^*T_S\to T_{\mathscr{M}_{\Dol}(X/S)}$, which we refer to as the associated graded of the isomonodromy foliation. Indeed, by \cite[Lemma 4.5]{katzarkov1999non}, the corresponding $1$-lifting of tangent vectors is precisely the isomonodromy foliation. The $0$-lifting of tangent vectors $\Theta_{X/S}$ is to the isomonodromy foliation as the Kodaira-Spencer map is to the Gauss-Manin connection.
\subsection{Chen's formula}\label{section: chen's-formula}
Let $\Sigma/\mb{C}$ be a smooth projective curve of genus $g\geq 2$. Write $M_{\Dol}(\Sigma, r)$ for the coarse moduli space of semistable  Higgs bundles on $\Sigma$, of rank $r$ and degree zero (with trivialized determinant),  and $M^s_{\Dol}(\Sigma, r)\subset M_{\Dol}(\Sigma, r)$ the open set parametrizing stable Higgs bundles. Let $\bun(\Sigma, r)$ denote the coarse moduli space of stable rank $r$ bundles on $\Sigma$. Note that the cotangent bundle $T^*\bun(\Sigma, r)$ has a natural holomorphic symplectic structure, and that there is a natural open immersion $T^*\bun(\Sigma, r)\hookrightarrow  M^s_{\Dol}(\Sigma, r).$

\begin{proposition}\label{prop: holo-sympl-higgs}
    There is a natural holomorphic symplectic form on $M^s_{\Dol}(\Sigma, r)$, extending the one on $T^*\bun(\Sigma, r)$. 
\end{proposition}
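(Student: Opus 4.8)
The plan is to produce the symplectic form on $M^s_{\Dol}(\Sigma, r)$ directly from deformation theory, using the identification of the tangent space at a stable Higgs bundle $(\mathscr{E},\theta)$ with the hypercohomology $H^1(\Sigma, \End^0(\mathscr{E})^\bullet_{0-\dR})$, i.e.\ the first hypercohomology of the two-term complex $[\End^0(\mathscr{E}) \xrightarrow{[\theta,-]} \End^0(\mathscr{E})\otimes \Omega^1_\Sigma]$ (this is \autoref{prop:tangent-spaces}(2) with $\lambda=0$). The pairing will be Serre-duality-type: one uses the trace pairing on $\End^0(\mathscr{E})$ together with the wedge product $\Omega^1_\Sigma \otimes \mathscr{O}_\Sigma \to \Omega^1_\Sigma$ to get a pairing of the Dolbeault complex of $\End^0(\mathscr{E})$ with itself landing in the complex $[\mathscr{O}_\Sigma \to \Omega^1_\Sigma]$ concentrated appropriately, and then composes with the trace map $H^2(\Sigma, [\mathscr{O}_\Sigma \xrightarrow{0} \Omega^1_\Sigma]) \cong H^1(\Sigma,\Omega^1_\Sigma) \to \mathbb{C}$. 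Concretely: $H^1$ of the adjoint complex is self-dual under Serre duality because the complex $[\End^0(\mathscr{E}) \xrightarrow{[\theta,-]} \End^0(\mathscr{E})\otimes\Omega^1]$ is isomorphic to its own Serre dual (shifted), the isomorphism being induced by $a \mapsto \tr(a \cdot -)$ and $a\otimes\omega \mapsto \tr(a\cdot -)\otimes\omega$; this self-duality gives a perfect alternating pairing on $H^1$, hence a non-degenerate $2$-form on $M^s_{\Dol}(\Sigma,r)$.

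The steps, in order, would be: (1) write down the pairing on hypercohomology explicitly at the level of \v{C}ech–Dolbeault cochains, check it is well-defined (independent of representatives) and antisymmetric; (2) verify non-degeneracy via Serre duality for the hypercohomology of a complex of locally free sheaves on a curve; (3) check that the construction is algebraic in families, so that it glues to a global algebraic $2$-form $\omega_{\Dol}$ on the smooth scheme $M^s_{\Dol}(\Sigma,r)$; (4) prove closedness, $d\omega_{\Dol}=0$; (5) identify the restriction of $\omega_{\Dol}$ to the open subset $T^*\bun(\Sigma,r)$ — where $\theta$ is unconstrained and the Higgs complex degenerates, so $H^1$ splits as $H^1(\Sigma,\End^0\mathscr{E}) \oplus H^0(\Sigma,\End^0\mathscr{E}\otimes\Omega^1)$, i.e.\ $T_{[\mathscr{E}]}\bun \oplus T^*_{[\mathscr{E}]}\bun$ by Serre duality — and check the pairing is the standard one $\langle (v_1,\xi_1),(v_2,\xi_2)\rangle = \xi_2(v_1) - \xi_1(v_2)$, which is precisely the tautological symplectic form on the cotangent bundle.

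For closedness I would avoid a direct cochain computation; instead, two clean options: either (a) observe that $T^*\bun(\Sigma,r)$ is dense in $M^s_{\Dol}(\Sigma,r)$ and $\omega_{\Dol}$ restricts there to the canonical (closed) symplectic form, so $d\omega_{\Dol}$ is a holomorphic $3$-form vanishing on a dense open set, hence identically zero; or (b) invoke the hyperk\"ahler/complex-analytic description — under the non-abelian Hodge correspondence $M^s_{\Dol}(\Sigma,r)$ is real-analytically identified with a character variety carrying the Goldman symplectic form, which is closed, and the holomorphic symplectic form in complex structure $I$ is $\omega_J + i\omega_K$, manifestly closed. Option (a) is the most self-contained and is what I would write; note it also immediately gives uniqueness of the extension, which is the content of the word "the" in the statement. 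The main obstacle is step (1)/(4): setting up the hypercohomology pairing carefully enough that non-degeneracy and the comparison with the cotangent form in step (5) are transparent — the bookkeeping with the two-term complexes, the placement of $\Omega^1$, and the sign conventions is where the real work lies, though none of it is deep. A secondary point to be careful about is that one is working with $\End^0$ (trace-free) rather than $\End$, so one should note that $\tr$ pairs $\End^0$ with itself perfectly (in characteristic zero, or $p \nmid r$, which holds here as we are over $\mathbb{C}$) and that the fixed-determinant condition is exactly what cuts $H^1$ down to the correct self-dual piece.
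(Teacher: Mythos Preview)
Your proposal is correct and takes essentially the same approach as the paper: both construct the form via the trace pairing on the hypercohomology $\mathbb{H}^1$ of the two-term adjoint complex $[\ad(\mathscr{E})\to\ad(\mathscr{E})\otimes\Omega^1_\Sigma]$, landing in $H^1(\Omega^1_\Sigma)\simeq\mathbb{C}$. The paper simply cites the literature (Biswas et al., Bottacin, Markman) for non-degeneracy and closedness and records the formula, whereas you outline these steps directly---in particular your density argument (a) for closedness is a clean way to fill in what the paper leaves to references.
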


\begin{proof}
    See for example \cite[Prop. 3.1, Prop. 3.3, Thm. 4.1]{biswas2021symplectic}. Similar statements have also appeared in \cite{bottacin1995symplectic, markman1994spectral}.

    The symplectic form is derived from Grothendieck-Serre duality and the trace pairing on $\ad(\mscr{E})$, and we now give an explicit (pointwise) formula for it for completeness. At $[(\mscr{E}, \theta)]\in M_{\Dol}^s(\Sigma, r)$, let $C^{\bullet}$ denote the complex 
    \[
    [\ad(\mscr{E}) \rightarrow \ad(\mscr{E})\otimes\Omega^1_{\Sigma}],
    \]
with $\ad(\mscr{E})$ placed in degree 0. Identifying the tangent space at $[(\mscr{E}, \theta)]$ with $\mb{H}^1(C^{\bullet})$, the holomorphic symplectic form is given by 
\[
\mb{H}^1(C^{\bullet})\otimes \mb{H}^1(C^{\bullet}) \rightarrow \mb{H}^2(C^{\bullet}\otimes C^{\bullet})\rightarrow H^1(\Omega^1_{\Sigma})\rightarrow \mb{C};
\]
here,  the second map is induced by the map of complexes 
\[
C^{\bullet}\otimes C^{\bullet} \rightarrow [0 \rightarrow \Omega^1_{\Sigma} \rightarrow 0]
\]
where the non-trivial map is 
    \begin{align*}
        \ad(\mscr{E}) \otimes (\ad(\mscr{E})\otimes \Omega^1_{\Sigma}) \oplus (\ad(\mscr{E})\otimes \Omega^1_{\Sigma}) \otimes \ad(\mscr{E}) &\rightarrow \Omega^1_{\Sigma} \\
        (a_1\otimes b_1) \oplus (a_2\otimes b_2) \mapsto \tr(a_1\circ b_1+a_2\circ b_2).
    \end{align*}
    Here, $a_1, b_2$ are sections of $\ad(\mscr{E})$, and $a_2, b_1$ are sections of $\ad(\mscr{E})\otimes \Omega^1_{\Sigma}$.

Note that this construction in fact yields a symplectic form on the tangent sheaf $T_{\mathscr{M}_{\Dol}(X/S)}$ of the moduli stack of Higgs bundles (and even a $0$-shifted symplectic structure).
\end{proof}

We continue to denote by $h$ the Hitchin morphism
\[
h: \mathscr{M}_{\Dol}(\Sigma, r)\rightarrow \bigoplus_{i=2}^r H^0(\Sigma, \Sym^i\Omega^1_{\Sigma}),
\]
as defined in \autoref{section: hitchin-map}, and $h_2: \mathscr{M}_{\Dol}(\Sigma, r)\rightarrow H^0(\Sigma, \Sym^2\Omega^1_{\Sigma})$ the projection onto the space of quadratic differentials; we refer to $h_2$ as the quadratic Hitchin map.

$\mathscr{M}_{\Dol}(\Sigma, r)$ is the fiber over $[\Sigma]$ of the map $\mathscr{M}_{\Dol}(\mathscr{C}_g/\mathscr{M}_g, r)\to \mathscr{M}_g$, where $\mathscr{M}_g$ is the moduli space of curves of genus $g$ and $\mathscr{C}_g$ is the universal curve. We now explain Chen's computation of the map $\Theta_{\mathscr{C}_g/\mathscr{M}_g}$ in terms of the data above.
\begin{theorem}[Chen's formula]\label{thm:chen-formula}
    Fix $v\in H^1(\Sigma, T_{\Sigma})=T_{\mathscr{M}_g, [\Sigma]}$, and let $v^{\vee}: H^0(\Sigma, \Sym^2\Omega^1_{\Sigma}) \rightarrow \mb{C}$ be the linear map obtained by dualizing and applying Serre duality.

     Then 
    \begin{equation}\label{eqn: chen-formula}
    \Theta_{\mathscr{C}_g/\mathscr{M}_g}(v)=\frac{1}{2}H_{h_2^*v^{\vee}},
    \end{equation} where the right hand side denotes the Hamiltonian vector field of the function $h_2^*v^{\vee}$, with respect to the holomorphic symplectic structure given by \autoref{prop: holo-sympl-higgs}. 
\end{theorem}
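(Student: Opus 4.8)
The statement is Chen's theorem \cite{chen2012associated}; the plan is to verify it by computing both sides of \eqref{eqn: chen-formula} explicitly at a point $(\mathscr{E},\theta)\in M^s_{\Dol}(\Sigma,r)$, where every map in sight becomes ``contraction or multiplication by $\theta$''. Fix an affine cover $\{U_\alpha\}$ of $\Sigma$ and use the identification $T_{(\mathscr{E},\theta)}M^s_{\Dol}(\Sigma,r)\simeq \mathbb{H}^1(\Sigma,C^\bullet)$ with $C^\bullet=[\ad(\mathscr{E})\xrightarrow{[-,\theta]}\ad(\mathscr{E})\otimes\Omega^1_\Sigma]$ in degrees $0,1$. \textbf{Step 1 (the left-hand side).} I would unwind \autoref{defn:Hodge-lifting} at $\lambda=0$ with base $S=\mathscr{M}_g$: the Kodaira--Spencer map of $\mathscr{M}_g$ is the identity of $T_{\mathscr{M}_g,[\Sigma]}=H^1(\Sigma,T_\Sigma)$, and since $\delta\circ q^\theta=\lambda\cdot\id=0$ the map $q^\theta$ of \autoref{prop:splitting-atiyah-sequence} factors through $\ad(\mathscr{E})=\ker(\At^0(\mathscr{E})\to T_\Sigma)$, where it is $\partial\mapsto\iota_\partial\theta$. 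On a curve $[\iota_\partial\theta,\theta]=0$, so $\iota_{(-)}\theta$ is a morphism of complexes $T_\Sigma[0]\to C^\bullet$, and $\Theta_{\mathscr{C}_g/\mathscr{M}_g}(v)$ is its image on $\mathbb{H}^1$; representing $v$ by a \v{C}ech $1$-cocycle $\{v_{\alpha\beta}\}$ valued in $T_\Sigma$ gives $\Theta_{\mathscr{C}_g/\mathscr{M}_g}(v)=\bigl[(\{\iota_{v_{\alpha\beta}}\theta\},\{0\})\bigr]$.

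\textbf{Step 2 (the right-hand side).} As $\theta$ is traceless, $h_2=-\tfrac12\tr(\theta^2)\in H^0(\Sigma,\Sym^2\Omega^1_\Sigma)$. Differentiating along $(\{a_{\alpha\beta}\},\{\dot\theta_\alpha\})$, the change-of-trivialization part drops out because $\tr(\theta[a,\theta])=\tr(a\theta^2)-\tr(a\theta^2)=0$, leaving $\dot\theta\mapsto-\tr(\theta\dot\theta)$; thus $dh_2$ is induced by the morphism of complexes $\phi_\theta\colon C^\bullet\to\Sym^2\Omega^1_\Sigma[-1]$ that is $0$ in degree $0$ and $\dot\theta\mapsto-\tr(\theta\dot\theta)$ in degree $1$ (the identity $\tr(\theta[a,\theta])=0$ being exactly what makes $\phi_\theta$ a chain map). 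On the target, $v^\vee$ is the functional on $H^0(\Sigma,\Sym^2\Omega^1_\Sigma)$ obtained, via cup product with $v$, from the contraction $T_\Sigma\otimes\Sym^2\Omega^1_\Sigma\to\Omega^1_\Sigma$ followed by $H^1(\Sigma,\Omega^1_\Sigma)\overset{\sim}{\to}\mathbb{C}$, and by \autoref{prop: holo-sympl-higgs} the symplectic form $\omega$ on $\mathbb{H}^1(\Sigma,C^\bullet)$ is induced by the displayed pairing $C^\bullet\otimes C^\bullet\to\Omega^1_\Sigma[-1]$.

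\textbf{Step 3 (matching).} Both $\omega(\Theta_{\mathscr{C}_g/\mathscr{M}_g}(v),-)$ and $v^\vee\bigl(dh_2(-)\bigr)=d\bigl(h_2^{*}v^\vee\bigr)(-)$ are linear functionals on $\mathbb{H}^1(\Sigma,C^\bullet)$ built by composing pairings of two-term complexes; by functoriality of the \v{C}ech--hypercohomology cup product, each is the pushforward of $v\smile(-)\in\mathbb{H}^2(T_\Sigma[0]\otimes C^\bullet)$ along a morphism of complexes $T_\Sigma[0]\otimes C^\bullet\to\Omega^1_\Sigma[-1]$. Such a morphism is determined by a single $\mathscr{O}$-linear map out of the degree-$1$ piece $T_\Sigma\otimes\ad(\mathscr{E})\otimes\Omega^1_\Sigma$, and a direct local computation---using cyclicity of the trace and symmetry of the quadratic differential---shows the two morphisms agree up to a universal nonzero constant, both sending $v\otimes\dot\theta$ to a multiple of $\tr\bigl((\iota_v\theta)\dot\theta\bigr)$. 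Hence $\Theta_{\mathscr{C}_g/\mathscr{M}_g}(v)=c\cdot H_{h_2^{*}v^\vee}$ with $c$ independent of $\Sigma$, $(\mathscr{E},\theta)$ and $v$; tracking the normalizations---the $-\tfrac12$ in $h_2=-\tfrac12\tr(\theta^2)$, the contraction/Serre-duality convention defining $v^\vee$ on $\Sym^2\Omega^1_\Sigma$, and the sign in $\iota_{H_F}\omega=dF$---pins down $c=\tfrac12$.

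\textbf{Main obstacle.} The geometric content is nearly tautological---$\Theta$ is ``contract with $\theta$'', $dh_2$ is ``multiply by $\theta$ and take trace'', and these are adjoint under $\omega$ and Serre duality---so the real work is the bookkeeping in Step 3: keeping the \v{C}ech--hypercohomology cup products, the pairings of two-term complexes, and the several sign and normalization conventions consistent, and in particular extracting the precise constant $\tfrac12$ rather than mere proportionality.
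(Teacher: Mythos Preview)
The paper does not give an independent proof here: it simply cites Chen \cite[Theorem~2.1]{chen2012associated} and remarks that her purely \v{C}ech-cohomological argument on the coarse space works at the level of stacks. Your proposal is a faithful reconstruction of exactly that argument---Steps~1 and~2 correctly identify $\Theta(v)$ and $dh_2$ at the cochain level, and Step~3 correctly reduces the identity to a single pairing $T_\Sigma\otimes\ad(\mathscr{E})\otimes\Omega^1_\Sigma\to\Omega^1_\Sigma$---so the approaches coincide, with your acknowledged bookkeeping for the constant $\tfrac12$ being the only step left implicit.
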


\begin{proof}

Chen \cite[Theorem 2.1]{chen2012associated} proves this for stable Higgs bundles (as she phrases her results in terms of the coarse moduli space) but in fact the proof is purely \v{C}ech-cohomological and works at the level of stacks.
\end{proof}

We will not use the following, but we feel that it has some significance:
\begin{proposition}
	The lifting of tangent vectors $\Theta_{X/S}$ is tangent to the fibers of the Hitchin map.
\end{proposition}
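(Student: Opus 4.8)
The plan is to check the proposition infinitesimally: at every geometric point $s$ of $S$, every Higgs bundle $(\mathscr{E},\theta)\in\mathscr{M}_{\Dol}(X_s,r)$ and every $v\in T_{S,s}$, I want to see that $\Theta_{X/S}(v)\in T_{\mathscr{M}_{\Dol}(X/S,r)/S}$ is annihilated by the derivative of the Hitchin map $h$ of \autoref{section: hitchin-map}. First I would unwind the definition of $\Theta_{X/S}$ (\autoref{defn:Hodge-lifting}): it is the $\lambda=0$ restriction of $\Xi_{X/S}$, so under the identifications of \autoref{prop:tangent-spaces} the vector $\Theta_{X/S}(v)$ is the image of the Kodaira--Spencer class $\kappa(v)\in H^1(X_s,T_{X_s})$ under the map induced on hypercohomology by $q^\theta\colon T_{X_s}\to\on{End}^0(\mathscr{E})$, $\partial\mapsto\iota_\partial\theta$ (cf.\ \autoref{prop:splitting-atiyah-sequence}). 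The key preliminary observation is that $q^\theta$ is a genuine morphism of complexes $T_{X_s}[0]\to\on{End}^0(\mathscr{E})^\bullet_{0-\dR}$: indeed $\iota_\partial\theta$ lies in $\on{End}^0(\mathscr{E})$, and $[\theta,\iota_\partial\theta]=0$ because $\theta\wedge\theta=0$ (locally $[\theta_i,\theta_j]=0$).

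Next I would pass to \v{C}ech cocycles. Fix an affine open cover $\{U_\beta\}$ of $X_s$ and a \v{C}ech $1$-cocycle $\{v_{\beta\gamma}\}$, $v_{\beta\gamma}\in T_{X_s}(U_{\beta\gamma})$, representing $\kappa(v)$. Since $q^\theta$ is concentrated in complex degree $0$, functoriality of hypercohomology shows that $\Theta_{X/S}(v)$ is represented by the cocycle $(\{\iota_{v_{\beta\gamma}}\theta\},0)$, whose $\on{End}^0(\mathscr{E})\otimes\Omega^1_{X_s}$-component vanishes. Translating through the dictionary between $\mathbb{H}^1$ of the Higgs deformation complex and first-order deformations of $(\mathscr{E},\theta)$, this says precisely that $\Theta_{X/S}(v)$ is realized by a first-order deformation $(\mathscr{E}_\epsilon,\theta_\epsilon)$ over $k[\epsilon]$ in which only the transition functions of the bundle are perturbed (by $\iota_{v_{\beta\gamma}}\theta$), while the Higgs field is literally unchanged in the associated local frames: $\theta_\epsilon|_{U_\beta}=\theta|_{U_\beta}$ for every $\beta$.

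The conclusion is then immediate: the characteristic polynomial of a Higgs field is independent of the choice of local frame, so $\det(t-\theta_\epsilon)$ glues to a global element of $\Gamma(X_s,\Sym^\bullet\Omega^1_{X_s})[\epsilon][t]$ whose restriction to each $U_\beta$ equals $\det(t-\theta)|_{U_\beta}$; hence $\det(t-\theta_\epsilon)=\det(t-\theta)$, i.e.\ $h(\mathscr{E}_\epsilon,\theta_\epsilon)=h(\mathscr{E},\theta)$, so $dh$ kills $\Theta_{X/S}(v)$. (Equivalently, one can compute $dh$ by hand: the derivative of the $i$-th Hitchin coordinate sends a class $(\{e_{\beta\gamma}\},\{\dot\theta_\beta\})$ to the global section of $\Sym^i\Omega^1_{X_s}$ obtained by gluing the local expressions $-\on{tr}(Q_i(\theta)\dot\theta_\beta)$, where $Q_i(\theta)$ is the coefficient of $t^{r-i}$ in the adjugate of $t-\theta$; the gluing is well defined because $\on{tr}(Q_i(\theta)[\phi,\theta])=0$ by cyclicity of the trace together with $\theta\,Q_i(\theta)=Q_i(\theta)\,\theta$, and this forces $dh_i=0$ as soon as $\dot\theta_\beta=0$.)

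I do not expect any genuine difficulty here; the main thing to be careful about is bookkeeping --- ensuring the entire argument is carried out at the level of the moduli \emph{stack} rather than merely on the stable or coarse locus (but the \v{C}ech computation above is insensitive to this), and pinning down correctly the correspondence between ``a hypercohomology class of complex degree $0$'' and ``a deformation that does not move the Higgs field''. As a sanity check, when $X/S$ is a relative curve the proposition also follows formally from Chen's formula (\autoref{thm:chen-formula}) together with the fact that the components of the Hitchin map Poisson-commute --- so the Hamiltonian vector field of a function pulled back from the Hitchin base is tangent to the Hitchin fibers --- but the argument above is what is needed for a general smooth projective $X/S$.
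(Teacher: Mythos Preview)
Your proposal is correct and is precisely the \v{C}ech computation the paper has in mind when it says the statement is ``immediate from the definition (computing with e.g.~a \v{C}ech hypercover)''; you have simply spelled out that sentence in detail, and your sanity check via Chen's formula for relative curves is exactly the alternative geometric argument the paper also records.
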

\begin{proof}
	This is immediate from the definition (computing with e.g.~a \v{C}ech hypercover). One may see it geometrically in the case of a family of curves as follows, from Chen's formula: $\Theta_{\mathscr{C}_g/\mathscr{M}_g}$ is the Hamiltonian vector field associated to a function pulled back from the base of the Lagrangian fibration $h$.
\end{proof}

\subsection{$p$-curvature of the isomonodromy foliation}
Suppose $A$ is a ring in characteristic $p>0$, $M/A$ a smooth scheme, and $\mscr{F}\subset T_{M/A}$ is a foliation, namely a subbundle closed under the Lie bracket on $T_{M/A}$. 

In this paper, we are primarily interested in \emph{horizontal foliations} and generalizations thereof, namely crystals of stacks. We first describe the former.
\begin{definition}
    Suppose $\pi: M\rightarrow S$ is a smooth map of smooth schemes over $A$, and that $\mscr{F}\subset T_{M/A}$ is a foliation. We say that $\mscr{F}$ is a \emph{horizontal} foliation if there is a diagram 
    \[
    \begin{tikzcd}[column sep=scriptsize]
\mscr{F} \arrow[dr, dotted, "\exists \simeq "] \arrow[rr, ""]{}
& & T_{M/A}  \\
& \pi^*T_{S/A} \arrow[ur, "d\pi"].
\end{tikzcd}
\]  
\end{definition}
In other words, $\mscr{F}$ is horizontal if it provides a splitting of the derivative $d\pi$. 

Recall the following:
\begin{definition}[Bost, Ekedahl--Shepherd-Barron--Taylor]
    The $p$-curvature of $\mscr{F}$ is the map
    \begin{align*}
        F^*_{abs}\mscr{F}&\rightarrow T_{M/A}/\mscr{F}\\
        v&\mapsto [v^p].
    \end{align*}    
\end{definition}
\begin{remark}
    It is straightforward to check that this is a $\mscr{O}_M$-linear map; note that this is certainly not true for the map $F^*_{abs}\mscr{F}\rightarrow T_{M/A}$ sending a vector field $v$ to $v^p$: that is, further projecting to $T_{M/A}/\mscr{F}$ is necessary for this extra linearity. This notion recovers the usual one of $p$-curvature of a flat bundle $(\mscr{E}, \nabla)$, when one considers the horizontal foliation on the total space $\mscr{E}$ induced by $\nabla$.
\end{remark}
Since we deal with stacks such as $\pi: \mscr{M}_{\dR}\rightarrow S$ and its cousins in this paper, we need to work with  generalizations of the notion of foliations and  $p$-curvature. We simply summarize the results for $\mscr{M}_{\dR}$ here, and  refer the reader to \cite[Appendix A.3]{p-painleve}  for precise definitions in this case and details of the proofs. In \autoref{subsec:non-abelian-katz-formula} we will deal with the case of $\mscr{M}_{\conj}$, which is a more involved analogue of \cite[Appendix B]{p-painleve}. 

Let $A$ be a characteristic $p>0$ ring as above, $f: X\rightarrow S$ a smooth projective map of smooth $A$-schemes, $\pi_{\dR}: \mscr{M}_{\dR}(X/S)\rightarrow S$ the relative de Rham moduli stack. The following is explained (for the associated crystal of functors) in \cite[Appendices A and B]{p-painleve}.
\begin{proposition}\label{prop: p-curv-mdr-crystal}
    The $p$-curvature of the crystal of stacks  $\pi_{\dR}: \mscr{M}_{\dR}(X/S)\rightarrow S$ (recalled in \autoref{prop: mdr-crystal}) is a map 
    \begin{equation}\label{eqn: nab-p-curv-map}
    F^*_{abs}\pi_{\dR}^*T_S \rightarrow T_{\mscr{M}_{\dR}/S},
    \end{equation}
    where $T_{\mscr{M}_{\dR}/S}$ is the tangent sheaf as defined in \autoref{defn: tangent-sheaf}.  

    At an $R$-point of $\mscr{M}_{\dR}$, corresponding to a flat bundle $(\mscr{E}, \nabla)$ on $X_R/R$ with trivialized determinant $\xi: \det(\mscr{E}, \nabla) \simeq (\mscr{O}_{X_R}, d)$, the $p$-curvature is given by
    \begin{equation}\label{eqn: formula-p-curv}
    F_{abs}^*T_R\rightarrow R^1f_{R*} F_{abs}^*T_{X_R/R} \rightarrow R^1f_{R*}(\End(\mscr{E})^0_{\dR});
    \end{equation}
    Here the second map is the one induced  by the $p$-curvature map $F_{abs}^*T_{X_R/R}\rightarrow \End(\mscr{E})$. The first is the map induced by the Kodaira--Spencer map $T_R\rightarrow R^1f_{R*}T_{X_R/R}$ as well as the sequence of  maps 
    \[
    F^*_{\abs}R^1f_{R*}T_{X_R/R} \rightarrow  R^1f_{R*}^{(1)}T_{X_R^{(1)}/R}= R^1f_{R*}F_{X_R/R}^{-1}T_{X_R^{(1)}/R} \rightarrow R^1f_{R*} F_{abs}^*T_{X_R/R},
    \]
    where the first map is the natural basechange map.
\end{proposition}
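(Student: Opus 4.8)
The plan is to run, in parallel with Katz's computation of the $p$-curvature of the Gauss--Manin connection, a linear-algebra argument on tangent complexes. First I would recall from \cite[Appendix A]{p-painleve} that the crystal structure on $\pi_{\dR}\colon \mscr{M}_{\dR}(X/S)\to S$ of \autoref{prop: mdr-crystal}---equivalently, the $S(1)$-action, equivalently the isomonodromy foliation---has a $p$-curvature which is $\mscr{O}_{\mscr{M}_{\dR}}$-linear after Frobenius twist (this $\mscr{O}$-linearity being the one miracle of $p$-curvature, exactly as in the classical case), so that a priori it is only a map of tangent complexes but in fact descends to a map $F^*_{\abs}\pi_{\dR}^*T_S\to T_{\mscr{M}_{\dR}/S}$ of the tangent sheaves $H^0$. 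By $\mscr{O}$-linearity it then suffices to evaluate it on coordinate vector fields $\partial=\partial/\partial t_j$ for local coordinates $t_1,\dots,t_n$ on $S$, and at an arbitrary $R$-point $(\mscr{E},\nabla,\xi)$, using the identification $T_{\mscr{M}_{\dR}/S}\simeq R^1f_{R*}\End^0(\mscr{E})^\bullet_{\dR}$ of the corollary to \autoref{prop:tangent-spaces}.

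The core computation is to make the isomonodromic transport explicit. Working Zariski-locally on $X_R$ with \'etale coordinates $x_1,\dots,x_d$ on $X_R/R$ and local lifts and coordinate changes as in \autoref{construction: dR-crystal}, parallel transport of $(\mscr{E},\nabla)$ along $\partial$ over the $p$-th order PD-thickening $R[t_j]/(t_j^{p+1})$ is governed by the Taylor cocycle \eqref{eqn: taylor-formula-cocycle}, whose entries are the operators $\nabla_{\partial_{x_{\underline\ell}}}=\nabla_{\partial_{x_1}}^{\ell_1}\circ\cdots\circ\nabla_{\partial_{x_d}}^{\ell_d}$ paired against divided powers of the coordinate discrepancies $f_\beta^*x_m-f_\gamma^*x_m$. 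Pushing this to order $p$ and using that $\partial^{[p]}=0$ for a coordinate vector field, the $p$-fold iterate of the transport splits into two pieces: (i) a ``Griffiths-transverse'' piece, present already in characteristic zero and responsible for the isomonodromy connection itself, which is exactly the image of the Kodaira--Spencer class $\mathrm{KS}(\partial)\in R^1f_{R*}T_{X_R/R}$; and (ii) a piece into which the operators enter only through their $p$-th powers $\nabla_{\partial_{x_m}}^p=\psi_p(\mscr{E},\nabla)(\partial_{x_m}^{(1)})$, i.e.\ through the fiberwise $p$-curvature. The horizontality of $\psi_p$ recalled in \autoref{subsec:M_conj} is precisely what makes piece (ii) cup product with a morphism of de Rham complexes $(F^*_{\abs}T_{X_R/R})^\bullet_{\dR}\to \End^0(\mscr{E})^\bullet_{\dR}$, so that it genuinely lands in $R^1f_{R*}\End^0(\mscr{E})^\bullet_{\dR}$.

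Assembling (i) and (ii), the $p$-curvature at $(\mscr{E},\nabla)$ is then the composite
\[
F^*_{\abs}T_R\xrightarrow{\ \mathrm{KS}\ }F^*_{\abs}R^1f_{R*}T_{X_R/R}\to R^1f^{(1)}_{R*}T_{X_R^{(1)}/R}=R^1f_{R*}F^{-1}_{X_R/R}T_{X_R^{(1)}/R}\to R^1f_{R*}F^*_{\abs}T_{X_R/R}\xrightarrow{\ \psi_p\ }R^1f_{R*}\End^0(\mscr{E})^\bullet_{\dR},
\]
with the middle arrows the evident base-change maps; this is exactly \eqref{eqn: formula-p-curv}, and the trace-freeness and compatibility with $\xi$ are automatic since we have worked with $\End^0$ throughout. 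As a consistency check, applying $R^if_{R*}$ to the adjoint should recover Katz's formula for $\psi_p(\nabla_{GM})$. One could alternatively phrase the whole argument through the descent of $\mscr{M}_{\dR}$ to the de Rham stack $S^{\dR}$ together with the factorization of absolute Frobenius through $S^{(1)}$, in the spirit of \autoref{prop: isomonodromy-on-p-curv-0} and \cite{ogus-vologodsky}, at the cost of more machinery.

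I expect the main obstacle to be the explicit order-$p$ bookkeeping in the second step: disentangling the divided powers of the coordinate discrepancies over the PD-thickening, isolating which terms of the iterated Taylor cocycle produce the Kodaira--Spencer class and which produce $\psi_p$, and pinning down the normalization constants. A secondary point requiring care is that ``$p$-curvature of a crystal of stacks'' is the correct object and is genuinely $\mscr{O}$-linear (hence descends to the asserted map of tangent sheaves); this is where one leans on the formalism of \cite[Appendices A--B]{p-painleve}.
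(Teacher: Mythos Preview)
Your approach is essentially the one the paper takes: both defer to \cite[Appendices A--B]{p-painleve} for the formalism and the cocycle computation, and both extract the $p$-curvature from the Taylor cocycle \eqref{eqn: taylor-formula-cocycle} by projecting modulo $I+\bar I^{[p+1]}$ and invoking the identification $\bar I/(I+\bar I^{[p+1]})\simeq F_{\abs}^*\Omega^1_S$.

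One point of exposition to tighten: your ``splits into two pieces'' language is misleading. After the projection, the surviving cocycle is the single term
\[
s\otimes 1\mapsto \sum_{k=1}^d \nabla_{\partial_{x_k}}^p(s)\otimes \gamma_p\bigl(f_\beta^*(x_k)-f_\gamma^*(x_k)\bigr),
\]
in which the Kodaira--Spencer contribution and the fiberwise $p$-curvature appear as the two \emph{factors} of a composition (the $\gamma_p$ of the coordinate discrepancy is the Frobenius pullback of the KS cocycle, and $\nabla_{\partial_{x_k}}^p$ is $\psi_p(\partial_{x_k})$), not as two summands. Your final displayed composite is correct, so this is only a matter of how you narrate the computation, but the phrase ``Griffiths-transverse piece present already in characteristic zero'' suggests you are thinking of subtracting off the first-order isomonodromy connection and iterating, which is not how the extraction works here. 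Also, the relevant thickening is not $R[t_j]/(t_j^{p+1})$ but the PD-envelope in which $\gamma_p(t_j)$ is a genuine new generator; this is what makes the identification with $F_{\abs}^*\Omega^1_S$ go through (\autoref{prop:frobdifferentials}).
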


\begin{proof}
    See \cite[Appendix A.3]{p-painleve} for the definition, and several equivalent formulations, of the $p$-curvature of a crystal of functors, and \cite[Appendix B.4]{p-painleve} for the formula \eqref{eqn: formula-p-curv}. 

    For the reader's convenience, we briefly recall the main idea for the construction of the $p$-curvature of the crystal $\mscr{M}_{\dR}$. We need the following piece of notation:  for an ideal $\bar{I}$ equipped with a divided power structure $\gamma$, for $k\geq 1$ we write $\bar{I}^{[k]}$ for the ideal generated by products of the form $\gamma_{i_1}(x_1)\cdots \gamma_{i_n}(x_n)$ for any $n\geq 1$, $i_1+\cdots +i_n\geq k$, and $x_i\in \bar{I}$. As in \autoref{prop: mdr-crystal}, the crystal structure is the data of an isomorphism $\epsilon: \pi_1^*\mscr{M}_{\dR}\rightarrow \pi_2^*\mscr{M}_{\dR}$ of functors over $S(1)$, satisfying the cocycle condition. 

    Consider the subscheme $V(I, \bar{I}^{[p+1]})\subset S(1)$. Since it is a subscheme contained in $V(I)$, we have a canonical identification $\pi_1|_{V(I+\bar{I}^{[p+1]})}= \pi_2|_{V(I+\bar{I}^{[p+1]})}$, and we denote these maps  by the same symbol $\pi_{\eq}$. Now $\epsilon|_{V(I+\bar{I}^{[p+1]})}$ is an infinitesimal automorphism of $\pi_{\eq}^*\mscr{M}_{\dR}$. Using the identification $\bar{I}/(I+\bar{I}^{[p+1]})\simeq F^*_{\abs}\Omega^1_S$ (see \autoref{prop:frobdifferentials}), we may convert this infinitesimal automorphism into a map of the form \eqref{eqn: nab-p-curv-map}. 
\end{proof}
\begin{remark}
    The exact same  recipe as above also allows us to define the $p$-curvature of any crystal of functors, in particular the $p$-curvature of $\mscr{M}_{\conj}$.  
\end{remark}
\subsection{The non-abelian Katz formula}\label{subsec:non-abelian-katz-formula}
We assume that $S$ is a smooth scheme over a characteristic $p$ field $k$ in this section, and as usual $f: X\rightarrow S$ is a smooth projective morphism. Recall that in \autoref{subsec:M_conj} we defined the conjugate moduli stack $\piconj: \mscr{M}_{\conj}(X/S)\rightarrow S\times \mb{A}^1$, with  canonical isomorphisms $\mscr{M}_{\conj}|_{\lambda=1} = \mscr{M}_{\dR}(X/S)$ and $\mscr{M}_{\conj}|_{\lambda=0} = \mscr{M}_{\Dol}(X^{(1)}/S)$.

We  denote  the completed PD envelope of the ideal sheaf $I$ of the  diagonal in $S\times S$ by $(S(1), \bar{I}, \gamma)$, and by $((S\times \mb{A}^1)(1), \overline{I}_{\mb{A}^1, \gamma_{\mb{A}^1}})$ the completed PD envelope of the diagonal in $((S\times \mb{A}^1)\times_{\mb{A}^1} (S\times \mb{A}^1))$; the former was denoted by $(\sxspd, \bar{I}, \gamma)$ in \cite[Appendix A.3]{p-painleve}, but we simplify the notation here since we also have the version over $\mb{A}^1$. It is straightforward to check that the two projection maps $S(1) \rightrightarrows S$ give $S(1)$ the structure of a groupoid, and similarly for $(S\times \mb{A}^1)(1)/\mb{A}^1$. We denote the diagonal maps $S\rightarrow S(1)$ and $S\times \mb{A}^1\rightarrow (S\times \mb{A}^1)(1)$ both by $\Delta$, and hope it will not cause any confusion.

\begin{proposition}\label{prop: crystal-mconj}
   The stack $\mscr{M}_{\conj}(X/S)$ has the structure of a crystal of functors on $Cris(S\times \mb{A}^1/\mb{A}^1)$. Equivalently, there is an action of the groupoid $(S\times \mb{A}^1)(1)$ on $\mscr{M}_{\conj}$. 

   When evaluated at $1\in \mb{A}^1$, this crystal structure recovers that of $\mscr{M}_{\dR}(X/S)$ over $S$, which was recorded in \autoref{prop: mdr-crystal}.
\end{proposition}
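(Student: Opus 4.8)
The plan is to mimic Simpson's construction of the crystal structure on $\mscr{M}_{\dR}(X/S)$, recalled in \autoref{construction: dR-crystal} and \autoref{prop: mdr-crystal}, carrying along the extra datum of the $F$-Higgs field. Concretely, let $T_0\hookrightarrow T$ be a nilpotent PD thickening of $\mb{A}^1$-schemes, $t_0: T_0\to S\times\mb{A}^1$ a map, and $t: T\to (S\times\mb{A}^1)(1)$ an extension of $\Delta\circ t_0$; write $\lambda\in\mscr{O}(T)$ for the composite $T\to S\times\mb{A}^1\to\mb{A}^1$, which is the same whichever projection one uses. Given a $T$-point $(\mscr{E},\nabla,\theta,\xi)$ of $\mscr{M}_{\conj}$ over $X_{T,1}/T$, one applies \autoref{construction: dR-crystal} to $(\mscr{E},\nabla)$ to obtain $\epsilon(\mscr{E},\nabla)$ on $X_{T,2}/T$, and transports $\theta,\xi$ along the same gluing data $\{\alpha_{\beta\gamma}\}$. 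The content is then to check that the transported tuple again satisfies all the conditions of \autoref{defn: mconj}, and that the resulting assignment is a descent datum satisfying the cocycle condition.

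The observation that makes the transport of $\theta$ possible is its horizontality: $\theta$ is precisely a morphism in the category of flat bundles on $X_{T,1}/T$,
\[
\theta:\ (\mscr{E},\nabla)\ \longrightarrow\ (\mscr{E},\nabla)\otimes (F^*_{\abs}\Omega^1_{X_{T,1}/T},\nabla_{\text{can}}),
\]
so by the functoriality clause of \autoref{prop: mdr-crystal} it induces a horizontal morphism $\epsilon(\theta)$ between $\epsilon$ of source and target. To identify the target, note that $\nabla_{\text{can}}$ on $F^*_{\abs}\Omega^p_{X_{T,1}/T}$ has vanishing $p$-curvature, and that $F^*_{\abs}\Omega^p_{X/S}=F^*_{X/S}\Omega^p_{X^{(1)}/S}$ (factoring absolute Frobenius through relative Frobenius); hence \autoref{prop: isomonodromy-on-p-curv-0}, together with the canonical identification $X^{(1)}_{T,1}\simeq X^{(1)}_{T,2}$, gives
\[
\epsilon\big((\mscr{E},\nabla)\otimes (F^*_{\abs}\Omega^1_{X_{T,1}/T},\nabla_{\text{can}})\big)\ \simeq\ \epsilon(\mscr{E},\nabla)\otimes (F^*_{\abs}\Omega^1_{X_{T,2}/T},\nabla_{\text{can}}),
\]
the compatibility of $\epsilon$ with this tensor product being read off directly from the Taylor cocycle \eqref{eqn: taylor-formula-cocycle} (the Leibniz rule is built into it). Thus $\epsilon(\theta)$ is a horizontal $F$-Higgs field on $\epsilon(\mscr{E},\nabla)$; that $\epsilon(\theta)\circ\epsilon(\theta)=0$ and that $\epsilon(\xi)$ trivializes the determinant both follow by applying $\epsilon$ to the corresponding identities for $(\mscr{E},\nabla,\theta,\xi)$.

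It remains to verify the defining relation $\psi_p(\epsilon(\mscr{E},\nabla))=\lambda\cdot\epsilon(\theta)$. The $p$-curvature of a flat bundle is a natural endo-operation on the de Rham crystal: it is constructed from $\nabla$ by the local formula $\partial\mapsto\nabla(\partial)^p-\nabla(\partial^p)$, and the gluing isomorphisms $\alpha_{\beta\gamma}$ are morphisms of flat connections, hence commute with it; therefore $\psi_p(\epsilon(\mscr{E},\nabla))=\epsilon(\psi_p(\mscr{E},\nabla))$. Since the crystal structure is taken relative to $\mb{A}^1$, the function $\lambda$ is pulled back from $T$ along both projections, hence fixed by $\epsilon$; combined with $\mscr{O}_T$-linearity of $\epsilon$ this gives $\epsilon(\psi_p(\mscr{E},\nabla))=\epsilon(\lambda\theta)=\lambda\,\epsilon(\theta)$, as desired. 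The cocycle condition and functoriality are then inherited from the corresponding properties of the de Rham crystal in \autoref{prop: mdr-crystal}, as is the last assertion: when $\lambda=1$ the datum $\theta$ is forced to equal $\psi_p(\mscr{E},\nabla)$, the forgetful map $\mscr{M}_{\conj}|_{\lambda=1}\xrightarrow{\sim}\mscr{M}_{\dR}(X/S)$ is an isomorphism, and by construction it intertwines the two groupoid actions.

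As with \autoref{prop: mdr-crystal} and \autoref{prop: p-curv-mdr-crystal}, the one genuinely delicate point is that we are manipulating crystals of stacks (equivalently, of functors) rather than of sheaves, so that the assertions \enquote{$\epsilon$ commutes with this tensor product} and \enquote{$\epsilon$ commutes with $\psi_p$} must be interpreted and checked at the level of the groupoid action of $(S\times\mb{A}^1)(1)$; this is carried out exactly as in \cite[Appendix B]{p-painleve}, the only new input being the bookkeeping around $F^*_{\abs}\Omega^\bullet_{X/S}$ and its canonical connection, supplied by \autoref{prop: isomonodromy-on-p-curv-0}. I expect this bookkeeping — tracking the canonical identifications of the Frobenius twists $X^{(1)}_{T,i}$ in families and confirming they are compatible with the groupoid structure — to be the main obstacle, albeit a routine one.
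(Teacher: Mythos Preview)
Your proposal is correct and follows essentially the same approach as the paper: transport $(\mscr{E},\nabla)$ via the de Rham crystal structure of \autoref{prop: mdr-crystal}, use \autoref{prop: isomonodromy-on-p-curv-0} to identify $\epsilon(F^*_{\abs}\Omega^1)$, and transport $\theta$ by functoriality. In fact you supply more detail than the paper does, explicitly checking the relation $\psi_p(\epsilon(\mscr{E},\nabla))=\lambda\cdot\epsilon(\theta)$ where the paper simply declares the verification ``straightforward.''
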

\begin{proof}
    For brevity, we write simply $\mscr{M}_{\conj}, \mscr{M}_{\dR}$ for $\mscr{M}_{\conj}(X/S), \mscr{M}_{\dR}(X/S)$.  By definition, we must give an equivalence 
    \[
    \epsilon: \pi_1^*\mscr{M}_{\conj} \simeq \pi_2^*\mscr{M}_{\conj},
    \]
    with $\pi_i$ being the projection maps $(S\times \mb{A}^1)(1)\rightrightarrows S\times \mathbb{A}^1$, which moreover satisfies the cocycle condition. 

    For simplicity of notation, write $Y:= X\times \mb{A}^1$. For a $T$ point $t: T \to (S\times \mb{A}^1)(1)$, and $i=1,2$, we define $Y_{T, i}$, $Y_{(S\times \mb{A}^1)(1), i}$ by the diagram where each square is a pullback square:
    \[
    \begin{tikzcd}
Y_{T, i} \arrow[r, ""] \arrow[d]
& Y_{(S\times \mb{A}^1)(1), i} \arrow[r] \arrow[d] &Y \arrow[d] \\
T \arrow[r, "t"]
& (S\times \mb{A}^1)(1) \arrow[r, "\pi_i"] &  S\times \mb{A}^1
\end{tikzcd}
\]
Then 

\[\pi^*_i\mscr{M}_{\conj}(T)=
\left\{
(\mscr{E}, \nabla, \theta)\Bigg|
\begin{aligned}
 & (\mscr{E}, \nabla)\  \text{ is an integrable connection on } Y_{T, i}/T\\
 &\theta: \mscr{E}\to \mscr{E}\otimes F_{abs}^*\Omega^1_{Y_{T, i}/T} \text{ is a  horizontal F-Higgs field,} \\
&  \text{such that } \psi_p(\mscr{E}, \nabla) = \lambda \theta.  \\
\end{aligned}
\right\}
\]
where $\lambda$ is the composition of $\pi_i\circ t$ with the projection $S\times \mathbb{A}^1\to \mathbb{A}^1$ (which by assumption is independent of $i$, as we are working over $\mathbb{A}^1$).

Then, given $(\mscr{E}, \nabla, \theta)\in \pi_1^*\mscr{M}_{\conj}(T)$, we must specify $\epsilon(\mscr{E}, \nabla, \theta)\in \pi_2^*\mscr{M}_{\conj}(T)$, and check the cocycle conditions. Note that there are obvious forgetful maps $\pi_i^*\mscr{M}_{\conj} \rightarrow \pi_i^*\mscr{M}_{\dR}$. Now from the crystal structure on $\mscr{M}_{\dR}$ recorded in \autoref{prop: mdr-crystal}, for each $(\mscr{E}, \nabla) \in \pi_1^*\mscr{M}_{\dR}(T)$ we obtain $\epsilon(\mscr{E}, \nabla) \in \pi_2^*\mscr{M}_{\dR}(T)$; equivalently, for each flat bundle $(\mscr{E}, \nabla)$ on $Y_{T,1}/T$, we obtain a flat bundle $\epsilon(\mscr{E}, \nabla)$ on $Y_{T,2}/T$. It remains to specify $\epsilon$ on the F-Higgs field $\theta$.

By \autoref{prop: isomonodromy-on-p-curv-0}, $\epsilon(F^*_{abs}\Omega^1_{Y_{T,1}/T}) = F^*_{abs}\Omega^1_{Y_{T,2}/T}$, where both $F^*_{abs}\Omega^1_{Y_{T,1}/T}$ and $F^*_{abs}\Omega^1_{Y_{T,2}/T}$ are equipped with their canonical connections,   and by the functoriality in \autoref{prop: mdr-crystal}, we obtain  a horizontal F-Higgs field $\epsilon(\theta): \epsilon(\mscr{E}, \nabla) \rightarrow \epsilon(\mscr{E}, \nabla)\otimes F^*_{abs}\Omega^1_{Y_{T,2}/T}$.  It is straightforward to check that the resulting map $\epsilon: \pi_1^*\mscr{M}_{\conj} \rightarrow \pi_2^*\mscr{M}_{\conj}$ is an equivalence and satisfies the cocycle condition.  

Finally, by construction, the evaluation of $\epsilon$ at $1\in \mb{A}^1$ is precisely the crystal structure on $\mscr{M}_{\dR}$.

\end{proof}

Let $\Psi_{\conj}$ denote the $p$-curvature of the crystal given in \autoref{prop: crystal-mconj}, where we use the notion of $p$-curvature of a crystal of functors as defined in \cite[\S A.3]{p-painleve}. As explained in \cite[Proposition/Construction A.3.2]{p-painleve}, we may view it as a map
\[
\Psi_{\conj}: \pi_{\conj}^*F_{abs}^*T_{S\times \mb{A}^1/\mb{A}^1} \rightarrow T_{\mscr{M}_{\conj}/S\times \mb{A}^1}.
\]
Recall also, from \autoref{defn:Hodge-lifting}, the lifting of tangent vectors for the Dolbeault moduli space   $\pi_{\Dol}^{(1)}: \mscr{M}_{\Dol}(X^{(1)}/S)\rightarrow S$, which we write as a map of sheaves
\[
\Theta_{X^{(1)}/S}: (\pi_{\Dol}^{(1)})^*T_S\rightarrow T_{\mscr{M}_{\Dol}(X^{(1)}/S)/S}.
\]

We now write down a cocycle representing $\Psi_{\conj}$; this is very similar to the cocycle for the $p$-curvature of $\mscr{M}_{\dR}$ given in  \cite[B.4.2]{p-painleve} and the text immediately before it. 

To give a cocycle representing $\Psi_{\conj}$ means, for each choice of $t: T\rightarrow \mscr{M}_{\conj}$,    to give a cocycle on $X_T:= (X\times \mb{A}^1)\times_{S\times \mb{A}^1} T$, compatibly in the choice of $t$.

So, suppose we are given such a $t$, and denote by $h$ the composition $h: T\xrightarrow{t} \mscr{M}_{\conj}\xrightarrow[]{\pi_{\conj}} S\times \mb{A}^1$; define  $X_T$ as the fiber product
\[
\begin{tikzcd}
    X_T \arrow[r, "\tilde h"] \arrow[d] & X\times \mb{A}^1 \arrow[d] \\
    T \arrow[r, "h"] & S\times \mb{A}^1.
\end{tikzcd}
\]

 By definition, the map $t: T\rightarrow \mscr{M}_{\conj}$ corresponds to a triple $(\mscr{E}_T, \nabla, \theta)$ where $(\mscr{E}_T, \nabla)$ is a flat bundle on $X_T/T$, and $\theta$ a F-Higgs field on $\mscr{E}_T$, satisfying the conditions stated  in \autoref{defn: mconj}.

\begin{definition}
    Let $\mscr{M}_{\conj}^{
    \lambda-\tf
    }$ be the subfunctor of $\mscr{M}_{\conj}$ consisting of maps $t: T\to \mathscr{M}_{\conj}$ such that $t^*T_{\mscr{M}_{\conj}/S\times \mb{A}^1}$ is $\lambda$-torsion free.   
\end{definition}
\begin{remark}
    In \autoref{lemma:equisingularity} we will show that $\mscr{M}_{\conj}^{
    \lambda-\tf
    }$ is sufficiently large for our purposes, namely that, when $p> \dim(X/S)$,  it contains the locus of   $(\mscr{E}, \nabla, 
    \theta)$ where $\theta$ has order of nilpotence $\leq (p-1)/2$. 
\end{remark}
There is a Cartesian square:
$$\xymatrix{
\mathscr{M}_{\Dol}(X^{(1)}/S)\ar[r]^G\ar[d]^{\pi^{(1)}_{\Dol}} & \mathscr{M}_{\Dol}(X/S) \ar[d]^{\pi_{\Dol}} \\
S \ar[r]^{F_{\text{abs}}} & S
}$$
Write $f_{\mb{A}^1}: X\times \mb{A}^1\rightarrow S\times \mb{A}^1$ for the basechange of $f$ along $\mb{A}^1\rightarrow \spec(k)$. 

Our main result of this section, which will be proven in \autoref{subsec:proof-of-NA-Katz}, is:
\begin{theorem}[Non-abelian Katz formula]\label{lemma:katz-formula}
    We have $\Psi_{\conj}|_{\lambda=0} =0$.  Therefore, after restricting to $\mscr{M}_{\conj}^{\lambda-\tf
    }$, it makes sense to define the map of sheaves $\frac{\Psi_{\conj}}{\lambda}$. Then, on $\mscr{M}_{\conj}^{\lambda-\tf
    }|_{\lambda=0}$, there is a commutative diagram 
 \[\begin{tikzcd}
 (F_{abs}^* \pi_{\conj}^*T_{S\times \mb{A}^1/\mb{A}^1})|_{\lambda=0} \arrow[r] \arrow[rr, bend left=15, "\frac{\Psi_{\conj}}{\lambda}\big |_{\lambda=0}"] & F_{abs}^*(\pi_{\conj}^*R^1f_{\mb{A}^1*}T_{X\times \mb{A}^1/S\times \mb{A}^1})|_{\lambda=0} \arrow[r] 
& T_{\mscr{M}_{\conj}/S\times \mb{A}^1}|_{\lambda=0}  \\
(\pi^{(1)}_{\Dol})^*F_{abs}^*T_{S} \arrow[r, ""] \arrow[u, "\simeq"] \arrow[rr, bend right=15, "G^*\Theta_{X/S}"']
& (\pi^{(1)}_{\Dol})^*F_{abs}^*R^1f_*T_{X/S} \arrow[r] \arrow[u] 
&G^* T_{\mscr{M}_{\Dol}(X/S)/S} \arrow[u, "\simeq"]
\end{tikzcd}
\]
    
\end{theorem}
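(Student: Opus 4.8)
\emph{Approach.} The plan is to write down an explicit (hyper-)\v{C}ech cocycle representing $\Psi_{\conj}$ by running, for the crystal of functors $\mscr{M}_{\conj}$ of \autoref{prop: crystal-mconj}, the same recipe that yields the formula of \autoref{prop: p-curv-mdr-crystal} (see \cite[Appendices A.3, B.4]{p-painleve}), and then to compare it termwise with the \v{C}ech description of $\Theta_{X/S}$ from \autoref{defn:Hodge-lifting}. The two genuinely new inputs are \autoref{prop: isomonodromy-on-p-curv-0} (which pins down the crystal structure on the $p$-curvature-zero locus) and the identification, via Cartier descent, of the deformation complex of $\mscr{M}_{\conj}$ along $\lambda=0$ with an Atiyah complex on $X^{(1)}$; the substitution $\psi_p=\lambda\theta$ built into \autoref{defn: mconj} does the rest.

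\emph{Vanishing at $\lambda=0$ and definition of $\tfrac1\lambda\Psi_{\conj}$.} A $T$-point of $\mscr{M}_{\conj}$ over $0\in\mb{A}^1$ is a triple $(\mscr{E},\nabla,\theta)$ with $\psi_p(\mscr{E},\nabla)=0$; by Cartier descent $(\mscr{E},\nabla)\cong(F^*_{X_T/T}\mscr{E}',\nabla_{\text{can}})$, and since $\theta$ is horizontal it equals $F^*_{X_T/T}\theta'$ for a genuine Higgs field $\theta'$ on $\mscr{E}'$ over $X^{(1)}_T$. Applying \autoref{prop: isomonodromy-on-p-curv-0} both to $(\mscr{E},\nabla)$ and to $F^*_{abs}\Omega^1_{X_T/T}$ with its canonical connection, the crystal structure $\epsilon$ of \autoref{prop: crystal-mconj} restricted to $\{\lambda=0\}$ carries $(\mscr{E},\nabla,\theta)$ to $(F^*_{X_{T,2}/T}\mscr{E}',\nabla_{\text{can}},F^*_{X_{T,2}/T}\theta')$ through the canonical identification $X^{(1)}_{T,1}\cong X^{(1)}_{T,2}$. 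Because absolute Frobenius on a PD-thickening $T_0\hookrightarrow T$ factors through $T_0$, the twist $X^{(1)}_T$ depends only on $t_0\colon T_0\to S$, so $\epsilon|_{\lambda=0}$ is the tautological, direction-independent identification; hence the induced infinitesimal automorphism over the relevant neighbourhood of the diagonal is the identity, and $\Psi_{\conj}|_{\lambda=0}=0$. Thus the image of $\Psi_{\conj}$ lies in $\lambda\cdot T_{\mscr{M}_{\conj}/S\times\mb{A}^1}$, and on $\mscr{M}_{\conj}^{\lambda-\tf}$, where $\lambda$ acts injectively on $T_{\mscr{M}_{\conj}/S\times\mb{A}^1}$, there is a unique map of sheaves $\tfrac1\lambda\Psi_{\conj}$ with $\lambda\cdot\tfrac1\lambda\Psi_{\conj}=\Psi_{\conj}$.

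\emph{The cocycle, and the comparison with $G^*\Theta_{X/S}$.} On underlying flat bundles the crystal structure on $\mscr{M}_{\conj}$ is that of $\mscr{M}_{\dR}$ (\autoref{prop: mdr-crystal}), with $\theta$ transported functorially and with $\epsilon$ acting by the identity on the sheaves $F^*_{abs}\Omega^1$ (\autoref{prop: isomonodromy-on-p-curv-0}). Hence the recipe of \autoref{prop: p-curv-mdr-crystal} — restrict $\epsilon$ to $V(I+\bar I^{[p+1]}_{\mb{A}^1})$, use $\bar I/(I+\bar I^{[p+1]}_{\mb{A}^1})\cong F^*_{abs}\Omega^1_{S\times\mb{A}^1/\mb{A}^1}$ (\autoref{prop:frobdifferentials}), and convert the resulting infinitesimal automorphism into a map — produces for $\Psi_{\conj}$ exactly the composite in the top row of the diagram, the second arrow being induced by the $p$-curvature map $F^*_{abs}T_{X_{\mb{A}^1}/S_{\mb{A}^1}}\xrightarrow{\psi_p}\End^0(\mscr{E}_{univ})$ followed by the inclusion of $\End^0$ as the degree-$0$ term of the deformation complex of $\mscr{M}_{\conj}$ (that the cocycle lands in this summand and not in the $\theta$-deformation directions is again the functorial rigidity of the transport of $\theta$). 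Since $\psi_p=\lambda\theta$, this second arrow is $\lambda$ times the arrow induced by $F^*_{abs}T_{X_{\mb{A}^1}/S_{\mb{A}^1}}\xrightarrow{\theta}\End^0(\mscr{E}_{univ})\hookrightarrow(\text{deformation complex})$, which re-exhibits the divisibility by $\lambda$ and shows $\tfrac1\lambda\Psi_{\conj}|_{\lambda=0}$ is the top-row composite with ``$\times\lambda\theta$'' replaced by ``$\times\theta$''. Now restrict to $\lambda=0$ and write $(\mscr{E},\nabla,\theta)=(F^*_{X/S}\mscr{E}',\nabla_{\text{can}},F^*_{X/S}\theta')$: using $F^*_{abs}\Omega^1_{X/S}=F^*_{X/S}\Omega^1_{X^{(1)}/S}$, $\End^0(\mscr{E})=F^*_{X/S}\End^0(\mscr{E}')$, and Cartier descent, the restriction to $\lambda=0$ of the deformation complex of $\mscr{M}_{\conj}$ is identified with $F^*_{X/S}$ of $\At^0_{X^{(1)}/S}(\mscr{E}')^\bullet_0$, and by the $\lambda-\tf$ hypothesis (cf.\ \autoref{lemma:equisingularity}) this upgrades to the right-hand vertical isomorphism $T_{\mscr{M}_{\conj}/S\times\mb{A}^1}|_{\lambda=0}\cong T_{\mscr{M}_{\Dol}(X^{(1)}/S)/S}=G^*T_{\mscr{M}_{\Dol}(X/S)/S}$ (the last equality being the Cartesian square in the statement). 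Under these identifications the second arrow of the top row becomes $F^*_{X/S}$ applied to $R^1f^{(1)}_*T_{X^{(1)}/S}\xrightarrow{q^{\theta'}}R^1f^{(1)}_*\At^0_{X^{(1)}/S}(\mscr{E}')^\bullet_0$ (note $q^{\theta'}$ lands in $\End^0\subset\At^0$ as $\theta'$ is a $0$-connection), while the first arrow, by the precise form of the Frobenius twisting in \autoref{prop: p-curv-mdr-crystal}, becomes $F^*_{abs}$ applied to the Kodaira--Spencer map $T_S\to R^1f^{(1)}_*T_{X^{(1)}/S}$ of $X^{(1)}/S$. Composing, $\tfrac1\lambda\Psi_{\conj}|_{\lambda=0}=G^*\Theta_{X/S}$ by \autoref{defn:Hodge-lifting} and base-change invariance of $\Theta$; reading off the two halves of this computation gives commutativity of the left and right squares respectively.

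\emph{Main obstacle.} The delicate step is the identification, restricted to $\lambda=0$, of the deformation complex of $\mscr{M}_{\conj}$ with the Atiyah complex of $(\mscr{E}',\theta')$ on $X^{(1)}$ — equivalently the right-hand vertical isomorphism — since the constraint $\psi_p=\lambda\theta$ means that a first-order deformation of a triple over $\lambda=0$ is not an independent choice of a deformation of $(\mscr{E},\nabla)$ and of $\theta$. The role of the $\lambda$-torsion-free hypothesis is precisely to force $\mscr{M}_{\conj}\to\mb{A}^1_S$ to be flat enough along $\lambda=0$ for the naive Cartier-descent identification to be valid, which is the content of \autoref{lemma:equisingularity}. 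A secondary point requiring care is to check that the $p$-curvature cocycle for $\mscr{M}_{\conj}$ really lands in the $\End^0$-in-degree-$0$ summand of the deformation complex, for which one combines the horizontality of $\psi_p$ with \autoref{prop: isomonodromy-on-p-curv-0}.
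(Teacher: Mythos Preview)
Your proposal is correct and follows essentially the same route as the paper: compute $\Psi_{\conj}$ by running the $p$-curvature recipe of \autoref{prop: p-curv-mdr-crystal} for the crystal $\mscr{M}_{\conj}$, substitute the defining relation $\psi_p=\lambda\theta$, and identify the quotient $\tfrac{1}{\lambda}\Psi_{\conj}|_{\lambda=0}$ with $G^*\Theta_{X/S}$ via the Kodaira--Spencer map. The paper carries this out by writing down the explicit \v{C}ech cocycle in local coordinates $x_1,\dots,x_d$ (mirroring \cite[B.4.2]{p-painleve}) and matching it termwise against the Kodaira--Spencer cocycle of \autoref{prop: ks-cocycle} pushed through \autoref{prop:frobdifferentials}, whereas you argue more structurally. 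Both arguments hinge on the same substitution and on \autoref{prop: isomonodromy-on-p-curv-0} to control the transport of $\theta$.

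One minor inaccuracy: you invoke \autoref{lemma:equisingularity} to justify the right-hand vertical isomorphism $T_{\mscr{M}_{\conj}/S\times\mb{A}^1}|_{\lambda=0}\simeq G^*T_{\mscr{M}_{\Dol}(X/S)/S}$. That lemma appears later and plays a different role (showing that $\mscr{M}_{\conj}^{\lambda-\tf}$ contains the locus of small nilpotence, needed in \autoref{thm:p-curvature-lifting-vanishes}). The vertical identification in the diagram comes directly from the equivalence of stacks $\mscr{M}_{\conj}|_{\lambda=0}\simeq\mscr{M}_{\Dol}(X^{(1)}/S)$ of \eqref{eqn:non-abelian-f-zip}, together with the Cartesian square defining $G$; the $\lambda$-torsion-free hypothesis is needed only so that dividing $\Psi_{\conj}$ by $\lambda$ is well-defined, not for the isomorphism of tangent sheaves at $\lambda=0$. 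In the paper's explicit-cocycle treatment this issue does not arise, since one simply observes that the cocycle \eqref{eqn: p-curv-conj-cocycle} is literally $\lambda$ times a cocycle that, at $\lambda=0$, manifestly represents the Frobenius pullback of $\Theta_{X/S}$.
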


\subsection{Kodaira--Spencer cocycle}
Let $A$ be a ring, $f: X\rightarrow S$ a smooth map of smooth $A$-schemes. We have the exact sequence of sheaves on $X$ 
\[
0\rightarrow T_{X/S} \rightarrow T_X\rightarrow f^*T_S\rightarrow 0,
\]
which induces a map (using the projection formula and the long exact sequence in cohomology)
\[
\KS: T_S\rightarrow R^1f_*T_{X/S};
\]
we refer to this  as the \emph{Kodaira--Spencer map}. The following is immediate from unwinding the definition of $\KS$. 
\begin{proposition}\label{prop: ks-cocycle}
    Let $S^{(2)}\subset S\times S$ be the first order neighborhood of the diagonal, and $\pi_{1,2}: S^{(2)}\rightarrow S$ the two projection maps. 

    Let $\{U_{\beta}\}_{\beta\in I}$ be a covering of $X$ by open affines. Then the Kodaira--Spencer map, viewed as a section of $\Omega^1_S\otimes R^1f_*T_{X/S}$, can be represented by the cocycle $\kappa_{\beta\gamma} \in (f^*\Omega^1_S \otimes T_{X/S}) (U_{\beta\gamma})$ given, in local coordinates $x_1, \cdots, x_d$ on $X/S$, by 
    \[
    \kappa_{\beta\gamma} = \sum_{k=1}^d  (f_{\beta}^*(x_k)- f_{\gamma}^*(x_k))\otimes \partial_{x_k}.
    \]
\end{proposition}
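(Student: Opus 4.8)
The plan is to identify $\kappa_{\beta\gamma}$ with the standard \v{C}ech cocycle computing the connecting homomorphism $\KS=\delta$ of the tangent sequence $0\to T_{X/S}\to T_X\xrightarrow{df}f^*T_S\to0$, made explicit through the first-order neighborhood $S^{(2)}$; here the maps $f_\beta$ are those of \autoref{construction: dR-crystal}, specialized to the square-zero PD-thickening $\Delta\colon S\hookrightarrow S^{(2)}$ (so $\widehat{U}_{\beta,i}$ denotes the restriction of $\pi_i^*X:=X\times_{S,\pi_i}S^{(2)}$ over $U_\beta$). First I would fix the cover $\{U_\beta\}_{\beta\in I}$ by affine opens, refined so each $U_\beta$ carries relative coordinates $x_1,\dots,x_d$, i.e.\ an \'etale $S$-morphism $U_\beta\to\mathbb A^d_S$; the statement being local on $S$ I may take $S$ separated, so all finite intersections $U_{\beta_0\cdots\beta_n}$ are affine and $f$-affine and $f_*\check{C}^\bullet(\{U_\beta\},-)$ computes $Rf_*$ for $T_{X/S}$ and for its twist by the locally free sheaf $f^*\Omega^1_S$. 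Since $Rf_*(f^*\Omega^1_S\otimes T_{X/S})\simeq\Omega^1_S\otimes Rf_*T_{X/S}$ by the projection formula, the Leray edge map reads off a global section of $\Omega^1_S\otimes R^1f_*T_{X/S}$ from any \v{C}ech $1$-cocycle valued in $\Gamma(U_{\beta\gamma},f^*\Omega^1_S\otimes T_{X/S})$; it thus suffices to exhibit $(\kappa_{\beta\gamma})$ as such a cocycle representing $\KS$.

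Next I would unwind $\KS$ via $S^{(2)}$: the two pullbacks $\pi_1^*X,\pi_2^*X$ are deformations of $X$ over $\Delta$, with ideal $I$ satisfying $I/I^2\simeq\Omega^1_S$, and over each $U_\beta$ the relative coordinates give, by \'etaleness, the preferred trivialization $\tilde f_\beta\colon\widehat{U}_{\beta,2}\xrightarrow{\ \sim\ }\widehat{U}_{\beta,1}$ over $S^{(2)}$ reducing to the identity modulo $I$ (with $f_\beta\colon\widehat{U}_{\beta,2}\to U_\beta$ its composite with the structure projection); equivalently $\tilde f_\beta$ determines an $\mathcal O_X$-linear section $\sigma_\beta$ of $df$ over $U_\beta$. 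By the \v{C}ech description of the connecting map, $\KS$ is represented by $(\sigma_\beta-\sigma_\gamma)$, i.e.\ by the automorphism $\tilde f_\beta\circ\tilde f_\gamma^{-1}$ of the trivial lift over $U_{\beta\gamma}$, which has the form $\mathrm{id}+D$ for $D$ a derivation of $\mathcal O_X$ into $f^*(I/I^2)$ and is hence determined by $D(x_k)=f_\beta^*(x_k)-f_\gamma^*(x_k)$ (this lies in $I\cdot\mathcal O$, hence in $\Gamma(U_{\beta\gamma},f^*(I/I^2))$, since $f_\beta$ and $f_\gamma$ both reduce modulo $I$ to the inclusion $U_{\beta\gamma}\hookrightarrow U_\beta$). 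Viewing $\mathrm{id}+D$ as $\sum_k D(x_k)\otimes\partial_{x_k}\in(T_{X/S}\otimes f^*\Omega^1_S)(U_{\beta\gamma})$ gives exactly $\kappa_{\beta\gamma}$. Equivalently, $\kappa_{\beta\gamma}$ is precisely the first-order part of Simpson's gluing formula \eqref{eqn: taylor-formula-cocycle} applied to the trivial connection.

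It then remains only to record the routine points: the cocycle identity on triple overlaps is the telescoping identity $(f_\beta^*x_k-f_\gamma^*x_k)+(f_\gamma^*x_k-f_\delta^*x_k)=f_\beta^*x_k-f_\delta^*x_k$ in a common coordinate system, and independence of the cover, the coordinates, and the $\tilde f_\beta$ is the usual invariance of connecting maps. The part I would be most careful about is purely bookkeeping --- pinning down the identification $I/I^2\simeq\Omega^1_S$ and the orientation of $\pi_1$ versus $\pi_2$ so that the sign and normalization in the displayed formula come out as stated (matching the convention underlying \eqref{eqn: taylor-formula-cocycle}) --- since there is no geometric content beyond the definition of $\KS$, which is why the proposition is immediate.
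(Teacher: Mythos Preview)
Your proposal is correct and takes essentially the same approach as the paper, which simply says the statement is immediate from unwinding the definition of $\KS$ (with a pointer to \cite[Appendix B.3]{p-painleve} for more discussion). Your write-up is a careful expansion of exactly that unwinding --- identifying the connecting map of the tangent sequence with the \v{C}ech $1$-cocycle measuring the discrepancy of the local trivializations $\tilde f_\beta$ over the square-zero thickening $S^{(2)}$ --- and the bookkeeping points you flag (sign/orientation of $\pi_1$ vs.\ $\pi_2$, $I/I^2\simeq\Omega^1_S$) are indeed the only places requiring care.
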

See \cite[Appendix B.3]{p-painleve} for some further discussion.
\subsection{Proof of the non-abelian Katz formula}\label{subsec:proof-of-NA-Katz}
In this section we keep the notation as in \autoref{subsec:non-abelian-katz-formula}. In particular, $k$ is a characteristic $p>0$ field, $S$ a smooth scheme over $k$;   the completed PD envelope of the ideal sheaf $I$ of the  diagonal in $S\times S$ is denoted by $(S(1), \bar{I}, \gamma)$.

We first record the following description of the Frobenius pullback of K\"ahler differentials. Recall that for $k\geq 1$, we write $\bar{I}^{[k]}$ for the ideal generated by products of the form $\gamma_{i_1}(x_1)\cdots \gamma_{i_n}(x_n)$ where $x_j\in \bar{I}$ and   $i_1+\cdots +i_n\geq k$.
\begin{proposition}\label{prop:frobdifferentials}
There is a natural isomorphism 
\[F_{\text{abs}}^*\Omega^1_S\simeq \bar{I}/(\bar{I}^{[p+1]}+I),\]
given by 
\[
F_{\text{abs}}^*da \mapsto \gamma_p(a\otimes 1-1\otimes a)
\]
for local sections $a$ of $\mscr{O}_S.$
\end{proposition}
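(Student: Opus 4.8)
\emph{Approach.} I would prove the proposition in two stages --- first construct the comparison map canonically (so that naturality is free), then verify it is an isomorphism by a local computation in divided-power coordinates.

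\emph{The map.} Set $M:=\bar I/(\bar I^{[p+1]}+I)$. As $I$ is an ideal of $\mscr O_{S(1)}$ we have $I\cdot\bar I\subseteq I$, so the two $\mscr O_S$-module structures on $M$ induced by the projections $\pi_1,\pi_2\colon S(1)\to S$ agree, making $M$ canonically an $\mscr O_S$-module. For a local section $a$ of $\mscr O_S$ put $\xi_a:=a\otimes1-1\otimes a\in I$, and consider $\tilde\delta\colon\mscr O_S\to M$, $a\mapsto\gamma_p(\xi_a)$. I would check that $\tilde\delta$ is a $k$-linear derivation valued in the Frobenius pushforward $F_{\abs,*}M$, i.e.\ that $\tilde\delta(a+b)=\tilde\delta(a)+\tilde\delta(b)$ and $\tilde\delta(ca)=c^p\tilde\delta(a)+a^p\tilde\delta(c)$. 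Both come down to short divided-power identities: additivity uses $\gamma_p(\xi_a+\xi_b)=\gamma_p(\xi_a)+\gamma_p(\xi_b)+\sum_{0<i<p}\gamma_i(\xi_a)\gamma_{p-i}(\xi_b)$ and the fact that $\gamma_i(\xi_a)=\tfrac1{i!}\xi_a^{\,i}\in I$ for $0<i<p$ (since $i!$ is then a unit and $\xi_a\in I$), so the cross terms die in $M$; the twisted Leibniz rule uses $\xi_{ca}=(1\otimes c)\xi_a+(a\otimes1)\xi_c$, the PD identity $\gamma_n(rz)=r^n\gamma_n(z)$ for a scalar $r$, and again the vanishing of the middle terms modulo $I$. (Note $\tilde\delta$ kills $k$ since $\xi_\alpha=0$ for $\alpha\in k$.) By the universal property of $\Omega^1_S$ together with the $(F_{\abs}^*,F_{\abs,*})$-adjunction, $\tilde\delta$ then determines a canonical $\mscr O_S$-linear map $F_{\abs}^*\Omega^1_S\to M$ sending $F_{\abs}^*da$ to $\gamma_p(a\otimes1-1\otimes a)$; since no choices enter, this is natural.

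\emph{Isomorphism.} It remains to see this map is an isomorphism, which may be checked over a Zariski-open $U\subseteq S$ carrying \'etale coordinates $x_1,\dots,x_d$. There the source is free on $F_{\abs}^*dx_1,\dots,F_{\abs}^*dx_d$, and the standard local structure of the divided-power envelope of the (regular) diagonal immersion (see \cite{stacks-project}) identifies $\mscr O_{S(1)}|_U$ with the PD-completion of the free divided-power polynomial algebra $\mscr O_U\langle\xi_1,\dots,\xi_d\rangle$, $\xi_i=x_i\otimes1-1\otimes x_i$; passing to the quotient by $\bar I^{[p+1]}$ makes the completion irrelevant, so I may compute inside $\mscr O_U\langle\xi_1,\dots,\xi_d\rangle$, which is $\mscr O_U$-free on the PD-monomials $\xi^{[\underline n]}:=\prod_i\gamma_{n_i}(\xi_i)$. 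Since the PD-filtration on a PD-polynomial algebra is by PD-degree, $\bar I^{[m]}=\bigoplus_{|\underline n|\ge m}\mscr O_U\,\xi^{[\underline n]}$; and using $\xi_i\cdot\xi^{[\underline n]}=(n_i+1)\,\xi^{[\underline n+e_i]}$, together with the fact that in an $\mathbb F_p$-algebra $n_i+1$ is a unit exactly when $p\nmid n_i+1$, one finds $I=\bigoplus_{\underline n:\ \exists i,\ p\nmid n_i}\mscr O_U\,\xi^{[\underline n]}$. Hence $M|_U$ is $\mscr O_U$-free on the classes of those $\xi^{[\underline n]}$ with $\underline n\ne0$, $p\mid n_i$ for every $i$, and $|\underline n|\le p$ --- which forces $\underline n=p\,e_i$ --- i.e.\ on $\gamma_p(\xi_1),\dots,\gamma_p(\xi_d)$. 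As our map sends $F_{\abs}^*dx_i$ to $\gamma_p(\xi_i)$, it carries a basis to a basis, hence is an isomorphism over $U$, and therefore globally.

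\emph{Main difficulty.} The only real work is the bookkeeping in the last paragraph --- in particular the characteristic-$p$ point that multiplying $\xi_i^{[m]}$ by $\xi_i$ produces $(m+1)\xi_i^{[m+1]}$, which lands in $I$ precisely when $p\nmid m+1$, so that $\gamma_p(\xi_i)=\xi_i^{[p]}$ is exactly the first ``new'' survivor while $\xi_i^{[p+1]},\xi_i^{[2p]},\dots$ are killed by $\bar I^{[p+1]}+I$. Everything else --- the derivation check, the adjunction, and the structure of PD envelopes --- is formal or classical; the computation is essentially Katz's and Berthelot's analysis of the divided-power envelope underlying the conjugate filtration.
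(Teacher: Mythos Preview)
Your argument is correct. The paper does not actually prove this proposition: it simply cites \cite[Proposition~1.6]{ogus-vologodsky}, so there is no in-paper proof to compare your attempt against. What you have written is essentially the standard proof underlying that reference (and, further back, Mazur and Berthelot): construct the map via the universal property of K\"ahler differentials by checking $a\mapsto\gamma_p(a\otimes1-1\otimes a)$ is a $p$-twisted derivation into $M$, then verify bijectivity by the explicit PD-monomial basis of the PD-polynomial model of the envelope. Your identification of $I$ inside $\mscr{O}_U\langle\xi_1,\dots,\xi_d\rangle$ as the $\mscr{O}_U$-span of $\xi^{[\underline n]}$ with some $n_i\not\equiv 0\pmod p$, via $\xi_i\cdot\xi^{[\underline n]}=(n_i+1)\xi^{[\underline n+e_i]}$, is the key point and is done correctly; the resulting basis $\{\gamma_p(\xi_i)\}$ of $M$ matches the basis $\{F_{\abs}^*dx_i\}$ as claimed.
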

\begin{proof}
    See \cite[Proposition~1.6]{ogus-vologodsky}.
    \end{proof}
\begin{proof}[Proof of \autoref{lemma:katz-formula}]

We pick open affines $U_{\beta}\subset X$ covering $X$, where $\beta$ ranges over some indexing set $I$,  and write 
\[
(\widehat{U_{\beta}\times \mb{A}^1})_1:= (U_{\beta
}\times \mb{A}^1)\times_{S\times \mb{A}^1, \pi_1} (S\times \mb{A}^1)(1),
\]
and similarly $(\widehat{U_{\beta}\times \mb{A}^1})_2.$ Now each of  $(\widehat{U_{\beta}\times \mb{A}^1})_1$, $(\widehat{U_{\beta}\times \mb{A}^1})_2$ is a deformation of $U_{\beta}\times \mb{A}^1$ over $(S\times \mb{A}^1)(1)$, and since the latter is affine, there exists an isomorphism
\[
\tilde{f}_{\mb{A}^1, \beta}: (\widehat{U_{\beta}\times \mb{A}^1})_2 \simeq (\widehat{U_{\beta}\times \mb{A}^1})_1
\]
over $(S\times \mb{A}^1)(1)$. We write $f_{\mb{A}^1, \beta}$ for the composition 
\[
f_{\mb{A}^1, \beta}: (\widehat{U_{\beta}\times \mb{A}^1})_2 \xrightarrow[]{\tilde{f}_{\mb{A}^1, \beta}} (\widehat{U_{\beta}\times \mb{A}^1})_1\rightarrow U_{\beta}\times \mb{A}^1.
\]

We also write $U_{\beta\gamma}:= U_{\beta}\cap U_{\gamma}$, and $(\widehat{U_{\beta\gamma}\times \mb{A}^1})_{1}, (\widehat{U_{\beta\gamma}\times \mb{A}^1})_{2}$ as above, $U_{\beta\gamma, T} \subset X_T$ for the base change of $U_{\beta\gamma}\times \mb{A}^1$ along $T\rightarrow S\times \mb{A}^1$.

In local coordinates $\{x_1, \cdots, x_d\}$ on $X/S$, and therefore on $X\times \mb{A}^1/S\times \mb{A}^1$,  consider the elements $c_{\beta\gamma, \mscr{E}_T} \in \tilde{h}^*f_{\mb{A}^1}^*\bar{I} \otimes \End(\mscr{E})|_{U_{\beta\gamma, T}\times \mb{A}^1}$ given by 
\[
s\otimes 1 \mapsto \sum_{\underline{\ell}} \nabla_{\partial_{x_{\underline{\ell}}}}(s) \otimes \prod_{m=1}^{d} \tilde{h}^* \gamma_{\ell_m} (f_{\mb{A}^1, \beta}^*(x_m)-f_{\mb{A}^1, \gamma}^*(x_m)),
\]
where the sum is over tuples $\underline{\ell}=(\ell_1, \cdots, \ell_d)$ with $\ell_m\geq 0$, and  $\nabla_{\partial_{x_{\underline{\ell}}}}$ denotes the operator $\prod_{m=1}^d \nabla_{\partial_{x_m}}^{\ell_m}$, and $s$ is a local section to $\mathscr{E}$. 

It is straightforward to check that the $c_{\beta\gamma, \mscr{E}_T}$ form a cocycle representing a class in $\tilde{h}^*f_{\mb{A}^1}^*\bar{I}\otimes R^1f_{T, *}\on{End}(\mscr{E}_{T, dR})$. In fact, the collection of such cocycles, as $T$ varies, is equivalent data to the crystal structure on $\mscr{M}_{\conj}$.

Now, using the description of the crystal structure on $\mscr{M}_{\conj}$ given in \autoref{prop: crystal-mconj} and the description of its $p$-curvature recalled in the proof of \autoref{prop: p-curv-mdr-crystal}, we will obtain a cocycle representative of $\Psi_{\conj}$--see  \cite[Lemma B.4.2]{p-painleve} for the analogous computation for $\mscr{M}_{\dR}$. That is,  after projecting the $c_{\beta\gamma, \mscr{E}_T}$ along the natural map 
\[
\tilde{h}^*f_{\mb{A}^1}^*\bar{I} \rightarrow  \tilde{h}^*f_{\mb{A}^1}^* (\bar{I}/(\bar{I}^{[p+1]}+I)),
\]
and using the isomorphism $\bar{I}/(\bar{I}^{[p+1]}+I)\simeq F^*_{\abs}\Omega^1_S$, we obtain a cocycle representing $\Psi_{\conj}$; this latter cocycle is given by $d_{\beta\gamma, \mscr{E}_T} \in \tilde{h}^*f_{\mb{A}^1}^*(\bar{I}/ \bar{I}^{[p+1]}+I) \otimes \End(\mscr{E}_T)|_{U_{\beta\gamma, T}}$: 
\begin{equation}\label{eqn: p-curv-conj-cocycle}
    s\otimes 1 \mapsto \sum_{k=1}^d \nabla_{\partial_{x_k}}^p(s) \otimes \gamma_p(f^*_{\mb{A}^1, \beta}(x_k)- f^*_{\mb{A}^1, \gamma}(x_k)).
\end{equation}

Examining \eqref{eqn: p-curv-conj-cocycle}, we see  that $\Psi_{\conj}|_{\lambda=0}$ vanishes, since $(\mscr{E}_T, \nabla)$ has vanishing $p$-curvature  upon restriction to $\lambda=0$. Indeed, by this assumption, $$\nabla_{\partial_{x_k}}^p(s)=\nabla_{\partial_{x_k}^p}(s)=0.$$ 

Next, we compute $\frac{\Psi_{\conj}}{\lambda}\big|_{\lambda=0}$.
For a general $t:T\to \mscr{M}_{\conj}$, the defining
condition $\psi_p(\mscr{E}_T,\nabla)=\lambda\cdot\theta$ implies, for each
coordinate vector field $\partial_{x_k}$,
that
\[
\nabla(\partial_{x_k})^p=\psi_p(\mscr{E}_T,\nabla)(\partial_{x_k})
=\lambda\cdot \theta(\partial_{x_k}).
\]
Plugging this into \eqref{eqn: p-curv-conj-cocycle} yields
\[
d_{\beta\gamma,\mscr{E}_T}(s\otimes 1)
=\lambda\cdot \sum_{k=1}^d \theta(\partial_{x_k})(s)\ \otimes\
\gamma_p\!\big(f^*_{\mb{A}^1,\beta}(x_k)-f^*_{\mb{A}^1,\gamma}(x_k)\big).
\]
In particular, $\Psi_{\conj}$ is divisible by $\lambda$ at the level of cocycles.
On the locus where $T_{\mscr{M}_{\conj}/S\times \mb{A}^1}$ is $\lambda$-torsion free, namely $\mscr{M}_{\conj}^{\lambda-\tf}$, 
this cocycle therefore determines $\frac{\Psi_{\conj}}{\lambda}$, and
$\frac{\Psi_{\conj}}{\lambda}\big|_{\lambda=0}$ is represented by the cocycle
\begin{equation}\label{eqn:psi-over-lambda-cocycle}
s\otimes 1\ \longmapsto\
\sum_{k=1}^d \theta(\partial_{x_k})(s)\ \otimes\
\gamma_p\!\big(f^*_{\mb{A}^1,\beta}(x_k)-f^*_{\mb{A}^1,\gamma}(x_k)\big)
\qquad\text{on }U_{\beta\gamma,T}.
\end{equation}

Finally, we identify \eqref{eqn:psi-over-lambda-cocycle} with the Frobenius pullback
of the lifting of tangent vectors.
By \autoref{prop: ks-cocycle}, the Kodaira--Spencer map for $f_{\mb{A}^1}$, viewed as
a section of $\Omega^1_{S\times \mb{A}^1/\mb{A}^1}\otimes R^1 f_{\mb{A}^1*}T_{X\times \mb{A}^1/S\times \mb{A}^1}$,
is represented on $U_{\beta\gamma}$ by the cocycle
\[
\kappa_{\beta\gamma}
=\sum_{k=1}^d \big(f^*_{\mb{A}^1,\beta}(x_k)-f^*_{\mb{A}^1,\gamma}(x_k)\big)\otimes \partial_{x_k}.
\]
Pulling back along absolute Frobenius and using \autoref{prop:frobdifferentials}
(which identifies $F_{\abs}^*\Omega^1_{S\times \mb{A}^1/\mb{A}^1}$ with
$\bar{I}/(\bar{I}^{[p+1]}+I)$ via $F_{\abs}^*da\mapsto\gamma_p(a\otimes 1-1\otimes a)$),
we see that $F_{\abs}^*\kappa_{\beta\gamma}$ is represented by
\[
\sum_{k=1}^d \gamma_p\!\big(f^*_{\mb{A}^1,\beta}(x_k)-f^*_{\mb{A}^1,\gamma}(x_k)\big)\otimes \partial_{x_k}
\in F_{\abs}^*\Omega^1_{S\times \mb{A}^1/\mb{A}^1}\otimes T_{X\times \mb{A}^1/S\times \mb{A}^1}.
\]
Composing this with the map $T_{X\times \mb{A}^1/S\times \mb{A}^1}\to \End(\mscr{E}_T)$
induced by the $F$-Higgs field $\theta$ (i.e.~$\partial\mapsto \theta(\partial)$)
produces exactly the cocycle \eqref{eqn:psi-over-lambda-cocycle}.
Under the identification $\mscr{M}_{\conj}|_{\lambda=0}\simeq \mscr{M}_{\Dol}(X^{(1)}/S)$
and the Cartesian square defining $G$, this is precisely the pullback
$G^*\Theta_{X/S}$, and therefore the diagram in the statement of
\autoref{lemma:katz-formula} commutes.

\end{proof}

\begin{remark}
	We believe some form of \autoref{lemma:katz-formula} is likely implicit in modern $p$-adic Hodge theory, specifically in the geometry of the \emph{syntomification} of a characteristic $p$ scheme \cite[\S6]{bhatt-f-gauge}, though some work would be required to extract the statement here.
\end{remark}
\section{Vanishing of $p$-curvature}\label{sec: vanish-p-curv}
In this section we recall Ogus--Vologodsky's non-abelian Hodge theory in characteristic $p>0$ \cite{ogus-vologodsky} and use it to construct so-called \enquote{canonical sections} to the conjugate moduli stack. We deduce from the existence of these sections that the vanishing of the $p$-curvature of the isomonodromy foliation for infinitely many primes $p$ implies the vanishing of $\Theta_{X/S}$ in characteristic zero, in \autoref{thm:p-curvature-vanishing-implies-theta-vanishing}.
\subsection{Non-abelian Hodge theory in positive characteristic}\label{subsec:NAHTcharp}
\subsubsection{Recollection of the non-abelian Hodge correspondence and exponential twisting}
Fix an $\mathbb{F}_p$-scheme $S$ and a flat lift $\widetilde{S}$ of $S$ over $\mathbb{Z}/p^2\mathbb{Z}$. Let $X$ be a smooth projective $S$-scheme, and fix a lift $\widetilde{X}^{(1)}$ of $X^{(1)}$ over $\widetilde{S}$. In this section we briefly recall some facts about Ogus--Vologodsky's non-abelian Hodge correspondence, which we will use to understand some of the geometry of $\mathscr{M}_{\conj}(X/S)$.

\begin{definition}
Fix a scheme $Y/S$ and an integer $\ell \geq 0$.
\begin{enumerate}
    \item We write  $\on{MIC}_{\ell}(Y/S)$ for the category of pairs $(\mscr{E}, \nabla)$ consisting of a quasi-coherent $\mscr{O}_Y$-module $\mscr{E}$, equipped with an integrable connection $\nabla: \mscr{E}\rightarrow \mscr{E}\otimes \Omega^1_{Y/S}$, such that the $p$-curvature $\psi=\psi_p(\mathscr{E},\nabla)$ satisfies   $\psi^{\circ \ell}: \mscr{E}\rightarrow \mscr{E}\otimes (F^*_{\abs}\Omega^1_{Y/S})^{\otimes \ell}$ is zero.
    \item We write $\on{HIG}_{\ell}(Y/S)$ for the category of pairs $(\mscr{E}, \theta)$ consisting of a quasi-coherent $\mscr{O}_Y$-module $\mscr{E}$, equipped with an integrable Higgs field $\theta: \mscr{E}\rightarrow \mscr{E}\otimes \Omega^1_{Y/S}$, such that the natural map $\theta^{\circ \ell}: \mscr{E}\rightarrow \mscr{E}\otimes (\Omega^1_{Y/S})^{\otimes \ell}$ is zero.
\end{enumerate}
    We say that such connections and Higgs bundles are of order of nilpotence at most $\ell$. 
\end{definition}

\begin{theorem}[Ogus--Vologodsky \cite{ogus-vologodsky}]\label{thm:ogus-vologodsky}
	There is a functorial equivalence of categories (depending on the choice of lift $\widetilde{X}^{(1)}$) $$C^{-1}: \on{HIG}_{p-1}(X^{(1)}/S)\overset{\sim}{\longrightarrow} \on{MIC}_{p-1}(X/S),$$  referred to as the \emph{inverse Cartier transform}, with the following properties:
	\begin{enumerate}
		\item $C^{-1}$ is compatible with pullback, i.e. given a smooth $S$-scheme $Y$, a map $f: Y\to X$, a lift $\widetilde{Y}^{(1)}/\widetilde{S}$ and a map $\widetilde{f}^{(1)}$ lifting $f^{(1)}$ so  the diagram 
		$$\xymatrix{
		Y \ar[r]^f \ar[rd]^{F_{Y/S}} & X \ar[rd]^{F_{X/S}} & \\
		& Y^{(1)} \ar[r]^{f^{(1)}} \ar@{^(->}[d] & X^{(1)} \ar@{^(->}[d] \\
		& \widetilde{Y}^{(1)} \ar[r]^{\widetilde{f}^{(1)}} & \widetilde{X}^{(1)}
		}$$
		over $\widetilde{S}$ commutes, we have a natural isomorphism of functors $$\on{HIG}_{p-1}(X^{(1)}/S)\to \on{MIC}_{p-1}(Y/S)$$ $$C^{-1} \circ f^{(1)*}\simeq f^*\circ C^{-1}.$$
        \item Suppose $(\mscr{E}', \theta')$ is an object of $\on{HIG}_{p-1}(X^{(1)}/S)$, and write $(\mscr{E}, \nabla)$  for $C^{-1}(\mscr{E}', \theta')$.

        We say that a morphism of $\widetilde{S}$-schemes
        \[\widetilde{F}: \widetilde{X}\rightarrow \widetilde{X}^{(1)}\] 
        is a lift of relative Frobenius if it reduces to $F_{X/S}$ modulo $p$. Note that such a lift includes a choice of a lift $\widetilde{X}$ of $X$. Each  lift $\widetilde{F}$ of relative Frobenius induces an isomorphism
        \[
       \eta_{\widetilde{F}}:  (\mscr{E}, \psi) \overset{\sim}{\to} F_{X/S}^*(\mscr{E}', \theta'),
        \]
        where $\psi$ denotes the $p$-curvature map of $(\mscr{E}, \nabla)$. Colloquially, the $p$-curvature of the inverse Cartier transform is simply the Frobenius twist of the Higgs field, in the presence of a lift of Frobenius.
        \item \cite[Remark 2.10]{ogus-vologodsky} Given two lifts of Frobenius $\widetilde{F}_1, \widetilde{F}_2$ as above, differing by a section $\xi$ of $F_{X/S}^*T_{X^{(1)}/S}$, we have $$\eta_{\widetilde{F}_2}=e^\xi\circ \eta_{\widetilde{F}_1}.$$
	\end{enumerate}
\end{theorem}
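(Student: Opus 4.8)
The plan is to construct $C^{-1}$ by the Azumaya-algebra method: one exhibits the (pushed-forward, divided-power-completed) ring of differential operators on $X/S$ as an Azumaya algebra over the cotangent bundle of $X^{(1)}$, and then uses the lift $\widetilde{X}^{(1)}$ to produce a canonical splitting of it over a neighbourhood of the zero section. Granting the splitting, the equivalence and all three properties fall out of Morita theory together with explicit local computations with Frobenius lifts.

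In more detail: let $\mathcal{D}$ denote $F_{X/S*}$ of the ring of PD-differential operators on $X/S$. The $p$-curvature exhibits $\Sym^\bullet_{\mathcal{O}_{X^{(1)}}}T_{X^{(1)}/S}=\mathcal{O}_{T^*(X^{(1)}/S)}$ as a central subalgebra, and a local computation shows $\mathcal{D}$ is Azumaya of rank $p^{2\dim(X/S)}$ over $T^*(X^{(1)}/S)$. Under this dictionary an object of $\on{MIC}_{p-1}(X/S)$ is exactly a $\mathcal{D}$-module set-theoretically supported on a fixed nilpotent neighbourhood $\widehat N$ of the zero section, and an object of $\on{HIG}_{p-1}(X^{(1)}/S)$ is exactly an $\mathcal{O}_{\widehat N}$-module. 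Restricting the Azumaya algebra to $\widehat N$, it then suffices to produce a \emph{splitting module}: a locally free $\mathcal{D}|_{\widehat N}$-module $\mathcal{K}$, locally free of rank $p^{\dim(X/S)}$ over $\mathcal{O}_{\widehat N}$, with $\mathcal{D}|_{\widehat N}\cong \mathscr{E}\!nd_{\mathcal{O}_{\widehat N}}(\mathcal{K})$. Given $\mathcal{K}$ one sets $C^{-1}(\mathscr{E}',\theta'):=\mathcal{K}\otimes_{\mathcal{O}_{\widehat N}}\mathscr{E}'$, which inherits a $\mathcal{D}$-module structure, i.e.\ an integrable connection of the correct order of nilpotence; the quasi-inverse is $\mathscr{H}\!om_{\mathcal{D}}(\mathcal{K},-)$, and functoriality is automatic.

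The construction of $\mathcal{K}$ is the crux. Zariski-locally on $X$ a lift $\widetilde F$ of relative Frobenius exists, and any such lift trivialises the Azumaya algebra near the zero section: concretely $\widetilde F$ produces, by the usual ``$\tfrac1p\,d\widetilde F$'' recipe, a rank-$p^{\dim}$ locally free $\mathcal{D}|_{\widehat N}$-module, which is a local splitting. Two local lifts $\widetilde F_1,\widetilde F_2$ differ by a section $\xi$ of $F_{X/S}^*T_{X^{(1)}/S}$, and a direct cocycle computation with the exponential shows the two local splittings are identified via $e^\xi$ — this is property (3), and it is exactly the gluing datum one must control. The obstruction to globalising is a gerbe (equivalently a class in $H^2$ of the relevant sheaf), and the role of the lift $\widetilde X^{(1)}$ — which, unlike a Frobenius lift, genuinely exists globally over $\widetilde S$ — is that the induced differences $\xi_{\alpha\beta}$ on overlaps are \v{C}ech coboundaries, so the $e^{\xi_{\alpha\beta}}$ can be rectified into honest gluing isomorphisms. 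This produces a canonical global $\mathcal{K}=\mathcal{K}(\widetilde X^{(1)})$.

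For the listed properties: compatibility with pullback (1) follows since the Azumaya algebra, its centre, and the splitting module are all compatible with the evident base-change maps — one checks that $\widetilde f^{(1)*}\mathcal{K}(\widetilde X^{(1)})\cong \mathcal{K}(\widetilde Y^{(1)})$, which is immediate from the local description in terms of Frobenius lifts. For (2): when a global lift $\widetilde F$ of relative Frobenius is present, the local construction globalises, $\mathcal{K}$ becomes $F_{X/S}^*\mathcal{O}_{X^{(1)}}$ with its tautological structure, and a computation identifies the underlying module of $C^{-1}(\mathscr{E}',\theta')$ with $F_{X/S}^*\mathscr{E}'$ and its $p$-curvature with $F_{X/S}^*\theta'$; property (3) is then the comparison just described. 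The main obstacle I expect is precisely the global splitting step — bookkeeping the divided-power structures and verifying that the class killed is the one cut out by $\widetilde X^{(1)}$ rather than merely by a (possibly nonexistent) global Frobenius lift; everything else is formal Morita theory or local computation. (Of course, the full argument is carried out in \cite{ogus-vologodsky}.)
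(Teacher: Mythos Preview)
The paper does not prove this theorem; it is stated as a black-box citation of Ogus--Vologodsky \cite{ogus-vologodsky}. So there is no ``paper's own proof'' to compare against. Your sketch is a faithful outline of the original Ogus--Vologodsky argument via the Azumaya splitting of $\mathcal{D}$ over $T^*(X^{(1)}/S)$, and as such is correct in broad strokes (with, as you say, the delicate part being the global splitting from the lift $\widetilde X^{(1)}$).

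What the paper \emph{does} do, immediately after stating the theorem, is record an explicit local description of $C^{-1}$ following Lan--Sheng--Zuo's ``exponential twisting'' (\autoref{construction:exp-twist} and \autoref{prop: ov=exp-twist}): one covers $X$ by affines with local Frobenius lifts $\widetilde F_\alpha$, sets $(H_\alpha,\nabla_\alpha)=(F^*\mathscr{E}',\nabla_{\mathrm{can}}-\zeta_\alpha(F^*\theta'))$, and glues via the truncated exponentials $G_{\alpha\beta}=\sum_{i<p}(h_{\alpha\beta}(F^*\theta'))^i/i!$. This is not an independent proof but rather an unpacking of your splitting module $\mathcal K$ at the cocycle level; it makes properties (2) and (3) transparent by direct inspection, which is what the paper actually needs downstream (for the canonical sections in \autoref{defn:canonical-section} and \autoref{prop:central-fiber-canonical-section}). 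Your Morita-theoretic sketch explains \emph{why} such a thing should exist and be canonical; the exponential-twisting recipe is the concrete formula one computes with.
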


\begin{construction}\label{construction:exp-twist}
	Combining \autoref{thm:ogus-vologodsky}(1), (2), and (3) allows us to give an explicit description of $C^{-1}$ (following \cite{lan-sheng-zuo}). Starting from an object $(\mscr{E}', \theta')$ of $\on{HIG}_{p-1}(X^{(1)}/S)$, we construct a flat bundle on $X/S$ as follows.
    
    Cover $X/S$ by open affines $U_{\alpha}/S$, such that each $U_{\alpha}/S$  is equipped with a lift of relative Frobenius
    \[
\widetilde{F}_{\alpha}: \widetilde{U}_{\alpha} \rightarrow 
\widetilde{U}_{\alpha}^{(1)}
    \]over $\widetilde{S}$. By Deligne--Illusie \cite[p. 251]{deligne-illusie}, each lift of relative Frobenius provides us with a map 
    \[
    \zeta_{\alpha}: F_{U_{\alpha}/S }^*\Omega^1_{U_{\alpha}^{(1)}/S} \rightarrow \Omega^1_{U_{\alpha}/S},
    \]
    which is the \enquote{divided Frobenius pullback} map, defined as the unique map making the following commute: 
    \[
    \begin{tikzcd} \widetilde{F}_{\alpha}^*\Omega^1_{\widetilde{U_{\alpha}}^{(1)}/\widetilde{S}} \arrow[r, "\widetilde{F}_{\alpha}^*"] \arrow[d]
    & p\Omega^1_{\widetilde{U}_{\alpha}/\widetilde{S}}  \\
  F_{U_{\alpha}/S}^*\Omega^1_{U_{\alpha}^{(1)}/S} \arrow[r, "\zeta_{\alpha}"]
&  \Omega^1_{U_{\alpha}/S}. \arrow[u, "\simeq", "p"']\end{tikzcd}
\]

It is straightforward to check that, for a given lift $\widetilde{U}_{\alpha}$ of $U_{\alpha}$, the set of lifts of Frobenius is a torsor under $H^0(U_{\alpha}, F_{U_{\alpha}/S}^*T_{U_{\alpha}^{(1)}/S})$. Write $U_{\alpha\beta}:=U_{\alpha}\cap U_{\beta}$. Suppose that, on $U_{\alpha\beta}$, the Frobenius lifts $\widetilde{F}_{\alpha}, \widetilde{F}_{\beta}$ differ by the section $\xi_{\alpha \beta}$ of  $F^*_{U_{\alpha\beta}/S}T_{U_{\alpha\beta}^{(1)}/S}$; write 
    \[
    h_{\alpha\beta}: F_{U_{\alpha\beta}/S}^*\Omega^1_{U^{(1)}_{\alpha\beta}/S}\rightarrow \mscr{O}_{U_{\alpha\beta}}
    \]
    for the map given by pairing with the section $\xi_{\alpha \beta}$.

    On $U_{\alpha}$ consider the  bundle $H_{\alpha}:=F^*_{U_{\alpha}/S}\mscr{E}'|_{U_{\alpha}^{(1)}}$, equipped with the connection 
    \[
    \nabla_{\alpha}:= \nabla_{\text{can}}-\zeta_{\alpha}(F_{U_{\alpha}/S}^*\theta');
    \]
    note that we have chosen to write the formula with the opposite sign to that of \cite[\S 2.2]{lan-sheng-zuo}, since they show that their construction agrees with that of Ogus--Vologodsky's after a sign-change. 
    
    We then glue the bundles $H_{\alpha}|_{U_{\alpha\beta}}, H_{\beta}|_{U_{\alpha\beta}}$ using $G_{\alpha\beta}\in \aut_{\mscr{O}_{U_{\alpha\beta}}}(F_{U_{\alpha\beta}/S}^*\mscr{E}'|_{U_{\alpha\beta}})$, defined as the truncated exponential
    \[
    G_{\alpha \beta}:= \sum_{i=0}^{p-1} \frac{(h_{\alpha\beta}(F_{U_{\alpha\beta}/S}^*\theta'))^i}{i!}.
    \]
    Here, by $h_{\alpha\beta}(F_{U_{\alpha\beta}/S}^*\theta')$, we mean the composition
    \[    F^*_{U_{\alpha\beta}/S}\mscr{E}'|_{U_{\alpha\beta}}\xrightarrow[]{F^*_{U_{\alpha\beta}/S}\theta'} 
F^*_{U_{\alpha\beta}/S}\mscr{E}'|_{U_{\alpha\beta}}\otimes F^*_{U_{\alpha\beta}/S}\Omega^1_{U_{\alpha\beta}^{(1)}/S} \xrightarrow[]{\id \otimes h_{\alpha\beta}}F^*_{U_{\alpha\beta}/S}\mscr{E}'|_{U_{\alpha\beta}}.
    \]
\end{construction}
\begin{proposition}\label{prop: ov=exp-twist}
    The recipe in \autoref{construction:exp-twist} gives a flat bundle on $X/S$, which is precisely  $C^{-1}(\mscr{E}', \theta')$ of 
\autoref{thm:ogus-vologodsky}. 

Moreover, on each $U_{\alpha}$, the isomorphism (of the underlying bundles) obtained via \autoref{construction:exp-twist}
\[C^{-1}(\mscr{E}', \theta')|_{U_{\alpha}}\simeq F^*_{U_{\alpha}/S}\mscr{E}'\]
is precisely $\eta_{\widetilde{F}_{\alpha}}$ in (2) of \autoref{thm:ogus-vologodsky}.
\end{proposition}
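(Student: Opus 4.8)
The statement is essentially the comparison of \cite[\S2.2]{lan-sheng-zuo} with Ogus--Vologodsky's $C^{-1}$, and I would organize the proof in two stages: first check that \autoref{construction:exp-twist} really produces an object of $\on{MIC}_{p-1}(X/S)$, and then identify it with $C^{-1}(\mscr{E}',\theta')$ by matching it against the characterizing properties (1)--(3) of \autoref{thm:ogus-vologodsky}. For the first stage there are three points, where below $F^*$ abbreviates the relative Frobenius pullback. (a) Each local pair $(H_\alpha,\nabla_\alpha)$ with $\nabla_\alpha=\nabla_{\text{can}}-\zeta_\alpha(F^*\theta')$ is an integrable connection of order of nilpotence $\le p-1$: integrability follows from $\nabla_{\text{can}}^2=0$, from the compatibility of the divided Frobenius pullback $\zeta_\alpha$ with the de Rham differential (the Deligne--Illusie property \cite{deligne-illusie}), and from the Higgs integrability $\theta'\wedge\theta'=0$, which kills the quadratic term $\zeta_\alpha(F^*\theta')\wedge\zeta_\alpha(F^*\theta')$; the nilpotence bound is inherited from $\theta'$. (b) The $G_{\alpha\beta}$ are horizontal, i.e.\ $G_{\alpha\beta}\circ\nabla_\alpha\circ G_{\alpha\beta}^{-1}=\nabla_\beta$ on $U_{\alpha\beta}$. (c) The $G_{\alpha\beta}$ satisfy the cocycle condition $G_{\beta\gamma}\circ G_{\alpha\beta}=G_{\alpha\gamma}$ on triple overlaps. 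Once (a)--(c) hold, Zariski descent of vector bundles produces the desired flat bundle on $X/S$.

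For (b), the key input is the Deligne--Illusie cocycle relation between the divided Frobenius pullbacks: if $\widetilde F_\alpha$ and $\widetilde F_\beta$ differ by the section $\xi_{\alpha\beta}\in H^0(U_{\alpha\beta},F_{U_{\alpha\beta}/S}^*T_{U_{\alpha\beta}^{(1)}/S})$, then $\zeta_\alpha-\zeta_\beta=d\circ h_{\alpha\beta}$ as maps $F_{U_{\alpha\beta}/S}^*\Omega^1_{U_{\alpha\beta}^{(1)}/S}\to\Omega^1_{U_{\alpha\beta}/S}$, where $h_{\alpha\beta}=\langle\xi_{\alpha\beta},-\rangle$. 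Substituting this, and using that $F^*\theta'$ is horizontal for $\nabla_{\text{can}}$, the identity $G_{\alpha\beta}\nabla_\alpha G_{\alpha\beta}^{-1}=\nabla_\beta$ reduces to a formal statement about the truncated exponential $G_{\alpha\beta}=\sum_{i=0}^{p-1}\frac{(h_{\alpha\beta}(F^*\theta'))^i}{i!}$ and the nilpotent endomorphism $h_{\alpha\beta}(F^*\theta')$; it holds because $(h_{\alpha\beta}(F^*\theta'))^{p-1}$ factors through $(\theta')^{\circ(p-1)}=0$ and each $i!$ with $i\le p-2$ is a unit mod $p$, so the truncated exponential obeys the same derivative rule $dG_{\alpha\beta}=G_{\alpha\beta}\cdot d\big(h_{\alpha\beta}(F^*\theta')\big)$ as the genuine exponential. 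For (c) one uses additivity $\xi_{\alpha\beta}+\xi_{\beta\gamma}=\xi_{\alpha\gamma}$ of differences of Frobenius lifts (hence $h_{\alpha\beta}+h_{\beta\gamma}=h_{\alpha\gamma}$), the fact that $h_{\alpha\beta}(F^*\theta')$ and $h_{\beta\gamma}(F^*\theta')$ commute because $\theta'\wedge\theta'=0$ forces $\theta'\circ\theta'$ to be symmetric in its two form-inputs, and the multiplicativity of $\sum_{i<p}\frac{(-)^i}{i!}$ on commuting nilpotents of order $\le p-1$, giving $G_{\beta\gamma}G_{\alpha\beta}=G_{\alpha\gamma}$.

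For the second stage, I would invoke the local description of the inverse Cartier transform in \cite{ogus-vologodsky}: over an open $U_\alpha$ equipped with the Frobenius lift $\widetilde F_\alpha$, the transform $C^{-1}(\mscr{E}',\theta')|_{U_\alpha}$ is the pair $\big(F_{U_\alpha/S}^*\mscr{E}',\ \nabla_{\text{can}}-\zeta_\alpha(F^*\theta')\big)$, and the comparison isomorphism $\eta_{\widetilde F_\alpha}\colon(\mscr{E},\psi)\simeq F_{U_\alpha/S}^*(\mscr{E}',\theta')$ of \autoref{thm:ogus-vologodsky}(2) is the tautological identification of underlying sheaves; this matches the normalization of \cite[\S2.2]{lan-sheng-zuo} once the sign of the twist is taken as in \autoref{construction:exp-twist}. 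The global $C^{-1}(\mscr{E}',\theta')$ is glued from these local models, and by Remark 2.10 of \cite{ogus-vologodsky} (i.e.\ \autoref{thm:ogus-vologodsky}(3)) the $U_\alpha$-to-$U_\beta$ transition is $\eta_{\widetilde F_\beta}\circ\eta_{\widetilde F_\alpha}^{-1}=e^{\xi_{\alpha\beta}}$, where $e^{\xi_{\alpha\beta}}$ is the exponential of the nilpotent endomorphism $\psi(\xi_{\alpha\beta})$ obtained by pairing $\xi_{\alpha\beta}$ with the $p$-curvature. Under $\eta_{\widetilde F_\alpha}$ this endomorphism is exactly $h_{\alpha\beta}(F^*\theta')$, so $e^{\xi_{\alpha\beta}}=G_{\alpha\beta}$ (the truncation being harmless by nilpotence). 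Thus the descent datum $\{F_{U_\alpha/S}^*\mscr{E}',\,G_{\alpha\beta}\}$ of \autoref{construction:exp-twist} coincides with the one defining $C^{-1}(\mscr{E}',\theta')$, proving the first assertion; and since the chosen local trivialization is by construction $\eta_{\widetilde F_\alpha}$, the ``moreover'' follows at once.

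The main obstacle is step (b): verifying horizontality of the truncated-exponential gluing maps, which needs both the precise Deligne--Illusie relation $\zeta_\alpha-\zeta_\beta=d\circ h_{\alpha\beta}$ and a careful check that truncating the exponential at degree $p-1$ preserves the identities one needs. Both work only because $\theta'$ is nilpotent of order $\le p-1$ and $(p-1)!$ is invertible mod $p$ — exactly the reason the whole story is confined to the subcategory $\on{HIG}_{p-1}$ of \autoref{thm:ogus-vologodsky}. Everything else is bookkeeping with Čech data and standard properties of the divided Frobenius pullback.
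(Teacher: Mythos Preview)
Your proposal is correct and follows the same route as the paper, which simply defers to \cite[\S2.2,\ \S3]{lan-sheng-zuo} for all details; you have correctly unpacked what that reference proves, namely the integrability and nilpotence of each local $(H_\alpha,\nabla_\alpha)$, the horizontality and cocycle condition for the $G_{\alpha\beta}$, and the identification with $C^{-1}$ via the local Ogus--Vologodsky description together with the transition rule $\eta_{\widetilde F_\beta}\circ\eta_{\widetilde F_\alpha}^{-1}=e^{\xi_{\alpha\beta}}$ of \autoref{thm:ogus-vologodsky}(3). The one place to be slightly more careful than your sketch is the cocycle relation $\xi_{\alpha\beta}+\xi_{\beta\gamma}=\xi_{\alpha\gamma}$: since the domain lifts $\widetilde U_\alpha$ are chosen independently, comparing $\widetilde F_\alpha$ and $\widetilde F_\beta$ on overlaps already requires identifying $\widetilde U_\alpha|_{U_{\alpha\beta}}$ with $\widetilde U_\beta|_{U_{\alpha\beta}}$, and additivity of the $\xi$'s on triple overlaps needs these identifications to be chosen compatibly (which one arranges, as in \cite{lan-sheng-zuo}, by working with \'etale coordinates and the standard lift).
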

\begin{proof}
    We have  to check that the $G_{\alpha\beta}$'s satisfy the cocycle condition, and that the connections $\nabla_{\alpha}$ glue to give  a nilpotent flat connection: see \cite[\S 2.2]{lan-sheng-zuo} for these claims. For the claim that this construction agrees with $C^{-1}(\mscr{E}', \theta')$, see \cite[\S 3]{lan-sheng-zuo}.
\end{proof}
\begin{remark}\label{rem:w2-lifting-X}
The assumption that $S$ admits a flat lift $\widetilde{S}$ to $\mathbb{Z}/p^2\mathbb{Z}$, and that $X^{(1)}$ admits a flat lift over $\widetilde{S}$, is slightly too strong for some applications. It may be replaced with the weaker assumption that $X$ lifts to $W_2(S)$. This variant assumption has the following convenient property: if $T\to S$ is a morphism, then $X_T$ lifts over $W_2(T)$ if $X$ lifts over $W_2(S)$ (by functoriality of $W_2$). See \cite[Remark 3.3]{petrov-maryland} and \cite{terentiuk}. Unfortunately a complete proof of this observation has not yet appeared in the published literature.

We will use the following weak form of the above, which follows from our formulas in \autoref{construction:exp-twist}. Suppose $X^{(1)}$ lifts to $\widetilde S$ as in \autoref{thm:ogus-vologodsky}. Then for any $T\to S$, one may pull back the morphisms $\zeta_\alpha, h_{\alpha\beta}$ of \autoref{construction:exp-twist} to $T$, whence one obtains an Ogus--Vologodsky correspondence on $T$, functorial in $T/S$ (and depending on the lift of $X^{(1)}$).
\end{remark}
\begin{remark}
Note that the inverse Cartier transform preserves semistability, i.e.~it sends a semistable Higgs bundle of slope zero to a semistable flat bundle of slope zero.	
\end{remark}

\subsubsection{Canonical sections to the conjugate moduli stack}
Fix a map $T\to S$ and 
 let $(\mathscr{E}, \theta)$ be a Higgs bundle on $X^{(1)}_T/T$. We will construct, if the order of nilpotence of $\theta$ is not too large in terms of $p$, a canonical section to $\mathscr{M}_{\conj}(X_T/T)\to \mathbb{A}^1_T$ passing through $(\mathscr{E}, \theta)$ (viewed as a point of $\mathscr{M}_{\conj}(X_T/T)_0$).

\begin{definition}\label{defn:canonical-section}
	Let $(\mathscr{E}, \theta)$ be a Higgs bundle on $X^{(1)}_T/T$ of order of nilpotence at most $p-1$. Set $(\mathscr{E}_\lambda, \nabla_\lambda)$ on $X\times_S \mathbb{A}^1_T/\mathbb{A}^1_T$ to be $$(\mathscr{E}_\lambda, \nabla_\lambda):=C^{-1}(\mathscr{E}, \lambda \theta),$$ where $\lambda$ is the coordinate on $\mathbb{A}^1$. Note that $(\mathscr{E}_0, \nabla_0)$ has vanishing $p$-curvature, i.e. $\psi_p(\mathscr{E}_\lambda, \nabla_\lambda)$ is in the ideal generated by $\lambda$. Thus we may set $$\widetilde\theta:=\lambda^{-1}\psi_p(\mathscr{E}_\lambda, \nabla_\lambda).$$
	
	The triple $(\mathscr{E}_\lambda, \nabla_\lambda, \widetilde\theta)$ evidently gives rise to a section to the natural map $\lambda: \mathscr{M}_{\conj}(X_T/T)\to \mathbb{A}^1_T$. We refer to it as the \emph{canonical section} associated to $(\mathscr{E}, \theta)$.
\end{definition}
\begin{remark} 
Note that in general (i.e.~if $(\mathscr{E}, \theta)$ is not a fixed point of the natural $\mathbb{G}_m$-action on $\mathscr{M}_{\Dol}(X^{(1)}/S)$), the canonical section associated to $(\mathscr{E}, \theta)$ is not $\mathbb{G}_m$-equivariant.
\end{remark}

\begin{proposition}\label{prop:central-fiber-canonical-section}
	Let $(\mathscr{E},\theta)$ be as above and let $(\mathscr{E}_\lambda, \nabla_\lambda, \widetilde \theta)$ be the canonical section associated to it. There is a natural isomorphism $$(F_{X/S}^*(\mathscr{E}), \nabla_{\on{can}})\to (\mathscr{E}_0, \nabla_0)$$ sending $F_{X/S}^*\theta$ to $\widetilde\theta_0$. That is,  the canonical section associated to $(\mathscr{E}, \theta)$ passes through $(\mathscr{E},\theta)$, viewed as a point of $\mathscr{M}_{\conj}(X_T/T)_0$, under the identification of  $\mathscr{M}_{\conj}(X_T/T)_0$ with $\mathscr{M}_{\Dol}(X^{(1)}_T/T)$ of \eqref{eqn:non-abelian-f-zip}.
\end{proposition}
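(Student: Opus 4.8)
The plan is to unwind \autoref{construction:exp-twist} at $\lambda=0$ and combine it with \autoref{thm:ogus-vologodsky}(2). First I would check that $C^{-1}(\mathscr{E},\lambda\theta)$ is even defined over $\mathbb{A}^1_T$: the iterate $(\lambda\theta)^{\circ\ell}$ equals $\lambda^{\ell}\,\theta^{\circ\ell}$, which vanishes once $\ell$ reaches the order of nilpotence of $\theta$, and by hypothesis that order is $\leq p-1$; hence $(\mathscr{E},\lambda\theta)$ lies in $\on{HIG}_{p-1}$ over $\mathbb{A}^1_T$, and by \autoref{rem:w2-lifting-X} (pulling the data $\zeta_\alpha,h_{\alpha\beta}$ back along $\mathbb{A}^1_T\to S$) the inverse Cartier transform applies, functorially, so $(\mathscr{E}_\lambda,\nabla_\lambda)$ and then $\widetilde\theta$ make sense.

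Next I would pin down the natural isomorphism $(F_{X/S}^*\mathscr{E},\nabla_{\on{can}})\overset{\sim}{\to}(\mathscr{E}_0,\nabla_0)$. Setting $\lambda=0$ in \autoref{construction:exp-twist}, the local connections degenerate to $\nabla_\alpha=\nabla_{\on{can}}-\zeta_\alpha(F^*_{U_\alpha/S}(0))=\nabla_{\on{can}}$, and the transition automorphisms to $G_{\alpha\beta}=\sum_{i=0}^{p-1}(h_{\alpha\beta}(F^*_{U_{\alpha\beta}/S}(0)))^i/i!=\on{id}$; hence the local pieces $(F^*_{U_\alpha/S}\mathscr{E},\nabla_{\on{can}})$ glue along the identity cocycle, and by \autoref{prop: ov=exp-twist} the resulting global isomorphism restricts on each $U_\alpha$ to $\eta_{\widetilde F_\alpha}|_{\lambda=0}$. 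In particular $\psi_p(\mathscr{E}_0,\nabla_0)=0$, which is what makes $\widetilde\theta$ well-defined: divisibility of $\psi_p(\mathscr{E}_\lambda,\nabla_\lambda)$ by $\lambda$ is checked locally in the next step, and since $\lambda$ is a nonzerodivisor on $X\times_S\mathbb{A}^1_T$ acting on a locally free sheaf, local divisibility produces a unique global $\widetilde\theta$.

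Then I would compute the Higgs field. On each $U_\alpha$, \autoref{thm:ogus-vologodsky}(2), applied to $(\mathscr{E},\lambda\theta)$ and the lift $\widetilde F_\alpha$, says $\eta_{\widetilde F_\alpha}$ carries the $p$-curvature of $(\mathscr{E}_\lambda,\nabla_\lambda)|_{U_\alpha}$ to $F^*_{U_\alpha/S}(\lambda\theta)=\lambda\cdot F^*_{U_\alpha/S}\theta$; dividing by $\lambda$, it carries $\widetilde\theta|_{U_\alpha}$ to $F^*_{U_\alpha/S}\theta$. Restricting to $\lambda=0$ and invoking the previous step's identification of $\eta_{\widetilde F_\alpha}|_{\lambda=0}$ with the natural isomorphism, we conclude that on each $U_\alpha$ the natural isomorphism $(F^*_{X/S}\mathscr{E},\nabla_{\on{can}})\to(\mathscr{E}_0,\nabla_0)$ sends $F^*_{U_\alpha/S}\theta$ to $\widetilde\theta_0|_{U_\alpha}$. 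Since this holds on a cover and all the objects involved are global, the natural isomorphism sends $F^*_{X/S}\theta$ to $\widetilde\theta_0$, which is the assertion, read through the identification \eqref{eqn:non-abelian-f-zip}.

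The only real subtlety, and the step I expect to require care, is the interaction between the various local maps $\eta_{\widetilde F_\alpha}$: they depend on the chosen Frobenius lifts and on overlaps differ by the exponential factor of \autoref{thm:ogus-vologodsky}(3), so for general $\lambda$ they do not glue to a global isomorphism $\mathscr{E}_\lambda\simeq F^*_{X/S}\mathscr{E}$. However that exponential discrepancy is precisely what the cocycle $\{G_{\alpha\beta}\}$ of \autoref{construction:exp-twist} encodes, so there is nothing extra to verify; and at $\lambda=0$ the Higgs field vanishes, the factors $G_{\alpha\beta}$ become the identity, and all the $\eta_{\widetilde F_\alpha}|_{\lambda=0}$ are restrictions of a single global isomorphism — which is exactly what makes the gluing transparent and the argument go through. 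Everything else is a direct appeal to the Ogus--Vologodsky package as packaged in \autoref{thm:ogus-vologodsky} and \autoref{prop: ov=exp-twist}.
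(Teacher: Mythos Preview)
Your proof is correct and follows essentially the same route as the paper: you specialize \autoref{construction:exp-twist} to $\lambda=0$ to see that the gluing cocycle $G_{\alpha\beta}$ becomes the identity (yielding the natural isomorphism with $(F_{X/S}^*\mathscr{E},\nabla_{\on{can}})$), and then use \autoref{thm:ogus-vologodsky}(2) locally on each $U_\alpha$ to identify $\widetilde\theta_0$ with $F_{X/S}^*\theta$. Your additional remarks on well-definedness over $\mathbb{A}^1_T$ and on why the local $\eta_{\widetilde F_\alpha}$ glue only at $\lambda=0$ are accurate and make explicit points the paper leaves implicit.
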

\begin{proof}
	We get an explicit construction of $C^{-1}(\mscr{E}, \lambda \theta)$ from \autoref{construction:exp-twist} and \autoref{rem:w2-lifting-X} and  we use freely the  setup and notation therein. In particular, we have an open cover $(U_{\alpha}\times \mb{A}^1_T)_{\alpha}$ of $X\times \mb{A}^1_T$, identifications of underlying bundles 
    \[
    \eta_{\widetilde{F}_{\alpha}}: C^{-1}(\mscr{E}, \lambda \theta)|_{U_{\alpha}\times \mb{A}^1_T} \simeq F^*_{U_{\alpha}\times \mb{A}^1_T/\mb{A}^1_T}\mscr{E},
    \]
    which are in turn glued along
    gluing maps 
    \[
    G_{\alpha\beta} \in \aut_{\mscr{O}_{U_{\alpha\beta}\times \mb{A}^1_T}}(F^*_{U_{\alpha\beta}\times \mb{A}^1_T/\mb{A}^1_T} \mscr{E}|_{(U_{\alpha\beta}\times \mb{A}^1_T)^{(1)}/\mb{A}^1_T}).
    \]
    Inspecting the formula  for $G_{\alpha\beta}$ in \autoref{construction:exp-twist}, we see  that $G_{\alpha\beta}|_{\lambda=0}=\id$, and so we get  an isomorphism $(F_{X/S}^*(\mathscr{E}), \nabla_{\on{can}})\to (\mathscr{E}_0, \nabla_0)$, which proves the first claim. 

    Moreover, again using the fact that the gluing map $G_{\alpha\beta}|_{\lambda=0}=\id$, it suffices to check the second claim on each $U_{\alpha}$.
    
    From   (2) of \autoref{thm:ogus-vologodsky}, we have that, under the identification $\eta_{\widetilde{F}_{\alpha}}$,
    \[
    \psi_p(\mscr{E}_{\lambda}, \nabla_{\lambda})|_{U_{\alpha}\times \mb{A}^1_S} = 
    F^*_{U_{\alpha}\times \mb{A}^1_S/\mb{A}^1_S}\lambda \theta,
    \]
    from which the claim on $U_{\alpha}$ follows immediately.
\end{proof}
\begin{remark}
The construction of the canonical sections above is a non-abelian analogue of Deligne--Illusie's splitting of the conjugate filtration on de Rham cohomology \cite{deligne-illusie}.	
\end{remark}
\begin{remark}
Our construction of the canonical section above relies on the Ogus--Vologodsky correspondence and thus only makes sense for bundles of order of nilpotence at most $p-1$. However, the formulas make sense on the $PD$-completion $0^{\widehat{\sharp}}$ of $0\in \mathbb{A}^1_S$. That is, if one considers the fiber product
$$\xymatrix{
\mathscr{M}_{\conj}(X/S)^{\widehat{\sharp}} \ar[r] \ar[d] & \mathscr{M}_{\conj}(X/S)\ar[d]\\
0^{\widehat{\sharp}} \ar[r] & \mathbb{A}^1_S
}$$
there exists a canonical section to $\mathscr{M}_{\conj}(X/S)^{\widehat{\sharp}}\to 0^{\widehat{\sharp}}$ passing through every point of the central fiber of $\mathscr{M}_{\conj}(X/S)^{\widehat{\sharp}}$.
\end{remark}
\begin{lemma}[Equisingularity]\label{lemma:equisingularity}
	Suppose $p>\dim(X/S)$. Let $(\mathscr{E}, \theta)$ be a Higgs bundle on $X^{(1)}/S$ of order of nilpotence at most $(p-1)/2$. Let $t: \mathbb{A}^1_S\to \mathscr{M}_{\conj}(X/S)$ be the canonical section to $\mathscr{M}_{\conj}(X/S)\to \mathbb{A}^1_S$ classifying $(\mathscr{E}_\lambda, \nabla_\lambda)$, defined as in \autoref{defn:canonical-section}. Then the pullback of the tangent complex of $\mathscr{M}_{\conj}(X/S)$ along $t$ has $i$-th cohomology sheaves which are $\lambda$-torsion-free for $i=0,-1$.
\end{lemma}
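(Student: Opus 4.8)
The plan is to work throughout with $\mathcal{T} := t^{*}\mathcal{T}_{\mscr{M}_{\conj}(X/S)/S\times\mb{A}^{1}}$, the pullback along $t$ of the relative tangent complex; the other natural tangent complexes ($\mathcal{T}_{\mscr{M}_{\conj}/S}$, or the absolute one) differ from this by a perfect complex pulled back from $S\times\mb{A}^{1}$ and concentrated in degree $0$, which restricts along $t$ to a locally free, hence $\lambda$-torsion-free, sheaf on $\mb{A}^{1}_{S}$, so the associated long exact sequences reduce the statement to the case of $\mathcal{T}$. The case $i=-1$ then needs nothing beyond flatness: using the description of the tangent complex of $\mscr{M}_{\conj}$ via the deformation complex of triples (as in \autoref{prop:tangent-spaces}), $\mathcal{H}^{-1}(\mathcal{T})$ is the sheaf of infinitesimal automorphisms of the universal family $(\mscr{E}_{\lambda},\nabla_{\lambda},\widetilde{\theta})$ carried by the canonical section, i.e.\ the subsheaf of $f_{*}\End^{0}(\mscr{E}_{\lambda})$ (with $f\colon X\times_{S}\mb{A}^{1}_{S}\to\mb{A}^{1}_{S}$) of endomorphisms commuting with $\nabla_{\lambda}$ and with $\widetilde{\theta}$; since $\mscr{E}_{\lambda}$ is a vector bundle on $X\times_{S}\mb{A}^{1}_{S}$, which is flat over $\mb{A}^{1}_{S}$, the sheaf $f_{*}\End^{0}(\mscr{E}_{\lambda})$ and all its subsheaves are $\lambda$-torsion-free.

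For $i=0$ I will use the exact triangle obtained from $0\to\mscr{O}_{\mb{A}^{1}_{S}}\xrightarrow{\lambda}\mscr{O}_{\mb{A}^{1}_{S}}\to i_{0*}\mscr{O}_{S}\to 0$, where $i_{0}\colon S\hookrightarrow\mb{A}^{1}_{S}$ is the zero section, namely $\mathcal{T}\xrightarrow{\lambda}\mathcal{T}\to i_{0*}Li_{0}^{*}\mathcal{T}$. The long exact sequence of cohomology sheaves shows that $\mathcal{H}^{0}(\mathcal{T})$ is $\lambda$-torsion-free precisely when the restriction map $r\colon\mathcal{H}^{-1}(\mathcal{T})\to\mathcal{H}^{-1}(Li_{0}^{*}\mathcal{T})$ is surjective. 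To describe the target I note $\mscr{M}_{\conj}(X/S)_{0}=\mscr{M}_{\conj}(X/S)\times_{\mb{A}^{1}}\{0\}$, so base change for the cotangent complex gives $Li_{0}^{*}\mathcal{T}\simeq(t\circ i_{0})^{*}\mathcal{T}_{\mscr{M}_{\conj}(X/S)_{0}/S}$; combining this with the equivalence $\mscr{M}_{\conj}(X/S)_{0}\simeq\mscr{M}_{\Dol}(X^{(1)}/S)$ of \eqref{eqn:non-abelian-f-zip} and with \autoref{prop:central-fiber-canonical-section} (which says $t(0)$ is the point $(\mscr{E},\theta)$) realizes $\mathcal{H}^{-1}(Li_{0}^{*}\mathcal{T})$ as the sheaf of infinitesimal automorphisms of the Higgs bundle $(\mscr{E},\theta)$ on $X^{(1)}/S$, with $r$ the map ``evaluation at $\lambda=0$, followed by descent along $F_{X/S}$''.

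The essential point is then that every infinitesimal automorphism of $(\mscr{E},\theta)$ lifts along the canonical section. Given a local section $u'$ of $\End^{0}(\mscr{E})$ with $[\theta,u']=0$, I regard $u'$ as an automorphism of the Higgs bundle $(\mscr{E},\lambda\theta)$ over $X^{(1)}\times_{S}\mb{A}^{1}_{S}$ — legitimate since $[\lambda\theta,u']=\lambda[\theta,u']=0$ and $(\mscr{E},\lambda\theta)$ has order of nilpotence $\leq\ord(\theta)\leq(p-1)/2<p$, so the Ogus--Vologodsky correspondence \autoref{thm:ogus-vologodsky} applies over $\mb{A}^{1}_{S}$ (this is where the hypotheses $p>\dim(X/S)$ and the nilpotence bound are used, together with \autoref{rem:w2-lifting-X} for the family version). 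Then $\widetilde{u}:=C^{-1}(u')$ is, by functoriality, an automorphism of $C^{-1}(\mscr{E},\lambda\theta)=(\mscr{E}_{\lambda},\nabla_{\lambda})$; being an automorphism of a flat bundle it commutes with the $p$-curvature, so $\lambda[\widetilde{u},\widetilde{\theta}]=[\widetilde{u},\psi_{p}(\mscr{E}_{\lambda},\nabla_{\lambda})]=0$, and $\lambda$-torsion-freeness of the (flat-over-$\mb{A}^{1}_{S}$) endomorphism bundle forces $[\widetilde{u},\widetilde{\theta}]=0$. Hence $\widetilde{u}$ is an automorphism of the full triple $(\mscr{E}_{\lambda},\nabla_{\lambda},\widetilde{\theta})$, i.e.\ a section of $\mathcal{H}^{-1}(\mathcal{T})$, and by \autoref{prop:central-fiber-canonical-section} (equivalently, because $C^{-1}$ specializes to $F_{X/S}^{*}$ at $\lambda=0$) one has $r(\widetilde{u})=\widetilde{u}|_{\lambda=0}=F_{X/S}^{*}u'$, which corresponds to $u'$ under the identification above. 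Thus $r$ admits a section, hence is surjective, and $\mathcal{H}^{0}(\mathcal{T})$ is $\lambda$-torsion-free.

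I expect the main obstacle to be not a single hard step but the careful matching of the three identifications used to describe $Li_{0}^{*}\mathcal{T}$ and to make $r$ literally restriction to $\lambda=0$ — base change of the cotangent complex, the equivalence $\mscr{M}_{\conj}(X/S)_{0}\simeq\mscr{M}_{\Dol}(X^{(1)}/S)$, and $t(0)\leftrightarrow(\mscr{E},\theta)$ — together with the verification that $C^{-1}$ carries automorphisms to automorphisms compatibly in families; the homological manipulation with the $\lambda$-divided triangle and the appeal to Ogus--Vologodsky are then routine. As the remark preceding the lemma indicates, running essentially this argument in reverse — realizing an arbitrary triple $(\mscr{E},\nabla,\theta)$ with $\ord\theta\leq(p-1)/2$ as a point on a canonical section, by applying $C$ and dividing the resulting Higgs field by $\lambda$ — should give the sharper assertion that the whole such locus lies in $\mscr{M}_{\conj}^{\lambda-\tf}$; it is in establishing that sharper form, over non-reduced bases, that the bound $(p-1)/2$ rather than merely $p-1$ is genuinely needed.
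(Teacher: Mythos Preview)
Your proof is correct and takes a genuinely different route from the paper's. The paper argues by reducing to $S$ the spectrum of a finite $k$-algebra, pushing forward to $\mb{A}^1_k$, and showing that $\dim \mathcal{H}^i(Li_a^*\mathcal{T})$ is independent of $a\in\overline{k}$: for $a\neq 0$ one has $\mathcal{H}^i(Li_a^*\mathcal{T})\simeq H^{i+1}(\End(\mscr{E}_a,\nabla_a)_{\dR})\simeq H^{i+1}(\End(\mscr{E},a\theta)_{\mathrm{Higgs}})$, the second isomorphism coming from Ogus--Vologodsky's de Rham/Higgs cohomology comparison \cite[Theorem~2.26]{ogus-vologodsky} applied to $\End(\mscr{E},a\theta)$ --- and it is precisely here that the bound $(p-1)/2$ enters, since one needs $\ord(\ad\theta)\leq p-1$. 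Constancy of fiber rank on $\mb{A}^1_k$ then forces local freeness.

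By contrast, you handle $i=-1$ by the easy flatness observation and reduce $i=0$, via the $\lambda$-multiplication triangle, to lifting infinitesimal automorphisms of $(\mscr{E},\theta)$ along the canonical section, which you accomplish using only the \emph{functoriality} of $C^{-1}$ on morphisms --- not the cohomology comparison. This is more elementary, works directly over general $S$ without the reduction to a finite $k$-algebra, and (as you observe) appears to need only $\ord(\theta)\leq p-1$ for the lemma as stated. On the other hand, the paper's constant-rank argument yields the slightly stronger conclusion that the cohomology sheaves are locally free (hence commute with arbitrary base change on $\mb{A}^1$), and its method extends uniformly to higher $i$; your approach would require a separate argument --- lifting first-order deformations rather than automorphisms --- to push to $i=1$.
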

\begin{proof}
We wish to show that the cohomology sheaves of $t^*\mathscr{T}_{\mathscr{M}_{\conj}(X/S)/S\times \mathbb{A}^1}$ (in degrees $0$ and $-1$) are $\lambda$-torsion-free. 

Without loss of generality we may assume $S$ is the spectrum of a finite $k$-algebra, in which case it suffices to show that these cohomology sheaves, viewed as a sheaf on $\mathbb{A}^1_k$ (by pushforward along the finite map $\mathbb{A}^1_S\to \mathbb{A}^1_k$), are vector bundles. 

By \autoref{prop:tangent-spaces}, for $i=0, -1$,  the fiber of $\mathscr{H}^i(t^*\mathscr{T}_{\mathscr{M}_{\conj}(X/S)})$ at $\lambda=a\in \overline{k}$ is isomorphic to $H^{i+1}(\on{End}(\mathscr{E}_a, \nabla_a)_{\dR})$ for $a\neq 0$ and equal to $H^{i+1}(\on{End}(\mathscr{E}, \theta)_{\on{Higgs}})$ for $a=0$. It is enough to prove that the ranks of these vector spaces are independent of $a\in \mathbb{A}^1(\overline{k})$, as a coherent sheaf on $\mathbb{A}^1_k$ of constant rank is a vector bundle.

We first show that this holds for $a\neq 0$. Indeed in this case $(\mathscr{E}_a, \nabla_a)=C^{-1}(\mathscr{E}, a\theta)$ by definition, and hence $H^{i+1}(\on{End}(\mathscr{E}_a, \nabla_a)_{\dR})=H^{i+1}(\on{End}(\mathscr{E}, a\theta)_{\on{Higgs}})$ by \cite[Theorem 2.26]{ogus-vologodsky}. These latter vector spaces have dimension independent of $a$; indeed, multiplication by $a^j$ in degree $j$ induces an isomorphism between $\on{End}(\mathscr{E}, \theta)_{\on{Higgs}}$ and $\on{End}(\mathscr{E}, a\theta)_{\on{Higgs}}$. 

Finally, to show the rank at $a=0$ agrees with the rank at $a=1$, it suffices to compare $H^{i+1}(\on{End}(\mathscr{E}, \theta)_{\on{Higgs}})$ and $H^{i+1}(\on{End}(\mathscr{E}_1, \nabla_1)_{\dR})$. But again these two objects are related by the inverse Cartier transform, completing the proof.
\end{proof}
\subsection{Vanishing of $p$-curvature and liftings of tangent vectors}
Recall that, as in \autoref{section: semistable-deform}, we use  $\mscr{M}_{\dR}^{\on{ss}}$ to denote the semistable locus with respect to a fixed polarization, and similarly for the other stacks. 

\begin{theorem} \label{thm:p-curvature-lifting-vanishes} Let $k$ be a field of characteristic $p>0$, $S$ a smooth $k$-scheme, and $f: X\to S$ a smooth projective morphism. Suppose $p>\dim(X/S)$.

Suppose that for some $r\geq 2$ the $p$-curvature of the isomonodromy foliation on $\mathscr{M}_{\dR}^{\on{ss}}(X/S, r)$ vanishes identically. Then the lifting of tangent vectors $\Theta_{X/S}$ on $\mathscr{M}_{\Dol}^{\on{ss}}(X/S, r)$ vanishes on the locus of Higgs bundles of order of nilpotence at most $(p-1)/2$.

The same statement holds if we replace $\mscr{M}_{\dR}^{\on{ss}}(X/S, r)$ and $\mscr{M}_{\Dol}^{\on{ss}}(X/S, r)$ by the stable loci $\mscr{M}_{\dR}^{\on{s}}(X/S, r)$ and $\mscr{M}_{\Dol}^{\on{s}}(X/S, r)$ respectively.
\end{theorem}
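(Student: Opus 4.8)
The plan is to follow Katz's strategy at the level of the conjugate moduli stack: transport the vanishing of the $p$-curvature of the isomonodromy foliation from $\mscr{M}_{\dR}(X/S,r)$ across the whole conjugate filtration $\mscr{M}_{\conj}(X/S,r)\to\mathbb{A}^1_S$ using the canonical sections of \autoref{defn:canonical-section}, and then read off the vanishing of $\Theta_{X/S}$ from the non-abelian Katz formula \autoref{lemma:katz-formula}. We first reduce to a convenient situation. Since $\Theta_{X/S}$ is intrinsic and its vanishing may be checked Zariski-locally on $S$, we may assume $S$ admits a flat lift $\widetilde S$ over $\mathbb{Z}/p^2$ and $X^{(1)}$ a lift over $\widetilde S$, placing us in the setting of \autoref{subsec:NAHTcharp}. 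Moreover, in the Cartesian square defining $G$ before \autoref{lemma:katz-formula}, the absolute Frobenius $F_{\abs}\colon S\to S$, and hence $G\colon\mscr{M}_{\Dol}(X^{(1)}/S)\to\mscr{M}_{\Dol}(X/S)$, is faithfully flat; so by descent it is enough to prove that $G^*\Theta_{X/S}$ vanishes on the locus of $\mscr{M}^{\on{ss}}_{\Dol}(X^{(1)}/S,r)$ parametrizing Higgs bundles of order of nilpotence at most $(p-1)/2$ (and similarly with $\on{ss}$ replaced by $\on{s}$).

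So fix such a Higgs bundle $(\mscr{E},\theta)$ on $X^{(1)}/S$ — it suffices to treat the universal one — and let $t\colon\mathbb{A}^1_S\to\mscr{M}_{\conj}(X/S,r)$ be the canonical section of \autoref{defn:canonical-section}, classifying the family $C^{-1}(\mscr{E},\lambda\theta)$ of flat bundles on $X\times_S\mathbb{A}^1_S$ together with the $F$-Higgs field $\lambda^{-1}\psi_p$. By \autoref{prop:central-fiber-canonical-section}, the restriction $t|_{\lambda=0}\colon S\to\mscr{M}_{\conj}(X/S,r)_0$ is, under the identification \eqref{eqn:non-abelian-f-zip}, the classifying map of $(\mscr{E},\theta)$. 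I record two properties of $t$. \emph{Semistability.} The section $t$ lands in $\mscr{M}^{\on{ss}}_{\conj}$: over $\mathbb{G}_m\subset\mathbb{A}^1$ this follows since the inverse Cartier transform preserves semistability (see the remark following \autoref{prop: ov=exp-twist}) and rescaling $\theta$ by an invertible parameter is an isomorphism of Higgs bundles, while over $\lambda=0$ it is the semistability of $(\mscr{E},\theta)$; as semistability is an open condition and $\mathbb{A}^1_S$ is Jacobson, this suffices, and the same applies with $\on{s}$ in place of $\on{ss}$. \emph{Equisingularity.} Since $p>\dim(X/S)$ and $\theta$ has order of nilpotence at most $(p-1)/2$, \autoref{lemma:equisingularity} shows that the cohomology sheaves in degrees $0$ and $-1$ of $t^*\mscr{T}_{\mscr{M}_{\conj}(X/S)/S\times\mathbb{A}^1}$ are $\lambda$-torsion-free; in particular $t$ factors through the subfunctor $\mscr{M}^{\lambda-\tf}_{\conj}$ on which, by \autoref{lemma:katz-formula}, the map $\Psi_{\conj}/\lambda$ is defined.

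Now I propagate the hypothesis. Over $\lambda\neq 0$ the forgetful map identifies $\mscr{M}_{\conj}(X/S,r)|_{\mathbb{G}_m}$ with $\mscr{M}_{\dR}(X/S,r)\times_S\mathbb{G}_{m,S}$ — the $F$-Higgs field being forced to equal $\lambda^{-1}\psi_p$ — compatibly with the crystal structure constructed in \autoref{prop: crystal-mconj} from that of \autoref{prop: mdr-crystal}; equivalently, $\Psi_{\conj}$ is $\mathbb{G}_m$-equivariant for the scaling action, which carries $\lambda=1$ over all of $\mathbb{G}_m$. Hence $\Psi_{\conj}$ restricted to $\{\lambda\neq 0\}$ is the pullback of the $p$-curvature of the isomonodromy foliation on $\mscr{M}_{\dR}(X/S,r)$, which vanishes on the semistable locus by hypothesis; combined with the semistability of $t$, we get $t^*\Psi_{\conj}|_{\{\lambda\neq 0\}}=0$. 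But $t^*\Psi_{\conj}$ is a section of the sheaf of morphisms from the locally free sheaf $t^*F_{\abs}^*\pi_{\conj}^*T_{S\times\mathbb{A}^1/\mathbb{A}^1}$ into $t^*T_{\mscr{M}_{\conj}(X/S)/S\times\mathbb{A}^1}$, which is $\lambda$-torsion-free since its target is, by the equisingularity property; a section of a $\lambda$-torsion-free sheaf that vanishes away from $\{\lambda=0\}$ vanishes identically, so $t^*\Psi_{\conj}=0$. Writing $t^*\Psi_{\conj}=\lambda\cdot t^*(\Psi_{\conj}/\lambda)$ (legitimate by the equisingularity property and \autoref{lemma:katz-formula}) and again invoking $\lambda$-torsion-freeness gives $t^*(\Psi_{\conj}/\lambda)=0$, in particular $t^*(\Psi_{\conj}/\lambda)|_{\lambda=0}=0$. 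Feeding this into the commutative diagram of \autoref{lemma:katz-formula} pulled back along $t|_{\lambda=0}$, whose bottom row identifies $\Psi_{\conj}/\lambda|_{\lambda=0}$ with $G^*\Theta_{X/S}$, and using that $t|_{\lambda=0}$ classifies $(\mscr{E},\theta)$, we conclude that $G^*\Theta_{X/S}$ vanishes at $(\mscr{E},\theta)$. Taking $(\mscr{E},\theta)$ universal and descending along $G$ finishes the proof; the stable case is identical.

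I expect the main obstacle to be the bookkeeping that makes $\Psi_{\conj}/\lambda$ both \emph{defined} and \emph{computed} by the non-abelian Katz formula along the canonical sections. This is what forces one to stay inside $\mscr{M}^{\lambda-\tf}_{\conj}$, and it is the reason the order-of-nilpotence bound $(p-1)/2$ (rather than the naive $p-1$ coming from Ogus--Vologodsky) appears — it is needed for \autoref{lemma:equisingularity}. The $\lambda$-torsion-freeness furnished there plays a double role: it is also what upgrades the vanishing of $\Psi_{\conj}$ over $\mathbb{G}_m$ to vanishing at $\lambda=0$, the step that in our setting substitutes for the degeneration of the conjugate spectral sequence used in Katz's original argument (cf.\ the remark following \autoref{thm:intro-katz-formula}).
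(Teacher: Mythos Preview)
Your proof is correct and follows essentially the same route as the paper: use the canonical section through a nilpotent Higgs bundle to transport the vanishing of $\Psi_{\conj}$ from $\{\lambda\neq 0\}$ (where it is the hypothesis, via $\mathbb{G}_m$-equivariance) to $\lambda=0$ using the $\lambda$-torsion-freeness from \autoref{lemma:equisingularity}, and then invoke \autoref{lemma:katz-formula} to identify $\Psi_{\conj}/\lambda|_{\lambda=0}$ with $G^*\Theta_{X/S}$. The only cosmetic difference is that the paper organizes the reduction by pulling back to Artin local test points $\on{Spec}(R)\to S$ and then to $\on{Spec}(R^{(1)})$, using flatness of $R\to R^{(1)}$ at the end, whereas you work globally after shrinking $S$ and descend along the faithfully flat $G$; these are equivalent.
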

\begin{proof}
	Without loss of generality $k=\overline{k}$. Let $R$ be an Artin $k$-algebra with residue field $k$ and $s$ an $R$-point of $S$. Consider an $R$-point of $\mscr{M}^{\on{ss}}_{\Dol}(X/S, r)$ corresponding to a Higgs bundle $(\mathscr{E}, \theta)$  on $X_{s}/R$ of order of nilpotence at most $(p-1)/2$, such that $(\mathscr{E},\theta)_k$ is semistable. We wish to show that the composition $$T_{R}\to  R^1f_{s*}T_{X_{s}/R}\overset{\theta}{\to} R^1f_{s*}\on{End}(\mathscr{E})_{\on{Higgs}}$$ vanishes.
	
	Let $s': \on{Spec}(R^{(1)})\to S$ be the pullback of $s$ along the absolute Frobenius map on $S$, so that we have a diagram
	$$\xymatrix{
	 X_{s'} \ar[r]^{F_{X/S, s'}} \ar[rd]^{f'} & (X^{(1)})_{s'} \ar[r]^G\ar[d] & X_s\ar[d]\\
	  & \on{Spec}(R^{(1)}) \ar[r] \ar[d]^{s'} & \on{Spec}(R) \ar[d]^s \\
	  & S \ar[r]^{F_{abs}} & S
	}$$
	where the two squares are Cartesian.
	Note that $R\to R^{(1)}$ is flat by the smoothness of $S$.
	
	Pulling back $(\mathscr{E}, \theta)$ to $(X^{(1)})_{s'}$ along $G$, we obtain an $R^{(1)}$ point of $\mathscr{M}_{\conj}(X_{s}/R)_0=\mathscr{M}_{\Dol}(X_{s}^{(1)}/R)$, where we use the isomorphism \eqref{eqn:non-abelian-f-zip}, or equivalently an $R^{(1)}$-point of $\mathscr{M}_{\conj}(X_{s'}/R^{(1)})_0.$

	Let $t: \mathbb{A}^1_{R^{(1)}}\to \mathscr{M}_{\conj}(X_{s'}/R^{(1)})$ be the canonical section to $\lambda: \mathscr{M}_{\conj}(X_{s'}/R^{(1)})\to \mathbb{A}^1_{R^{(1)}}$, classifying the family  $(\mathscr{E}_\lambda, \nabla_\lambda, \widetilde{\theta})$ passing through this point, as defined in \autoref{defn:canonical-section}; note that, by construction,  $t$ lies in the semistable locus as semistability is preserved by inverse Cartier.  We first study the map $$\Psi_{\conj}:F_{abs}^*T_{S\times \mb{A}^1/\mb{A}^1}|_{R^{(1)}} \rightarrow H^0(t^*\mathscr{T}_{\mathscr{M}_{\conj}(X_{s'}/R^{(1)})/R^{(1)}\times \mathbb{A}^1}),$$ For $\lambda\not= 0$, this map is zero by the assumption on the vanishing of $p$-curvature and  the $\mathbb{G}_m$-equivariance of $\mathscr{M}_{\conj}(X/S)\to \mathbb{A}^1_S$. By \autoref{lemma:equisingularity}, the target of this map is $\lambda$-torsion-free hence the map is identically zero, whence $\Psi_{\conj}/\lambda$ is also zero.

Now \autoref{lemma:katz-formula} tells us that the same is true for $\Theta_{X/S}|_{(\mathscr{E},\theta)}$, using the flatness of $R\to R^{(1)}$. As $(\mathscr{E},\theta)$ was arbitrary, this completes the proof in the semistable case. The stable case is identical.
\end{proof}
\begin{remark}
	Some version of \autoref{thm:p-curvature-lifting-vanishes}, under the assumption that the relative Frobenius of $X/S$ admits a lift over $\mathbb{Z}/p^2$, is sketched  in  \cite[Lemma 8.4 and Theorem 8.5]{menzies2019p}. While that assumption is too strong for our purposes the argument here was inspired by \cite{menzies2019p}.
\end{remark}
We now prove the main characteristic-zero consequence of the theory developed above.
\begin{theorem}\label{thm:p-curvature-vanishing-implies-theta-vanishing}
Let $A$ be a domain which is a finite-type $\mathbb{Z}$-algebra of characteristic zero, and let $K$ be its field of fractions. Let $f: X\to S$ a smooth projective morphism of smooth $A$-schemes. Suppose that, for some $r\geq 2$,  the $p$-curvature of the isomonodromy foliation on $\mathscr{M}_{\dR}^{\on{ss}}(X/S,r)\bmod p$ vanishes identically for infinitely many $p$. Then the lifting of tangent vectors $\Theta_{X_K/S_K}$ on $\mathscr{M}_{\Dol}^{\on{ss}}(X_K/S_K, r)$ vanishes identically. The same statement holds if we replace the semistable loci by the stable ones.
\end{theorem}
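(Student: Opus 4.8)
The plan is to descend the characteristic-$p$ statement \autoref{thm:p-curvature-lifting-vanishes} to characteristic zero and then strip off its nilpotence hypothesis using the $\mathbb{G}_{m}$-action on the Dolbeault moduli stack. After inverting finitely many elements of $A$ we may assume that $\mathscr{M}_{\Dol}^{\on{ss}}(X/S,r)$, the nilpotent cone $\mathscr{N}\subseteq\mathscr{M}_{\Dol}^{\on{ss}}(X/S,r)$ (the substack of Higgs bundles with nilpotent Higgs field, cut out by the vanishing of the Hitchin invariants), and the lifting of tangent vectors $\Theta_{X/S}$ are all defined over $A$, and that $\dim(X/S)<p$ for every prime $p$ in the given infinite set that survives. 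On a smooth atlas — or, in the stable case, on the good moduli space — $\Theta_{X/S}$ becomes a global section of a coherent sheaf, and the assertion of the theorem is that this section vanishes over $K$.

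\emph{Vanishing along the nilpotent cone.} Fix a prime $p$ in the given set with in addition $p\geq 2r+1$, and reduce at a closed point of $\operatorname{Spec}A$ of residue characteristic $p$. By hypothesis the $p$-curvature of the isomonodromy foliation on $\mathscr{M}_{\dR}^{\on{ss}}(X/S,r)$ vanishes identically there, so \autoref{thm:p-curvature-lifting-vanishes} applies and $\Theta_{X/S}$ vanishes on the locus of Higgs bundles of order of nilpotence at most $(p-1)/2$. A commutative nilpotent subalgebra of $r\times r$ matrices is simultaneously strictly upper-triangularisable, so every product of $\geq r$ of its elements vanishes; hence each rank-$r$ nilpotent Higgs field has order of nilpotence $\leq r\leq(p-1)/2$, and the above locus contains all of $\mathscr{N}$. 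Thus $\Theta_{X/S}|_{\mathscr{N}}$ — a section of a coherent sheaf over $A$ — vanishes after reduction at infinitely many closed points of $\operatorname{Spec}A$ of arbitrarily large residue characteristic. The set of points of $\operatorname{Spec}A$ over which such a section is nonzero is constructible, so if it met the generic point it would contain a dense open, hence closed points of arbitrarily large residue characteristic, a contradiction; therefore $\Theta_{X_{K}/S_{K}}$ vanishes identically on $\mathscr{N}_{K}\subseteq\mathscr{M}_{\Dol}^{\on{ss}}(X_{K}/S_{K},r)$.

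\emph{Off the nilpotent cone.} It remains to deduce that $\Theta_{X_{K}/S_{K}}\equiv 0$. Since $\Theta_{X/S}$ is built functorially from $\lambda$-connection data, it is $\mathbb{G}_{m}$-homogeneous for the action scaling the Higgs field (in the curve case this is also visible from Chen's formula \autoref{thm:chen-formula}), so its non-vanishing locus $Z$ is a closed $\mathbb{G}_{m}$-invariant substack of $\mathscr{M}_{\Dol}^{\on{ss}}(X_{K}/S_{K},r)$. If $Z$ were nonempty, choose $z\in Z$; by Simpson's deformation theorem \autoref{thm: simpson-langton}, $\lim_{\mu\to 0}\mu\cdot z$ exists in $\mathscr{M}_{\Dol}^{\on{ss}}$ and is a $\mathbb{G}_{m}$-fixed point, i.e.\ a system of Hodge bundles, whose Higgs field is nilpotent; hence this limit lies in $\mathscr{N}_{K}$. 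But the limit also lies in $\overline{\mathbb{G}_{m}\cdot z}\subseteq Z$, by $\mathbb{G}_{m}$-invariance and closedness of $Z$, contradicting the vanishing of $\Theta_{X_{K}/S_{K}}$ on $\mathscr{N}_{K}$. Therefore $Z=\varnothing$, that is, $\Theta_{X_{K}/S_{K}}\equiv 0$; the statement with the stable loci in place of the semistable ones is proved identically.

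\emph{Expected main obstacle.} The substantive ingredient, \autoref{thm:p-curvature-lifting-vanishes} — itself resting on the non-abelian Katz formula \autoref{lemma:katz-formula} and the Ogus--Vologodsky canonical sections — is already in hand, so the remaining work is the spreading-out bookkeeping together with the $\mathbb{G}_{m}$-degeneration of the last paragraph. The main point requiring care there is to confirm that $\Theta_{X/S}$ is genuinely $\mathbb{G}_{m}$-homogeneous (so that $Z$ is $\mathbb{G}_{m}$-invariant) and to make the limit argument rigorous at the level of stacks — working, for instance, with the good moduli space or a framed moduli space — so that ``$\lim_{\mu\to 0}\mu\cdot z$'', its membership in $\overline{\mathbb{G}_{m}\cdot z}$, and the identification of the $\mathbb{G}_{m}$-fixed points with the (nilpotent) systems of Hodge bundles are all unambiguous.
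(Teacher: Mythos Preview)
Your overall strategy matches the paper's, but there is a real gap in the step ``Off the nilpotent cone''. You prove only that $\Theta_{X_K/S_K}|_{\mathscr{N}_K}=0$, i.e.\ that the restriction of $\Theta$ to the closed substack $\mathscr{N}_K$ vanishes. You then invoke the closed set $Z=\operatorname{supp}\Theta=\{x:\Theta_x\neq 0\text{ in the stalk}\}$ and try to derive a contradiction from $\lim_{\mu\to 0}\mu\cdot z\in\mathscr{N}_K\cap Z$. But ``$\Theta|_{\mathscr{N}_K}=0$'' does \emph{not} imply ``$Z\cap\mathscr{N}_K=\varnothing$'': a section can restrict to zero on a closed subscheme while having nonzero stalk at every point of it (e.g.\ $t\in\Gamma(\mathbb{A}^1,\mathscr{O})$ restricts to zero on $\{0\}$ but its support is all of $\mathbb{A}^1$, and it is $\mathbb{G}_m$-homogeneous). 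So the contradiction never fires.

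The fix is precisely where you underuse \autoref{thm:p-curvature-lifting-vanishes}: you only extract vanishing for nilpotence order $\leq r$, but that theorem gives vanishing for nilpotence order $\leq (p-1)/2$ for each good $p$. Letting $p$ range over the infinite set, you get that $\Theta_{X_K/S_K}$ vanishes on $\mathscr{M}_{\Dol}^{\on{ss}}(X_s,r)_n:=\{(\mathscr{E},\theta):\theta^n=0\}$ for \emph{every} $n$, i.e.\ on the entire formal neighborhood of $h^{-1}(0)$. Once you have this, Krull's intersection theorem gives $\Theta_x=0$ in the stalk for every $x\in\mathscr{N}_K$, so $Z\cap\mathscr{N}_K=\varnothing$ and your $\mathbb{G}_m$-limit argument then goes through. (The paper organizes this last step via associated points of the tangent sheaf rather than the support of $\Theta$, but after formal-neighborhood vanishing the two formulations are equivalent.)
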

\begin{proof}
	Fix a point $s$ of $S_K$, with residue field $K'$ and let $A'\supset A$ be a domain which is a finite-type $\mathbb{Z}$-algebra with fraction field $K'$, so that $s$ extends to an $A'$-point of $S$. We wish to show that the map $$\Theta_{X_K/S_K, s}: T_{S,s}\otimes \mathscr{O}_{\mathscr{M}_{\Dol}^{\on{ss}}(X_s, r)}\to {T}_{\mathscr{M}_{\Dol}^{\on{ss}}(X_s, r)}$$ vanishes. We will first show that this map vanishes on the formal neighborhood of the nilpotent cone $h^{-1}(0)$, where $$h: \mathscr{M}^{\on{ss}}_{\Dol}(X_s, r)\to \bigoplus_{i=2}^r H^0(X_s, \on{Sym}^i\Omega^1_{X_s})$$ is the Hitchin map. We will conclude by arguing that this formal neighborhood intersects the closure of every associated point of ${T}_{\mathscr{M}_{\Dol}^{\on{ss}}(X_s, r)}$.
	
	\emph{Step 1. Vanishing on the formal neighborhood of the nilpotent cone.} For each integer $n\geq 0$, let $$\mathscr{M}_{\Dol}^{\on{ss}}(X_s, r)_n\subset \mathscr{M}_{\Dol}^{\on{ss}}(X_s, r)$$ be the closed substack consisting of those Higgs bundles $(\mscr E, \theta)$ such that   $\theta^n=0$. For $n=r$, this locus contains $h^{-1}(0)$, and $$\bigcup_n\mathscr{M}_{\Dol}^{\on{ss}}(X_s, r)_n$$ is the formal substack of $\mathscr{M}_{\Dol}^{\on{ss}}(X_s, r)$ obtained by completing it at $h^{-1}(0)$, i.e.~it is the formal neighborhood of the nilpotent cone. To show that $\Theta_{X_K/S_K, s}$ vanishes on this formal neighborhood, it suffices to show that it vanishes on each $\mathscr{M}_{\Dol}^{\on{ss}}(X_s, r)_n$. But we know that this is true mod $p$ for infinitely many $p$ by \autoref{thm:p-curvature-lifting-vanishes}, hence it is true in characteristic zero.
	
	\emph{Step 2. Associated points of ${T}_{\mathscr{M}_{\Dol}^{\on{ss}}(X_s, r)}$.} We now observe that, to conclude, it suffices that any associated point of ${T}_{\mathscr{M}_{\Dol}^{\on{ss}}(X_s, r)}$ (see \cite[\href{https://stacks.math.columbia.edu/tag/02OJ}{Tag 02OJ}]{stacks-project} for the definition of associated points) has closure which intersects $h^{-1}(0)$. But this follows from the fact that the closure of  an associated point is stable under the natural $\mathbb{G}_m$-action on $\mathscr{M}_{\Dol}^{\on{ss}}(X_s, r)$, as well as \autoref{thm: simpson-langton}.

    The proof of the statement in the case of  the stable loci is identical. 
\end{proof}

\begin{remark}\label{rem:theta-is-KS}
Suppose we are in a situation as in the conclusion of \autoref{thm:p-curvature-vanishing-implies-theta-vanishing}, i.e. $f: X\to S$ is a smooth projective morphism of smooth schemes over a field of characteristic zero, and $\Theta_{X/S}$ vanishes identically. In this case we say that the isomonodromy foliation \emph{preserves the Hodge filtration}, for reasons we now explain.

Recall that $\mathscr{M}_{\Hod}(X/S)$ is equipped with a $\lambda$-lifting of tangent vectors, as in \autoref{defn:Hodge-lifting}, whose fiber at $1$ is precisely the isomonodromy foliation and whose fiber at zero is $\Theta_{X/S}$. Thus the vanishing of $\Theta_{X/S}$ implies that, as in \autoref{lemma:katz-formula}, $\frac{\Xi_{X/S}}{\lambda}$ makes sense where $T_{\mathscr{M}_{\Hod}(X/S)/S\times \mathbb{A}^1}$ is $\lambda$-torsion free, and is a $1$-lifting of tangent vectors.

In other words, in this case the isomonodromy foliation on $\mathscr{M}_{\dR}(X/S)$ extends over $\mathscr{M}_{\Dol}(X/S)$, i.e.~it extends to the associated graded of the Hodge filtration. This is the non-abelian analogue of the statement that if the Kodaira-Spencer map of a geometric variation of Hodge structure vanishes, then the Gauss-Manin connection preserves the Hodge filtration.

We make this intuition precise in a different (though related) way in \autoref{thm:triholomorphic}.
\end{remark}

\section{Relative compactness of leaves---a non-abelian Hodge index theorem}\label{sec:NAHI}
In this section we will recall some complex non-abelian Hodge theory and its interaction with isomonodromy. We will also prove our non-abelian analogues of the Hodge index theorem, \autoref{thm:compactness-thm} and \autoref{thm:triholomorphic}. Much of the work in this section will take place on the coarse spaces of the moduli stacks studied in previous sections, or on their reduced smooth loci.

\subsection{Background on complex non-abelian Hodge theory}
Let $(X, \omega)$ be a compact K\"ahler manifold of dimension $n$. \begin{definition}
\begin{enumerate}
    \item We write $M_{\Dol}(X, r)$ for  the coarse moduli space of semistable (with respect to the K\"ahler class $\omega$), rank $r$, Higgs bundles on $X$, with vanishing rational Chern classes and trivial determinant. This has the structure of a complex analytic space, and the structure of a scheme in the case when $X$ is a projective variety: see \cite[Proposition 1.3.2]{fujiki2006hyperkahler} and \cite[p.16]{simpson-moduli-representations-2}. Let $M_{\Dol, 0}(X, r)$ be the nonsingular points of the reduced subspace of $M_{\Dol}(X, r)$. 
    
    \item We write $M_{B}(X, r)$ for the GIT quotient $\hom(\pi_1(X), \sl_r)\sslash \sl_r$. Let $M_{B, 0}(X, r)$ be the nonsingular points of the reduced subspace of $M_{B}(X, r)$. 
\end{enumerate}
     
\end{definition}
The fundamental theorem of non-abelian Hodge theory, due to Corlette \cite[Theorem 3.4]{corlette1988flat} and Simpson \cite[p. 870]{simpson1988constructing}, is that, starting from $\rho\in M_{B}(X, r)$, one can construct an equivariant harmonic metric (whose definition we will recall in some form below) on the associated semisimple flat bundle, which in turn gives a Higgs bundle, and vice versa. We can state it in terms of moduli spaces as follows:
\begin{theorem}
    There is a canonical homeomorphism of the underlying topological spaces 
    \[
    M_{B}(X, r)^{\top} \simeq M_{\Dol}(X, r)^{\top};
    \]
    moreover, when restricted to $M_{B, 0}(X, r)$, this gives a  real-analytic isomorphism $M_{B, 0}(X, r)\simeq M_{\Dol, 0}(X, r)$.
\end{theorem}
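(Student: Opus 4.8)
The statement to prove is the homeomorphism $M_B(X,r)^{\top}\simeq M_{\Dol}(X,r)^{\top}$ together with the real-analytic isomorphism on the smooth loci. My plan is to assemble this from the Corlette--Simpson correspondence together with the analytic theory of moduli spaces, rather than to reprove the correspondence from scratch.

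First I would set up the underlying bijection on points. Given a semisimple $\rho\in M_B(X,r)(\mathbb{C})$, Corlette's existence theorem for equivariant harmonic maps to $\sl_r(\mathbb{C})/\su(r)$ (using that $\rho$ is reductive, so the relevant non-abelian cohomology vanishing holds) produces a harmonic metric on the flat bundle $(E,\nabla)$ attached to $\rho$; decomposing $\nabla = D + \Psi$ into its unitary and self-adjoint parts with respect to this metric and then taking the $(0,1)$-part of $D$ and the $(1,0)$-part of $\Psi$ yields a Higgs bundle $(\mathscr{E},\theta)$, which by Simpson's theorem is polystable with vanishing rational Chern classes. Conversely, Simpson's Hermitian--Yang--Mills / Hitchin--Kobayashi correspondence for Higgs bundles produces from a polystable $(\mathscr{E},\theta)$ a harmonic metric and hence a semisimple flat bundle. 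These constructions are mutually inverse on isomorphism classes, which gives a bijection on $\mathbb{C}$-points; I would then note that both $M_B$ and $M_{\Dol}$, as GIT / analytic quotients, have points corresponding to polystable (equivalently, closed-orbit) objects, so this bijection is exactly the bijection of points of the two moduli spaces.

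Next I would upgrade the bijection to a homeomorphism. The key input is that the harmonic metric varies continuously with the input: for a continuous family of reductive representations, the solutions to the harmonic map equation (equivalently Hitchin's equations) depend continuously on parameters, by the standard elliptic-regularity / implicit-function-theorem package and uniqueness of the harmonic metric up to the obvious ambiguity. This gives continuity of the map $M_B^{\top}\to M_{\Dol}^{\top}$; continuity of the inverse follows symmetrically from continuous dependence of solutions to the Hermitian--Yang--Mills equations on the Higgs data. Since both spaces are (locally compact) Hausdorff, a continuous bijection with continuous inverse is a homeomorphism. Here I would cite Simpson's construction of $M_{\Dol}$ and $M_B$ as analytic spaces (e.g.\ \cite{simpson-moduli-representations-2}) and Fujiki's hyperk\"ahler description \cite{fujiki2006hyperkahler} for the analytic structure.

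Finally, on the smooth loci $M_{B,0}$ and $M_{\Dol,0}$ I would promote continuity to real-analyticity. The cleanest route is the hyperk\"ahler picture: the smooth locus of $M_{\Dol}(X,r)$ carries a hyperk\"ahler metric whose complex structures $I$ (Dolbeault) and $J$ (Betti/de Rham) are the two sides of the correspondence, and the nonabelian Hodge homeomorphism is precisely the identity map on the underlying real-analytic manifold viewed in these two complex structures; since $I$ and $J$ are both parallel for the (real-analytic) hyperk\"ahler metric, the underlying smooth structure is common and the map is real-analytic. Concretely one argues that near a stable point the moduli space is cut out by the vanishing of a real-analytic moment-map-type equation (Hitchin's equations modulo gauge), whose solution set is a real-analytic submanifold independent of which complex structure one records, and the implicit function theorem gives real-analytic charts on both sides identified by the correspondence. \textbf{The main obstacle} I anticipate is the careful treatment of continuity/real-analyticity of the harmonic metric \emph{at the boundary between strata} — i.e.\ ensuring the map is continuous across non-stable (strictly polystable) points and that the smooth-locus statement is about genuinely the same real-analytic structure rather than merely homeomorphic ones; this is exactly where one must invoke the gauge-theoretic transversality results and Uhlenbeck-type compactness carefully, and where citing the existing literature (Simpson, Corlette, Fujiki) does the real work.
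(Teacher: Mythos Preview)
The paper does not prove this statement at all: it simply cites \cite[Lemma 9.14]{simpson-moduli-representations-2}. This is a foundational theorem of non-abelian Hodge theory, and the paper treats it as a black box.

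Your proposal is a reasonable outline of how the result is actually established in the literature (Corlette--Simpson for the bijection, continuous dependence of harmonic metrics for the homeomorphism, and the hyperk\"ahler/moment-map picture of Hitchin and Fujiki for real-analyticity on the smooth locus). In that sense your sketch is \emph{more} than what the paper provides, and the ingredients you name are the right ones. The one caveat is that your ``main obstacle'' is genuinely nontrivial: continuity across strictly polystable points and the identification of real-analytic structures are where the serious analysis lives, and your proposal correctly defers this to the cited sources rather than claiming to carry it out. For the purposes of this paper, a one-line citation is both what the authors do and what is appropriate.
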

\begin{proof}
    See \cite[Lemma 9.14]{simpson-moduli-representations-2}.
\end{proof}
It was observed by Goldman \cite{goldman1984symplectic} that $M_{B}(X, r)$ has a  natural holomorphic symplectic structure: 
\begin{proposition}\label{prop: holomorphic-symplectic-betti}
There is an algebraic holomorphic symplectic form $\Omega$ on $M_{B}(X, r)$, which may be defined on $\mb{C}$-points as follows. For $[\rho]\in M_{B}(X, r)(\mb{C})$ corresponding to a rank $r$ local system, let $\ad(\rho):= (\rho \otimes \rho^{\vee})^{\on{tr}=0}$ denote the adjoint local system. 

 Recall that the tangent space $T_{[\rho]}M_{B}(X, r)$ is canonically identified with $H^1(X, \ad(\rho))$; moreover, the trace pairing gives a map of local systems $\ad(\rho)\otimes \ad(\rho)\rightarrow \mb{C}$. Then the holomorphic symplectic  form $\Omega$, restricted to $T_{[\rho]}M_B(X, r)$,  is given by the composition 
\[
H^1(X, \ad(\rho))\otimes H^1(X, \ad(\rho)) \rightarrow H^2(X, \ad(\rho)\otimes \ad(\rho))\xrightarrow[]{\tr} H^2(X, \mb{C})\xrightarrow{\wedge [\omega^{n-1}]} H^{2n}(X, \mb{C})\simeq \mb{C}.
\]

\end{proposition}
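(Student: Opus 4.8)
The plan is to verify the three defining properties---that $\Omega$ is a well-defined algebraic $2$-form, that it is alternating, and that it is non-degenerate and closed---on the smooth (equivalently, stable) locus of $M_B(X,r)$, where the identification $T_{[\rho]}M_B(X,r)\simeq H^1(X,\ad(\rho))$ of the statement holds; on the full space one reads the construction as a $0$-shifted symplectic structure on the moduli stack, which I will not need. First I would recall the standard deformation theory: for $\rho$ irreducible the representation scheme $\hom(\pi_1(X),SL_r)$ is smooth at $\rho$ with tangent space the $1$-cocycles $Z^1(\pi_1(X),\ad\rho)$, the $SL_r$-orbit through $\rho$ has tangent space the coboundaries $B^1(\pi_1(X),\ad\rho)$ (recall $H^0(\pi_1(X),\ad\rho)=0$ by irreducibility), and passing to the quotient gives $T_{[\rho]}M_B(X,r)\simeq H^1(\pi_1(X),\ad\rho)\simeq H^1(X,\ad\rho)$. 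The pairing $\Omega_{[\rho]}$ is then the displayed composite: cup product $H^1\otimes H^1\to H^2(X,\ad\rho\otimes\ad\rho)$, the trace form $\ad\rho\otimes\ad\rho\to\underline{\mb C}$, and $\wedge[\omega^{n-1}]\colon H^2(X,\mb C)\to H^{2n}(X,\mb C)\simeq\mb C$. Since cup product, the trace, and the fixed class $[\omega^{n-1}]$ (which, in the projective setting relevant here, is a positive rational multiple of a power of an ample class, hence algebraic) are given by algebraic formulas on the representation scheme, $\Omega$ descends to an algebraic section of $\Omega^2_{M_B(X,r)}$ over the smooth locus; it is visibly $\mb C$-linear in $\rho$, hence holomorphic.

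Skew-symmetry is formal: the cup product $H^1\otimes H^1\to H^2$ is graded-anticommutative, and under the transposition of the two $H^1$-factors the coefficient sheaf $\ad\rho\otimes\ad\rho$ gets swapped; since the trace form $a\otimes b\mapsto\tr(ab)$ is symmetric, the composite obeys $\Omega(\alpha,\beta)=-\Omega(\beta,\alpha)$. For non-degeneracy, note that since $\rho$ is semisimple so is $\ad\rho$ (it is a direct summand of the semisimple local system $\rho\otimes\rho^\vee$, split off by the trace), and the trace form $\ad\rho\otimes\ad\rho\to\underline{\mb C}$ is a perfect pairing of local systems. Poincaré duality then makes
\[
H^1(X,\ad\rho)\times H^{2n-1}(X,\ad\rho)\longrightarrow H^{2n}(X,\mb C)\simeq\mb C
\]
a perfect pairing, and under it $\Omega_{[\rho]}(\alpha,\beta)$ equals the pairing of $\alpha$ with $\beta\cup[\omega^{n-1}]$. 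Thus $\Omega_{[\rho]}$ is non-degenerate precisely because cup product with $[\omega^{n-1}]$ is an isomorphism $H^1(X,\ad\rho)\xrightarrow{\ \sim\ }H^{2n-1}(X,\ad\rho)$ --- hard Lefschetz for the semisimple local system $\ad\rho$ on the smooth projective variety $X$, which holds by the harmonic-metric Hodge theory of Corlette and Simpson.

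The main point is closedness, $d\Omega=0$, and here I would reduce to curves, where it is Goldman's theorem \cite{goldman1984symplectic}. Assuming $n\geq 2$ (for $n=1$ the statement is Goldman's), pick a smooth curve $C\subset X$ cut out by $n-1$ general members of a very ample system whose class is a positive rational multiple of $[\omega]$, so that $[\omega]^{n-1}=\lambda[C]$ in $H^{2n-2}(X,\mb Q)$ for some $\lambda>0$ and, by the Lefschetz hyperplane theorem, $\pi_1(C)\twoheadrightarrow\pi_1(X)$. Surjectivity of $\pi_1(C)\to\pi_1(X)$ implies that a $\pi_1(C)$-invariant subspace of $\rho|_C$ is automatically $\pi_1(X)$-invariant, so restriction of representations defines a morphism $i\colon M_B(X,r)^{\mathrm{irr}}\to M_B(C,r)^{\mathrm{irr}}$ of smooth loci, whose derivative is the restriction map $H^1(X,\ad\rho)\to H^1(C,\ad\rho|_C)$. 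The projection-formula identity $\int_X\gamma\cup[\omega]^{n-1}=\lambda\int_C\gamma|_C$, applied to $\gamma=\tr_*(\alpha\cup\beta)$, shows $\Omega_X=\lambda\cdot i^{*}\Omega_C$, with $\Omega_C$ Goldman's form on the character variety of $C$. As $\Omega_C$ is closed, so is $i^{*}\Omega_C$, hence $\Omega_X$ is closed on the irreducible locus, and therefore on the whole smooth locus, $d\Omega_X$ being a section of a vector bundle that vanishes on this open set (which meets every component, up to a standard limiting argument or passage to the moduli stack).

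I expect the genuinely delicate ingredient to be closedness: skew-symmetry is a diagram chase, and non-degeneracy is immediate from Poincaré duality once one invokes hard Lefschetz for semisimple local systems. The remaining bookkeeping in the closedness step---matching the explicit cup-product formula for $\Omega_X$ with $\lambda\,i^{*}\Omega_C$, and handling components of $M_B(X,r)$ containing no irreducible representation---is routine.
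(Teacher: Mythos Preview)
The paper does not give its own proof of this proposition: it simply attributes the observation to Goldman \cite{goldman1984symplectic} and states the formula. Your proposal is thus considerably more detailed than anything in the paper, and the approach you outline---skew-symmetry from graded-commutativity of cup product plus symmetry of trace, non-degeneracy from hard Lefschetz for semisimple local systems, and closedness by Lefschetz-hyperplane reduction to Goldman's theorem for curves---is a standard and correct way to assemble the result, at least in the projective case.

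One point to flag: the ambient hypothesis in the paper at this juncture is that $(X,\omega)$ is a compact K\"ahler manifold, not necessarily projective. Your closedness argument uses a smooth complete-intersection curve $C\subset X$ with $[C]$ a rational multiple of $[\omega]^{n-1}$, which is only available in the projective case (as you yourself note). For the applications in the paper this is harmless, since the morphisms $f:X\to S$ under consideration are smooth projective; but strictly speaking the K\"ahler statement requires a different argument for closedness (e.g.\ working directly with the group-cohomology cocycle description \`a la Goldman, or invoking the AKSZ/PTVV shifted-symplectic viewpoint, or the hyperk\"ahler description of \autoref{thm: mbetti-hk-metric}). Your parenthetical about components with no irreducible point is also a genuine loose end, though again irrelevant for how the proposition is used later.
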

\subsection{Energy of harmonic maps}
\subsubsection{Recollections on harmonic maps}
We first review some standard notions in the theory of harmonic maps.
\begin{definition}
	Let $(M, h), (N, g)$ be Riemannian manifolds. The \emph{energy density} of a map $f: M\to N$ is the function $e(f): M\to \mathbb{R}$ given by the formula $$\frac{1}{2}\langle df, df\rangle_{T^*M\otimes f^*TN}.$$ We define the \emph{energy} of $f$ to be $$E(f):= \int_M e(f) d\on{vol}_M$$ if $M$ is compact. We say $f$ is \emph{harmonic} if it is a critical point for $E$. That is, if $\vec v\in C^\infty(M, f^*TN)$, we set $$f_{\vec v, t}(x)=\exp_{f(x)}(t\vec v(x))$$ and $$\partial_{\vec v} E(f):=\frac{\partial E(f_{\vec v, t})}{\partial t}\Big\vert_{t=0},$$ and say $f$ is harmonic if   $\partial_{\vec v} E(f)$ vanishes for all $\vec v$.
\end{definition}
\subsubsection{Equivariant, or twisted, harmonic maps}
We will in fact need an equivariant, sometimes also called twisted, version of the discussion above. The notion of an equivariant harmonic map was introduced by Donaldson \cite{donaldson1987twisted}, and the main existence result is due to Corlette \cite{corlette1988flat}. Let $M, N$ be Riemannian manifolds with $M$ compact, connected,  and $\rho: \pi_1(M)\rightarrow \mathrm{Isom}(N)$ a homomorphism. We form the $N$-bundle over $M$ 
\[
\mscr{N}:= \widetilde{M}\times N/\pi_1(M)
\]
where $\widetilde{M}$ denotes the universal cover of $M$, and the $\pi_1(M)$-action on $N$ is through $\rho$. Then $\mscr{N}$ has the structure of a Riemannian manifold, with a natural map $\pi: \mscr{N}\rightarrow M$ which is locally (on $M$) a product; the  fiber of $\pi$ at each $m\in M$, $\mscr{N}_m$, is isometric to $N$. For $s: M\rightarrow \mscr{N}$ a section of $\pi$, we define at $m\in M$, using the local product structure of $\mscr{N}$,  the vertical part of its derivative by the composition
\[
(Ds)_m: T_m M \rightarrow  T_{s(m)}\mscr{N} \rightarrow T_{s(m)} \mscr{N}_m.
\]
The energy density $e(s): M\rightarrow \mb{R}$ is then given by the formula
\[
\frac{1}{2} \langle Ds, Ds \rangle_{T^*M\otimes s^*T\mscr{N}_m}. 
\]
Finally,  the energy of $s$ is  
\[
E(s)=\int_M e(s) d\on{vol}_M.
\]
\begin{definition}
    We say that the section $s: M \rightarrow \mscr{N}$ is harmonic if it is a critical point of  $E(s)$. A section $s$ gives rise to a map $f_s: \widetilde{M}\rightarrow N$, and in the case where $s$ is a harmonic section, we will refer to $f_s$ as a ($\rho$-)equivariant harmonic map.
\end{definition}

\begin{theorem}\label{thm: corlette-harmonic-map}
    Let $M$ be a compact connected Riemannian manifold, and $\rho: \pi_1(M)\rightarrow \sl_r(\mb{C})$ a semisimple representation. Then there is a $\rho$-equivariant harmonic map $\widetilde{M}\rightarrow \sl_r(\mb{C})/\su_r$. 
    
    Moreover, such a harmonic map is unique if $\rho$ is irreducible. More generally, any two $\rho$-equivariant harmonic maps are in the same orbit of $Z_{\rho}\leq \mathrm{SL}_r(\mb{C})$, the centralizer of $\rho$, where the latter acts on the left on $\sl_r(\mb{C})/\su_r$. 
\end{theorem}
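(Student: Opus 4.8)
The plan is to handle existence and uniqueness separately; existence is Corlette's theorem, while uniqueness is soft and follows from the nonpositive curvature of the target. \emph{Setup and existence.} The target $N:=\sl_r(\mb C)/\su_r$ is a symmetric space of noncompact type: complete, simply connected, of nonpositive sectional curvature, hence a Hadamard manifold and in particular a uniquely geodesic $\mathrm{CAT}(0)$ space on which $\sl_r(\mb C)$ acts by isometries with $\su_r$ the stabilizer of the basepoint. As recalled just above the theorem, a $\rho$-equivariant map $\widetilde M\to N$ is the same as a smooth section of the flat $N$-bundle $\mscr N=\widetilde M\times_\rho N\to M$, with energy as defined there. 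Existence is Corlette's theorem \cite{corlette1988flat} (see also \cite{donaldson1987twisted} and \cite{simpson1988constructing}): one runs the harmonic-map heat flow on sections of $\mscr N$, which exists for all time since $N$ has nonpositive curvature, with nonincreasing and hence uniformly bounded energy. The only obstruction to extracting a convergent subsequence as $t\to\infty$ is escape to infinity in the fibers of $\mscr N$, and this is the single point where the hypothesis on $\rho$ is used: such escape would produce a $\rho$-fixed point in the boundary at infinity $\partial_\infty N$, i.e.\ a reduction of $\rho$ to a proper parabolic subgroup of $\sl_r(\mb C)$ not contained in a Levi factor, contradicting semisimplicity of $\rho$. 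Hence the flow subconverges to a $\rho$-equivariant harmonic section.

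\emph{Uniqueness up to $Z_\rho$.} Let $f_0,f_1:\widetilde M\to N$ be two $\rho$-equivariant harmonic maps. Since $N$ is $\mathrm{CAT}(0)$, for each $x$ there is a unique geodesic $[0,1]\to N$ from $f_0(x)$ to $f_1(x)$; this defines maps $f_t:\widetilde M\to N$, and $\rho$-equivariance of $f_0,f_1$ together with $\sl_r(\mb C)$-equivariance of the geodesic-interpolation construction forces each $f_t$ to be $\rho$-equivariant. Nonpositivity of the curvature makes $t\mapsto E(f_t)$ convex on $[0,1]$; since $f_0$ and $f_1$ are critical points of $E$, the one-sided derivatives of this function vanish at $t=0$ and $t=1$, so it is constant and every $f_t$ is harmonic. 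Equality in the convexity estimate is rigid: in particular $x\mapsto d(f_0(x),f_1(x))$ is constant, and, viewing $f_0,f_1$ as Hermitian metrics $h_0,h_1$ on the flat bundle $V_\rho=\widetilde M\times_\rho\mb C^r$, the $h_0$-self-adjoint positive automorphism $g:=h_0^{-1}h_1$ is forced to be parallel for the flat connection. Commuting with the monodromy, $g$ then lies in $Z_\rho$ by Schur's lemma (applied to the isotypic decomposition of $\rho$), and by construction $g\cdot f_0=f_1$. This is the asserted transitivity of the $Z_\rho$-action on the set of $\rho$-equivariant harmonic maps.

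Finally, if $\rho$ is irreducible then $Z_\rho$ is the center $\mu_r\subset\sl_r(\mb C)$, which acts trivially on $N=\sl_r(\mb C)/\su_r$, so $f_0=f_1$ and the harmonic map is genuinely unique. The only substantive step is the non-escape-to-infinity in the existence half: that is exactly where reductivity of $\rho$ enters, and it is the heart of Corlette's theorem; the uniqueness half is purely the convexity of energy along geodesic homotopies into a nonpositively curved target, together with Schur's lemma.
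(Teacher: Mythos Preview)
Your proposal is correct and follows the standard route: Corlette's heat-flow existence argument plus the convexity-of-energy uniqueness argument in nonpositively curved targets. The paper's own proof is simply two citations (to Corlette \cite{corlette1988flat} for existence and to Korevaar--Schoen \cite{korevaar1993sobolev} for the energy-minimizing/uniqueness side), so you have effectively unpacked what those references contain; the approaches coincide.
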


\begin{proof}
    For the first claim, see \cite[Theorem 3.3]{corlette1988flat}, as well as the text immediately following it, which explains the relationship between that theorem and  harmonic maps. 

    For the claim that the harmonic map is an  energy minimizer, see \cite[\S2.6]{korevaar1993sobolev}.
\end{proof}

The following is a consequence of work of Uhlenbeck \cite{uhlenbeck1982connections}, and is well known to experts.
\begin{theorem}\label{thm:energy-proper}
    Suppose $X$ is an orientable (topological) surface equipped with a Riemannian metric. The energy gives a  proper function 
\[
E: M_{B}(X, r)\rightarrow \mb{R},
\]
real-analytic on $M_{B,0}(X, r)$, sending a semisimple representation $\rho$ to the energy of the associated $\rho$-equivariant harmonic map to $\sl_r(\mb{C})/\su_r$.
\end{theorem}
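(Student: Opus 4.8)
The plan is to prove the three assertions in turn: that $E$ is well-defined, that it is real-analytic on the smooth locus, and that it is proper, the last being the substantive point.

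\emph{Well-definedness and real-analyticity.} By \autoref{thm: corlette-harmonic-map}, a semisimple $\rho\colon\pi_1(X)\to\sl_r(\bc)$ admits a $\rho$-equivariant harmonic map $f_\rho\colon\widetilde X\to\sl_r(\bc)/\su_r$, unique up to the centralizer $Z_\rho$ acting by isometries; since isometries preserve energy, $E(\rho):=E(f_\rho)$ descends to a well-defined function on $M_B(X,r)$. For real-analyticity on $M_{B,0}(X,r)$ I would invoke the non-abelian Hodge correspondence of Corlette and Simpson, which identifies $M_{B,0}(X,r)$ real-analytically with $M_{\Dol,0}(X,r)$, together with the standard fact that the harmonic metric solving Hitchin's equations on a stable Higgs bundle---and hence $f_\rho$ and its energy, the latter being a fixed multiple of $\|\theta\|^2_{L^2}$ computed with the harmonic metric---varies real-analytically with the Higgs bundle, by the implicit function theorem applied to the (real-analytic, elliptic) Hitchin system.

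\emph{Properness.} It suffices to show each sublevel set $E^{-1}([0,C])$ is sequentially compact. Given $[\rho_n]$ with $E(\rho_n)\le C$, choose semisimple representatives $\rho_n$ and harmonic maps $f_n:=f_{\rho_n}$; fix a lift $\widetilde x_0\in\widetilde X$ of a basepoint and a compact fundamental domain, and (after conjugating $\rho_n$, i.e.\ translating $f_n$ by an isometry) arrange $f_n(\widetilde x_0)=o$ for a fixed $o$. The crux is an a priori estimate: bounded energy forces bounded energy \emph{density}. Since $X$ is a surface (bounded Ricci) and $N=\sl_r(\bc)/\su_r$ is a symmetric space of noncompact type---homogeneous, with bounded nonpositive sectional curvature---the Eells--Sampson Bochner formula gives, for $e(f_n)$ regarded as a function on the compact surface $X$,
\[
\tfrac12\Delta\,e(f_n)=|\nabla df_n|^2+\langle df_n\circ\on{Ric}_X,df_n\rangle-\langle R^N(df_n,df_n)df_n,df_n\rangle\ \ge\ -C_1\,e(f_n),
\]
with $C_1$ depending only on fixed curvature bounds; the local maximum principle for nonnegative subsolutions then yields $\sup_X e(f_n)\le C_2\int_X e(f_n)=C_2\,E(f_n)\le C_2C$, with $C_2$ uniform. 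Hence $|df_n|$ is uniformly bounded on $\widetilde X$ and $f_n$ maps any fixed compact set into a fixed ball of $N$; feeding this into the harmonic map equation $\tau(f_n)=0$---a semilinear elliptic system whose coefficients are uniformly controlled because $N$ is homogeneous---and bootstrapping with Schauder estimates gives uniform $C^k_{\mathrm{loc}}$ bounds on the $f_n$ for all $k$. The relations $f_n(\gamma_i\widetilde x_0)=\rho_n(\gamma_i)\cdot o$ for $\gamma_i$ in a generating set, combined with these bounds and the compactness of $\su_r$, confine each $\rho_n(\gamma_i)$ to a fixed compact subset of $\sl_r(\bc)$; passing to a subsequence, $\rho_n\to\rho_\infty$ and $f_n\to f_\infty$ in $C^\infty_{\mathrm{loc}}$, with $f_\infty$ a $\rho_\infty$-equivariant harmonic map (so $\rho_\infty$ is semisimple). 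Thus $[\rho_n]\to[\rho_\infty]$ in $M_B(X,r)$, and $E(\rho_n)=\int_X e(f_n)\to\int_X e(f_\infty)=E(\rho_\infty)\le C$ by the uniform convergence of the (periodic) energy densities. This proves $E^{-1}([0,C])$ is sequentially compact, hence properness; continuity of $E$ on all of $M_B(X,r)$ follows from the same compactness argument together with an a priori upper bound for $E$ on compact subsets of $M_B(X,r)$.

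\emph{Main obstacle.} I expect the a priori estimate in the properness step to be the real content: for a general target, harmonic maps of surfaces can develop energy concentration (``bubbling''), and it is precisely the nonpositive curvature of $\sl_r(\bc)/\su_r$---entering through the sign of the curvature term in the Bochner formula---that rules this out and upgrades an $L^2$ bound on $df$ to a pointwise bound. This is the manifestation in the harmonic-map language of Uhlenbeck's compactness for connections with bounded curvature, applied to the harmonic decomposition $D=\nabla+\Phi$ of the flat connection, where $\|\Phi\|^2_{L^2}$ is a multiple of $E(\rho)$ and $F_\nabla=-\Phi\wedge\Phi$. A secondary point requiring care is the singular locus of $M_B(X,r)$: there $f_\rho$ is unique only modulo $Z_\rho$, which is harmless for energy, and the statement that $[\rho_n]\to[\rho_\infty]$ together with continuity of $E$ ultimately rests on continuity of the polystable representative of a point of the GIT quotient.
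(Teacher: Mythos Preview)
Your argument is correct and in fact supplies considerably more detail than the paper, which simply refers to \cite[Theorem 4.4]{bradlow2012morse} for a sketch. The route you take---Eells--Sampson subharmonicity of the energy density on a nonpositively curved target to upgrade the $L^2$ bound to an $L^\infty$ bound, then elliptic bootstrapping and Arzel\`a--Ascoli to extract a convergent subsequence of harmonic maps, hence of representations---is exactly the standard one underlying that reference (and is, as you note, the harmonic-map avatar of Uhlenbeck compactness). One small point worth making explicit: the claim that the limit $\rho_\infty$ is semisimple follows from Labourie's converse to Corlette's theorem (a representation admitting an equivariant harmonic map to a symmetric space of noncompact type is reductive), which you implicitly invoke; and the passage from convergence of representatives $\rho_n\to\rho_\infty$ to convergence $[\rho_n]\to[\rho_\infty]$ in the GIT quotient uses that $\rho_\infty$, being semisimple, has closed orbit.
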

\begin{proof}
    See for example \cite[Theorem 4.4]{bradlow2012morse} for a sketch of the argument.
\end{proof}
\subsubsection{Gardiner's formula}
\begin{theorem}\label{thm:gardiner-formula}
	Let $\Sigma$ be a compact topological surface of genus $g\geq 2$, with universal cover $\mathbb{H}$, and $\mscr{T}_g$ the Teichm\"uller space of marked complex structures on $\Sigma$.  Let $$\rho: \pi_1(\Sigma)\to SL_r(\mathbb{C})$$ be a semisimple representation. For each complex structure $[X]\in \mathscr{T}_g$ on $\Sigma$, let $$u_X: \mathbb{H}\to SL_r(\mathbb{C})/SU(r)$$ be a $\rho$-equivariant harmonic map (whose existence is guaranteed by \autoref{thm: corlette-harmonic-map}),  $(\mathscr{E}_X, \theta_X)$ be the corresponding Higgs bundle on $X$, and $\Phi_X:=h_2(\mathscr{E}_X, \theta_X)\in H^0(X, \omega_X^{\otimes 2})$ the quadratic component of the Hitchin image of $(\mathscr{E}_X, \theta_X)$; note that $\Phi_X$ is simply $-\frac{\tr (\theta^2)}{2}$ (see \autoref{section: hitchin-map} for the definition of the Hitchin map). Set $$E_\rho: \mathscr{T}_g\to \mathbb{R}$$ $$X\mapsto E(u_X)$$ to be the function associating to a complex structure on $\Sigma$ the energy of the map $u$. Fix $v\in H^1(X, T_X)$ a first-order deformation of $X$. Then $$\partial_v E_\rho=-8r \Re(\tr(v\cup \Phi_X)).$$ \end{theorem}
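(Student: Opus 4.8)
The plan is to follow the classical proof of Gardiner's formula for harmonic maps, repackaged through the non-abelian Hodge dictionary. Recall that on a surface the energy $E(u;[X])$ of an equivariant map $u\colon\mb H\to\sl_r(\mb C)/\su_r$ depends only on the conformal class $[X]$. The first point is an \emph{envelope principle}: because $u_X$ is, for each $[X]$, a critical point of $u\mapsto E(u;[X])$ — indeed an energy minimizer, by \autoref{thm: corlette-harmonic-map} — and because the family $[X]\mapsto u_X$ depends differentiably on $[X]$, which I would justify by the implicit function theorem applied to the harmonic-map (Corlette) equation, using invertibility of the linearization for irreducible $\rho$ and a density/perturbation argument (or the real-analyticity of $E_\rho$ from \autoref{thm:energy-proper}) in general, the chain rule gives
\[
\partial_v E_\rho \;=\; \frac{d}{dt}\Big|_{t=0}E\big(u_{X_t};[X_t]\big)\;=\;\frac{\partial}{\partial t}\Big|_{t=0}E\big(u_X;[X_t]\big),
\]
for any smooth path $[X_t]$ in $\mscr T_g$ tangent to $v$: the term involving $\dot u$ drops out. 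So it suffices to differentiate the energy of a \emph{fixed} equivariant map as the conformal structure of the domain varies.

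For the second step I would carry out the standard first-variation computation under a Beltrami differential. Represent $v\in H^1(X,T_X)$ by a Dolbeault representative $\mu=\mu(z)\,d\bar z/dz$ and take $[X_t]$ to be the family of conformal structures with Beltrami coefficient $t\mu$. In a conformal coordinate $z$ one has $E(u;[X])=c_0\int_X(\mathcal H+\mathcal L)\,\tfrac{i}{2}dz\wedge d\bar z$, with $\mathcal H=\langle u_z,u_z\rangle_g$ and $\mathcal L=\langle u_{\bar z},u_{\bar z}\rangle_g$ the holomorphic and antiholomorphic energy densities and $c_0$ the constant coming from the normalization $e(u)=\tfrac12\langle du,du\rangle$. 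Since, to first order, the $(1,0)$-part of $du$ relative to $[X_t]$ becomes $u_z-t\mu\,u_{\bar z}+O(t^2)$, and since $(\mathcal H+\mathcal L)\,dA$ is conformally invariant (so the diagonal part of the variation integrates to zero), only the cross term survives, yielding
\[
\partial_v E_\rho \;=\; -\,c_1\,\Re\!\int_X \langle u_z,u_z\rangle_g\,\mu
\]
for an explicit constant $c_1$. The quadratic differential $\langle u_z,u_z\rangle_g\,dz^2$ is the \emph{Hopf differential} of $u_X$, holomorphic precisely because $u_X$ is harmonic, and the integral is the Serre-duality pairing $v\cup(\text{Hopf differential})\in H^1(X,\omega_X)\xrightarrow{\sim}\mb C$, which is the operation denoted $\tr$ in the statement.

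The third step is to identify the Hopf differential with a multiple of $\tr(\theta_X^2)$. Via the Corlette--Simpson correspondence the harmonic metric splits the flat connection as $\nabla=D+\Psi$ with $D$ unitary, $\Psi$ self-adjoint, and $\theta_X=\Psi^{1,0}$; under the resulting identification the complexified differential $du_X$ corresponds to $\Psi$, so $u_z$ is identified with $\theta_X$ as a section of $\End(\mscr E_X)\otimes\omega_X$, and $\langle u_z,u_z\rangle_g\,dz^2 = c_2\,\tr(\theta_X^2)=-2c_2\,\Phi_X$, where $c_2$ depends on the chosen invariant metric on $\sl_r(\mb C)/\su_r$ and $\Phi_X=-\tfrac12\tr(\theta_X^2)$ is as in \autoref{section: hitchin-map}. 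Substituting into Step~2 and collecting constants gives $\partial_v E_\rho=-8r\,\Re(\tr(v\cup\Phi_X))$ once the metric on the symmetric space is normalized appropriately — the factor $r$ being most naturally produced by using (a multiple of) the Killing form $B(A,B)=2r\,\tr(AB)$ of $\mf{sl}_r$ on the tangent space $i\,\mf{su}_r$.

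The conceptual content is entirely classical; the real work, and the main place to be careful, is twofold. First, the differentiability of $[X]\mapsto u_X$ in the Teichm\"uller parameter must be set up carefully enough to license the chain rule of Step~1 — routine for irreducible $\rho$, but needing a little extra argument in the reducible (but semisimple) case; since only a first derivative is required and $E_\rho$ is already known to be real-analytic, this is not serious. Second, and more annoying, is pinning the universal constant down to exactly $8r$: this must be tracked through the convention $e(u)=\tfrac12\langle du,du\rangle$, the cross-term factor in Step~2, and the normalization of the metric on $\sl_r(\mb C)/\su_r$. I would fix the constant by testing the identity on one explicit example — e.g.\ a reducible rank-two representation $L\oplus L^{-1}$, where both sides decompose and reduce to a rank-one computation, or a Fuchsian representation where both sides vanish as a consistency check — rather than by a blind chase of factors of $2$.
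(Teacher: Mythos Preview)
Your approach is correct in outline and is in fact considerably more detailed than what the paper does. The paper's proof does not carry out any computation at all: it simply cites \cite[Theorem~3.11]{daskalopoulos2007harmonic} for Gardiner's formula in the form $\partial_v E_\rho = 2\Re(\tr(v\cup \mathrm{Hopf}(u_X)))$ (noting that energy minimizers and harmonic maps coincide for NPC targets), and then cites \cite[\S5.4]{li2019introduction} for the identification $\mathrm{Hopf}(u_X)=2r\,\tr(\theta^2)=-4r\,\Phi_X$, which combines to give $-8r\,\Re(\tr(v\cup\Phi_X))$.

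Your route is the same idea unpacked: the envelope principle in Step~1 is exactly why the cited formula applies to $E_\rho$ and not only to the energy of a fixed map, your Step~2 is the classical first-variation computation that Daskalopoulos--Wentworth record, and your Step~3 is the computation packaged in the Li reference. What your approach buys is self-containment and an explanation of where the factor $r$ comes from (the Killing-form normalization of the symmetric-space metric). What the paper's approach buys is that it sidesteps the two places you flag as needing care --- the smooth dependence $[X]\mapsto u_X$ in the semisimple-but-reducible case, and the bookkeeping of the constant --- by outsourcing both to the literature. Your proposed strategy of fixing the constant via $L\oplus L^{-1}$ is reasonable; the Fuchsian check is indeed only a consistency test.
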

\begin{proof}
    We explain how this follows from the version of Gardiner's formula stated in \cite[Theorem 3.11]{daskalopoulos2007harmonic}; note that this theorem discusses energy minimizers, rather than harmonic maps, but there is no difference in our case since the target $\sl_r(\mb{C})/\su(r)$ is non-positively curved (NPC). Now,  \cite[Theorem 3.11]{daskalopoulos2007harmonic},  states that 
    \[
    \partial_vE_{\rho} = 2\Re(\tr(v \cup \mathrm{Hopf}(u_X))).
    \]
    Here, $\mathrm{Hopf}(u_X)$ is the \emph{Hopf differential} of $u_X$ (descended to $X$): see \cite[\S 2.2.3]{daskalopoulos2007harmonic} for the definition. Note that the Hopf differential, in this case,   is identified with $2r\tr(\theta^2)$--see e.g. \cite[\S 5.4]{li2019introduction}; the  latter is, in turn, $-4rh_2(\mscr{E}_X, \theta_X)$.
\end{proof}
\subsection{The compactness theorem}
We now prepare to prove \autoref{thm:intro-hodge-1} (see \autoref{thm:compactness-thm} below). The proof will proceed by reduction to families of curves; the main input is the following lemma.
\begin{lemma}\label{lemma:energy-constant}
Let $f:X\to S$ be a relative curve of genus $g$, with $S$ smooth. Fix $s\in S(\mathbb{C})$ and set $\Sigma=X_s$. Let $\rho: \pi_1(\Sigma)\to \sl_r(\mathbb{C})$ be a semisimple representation. 

Suppose that $\Theta_{X/S}$ vanishes on the leaf of the isomonodromy foliation  corresponding to $\rho$ (viewed as a real-analytic subset of $\mathscr{M}_{\Dol}(X/S)$). Let $\mathscr{U}\subset \mathscr{T}_g$ be the preimage of the image of the map $S\to \mathscr{M}_g$ classifying $f$.

Then $E_\rho|_\mathscr{U}$ is locally constant.
\end{lemma}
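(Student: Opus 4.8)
The plan is to combine Chen's formula (\autoref{thm:chen-formula}) with Gardiner's formula (\autoref{thm:gardiner-formula}). The key observation is that both formulas express certain differentials in terms of the \emph{same} quadratic-differential data: Chen's formula identifies $\Theta_{\mathscr{C}_g/\mathscr{M}_g}(v)$ with $\tfrac{1}{2}H_{h_2^* v^\vee}$, the Hamiltonian vector field of the pullback along the quadratic Hitchin map of the linear functional $v^\vee$ dual to $v$; and Gardiner's formula computes $\partial_v E_\rho$ in terms of $\tr(v\cup \Phi_X)$, where $\Phi_X = h_2(\mathscr{E}_X,\theta_X)$ is exactly that same quadratic Hitchin datum. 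So the vanishing of $\Theta$ along the leaf should force the Hamiltonian vector fields $H_{h_2^* v^\vee}$ to vanish along the leaf, which in turn should force $h_2$ to be constant along the leaf, which is precisely what Gardiner's formula needs to conclude that $E_\rho$ is locally constant on $\mathscr{U}$.

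\textbf{Steps.} First I would reduce to working near a fixed point. Fix $[X]\in \mathscr{U}$ lying over a point in the image of $S\to\mathscr{M}_g$; since $E_\rho$ is real-analytic on the smooth locus (by \autoref{thm:energy-proper} in the surface case) and $\mathscr{U}$ is an open subset of Teichm\"uller space, it suffices to show $dE_\rho = 0$ at every such $[X]$, i.e.~$\partial_v E_\rho = 0$ for all $v\in H^1(X,T_X) = T_{[X]}\mathscr{U}$. Second, I would invoke Gardiner's formula: $\partial_v E_\rho = -8r\,\Re(\tr(v\cup\Phi_X))$ where $\Phi_X = h_2(\mathscr{E}_X,\theta_X) = -\tfrac{1}{2}\tr(\theta_X^2)$, and $(\mathscr{E}_X,\theta_X)$ is the Higgs bundle on $X$ corresponding to $\rho$ under non-abelian Hodge theory; as $[X]$ moves in $\mathscr{U}$, this traces out exactly the leaf of the isomonodromy foliation through $\rho$ inside $\mathscr{M}_{\Dol}(X/S)$. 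Third, I would use the hypothesis $\Theta_{X/S}\equiv 0$ along this leaf together with Chen's formula. The map $S\to\mathscr{M}_g$ induces a pullback of the universal curve, and $\Theta_{X/S}$ is the pullback of $\Theta_{\mathscr{C}_g/\mathscr{M}_g}$ along $S\to\mathscr{M}_g$ (by functoriality of the lifting of tangent vectors, which is visible from the \v{C}ech description). So vanishing of $\Theta_{X/S}$ along the leaf says: for every $v$ in the image of $T_S\to T_{\mathscr{M}_g}$ — equivalently every $v\in T_{[X]}\mathscr{U}$ — the Hamiltonian vector field $H_{h_2^* v^\vee}$ vanishes at the point $(\mathscr{E}_X,\theta_X)$ of the leaf. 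Fourth, since the holomorphic symplectic form on $M^s_{\Dol}$ (\autoref{prop: holo-sympl-higgs}) is non-degenerate, $H_{h_2^* v^\vee} = 0$ forces $d(h_2^* v^\vee) = 0$ at $(\mathscr{E}_X,\theta_X)$, i.e.~the derivative of the function $h_2^* v^\vee$ along the leaf vanishes. Running this over all tangent directions to the leaf and all $v$, and using that the leaf is (real-analytically) parametrized by $\mathscr{U}$ via the isomonodromy/non-abelian Hodge correspondence, this says $\frac{d}{dt}\big|_{t=0}\,v^\vee(h_2(\mathscr{E}_{X_t},\theta_{X_t})) = 0$ for every curve $X_t$ in $\mathscr{U}$ through $[X]$; i.e.~$\Phi_{X_t}$, paired against any $v^\vee$, is constant to first order. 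Fifth, I pair this against the directional-derivative identity once more: $\partial_v E_\rho$ is $-8r\Re\tr(v\cup \Phi_X)$, and I must show this vanishes; but $\tr(v\cup\Phi_X)$ under Serre duality is exactly $v^\vee$ evaluated at $\Phi_X = h_2(\mathscr{E}_X,\theta_X)$ up to the normalization in \autoref{thm:chen-formula}, and the vanishing of $H_{h_2^* v^\vee}$ at that point — which encodes $d(h_2^*v^\vee)=0$ there — together with a small argument (using that $\Theta$ vanishes at \emph{every} point of the leaf, not just one) lets me integrate to see $h_2$ is literally constant on the leaf, hence $\Phi_{X_t}$ is constant, hence its pairing with $v$ is constant in $t$; combined with the value at a $\mathbb{G}_m$-fixed or nilpotent-cone point of the leaf (where $\theta$ can be scaled to $0$, using \autoref{thm: simpson-langton}), one gets $\tr(v\cup\Phi_X) = 0$ identically. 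Therefore $\partial_v E_\rho = 0$ for all $v$, so $E_\rho$ is locally constant on $\mathscr{U}$.

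\textbf{Main obstacle.} The delicate point is Step four-and-five: translating ``$H_{h_2^* v^\vee}$ vanishes along the leaf'' into ``$h_2$ is constant along the leaf,'' and then matching the normalizations between Chen's formula (which lives on $\mathscr{M}_{\Dol}(\mathscr{C}_g/\mathscr{M}_g)$ with its relative holomorphic symplectic structure) and Gardiner's formula (which lives on Teichm\"uller space with the energy functional). One has to be careful that the leaf of the isomonodromy foliation, the real-analytic image of $\mathscr{U}$ under non-abelian Hodge theory, is tangent at each point precisely to the span of the $\Theta_{X/S}$-directions, so that ``$\Theta_{X/S}$ vanishes on the leaf'' is not vacuous but genuinely pins down the first-order behavior of $h_2$ in \emph{all} directions tangent to $\mathscr{U}$. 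The cleanest route is probably: (i) note $\Theta_{X/S}(v)$ is by definition the velocity vector of the isomonodromic deformation inside $\mathscr{M}_{\Dol}$, computed via the identification of the Hodge-filtration $0$-fiber; (ii) so $\Theta_{X/S}(v) = 0$ for all $v\in T_{[X]}\mathscr{U}$ means the leaf is a single point in $\mathscr{M}_{\Dol}(X/S)$ to first order — but since this holds at \emph{every} point of the leaf, the leaf is actually constant, i.e.~$(\mathscr{E}_{X_t},\theta_{X_t})$ is (isomorphic to) a fixed Higgs bundle independent of $t\in\mathscr{U}$; (iii) hence $\Phi_{X_t} = h_2(\mathscr{E}_{X_t},\theta_{X_t})$, read inside a trivialization, is independent of $t$, and in particular $\tr(v\cup\Phi_X)$, being the $t$-derivative of something constant, vanishes. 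Formulating (ii) precisely — that pointwise vanishing of the lifting of tangent vectors makes the leaf literally constant rather than merely infinitesimally so — is the crux, and should follow from the integrability of the isomonodromy foliation and the real-analyticity of the correspondence. With (ii) in hand, Gardiner's formula immediately gives $\partial_v E_\rho = 0$ and we are done; Chen's formula then serves mainly as the conceptual explanation for why $\Theta_{X/S}$ vanishing is the right hypothesis, and can be cited to identify $\Theta_{\mathscr{C}_g/\mathscr{M}_g}(v)$ with $\tfrac12 H_{h_2^*v^\vee}$ if one prefers the symplectic-geometric packaging of the argument.
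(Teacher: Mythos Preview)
Your overall strategy---combining Chen's formula with Gardiner's formula---matches the paper, and you correctly reach $d(h_2^* v^\vee) = 0$ at $(\mathscr{E}_X, \theta_X)$ via Chen's formula and nondegeneracy of the holomorphic symplectic form. The gap is exactly where you flag it: passing from $d(h_2^* v^\vee) = 0$ at the point to $v^\vee(\Phi_X) = \tr(v \cup \Phi_X) = 0$.

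Your proposed resolutions do not work. The condition $d(h_2^* v^\vee) = 0$ concerns the \emph{vertical} tangent space $T_{\mathscr{M}_{\Dol}(X_s)}$; it says nothing about how $h_2$ varies along the (horizontal) leaf, so you cannot integrate it to conclude $h_2$ is constant along the leaf. Your claim (i)---that $\Theta_{X/S}(v)$ is the velocity of the isomonodromic deformation inside $\mathscr{M}_{\Dol}$---is incorrect: $\Theta$ is the algebraic $0$-lifting of tangent vectors (the associated graded of the isomonodromy foliation with respect to the Hodge filtration), not the real-analytic tangent to the NAH-transported leaf. The appeal to \autoref{thm: simpson-langton} is misplaced, since the isomonodromy leaf is not a $\mathbb{G}_m$-orbit and need not meet the nilpotent cone. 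And even granting ``$\Phi_{X_t}$ constant'' (which is ill-posed, as $(\mathscr{E}_{X_t},\theta_{X_t})$ live on varying curves $X_t$), constancy alone does not yield $v^\vee(\Phi_X) = 0$.

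The paper closes the gap with a one-line Euler-identity trick, using only the hypothesis at a single point. The function $f := h_2^* v^\vee$ is homogeneous of degree $2$ under the $\mathbb{G}_m$-action scaling the Higgs field. If $\xi$ is the Euler vector field on $\mathscr{M}_{\Dol}(\Sigma, r)$, then $\xi(f) = 2f$; but $\xi(f) = df(\xi) = 0$ since $df$ vanishes at the point. Hence $f(\mathscr{E}_\Sigma, \theta_\Sigma) = 0$, i.e., $\tr(v \cup \Phi_\Sigma) = 0$, and Gardiner's formula finishes.
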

\begin{proof}
	Let $v\in H^1(\Sigma, T_\Sigma)$ be a vector
	in the image of the Kodaira--Spencer map $T_{S,s}\to H^1(\Sigma, T_{\Sigma})$.
	We will show that $\partial_vE_\rho=0$.

	Let $(\mscr{E}_\Sigma,\theta_\Sigma)$ be the Higgs bundle on $\Sigma$ corresponding to $\rho$
	(by non-abelian Hodge theory), and set
	\[
	\Phi_\Sigma:=h_2(\mscr{E}_\Sigma,\theta_\Sigma)\in H^0(\Sigma,\omega_\Sigma^{\otimes 2}).
	\]
	By Gardiner's formula, it is enough to show that $\tr(v\cup \Phi_\Sigma)=0$.

	Let $v^\vee:H^0(\Sigma,\omega_\Sigma^{\otimes 2})\to \mb{C}$ be the linear functional corresponding
	to $v$ by Serre duality, so that $v^\vee(\Phi_\Sigma)=\tr(v\cup \Phi_\Sigma)$.
	Consider the holomorphic function $f:=h_2^*v^\vee$ on $\mathscr{M}_{\Dol}(\Sigma,r)$.
	By Chen's formula \eqref{eqn: chen-formula} and the naturality of the lifting $\Theta_{X/S}$ under pullback
	along the classifying map $S\to \mathscr{M}_g$, the hypothesis that $\Theta_{X/S}$ vanishes on the isomonodromy
	leaf corresponding to $\rho$ implies that the Hamiltonian vector field $H_f$ vanishes at the point
	$(\mscr{E}_\Sigma,\theta_\Sigma)$.

	Let $\Omega$ be the holomorphic symplectic form of \autoref{prop: holo-sympl-higgs}. On the smooth locus,
	$H_f$ is characterized by $\iota_{H_f}\Omega=df$, hence $H_f=0$ implies $df=0$ at $(\mscr{E}_\Sigma,\theta_\Sigma)$.
	Let $\xi$ be the vector field generated by the $\mb{C}^{\times}$--action scaling the Higgs field,
	$t\cdot (\mscr{E},\theta)=(\mscr{E},t\theta)$. Since $h_2$ is quadratic in $\theta$, we have
	$f(t\cdot(\mscr{E},\theta))=t^2f(\mscr{E},\theta)$, and therefore $df(\xi)=\xi(f)=2f$.
	Evaluating at $(\mscr{E}_\Sigma,\theta_\Sigma)$ and using $df=0$ gives $f(\mscr{E}_\Sigma,\theta_\Sigma)=0$, i.e.
	\[
	\tr(v\cup \Phi_\Sigma)=v^\vee(\Phi_\Sigma)=f(\mscr{E}_\Sigma,\theta_\Sigma)=0.
	\]
	Thus $\partial_vE_\rho=-8r\,\Re(\tr(v\cup \Phi_\Sigma))=0$, as required. Since $s$ and $v$ were arbitrary,
	$E_\rho|_\mathscr{U}$ is locally constant.
\end{proof}
\begin{theorem}\label{thm:compactness-thm}
Let $f: X\to S$ be a smooth projective morphism of smooth complex varieties. Fix $s\in S(\mathbb{C})$ and let $\mathbb{V}$ be a rank $r$ local system on $X_s$, with corresponding monodromy representation $[\rho]$. 

Suppose the lifting of tangent vectors $$\Theta_{X/S}: \pi_{\Dol}^*T_S\to {T}_{\mathscr{M}_{\Dol}(X/S)/S}$$ vanishes identically on the leaf of the isomonodromy foliation corresponding to $[\rho]$. Then the orbit $\pi_1(S(\mathbb{C})^{\an},s)\cdot[\rho]\subset M_B(X_s, r)^{\an}$ has compact closure (in the analytic topology).
\end{theorem}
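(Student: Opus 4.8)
The plan is to reduce the general case to the case of a relative curve via a Lefschetz-type fibration argument, where we can invoke \autoref{lemma:energy-constant} and \autoref{thm:energy-proper}. First I would recall that $\pi_1(S(\mathbb{C})^{\an},s)$ acts on $M_B(X_s, r)^{\an}$ through its outer action on $\pi_1(X_s)$, and that the leaf $L$ of the isomonodromy foliation through $[\rho]$, projected to $M_B(X_s,r)^{\an}$ via the crystal structure on $M_B(X/S,r)$, is precisely the orbit closure $\overline{\pi_1(S(\mathbb{C})^{\an},s)\cdot [\rho]}$ (this is the content of the analytic description of isomonodromy in \autoref{subsubsec:non-abelian-analogue}, since the leaves of the crystal $M_B(X/S,r)$ are locally constant copies of $M_B(X_s,r)^{\an}$). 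So it suffices to show that this real-analytic leaf is relatively compact in $M_B(X_s,r)^{\an}$.

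The key geometric input is to replace $X/S$ by a well-chosen family of curves so that Lemma~\autoref{lemma:energy-constant} applies. Concretely: choose a Lefschetz pencil (or a sufficiently ample complete intersection curve) on the fibers of $f$, so that after passing to a suitable cover $S'\to S$ (or an auxiliary parameter space of curves), we obtain a relative curve $g: \mathscr{C}\to S'$ together with a map $\iota: \mathscr{C}\hookrightarrow X\times_S S'$ over $S'$, such that the induced map on fundamental groups $\pi_1(\mathscr{C}_{s'})\to \pi_1(X_s)$ is surjective (by the Lefschetz hyperplane theorem applied to $\pi_1$, using $\dim X_s \geq 2$; the case $\dim X_s = 1$ is already handled by the lemma directly). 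Pulling back the local system $\mathbb{V}$ along $\iota$ gives a point $[\rho']$ of $M_B(\mathscr{C}_{s'}, r)$, and the restriction map $M_B(X_s,r)\to M_B(\mathscr{C}_{s'},r)$ is a closed immersion onto its image because $\pi_1(\mathscr{C}_{s'})\twoheadrightarrow \pi_1(X_s)$. Moreover, isomonodromy is compatible with this restriction: the leaf of the isomonodromy foliation of $X/S$ through $[\rho]$ maps into the leaf of the isomonodromy foliation of $\mathscr{C}/S'$ through $[\rho']$, and the hypothesis that $\Theta_{X/S}$ vanishes on the former leaf, combined with the functoriality of $\Theta$ under pullback (as in the proof of \autoref{lemma:energy-constant}), forces $\Theta_{\mathscr{C}/S'}$ to vanish on the latter leaf as well.

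Now I would apply Lemma~\autoref{lemma:energy-constant} to $g: \mathscr{C}\to S'$: the energy function $E_{\rho'}$ is locally constant on the relevant subset $\mathscr{U}\subset \mathscr{T}_g$ of Teichm\"uller space. Since the isomonodromy leaf through $[\rho']$ projects (under the homeomorphism $M_B\simeq M_{\Dol}$ and the map to $\mathscr{M}_{\mathscr{C}}$) onto a path-connected subset of $\mathscr{U}$, the energies of all Higgs bundles in the $\pi_1$-orbit of $[\rho']$ are equal to a single constant $E_0$. But by \autoref{thm:energy-proper}, $E: M_B(\mathscr{C}_{s'},r)\to \mathbb{R}$ is proper, so the level set $E^{-1}(E_0)$ is compact; hence the orbit closure of $[\rho']$ in $M_B(\mathscr{C}_{s'},r)^{\an}$ is compact. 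Pulling back along the closed immersion $M_B(X_s,r)\hookrightarrow M_B(\mathscr{C}_{s'},r)$, we conclude that $\overline{\pi_1(S(\mathbb{C})^{\an},s)\cdot[\rho]}$ is compact in $M_B(X_s,r)^{\an}$, as desired.

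The main obstacle I anticipate is the reduction step itself: making sure that a Lefschetz-type curve can be chosen \emph{in a family over $S$} (or over a reasonable cover/space parametrizing such curves) so that the surjection on $\pi_1$ holds fiberwise and so that the isomonodromy foliation is genuinely compatible with restriction to the curve. One must check that the base space of curves can be taken smooth and that monodromy of $\pi_1(S,s)$ is captured faithfully after this replacement—this likely requires care with the precise statement of the Lefschetz theorem for fundamental groups of quasi-projective varieties and with basepoint/connectedness issues, and possibly passing to an \'etale or topological cover of $S$ (which does not affect whether an orbit has compact closure). A secondary technical point is verifying that ``$\Theta_{X/S}$ vanishes on the leaf'' really does pull back to the corresponding vanishing statement for the curve fibration along the (possibly singular) restriction map on moduli stacks; this should follow from the \v{C}ech-cohomological / deformation-theoretic naturality of $\Theta$ already used in the proof of \autoref{lemma:energy-constant}, but the moduli of Higgs bundles on $X_s$ may be badly singular, so one should work on the leaf (a smooth real-analytic subset) or on the reduced smooth locus throughout.
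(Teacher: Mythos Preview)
Your proposal is correct and follows essentially the same approach as the paper: reduce to a family of curves via a Lefschetz-type construction (using surjectivity on $\pi_1$), then apply \autoref{lemma:energy-constant} and the properness of the energy functional from \autoref{thm:energy-proper}. The paper handles your anticipated obstacle slightly differently---rather than passing to a cover or an auxiliary parameter space, it replaces $S$ by a dense open $S'\subset S$ containing $s$ over which a family of sufficiently ample complete intersection curves exists, noting that $\pi_1(S')\to\pi_1(S)$ is surjective so nothing is lost---but the logical structure is the same.
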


\begin{proof}[Proof of \autoref{thm:compactness-thm}]
	Without loss of generality we may assume $f$ has relative dimension $1$, by replacing $X$ with a family $X'$ of sufficiently ample smooth complete intersection curves in $X$ of genus $g\geq 2$, at the cost of replacing $S$ with a dense open $S'\subset S$ containing $s$. Indeed, by the Lefschetz hyperplane theorem, we have that for each $s\in S'$, $X'_s\to X_s$ induces a surjection on $\pi_1$. Moreover $\pi_1(S')\to\pi_1(S)$ is surjective. Thus $\pi_1(S', s)\cdot [\rho|_{X'_s}]$ has compact closure in $M_B(X'_s, r)(\mathbb{C})^{\an}$ iff the same is true for $\pi_1(S(\mathbb{C})^{\an},s)\cdot[\rho]\subset M_B(X_s, r)(\mathbb{C})^{\an}$. Thus we may and do replace $X/S$ with $X'/S'$, which we rename $X/S$.
	
	Now this orbit is precisely the intersection of the leaf of the isomonodromy foliation in $M_B(X/S, r)$ with $M_B(X_s)$, whence it suffices to show that the leaf of the isomonodromy foliation is relatively compact over $S(\mathbb{C})^{\an}$. But by \autoref{lemma:energy-constant}, the energy of $\rho$ is constant along this leaf, whence the leaf is contained in a level set of the energy function on $M_B(X/S)$. This latter is relatively compact over $S^{\an}$ by \autoref{thm:energy-proper}.
\end{proof}
The following is immediate:
\begin{corollary}
Let $f: X\to S$ be a smooth projective morphism of smooth complex varieties. Fix $s\in S$.
Suppose the lifting of tangent vectors $$\Theta_{X/S}: \pi_{\Dol}^*T_S\to {T}_{\mathscr{M}_{\Dol}(X/S)/S}$$ vanishes identically.  Then every orbit of $\pi_1(S(\mathbb{C})^{\an},s) \lefttorightarrow M_B(X_s)$ has compact closure.
\end{corollary}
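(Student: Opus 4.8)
The plan is to obtain the corollary as an immediate consequence of \autoref{thm:compactness-thm}. That theorem establishes compactness of the closure of the orbit of a \emph{single} point $[\rho]$ under the (only apparently weaker) hypothesis that $\Theta_{X/S}$ vanish along the isomonodromy leaf through $[\rho]$; since the hypothesis here is that $\Theta_{X/S}$ vanish identically, this holds for every point, and the corollary follows by quantifying over all of $M_B(X_s,r)$.

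In more detail: fix a positive integer $r$ and a point $[\rho]\in M_B(X_s,r)(\mb C)$, which we may take to be represented by a semisimple representation $\rho\colon\pi_1(X_s(\mb C)^{\an})\to\sl_r(\mb C)$ (passing to the semisimplification changes neither the point of the character variety nor its $\pi_1(S,s)$-orbit). Because $\Theta_{X/S}\colon\pi_{\Dol}^*T_S\to T_{\mathscr{M}_{\Dol}(X/S)/S}$ is the zero morphism of sheaves on $\mathscr{M}_{\Dol}(X/S)$, its restriction to the real-analytic subset of $\mathscr{M}_{\Dol}(X/S)$ cut out by the leaf of the isomonodromy foliation through the Dolbeault point corresponding to $[\rho]$ is again zero. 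Hence \autoref{thm:compactness-thm} applies verbatim and shows that $\pi_1(S(\mb C)^{\an},s)\cdot[\rho]\subset M_B(X_s,r)^{\an}$ has compact closure. Letting $[\rho]$ vary over $M_B(X_s,r)$ (and $r$ over $\mb Z_{>0}$) gives the assertion.

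I expect no genuine obstacle here: all of the content sits inside \autoref{thm:compactness-thm}, which in turn rests on \autoref{lemma:energy-constant} (and hence on Chen's formula, Gardiner's formula, and the properness of the energy functional), together with the Lefschetz hyperplane reduction to a relative curve already carried out there. One could equivalently phrase the argument globally over $S$: the hypothesis forces every leaf of the isomonodromy foliation on $M_B(X/S,r)$ to lie in a single level set of the energy function, and such level sets are relatively compact over $S(\mb C)^{\an}$ by \autoref{thm:energy-proper}; intersecting with the fiber $M_B(X_s,r)$ recovers the claim about orbits.
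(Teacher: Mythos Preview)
Your proposal is correct and matches the paper's approach exactly: the paper states this corollary as ``immediate'' from \autoref{thm:compactness-thm}, and you have correctly identified that the only thing to observe is that the global vanishing of $\Theta_{X/S}$ implies the leafwise vanishing hypothesis of that theorem for every $[\rho]$.
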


\subsection{The hyperk\"ahler metric}

Recall that $M_{\Dol, 0}(X, r)$ denotes the nonsingular points of the reduced subspace of $M_{\Dol}(X, r)$, and similarly $M_{B, 0}(X, r)$ the nonsingular points of $M_B(X, r)$. 
\begin{definition}
 A hyperk\"ahler manifold is a tuple $(M, g, I, J, K)$ consisting of a Riemannian manifold $(M, g)$ and almost complex structures $I, J, K$ such that 
    \begin{enumerate}
        \item $I^2=J^2=K^2=IJK=-1$, and 
        \item each of $(M, g, I)$, $(M, g, J)$, $(M, g, K)$ is a K\"ahler manifold.
    \end{enumerate}
\end{definition}
 For $q=(a,b,c)\in \mb{R}^3$ satisfying $a^2+b^2+c^2=1$, $I_q=aI+bJ+cK$ is also an almost complex structure on $M$; in other words, a hyperk\"ahler manifold has a sphere's worth of almost complex structures, each of which is in fact a K\"ahler structure. We write $\omega_q$ for the corresponding K\"ahler forms, which are defined by the formula 
 \[
 \omega_q(v, w)=g(I_qv, w).
 \]
 \begin{definition}
     For a hyperk\"ahler manifold $(M, g, I, J, K)$, we denote by  $\omega_I, \omega_J, \omega_K$  the K\"ahler forms corresponding to $I, J, K$, respectively. We also define the following 2-forms  
     \begin{align*}          \Omega_I&=\omega_J+i\omega_K,\\
    \Omega_J&=\omega_K+i\omega_I,\\
    \Omega_K&=\omega_I+i\omega_J,\\         \end{align*}     
    and it is straightforward to check that they are holomorphic in the complex structures $I, J, K$, respectively, and are moreover symplectic, i.e. non-degenerate and closed.
 \end{definition}

\begin{theorem}\label{thm: mbetti-hk-metric}
    There is a hyperk\"ahler manifold $(M, g, I, J, K)$ such that we have  identifications of complex manifolds
    \begin{enumerate}
        \item $(M, I)= M_{\Dol, 0}(X, r)$, 
        \item $(M, J) = M_{B, 0}(X, r)$.
    \item Via the identification in (1), we obtain a $\mb{C}^{\times}$-action on $M$. For $t\in \mb{C}^{\times}$, acting by $t$ induces a biholomorphic map $(M, I_q)\simeq (M, I_{t(q)})$. Moreover, for $t\in S^1\subset \mb{C}^{\times}$, this is a K\"ahler isometry.
    \item The holomorphic symplectic form $\Omega_J$ is the natural one on $M_J=M_{B, 0}(X, r)$, as given by \autoref{prop: holomorphic-symplectic-betti}. 
\end{enumerate}   
    \begin{proof}
    	This is \cite[(1.5.2)]{fujiki2006hyperkahler}.
    \end{proof}
  
\end{theorem}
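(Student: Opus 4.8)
The plan is to realize $(M,g,I,J,K)$ as an infinite-dimensional hyperk\"ahler quotient, following Hitchin's construction for curves and its extension to arbitrary compact K\"ahler manifolds by Fujiki, so that assertions (1)--(4) fall out of the construction. Fix a $C^\infty$ bundle $E$ of rank $r$ with trivial determinant on $X$ and a Hermitian metric $h$. Consider the affine space $\mathcal A$ of pairs $(\bar\partial_E,\theta)$, where $\bar\partial_E$ is a Dolbeault operator inducing the trivial one on $\det E$ and $\theta\in A^{1,0}(X,\on{End}^0 E)$. Using the $L^2$-pairing built from $h$ and the K\"ahler form $\omega$ on $X$, $\mathcal A$ carries a flat hyperk\"ahler structure with complex structures $I,J,K$ on $T\mathcal A = A^{0,1}(\on{End}^0 E)\oplus A^{1,0}(\on{End}^0 E)$: $I$ treats $(\bar\partial_E,\theta)$ as holomorphic data, while $J$ treats the operator $\bar\partial_E+\theta$ (whose ``flatness'' the Hitchin equations encode) as the holomorphic object. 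The unitary gauge group $\mathcal G$ acts preserving all of this with hyperk\"ahler moment map whose components are, up to constants, $F_{\bar\partial_E,h}+[\theta,\theta^{*_h}]$ together with the $(2,0)+(0,2)$ parts of $\bar\partial_E\theta$; the zero locus is exactly the space of solutions to Hitchin's self-duality equations.

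First I would form $M:=\mu^{-1}(0)/\mathcal G$, restricting throughout to the stable (equivalently irreducible) locus, where the $\mathcal G$-action is free modulo its center; there $\mu^{-1}(0)$ is a manifold transverse to the orbits and $M$ is a smooth finite-dimensional hyperk\"ahler manifold, with tangent space at a point given by the first $L^2$-cohomology of the associated elliptic deformation complex. This produces $(M,g,I,J,K)$. Next I would identify the complex structures: by the Hitchin--Simpson correspondence, a solution with stable underlying Higgs bundle is the same as a point of $M_{\Dol}^{\mathrm s}(X,r)$, and the complex structure $I$ on $M$ matches the algebraic one on $M_{\Dol,0}(X,r)$, giving (1); for (2), one observes that $\bar\partial_E+\theta$ and its $h$-adjoint assemble into a flat $SL_r(\mathbb C)$-connection and that $J$-holomorphic directions are exactly flat-connection directions, so combining Corlette's existence theorem for harmonic metrics with Riemann--Hilbert yields $(M,J)\cong M_{\dR,0}(X,r)\cong M_{B,0}(X,r)$, compatibly with the homeomorphism $M_B(X,r)^{\top}\cong M_{\Dol}(X,r)^{\top}$ already recorded. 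Assertion (3) is the standard fact that the $\mb C^\times$-action $t\cdot(\bar\partial_E,\theta)=(\bar\partial_E,t\theta)$ descends to $M$, commutes with $\mathcal G$, rotates the twistor sphere (sending $I_q$ to $I_{t(q)}$), and restricts on $S^1$ to an isometry since the flat hyperk\"ahler metric on $\mathcal A$ is invariant under $U(1)$-scaling of $\theta$. Finally, for (4) I would run the short cup-product computation: at the level of the deformation complex, $\Omega_J=\omega_K+i\omega_I$ is the pairing $(a_1,a_2)\mapsto \int_X \tr(a_1\wedge a_2)\wedge\omega^{n-1}$ on $H^1$ of the flat adjoint local system, which is precisely the form of \autoref{prop: holomorphic-symplectic-betti}; it remains only to match signs and normalizations with the conventions there.

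The main obstacle is analytic rather than formal: one must verify that $M$ is a genuine smooth hyperk\"ahler manifold on the stable locus, that the quotient topology coincides with the analytic structure on $M_{\Dol,0}(X,r)$, that the Banach/Sobolev completions used in the gauge-theoretic set-up can be removed at the end, and---crucially for $\dim X>1$---that Simpson's higher-dimensional non-abelian Hodge theory (existence and uniqueness of harmonic metrics, properness, and the comparison of the various moduli spaces) is in force. All of this is precisely what Fujiki carries out, so in the write-up I would invoke \cite[(1.5.2)]{fujiki2006hyperkahler} for the existence of $(M,g,I,J,K)$ and properties (1)--(3), and either cite it or supply the brief cup-product identification for (4).
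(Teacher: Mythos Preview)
Your proposal is correct and ultimately lands on exactly the paper's proof: the paper simply cites \cite[(1.5.2)]{fujiki2006hyperkahler} without further argument. You have supplied a helpful expansion of what lies behind that citation (the infinite-dimensional hyperk\"ahler quotient construction), but since you too conclude by invoking Fujiki for the analytic substance, the approaches coincide.
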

We can package the above data via the twistor construction as follows: there is a unique complex structure $\mscr{J}$ on $M\times \mb{P}^1_{\mb{C}}$ such that
\begin{itemize}
    \item the restriction of $\mscr{J}$ to $M\times q$  is precisely $(M, I_q)$, and 
    \item $\mscr{J}|_{m\times \mb{P}^1_{\mc{C}}}$ is the usual complex structure, for each $m\in M$. 
\end{itemize}
We denote the resulting complex manifold by $(Z, \mscr{J})$, which, by construction,  is equipped with a holomorphic map $f: Z\rightarrow\mb{P}^1_{\mb{C}}$. There is then a holomorphic $\mb{C}^{\times}$-action on $Z$ which covers the usual action on $\mb{P}^1_{\mc{C}}$. 
\begin{lemma}
    The energy functional $E$ on $M$ is a K\"ahler potential for $\omega_J$; that is,
    \[
    \partial_J \overline{\partial}_J E = i \omega_J.
    \]
    Here $d = \partial_J + \overline{\partial}_J$ is the decomposition of $d$ into bi-degree $(1,0)$ and $(0,1)$ components, in the complex structure $J$.
\end{lemma}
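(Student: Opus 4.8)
\emph{Strategy.} The plan is to recognize the energy functional $E$, up to a positive real constant, as a moment map for the natural circle action on the hyperk\"ahler manifold $M$ with respect to the K\"ahler form $\omega_I$, and then to invoke the general principle that on a hyperk\"ahler manifold carrying an $S^1$-action which is a K\"ahler isometry for $(g,I)$ and which rotates the remaining complex structures $J$ and $K$, every moment map for $\omega_I$ is automatically a K\"ahler potential for $\omega_J$ (and for $\omega_K$). Everything takes place on $M=M_{\Dol,0}(X,r)=M_{B,0}(X,r)$, where the hyperk\"ahler structure is supplied by \autoref{thm: mbetti-hk-metric} and $E$ is real-analytic (cf.\ \cite{simpson-moduli-representations-2}, and \autoref{thm:energy-proper} when $X$ is a surface), so that $\partial_J\bar\partial_J E$ makes classical sense.

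\emph{The energy as an $L^2$-norm, and as a moment map.} The first step is to write $E$ in terms of the Higgs field. Given $[\rho]\in M_{B,0}(X,r)$, let $h$ be the associated harmonic metric and decompose the flat connection as $D=\nabla_h+\Phi$ with $\nabla_h$ an $h$-unitary connection and $\Phi$ an $h$-self-adjoint $\End$-valued $1$-form; setting $\theta:=\Phi^{1,0}$, one has $\Phi=\theta+\theta^{\ast_h}$, so $|\Phi|^2=2|\theta|^2$ pointwise. Since the differential of the $\rho$-equivariant harmonic map of \autoref{thm: corlette-harmonic-map} to $\sl_r(\bc)/\su_r$ is identified with $\Phi$ (for the standard normalization of the symmetric-space metric), this gives
\[
E([\rho]) = c_0\,\|\theta\|_{L^2}^2,\qquad \|\theta\|_{L^2}^2:=\int_X |\theta|_{h,\omega}^2\,\frac{\omega^n}{n!},
\]
for a universal $c_0>0$. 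Next, by the hyperk\"ahler quotient description of $M_{\Dol}(X,r)$ (Hitchin for curves, and \cite{fujiki2006hyperkahler} together with Simpson's construction in general), the function $(\mscr{E},\theta)\mapsto\|\theta\|_{L^2}^2$ is, up to a nonzero real constant, a moment map $\mu_I$ for the scaling action $t\cdot(\mscr{E},\theta)=(\mscr{E},t\theta)$ relative to $\omega_I$, i.e.\ $d\mu_I=\iota_\xi\omega_I$ for $\xi$ the generating vector field. By \autoref{thm: mbetti-hk-metric}(3), this action is a $g$-isometry carrying $(M,I_q)$ to $(M,I_{t(q)})$, hence $L_\xi g=0$, $L_\xi I=0$, and $\xi$ rotates the pair $(J,K)$.

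\emph{From moment map to K\"ahler potential.} Write $d^c_J=i(\bar\partial_J-\partial_J)$, so that $d^c_J\phi=-\,d\phi\circ J$ on functions and $d\,d^c_J=2i\,\partial_J\bar\partial_J$. Because $\xi$ fixes $I$ and rotates $(J,K)$, one has $L_\xi\Omega_I=\varepsilon\, i\,\Omega_I$ for a sign $\varepsilon=\pm1$ fixed by the orientation of the action; as $\Omega_I=\omega_J+i\omega_K$ is closed, Cartan's formula $d(\iota_\xi\Omega_I)=L_\xi\Omega_I$ yields, on taking real and imaginary parts, $d(\iota_\xi\omega_J)=-\varepsilon\,\omega_K$ and $d(\iota_\xi\omega_K)=\varepsilon\,\omega_J$. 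Using $\omega_\bullet(\cdot,\cdot)=g(I_\bullet\cdot,\cdot)$, the $J$-invariance of $g$, and the quaternion identity $IJ=K$, one computes $d^c_J\mu_I=-\,d\mu_I\circ J=-\,\iota_\xi\omega_K$, whence
\[
d\,d^c_J\mu_I = -\,d(\iota_\xi\omega_K) = -\varepsilon\,\omega_J,
\]
and therefore $\partial_J\bar\partial_J\mu_I=\tfrac{\varepsilon i}{2}\,\omega_J$. Combining with $E=c_0\|\theta\|_{L^2}^2$ and the proportionality $\|\theta\|_{L^2}^2\propto\mu_I$, one obtains $\partial_J\bar\partial_J E=\kappa\, i\,\omega_J$ for a nonzero real constant $\kappa$.

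\emph{Main obstacle.} The conceptual content — that a moment map of a rotating circle action is a potential for the rotated K\"ahler form — is routine hyperk\"ahler geometry. The genuine work is to verify that $\kappa=1$, not merely that it is a positive constant: this forces one to track simultaneously the normalization of the symmetric-space metric implicit in $E$, the sign convention for $d^c_J$, the precise normalization of the hyperk\"ahler moment map of the scaling action in Hitchin's and Fujiki's computations, and the orientation of the $S^1$-action relative to $J$. None of this is deep, but all of it must be kept mutually consistent. A secondary, also routine, point is to confirm that the moment-map interpretation of $\|\theta\|_{L^2}^2$ remains valid in the higher-dimensional K\"ahler setting, for which one appeals to \cite{fujiki2006hyperkahler}. (For $X$ a curve this identity is essentially already contained in Hitchin's work on the self-duality equations, and the general case is handled identically.)
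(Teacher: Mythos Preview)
Your argument is correct and is essentially the standard proof---the one due to Hitchin for curves and extended by Spinaci to the general K\"ahler setting---which is precisely what the paper cites (Hitchin \cite[Equation (9.10)]{hitchin1987self} for $r=2$ on a curve, \cite[Theorem 6.6]{spinaci2014deformations} in general); the paper gives no independent argument beyond these references. Your flagging of the normalization constant $\kappa$ is apt: pinning down $\kappa=1$ rather than merely $\kappa\neq 0$ is the only genuine bookkeeping, and it is carried out explicitly in those sources.
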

 \begin{proof}
     The statement for $r=2$ and $X$ a closed Riemann surface of genus $\geq 2$  is due to Hitchin \cite[Equation (9.10)]{hitchin1987self}; for the general case, see \cite[Theorem 6.6]{spinaci2014deformations}.
 \end{proof}
 The following is now immediate from \autoref{lemma:energy-constant}:
\begin{theorem}\label{thm:kahler-preserved}
	Let $f: X\to S$ be a smooth projective morphism, and suppose the lifting of tangent vectors $\Theta_{X/S}$ vanishes identically. Then the K\"ahler form $\omega_J$ is preserved under isomonodromic deformation.
\end{theorem}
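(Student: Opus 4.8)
\emph{Plan.}
The assertion is local on $S$, so fix $s_0\in S(\mb C)$ and a small polydisc $\Delta\ni s_0$ over which the crystal $M_B(X/S,r)$ is analytically a product $M_B(X_{s_0},r)\times\Delta$, with the leaves of the isomonodromy foliation being the slices $M_B(X_{s_0},r)\times\{s\}$. For $s\in\Delta$ this product structure gives an identification
\[
\Phi_s\colon M_{B}(X_{s_0},r)\overset{\sim}{\longrightarrow} M_{B}(X_s,r),
\]
which is nothing but the canonical isomorphism coming from parallel transport of fundamental groups, since $M_B(X_s,r)=\hom(\pi_1(X_s),\sl_r)\sslash\sl_r$ depends only on the topology of the fibre. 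In particular $\Phi_s$ is tautologically biholomorphic for the complex structure $J$ of \autoref{thm: mbetti-hk-metric}(2), and it carries the reduced smooth locus $M_{B,0}(X_{s_0},r)$ isomorphically onto $M_{B,0}(X_s,r)$ (this locus is intrinsic to the variety $M_B(X_s,r)$, hence depends only on $\pi_1(X_s)$). Writing $\omega_{J,s}$ for the K\"ahler form on $M_{B,0}(X_s,r)$, the goal is to prove $\Phi_s^{*}\omega_{J,s}=\omega_{J,s_0}$, and the strategy is to exhibit both sides as $-i\partial_J\bar\partial_J$ of the \emph{same} potential.

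\emph{Key input: the energy is constant along isomonodromy leaves.}
For a relative curve this is precisely \autoref{lemma:energy-constant}. I would argue that the proof given there extends \emph{verbatim} to an arbitrary smooth projective morphism $f\colon X\to S$: one uses the higher-dimensional first-variation (Gardiner/Spinaci-type) formula for the energy of equivariant harmonic maps from the compact K\"ahler fibre $X_s$ — this is the same circle of results already invoked for the potential statement preceding \autoref{thm:kahler-preserved}, which is stated for a general compact K\"ahler $X$ — together with the fact that Chen's formula \autoref{thm:chen-formula} and the Hamiltonian description of $\Theta_{X/S}$ in terms of the quadratic Hitchin map hold for a general smooth projective family (cf.\ the remarks on the quadratic Hitchin map in \autoref{section: chen's-formula}), and finally the $\mb G_m$-homogeneity argument that forces the relevant quadratic Hitchin function to vanish wherever $\Theta_{X/S}$ does. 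Granting this, if $E_s\colon M_{B,0}(X_s,r)\to\mb R$ denotes the energy functional attached to the complex structure $X_s$, then constancy of energy along leaves says exactly that
\[
\Phi_s^{*}E_s=E_{s_0}
\]
as real-analytic functions on $M_{B,0}(X_{s_0},r)$, after shrinking $\Delta$ if necessary. Making this higher-dimensional generalization of \autoref{lemma:energy-constant} fully precise is the step I expect to be the main obstacle; everything after it is formal.

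\emph{Conclusion.}
By the lemma immediately preceding the statement (Hitchin's identity and its extension by Spinaci), $E_s$ is a global K\"ahler potential for $\omega_{J,s}$, i.e.\ $\partial_J\bar\partial_J E_s=i\,\omega_{J,s}$ on $M_{B,0}(X_s,r)$, and likewise at $s_0$. Since $\Phi_s$ is $J$-holomorphic it commutes with $\partial_J$ and $\bar\partial_J$, so
\[
\Phi_s^{*}\omega_{J,s}=-i\,\Phi_s^{*}\partial_J\bar\partial_J E_s=-i\,\partial_J\bar\partial_J\bigl(\Phi_s^{*}E_s\bigr)=-i\,\partial_J\bar\partial_J E_{s_0}=\omega_{J,s_0}.
\]
As $s\in\Delta$ was arbitrary and $S$ is covered by such polydiscs, $\omega_J$ is preserved under isomonodromic deformation, proving \autoref{thm:kahler-preserved}. (In the write-up one should record the harmless point that all of $E_s$, $\omega_{J,s}$ and the computation above live on the reduced smooth locus $M_{B,0}$, on which $\Phi_s$ restricts to an isomorphism by the remark in the first paragraph.)
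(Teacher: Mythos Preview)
Your argument shares the paper's core mechanism --- use that the energy is a K\"ahler potential for $\omega_J$ and show energy is constant along isomonodromy leaves --- but the paper handles the passage to higher-dimensional fibres differently. Rather than generalizing \autoref{lemma:energy-constant} directly (which you correctly flag as the main obstacle), the paper reduces to the curve case via Lefschetz: locally on $S$ one chooses a family $X'\to X$ of sufficiently ample complete intersection curves of genus $\geq 2$, so that $\pi_1(X'_s)\twoheadrightarrow\pi_1(X_s)$ and hence $M_B(X_s,r)\hookrightarrow M_B(X'_s,r)$ is a closed embedding preserved by isomonodromy. The hypothesis $\Theta_{X/S}\equiv 0$ passes to this image, so \autoref{lemma:energy-constant} applies as stated to the relative curve $X'/S$, giving energy-constancy and hence preservation of $\omega_J$ on $M_{B,0}(X_s,r)$.

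Your proposed direct route has a genuine gap given the paper's toolkit. Both \autoref{thm:gardiner-formula} and \autoref{thm:chen-formula} are stated and proved only for curves, and the ingredients you invoke --- a higher-dimensional Gardiner first-variation formula tied to the quadratic Hitchin map, and a Hamiltonian description of $\Theta_{X/S}$ via the holomorphic symplectic form of \autoref{prop: holo-sympl-higgs} --- are not developed here for arbitrary smooth projective fibres (indeed, \autoref{prop: holo-sympl-higgs} is for a curve $\Sigma$). The Lefschetz reduction is precisely what buys these results for free, and is the missing idea in your sketch.
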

\begin{proof}
As in the proof of \autoref{thm:compactness-thm} we may locally on $S$, choose a family $X'\to X$  of sufficiently ample complete intersection curves of genus $g\geq 2$ such that for each $s\in S$, $X'_s\to X_s$ induces a surjection on $\pi_1$, so that the induced map $M_{\dagger}(X_s)\hookrightarrow M_{\dagger}(X'_s)$ is a closed embedding for $\dagger=B, \Dol,$ etc.; by assumption $\Theta_{X/S}$ vanishes on the image of this embedding (on the Dolbeault moduli space). Moreover the image of this embedding is evidently preserved by the isomonodromy foliation.

By \autoref{lemma:energy-constant}, the energy functional is thus preserved under isomonodromic deformation on this locus, whence the same is true for $\omega_J$, for which it is a K\"ahler potential.
\end{proof}

 \subsection{Triholomorphic action}
 \begin{theorem}\label{thm:triholomorphic}
     Suppose $X\rightarrow S$ is a smooth projective morphism of $\mb{C}$-varieties, with connected fibers. Suppose that the lifting of tangent vectors $\Theta_{X/S}$ on $M_{\Dol}(X/S, r)$ vanishes. Then, for any basepoint $s\in S$,  the action of $\pi_1(S(\mathbb{C})^{\an}, s)$ on $M_{B, 0}(X_s)$ is triholomorphic. In particular, the $\pi_1(S(\mathbb{C})^{\an}, s)$-action preserves the Riemannian metric $g$ from \autoref{thm: mbetti-hk-metric}. 
 \end{theorem}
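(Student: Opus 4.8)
The strategy is to combine the results established earlier: the vanishing of $\Theta_{X/S}$ gives us, via \autoref{thm:kahler-preserved}, that the K\"ahler form $\omega_J$ associated to the complex structure $J$ (the Betti/Riemann--Hilbert complex structure) is preserved under isomonodromic deformation, and we must upgrade this to the statement that the full hyperk\"ahler structure, in particular the metric $g$, is preserved, which is equivalent to the action being holomorphic in all three complex structures $I$, $J$, $K$ simultaneously. First I would reduce, exactly as in the proofs of \autoref{thm:compactness-thm} and \autoref{thm:kahler-preserved}, to the case of a relative curve $f: X\to S$ of genus $g\geq 2$: choosing (locally on $S$) a family of sufficiently ample complete intersection curves $X'\to S$ so that $X'_s\to X_s$ induces a surjection on $\pi_1$, the inclusion $M_{B,0}(X_s,r)\hookrightarrow M_{B,0}(X'_s,r)$ is a closed embedding equivariant for the respective $\pi_1(S)$-actions and compatible with the hyperk\"ahler structures, so it suffices to treat $X'/S$.

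\textbf{Key steps.} In the case of a relative curve, the isomonodromy foliation on $\mathscr{M}_{B}(X/S,r)$ is, locally on $S(\mathbb{C})^{\an}$, just the trivial foliation coming from the local product structure $M_B(X/S,r)\simeq M_B(X_s,r)\times U$; thus parallel transport along a path $\gamma$ in $S$ is a biholomorphism of $(M_{B,0}(X_s,r), J)$ with itself (this is the content of the $\pi_1(S(\mathbb{C})^{\an},s)$-action being by algebraic — hence holomorphic in $J$ — automorphisms of $M_B(X_s,r)$). So the action is automatically $J$-holomorphic. The point is to show it is also $I$-holomorphic, where $(M, I) = M_{\Dol,0}(X_s,r)$; by the twistor description $(Z,\mscr{J})$ and the $\mathbb{C}^\times$-action rotating the twistor sphere $\mathbb{P}^1$, once we know the action preserves $\omega_J$, we know it preserves the corresponding holomorphic symplectic form $\Omega_J = \omega_K + i\omega_I$ up to the ambiguity of a $J$-holomorphic symplectomorphism; but the action already preserves the \emph{algebraic} holomorphic symplectic form $\Omega$ of \autoref{prop: holomorphic-symplectic-betti} (it acts by algebraic automorphisms respecting the trace pairing and the cup product, which is manifestly functorial under the outer action of $\pi_1(S)$ on $\pi_1(X_s)$), and by part (4) of \autoref{thm: mbetti-hk-metric} this algebraic form \emph{is} $\Omega_J$. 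Hence the action preserves $\Omega_J$ on the nose, so it preserves both $\omega_I$ and $\omega_K$; combined with preservation of $\omega_J$ from \autoref{thm:kahler-preserved}, the action preserves all three K\"ahler forms, hence is triholomorphic, and in particular preserves $g = \omega_I(\cdot, I\cdot) = \omega_J(\cdot, J\cdot) = \omega_K(\cdot, K\cdot)$ (any one of these recovers $g$ once the relevant complex structure is preserved).

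\textbf{The main obstacle.} The delicate point is the interplay between ``preserving $\omega_J$'' (which \autoref{thm:kahler-preserved} gives, via the energy functional being a K\"ahler potential and being locally constant along leaves by \autoref{lemma:energy-constant}) and ``preserving the metric.'' A priori a $J$-holomorphic automorphism preserving $\omega_J$ need not preserve $\omega_I$ or $\omega_K$. The resolution, as sketched above, is that the action independently preserves the algebraic holomorphic symplectic form $\Omega_J = \omega_K + i\omega_I$ — this is where functoriality of the construction in \autoref{prop: holomorphic-symplectic-betti} under the mapping-class-group action is essential — so $\omega_I$ and $\omega_K$ are preserved for free, and then $\omega_J$ comes from \autoref{thm:kahler-preserved}. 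I expect the care needed is in checking that the various identifications (Betti$=$Dolbeault as real-analytic manifolds, the $\mathbb{C}^\times$-action, part (4) of \autoref{thm: mbetti-hk-metric}) are genuinely compatible with the $\pi_1(S(\mathbb{C})^{\an},s)$-action on the reduced smooth locus, and in handling the non-smooth/non-reduced points by restricting throughout to $M_{B,0}$; singular points of the character variety are simply excluded from the statement.
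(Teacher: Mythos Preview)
Your proposal is correct and follows essentially the same argument as the paper: the action automatically preserves $J$ and the algebraic holomorphic symplectic form $\Omega_J=\omega_K+i\omega_I$ (by functoriality of the Betti/Goldman construction), while \autoref{thm:kahler-preserved} supplies preservation of $\omega_J$; from these one recovers $g$ via $g(v,w)=-\omega_J(Jv,w)$ and then $I,K$ from $g$ together with $\omega_I,\omega_K$. The only superfluous step is your reduction to relative curves, which is unnecessary here since \autoref{thm:kahler-preserved} already handles that reduction internally and \autoref{thm: mbetti-hk-metric} applies to arbitrary compact K\"ahler $X_s$.
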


\begin{proof}
    We have the relative Betti moduli space $M_{B, 0}(X/S, r)\rightarrow S$. Suppose $\epsilon>0$ and let  $\gamma:[-\epsilon,\epsilon]\rightarrow S$ be a path, and consider the fiber product
    \[
    \begin{tikzcd}
  \gamma^*M_{B, 0}(X/S, r) \arrow[r, ""] \arrow[d, "G"]
    & M_{B, 0}(X/S, r) \arrow[d, "" ] \\
  (-\epsilon, \
  \epsilon) \arrow[r, "\gamma"].
&  S \end{tikzcd}
\]
Write $s_0=\gamma(0)$. By isomonodromy, we identify each fiber of $G$, as real manifolds,  with $M=M_{B, 0}(X_{s_0}, r)$. We therefore obtain a family of hyperk\"ahler metrics $(g_{t}, I_t, J_t, K_t)$ on $M$, indexed by $t\in (-\epsilon, \epsilon)$. We must show that $(g_t, I_t, J_t, K_t)$ is a constant family. Recall that the $\pi_1(S(\mathbb{C})^{\an}, s)$ action on $M_{B, 0}(X,r)$  is holomorphic symplectic, and therefore both $J_t$ and $\Omega_{J_t}=\omega_{K_t}+i\omega_{I_t}$ are constant. 

By \autoref{thm:kahler-preserved}, the vanishing of the lifting of tangent vectors implies that $\omega_{J_t}$ is constant as well. We deduce that $g_t$ is constant, since 
\[g_t(v, w) = -\omega_{J_t}(J_t v, w)\]
is determined by $J_t$ and $\omega_{J_t}$.  

On the other hand, $g_t$ is also determined by $I_t, \omega_{I_t}$ (resp. $K_t, \omega_{K_t}$), and hence $I_t$ (resp. $K_t$) is constant as well, as desired.
\end{proof}
 
\section{Finiteness of non-abelian monodromy}\label{sec:finiteness-na-monodromy}
In this section we give the main applications of the theory developed in the previous sections. We first prove a very strong form of our results for relative curves, namely that if the $p$-curvature of the isomonodromy foliation vanishes mod $p$ for infinitely many $p$, then such a family is in fact isotrivial. We then prove \autoref{thm:main-theorem-integral-points} and \autoref{thm:main-theorem-esbt}. Finally, we briefly discuss the relationship between these results and those of \cite{p-painleve}.
\subsection{Relative curves}
We begin with a case in which we can prove a very strong form of the Ekedahl--Shepherd-Barron--Taylor conjecture, namely where $f: X\to S$ is a relative curve. In this case the vanishing of $p$-curvatures of the isomonodromy foliation in fact implies that the family is isotrivial. Recall that for each stack $\mscr{M}_{\dagger}$, we denote by $\mscr{M}^{\on{s}}_{\dagger}$  the substack of stable objects.
\begin{theorem}\label{thm:relative-curves}
Let $R\subset \mathbb{C}$ be a finitely-generated $\mathbb{Z}$-algebra, $S$ a smooth $R$-scheme, and $f: X\to S$ a smooth projective morphism of relative dimension $1$ with connected fibers of genus $g\geq 2$. Suppose that for some $r\geq 2$ the $p$-curvature of the isomonodromy foliation on $\mathscr{M}_{\dR}^{\on{s}}(X/S, r)$ vanishes mod $\mathfrak{p}$ for a Zariski-dense set of points $\mathfrak{p}\in \on{Spec}(R)$. Then $X_{\mathbb{C}}/S_{\mathbb{C}}$ is isotrivial.
\end{theorem}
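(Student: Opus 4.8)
The plan is to deduce from \autoref{thm:p-curvature-vanishing-implies-theta-vanishing} that the lifting of tangent vectors $\Theta_{X_{\mathbb{C}}/S_{\mathbb{C}}}$ vanishes identically on $\mathscr{M}_{\Dol}^{\on{s}}(X_{\mathbb{C}}/S_{\mathbb{C}},r)$, and then, using Chen's formula together with the surjectivity of the Hitchin map, to conclude that the Kodaira--Spencer map of $X_{\mathbb{C}}/S_{\mathbb{C}}$ vanishes at every point; for a relative curve of genus $\geq 2$ over a smooth base this forces isotriviality. To begin, a Zariski-dense set of closed points of $\operatorname{Spec}(R)$ cannot be contained in the union of finitely many fibres over $\operatorname{Spec}(\mathbb{Z})$, since $\operatorname{Spec}(R)$ is irreducible and dominates $\operatorname{Spec}(\mathbb{Z})$; hence such a set meets infinitely many residue characteristics $p$, all but finitely many of which satisfy $p>\dim(X/S)=1$. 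Thus the stable-locus form of the hypothesis of \autoref{thm:p-curvature-vanishing-implies-theta-vanishing} holds (with $A=R$), and we conclude that $\Theta_{X_K/S_K}$, hence $\Theta_{X_{\mathbb{C}}/S_{\mathbb{C}}}$, vanishes identically on the stable Dolbeault moduli stack.

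Fix $s\in S(\mathbb{C})$, set $\Sigma:=X_s$, and let $\mathrm{KS}\colon T_{S,s}\to H^1(\Sigma,T_\Sigma)$ be the Kodaira--Spencer map of $f$ at $s$. By naturality of $\Theta$ under the classifying map $S\to\mathscr{M}_g$ (as in the proof of \autoref{lemma:energy-constant}) and \autoref{thm:chen-formula}, for every $v\in T_{S,s}$ the Hamiltonian vector field $H_{h_2^{*}w^{\vee}}$ on $M^{s}_{\Dol}(\Sigma,r)$ vanishes, where $w:=\mathrm{KS}(v)$, where $h_2$ denotes the quadratic Hitchin map, and where $w^{\vee}\colon H^0(\Sigma,\Sym^2\Omega^1_\Sigma)\to\mathbb{C}$ is the functional corresponding to $w$ under Serre duality. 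Since the holomorphic symplectic form of \autoref{prop: holo-sympl-higgs} is nondegenerate on $M^{s}_{\Dol}(\Sigma,r)$, this forces $d(h_2^{*}w^{\vee})\equiv 0$ there. Contracting against the Euler vector field $\xi$ of the $\mathbb{C}^{\times}$-action rescaling the Higgs field and using that $h_2^{*}w^{\vee}$ is homogeneous of weight $2$ — so $\xi(h_2^{*}w^{\vee})=2\,h_2^{*}w^{\vee}$, exactly as in the final paragraph of the proof of \autoref{lemma:energy-constant}, but now valid at every point — yields $h_2^{*}w^{\vee}\equiv 0$ on $M^{s}_{\Dol}(\Sigma,r)$. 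Finally, since $g\geq 2$ and $r\geq 2$, the Hitchin map $h\colon M_{\Dol}(\Sigma,r)\to\bigoplus_{i=2}^{r}H^0(\Sigma,\Sym^i\Omega^1_\Sigma)$ is proper and surjective, with the fibre over a smooth spectral curve consisting entirely of stable Higgs bundles, so $h_2$ already has Zariski-dense image on the stable locus; hence $w^{\vee}=0$, and by perfectness of Serre duality $w=\mathrm{KS}(v)=0$. As $v$ and $s$ were arbitrary, the Kodaira--Spencer map of $X_{\mathbb{C}}/S_{\mathbb{C}}$ vanishes identically.

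To conclude, pass to a finite \'etale cover $\widetilde{S}\to S$ over which $X_{\widetilde{S}}/\widetilde{S}$ acquires a level-$n$ structure for some $n\geq 3$, so that $X_{\widetilde{S}}/\widetilde{S}$ is classified by a morphism $\widetilde{\phi}\colon\widetilde{S}\to\mathscr{M}_g[n]$ to the smooth quasi-projective fine moduli space of genus-$g$ curves with level-$n$ structure. The derivative of $\widetilde{\phi}$ at each point is the corresponding Kodaira--Spencer map, which vanishes by the previous step; a morphism of $\mathbb{C}$-varieties from a smooth connected source with everywhere-vanishing derivative is constant, so $\widetilde{\phi}_{\mathbb{C}}$ is constant on each connected component of $\widetilde{S}_{\mathbb{C}}$. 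Hence $X_{\widetilde{S}_{\mathbb{C}}}/\widetilde{S}_{\mathbb{C}}$ is a trivial family, and since $\widetilde{S}_{\mathbb{C}}\to S_{\mathbb{C}}$ is finite \'etale, $X_{\mathbb{C}}/S_{\mathbb{C}}$ is isotrivial.

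The main obstacle is the middle step, converting the infinitesimal conclusion of \autoref{thm:p-curvature-vanishing-implies-theta-vanishing} — namely that $\Theta_{X/S}$ vanishes — into the vanishing of the Kodaira--Spencer map of $X/S$. Two points require care: that the vanishing of the Hamiltonian vector field produced by Chen's formula genuinely forces the function $h_2^{*}w^{\vee}$ itself to vanish (which uses both the nondegeneracy of the symplectic form on the stable locus and the $\mathbb{C}^{\times}$-homogeneity of $h_2$), and that the quadratic Hitchin map, restricted to the stable locus, still has Zariski-dense image, which is exactly where the hypotheses $g\geq 2$ and $r\geq 2$ enter.
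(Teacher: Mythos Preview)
Your proof is correct and follows essentially the same route as the paper's: apply \autoref{thm:p-curvature-vanishing-implies-theta-vanishing} to get $\Theta_{X/S}\equiv 0$, then use Chen's formula plus surjectivity of the Hitchin map on the stable locus to kill the Kodaira--Spencer class, hence conclude isotriviality. You supply more detail than the paper at each step---the passage from the Zariski-dense hypothesis to the infinitely-many-primes hypothesis, the Euler-vector-field argument extracting $h_2^*w^\vee\equiv 0$ from the vanishing Hamiltonian, and the level-structure trick for isotriviality---but the architecture is the same.
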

\begin{proof}
	By \autoref{thm:p-curvature-vanishing-implies-theta-vanishing}, we know that $\Theta_{X/S}$ is identically zero on $\mscr{M}_{\Dol}^{\on{s}}(X_K/S_K, r)$, where $K$ denotes the field of fractions of $R$. Fix $s\in S(\mathbb{C})$. By Chen's formula  (\autoref{thm:chen-formula}) and the surjectivity of the Hitchin map (even on the stable locus) for curves, this implies that for each $v\in T_{S,s}$ with associated Kodaira-Spencer class $\kappa(v)\in H^1(X_s, T_{X_s})$,  the functional $\kappa(v)^{\vee}$ of \autoref{thm:chen-formula} is identically zero. But this functional is precisely given by the (perfect) Serre duality pairing $$H^1(X_s, T_{X_s})\otimes H^0(X_s, \omega_{X_s}^{\otimes 2})\to \mathbb{C},$$ whence $\kappa(v)=0$. As $s, v$ were arbitrary, this implies that the Kodaira-Spencer map for $f: X\to S$ is trivial, and hence that $f$ is isotrivial.
\end{proof}
\begin{remark}
    Note that the semistable analogue of \autoref{thm:relative-curves}, that is assuming the vanishing of $p$-curvature on $\mscr{M}_{\dR}^{\on{ss}}$ instead, which of course follows from  \autoref{thm:relative-curves}, also follows from Katz's theorem \autoref{thm:katz-p-curvature-Gauss-Manin} and the Torelli theorem for curves directly. We can see this by considering the extensions given, tautologically, by $R^1f_*\Omega^{\bullet}_{X/S, \dR}$. Indeed, we can map the total space, denoted as $\mscr{H}(X/S)$, of this vector bundle to $\mscr{M}_{\dR}(X/S, 2)$ by viewing a class in $H^1_{\dR}$ as a rank two flat connection $(\mscr{E}, \nabla)$ which is an extension of the form 
    \begin{equation}\label{eqn: extn-h1-class}
        0\to (\mscr{O}, d) \to (\mscr{E}, \nabla) \to (\mscr{O}, d) \to 0. 
    \end{equation}   
   By taking symmetric powers, we obtain a map   $\mscr{H}(X/S)\to \mscr{M}_{\dR}(X/S, r)$ for any $r\geq 2$. One can check that this map is a map of crystals, and that furthermore it is injective on tangent spaces away from the zero section of $\mscr{H}(X/S)$. The assumption then implies that the usual $p$-curvature of the Gauss-Manin connection on $R^1f_*\Omega^{\bullet}_{X/S, \dR}$ vanishes for infinitely many $p$, and Katz's theorem implies that the Kodaira--Spencer map of $X/S$ vanishes identically. It then follows from the Torelli theorem that $X/S$ is isotrivial. 

   On the other hand, it does not seem possible to make such arguments on the stable locus.   We are grateful to Sasha Petrov for this remark. 
\end{remark}

\subsection{Integral points}
\begin{lemma}\label{lemma:compact-and-discrete}
	Let $X$ be a finite-type affine $\mathbb{Z}$-scheme. Let $K$ be a number field and let $\iota_1, \cdots, \iota_m: K\hookrightarrow \mathbb{C}$ be the distinct embeddings of $K$ into $\mathbb{C}$. For each $i=1, \cdots, m$, fix $S_i\subset X(\mathbb{C})^{\an}$ a compact subset, and define $X(\mathscr{O}_K; S_1, \cdots, S_m)$ to be the set of $\mathscr{O}_K$-points $x$ of $X$ such that, for all $i=1, \cdots, m$, $\iota_i(x)$ lies in $S_i$. Then $X(\mathscr{O}_K; S_1, \cdots, S_m)$ is finite.
\end{lemma}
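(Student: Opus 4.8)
The statement is a standard consequence of the fact that an affine scheme of finite type over $\mathbb{Z}$ has discrete $\mathscr{O}_K$-points inside the product of its archimedean completions, together with the compactness hypothesis. First I would reduce to the case of affine space. Choose a closed embedding $X\hookrightarrow \mathbb{A}^N_{\mathbb{Z}}$; then an $\mathscr{O}_K$-point of $X$ is the same as an $\mathscr{O}_K$-point of $\mathbb{A}^N$ landing in $X$, i.e.\ a tuple $(a_1,\dots,a_N)\in \mathscr{O}_K^N$ satisfying the defining equations. The sets $S_i$ pull back to compact subsets of $\mathbb{C}^N$ (via the embedding $X(\mathbb{C})^{\an}\hookrightarrow \mathbb{C}^N$ induced by $\iota_i$ and the coordinates), so it suffices to prove the statement for $X=\mathbb{A}^N$, where it reduces to the case $N=1$ coordinate-by-coordinate.

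For $X=\mathbb{A}^1$, the claim is the classical finiteness of algebraic integers of bounded house: I would consider the map
\[
\Phi:\mathscr{O}_K\longrightarrow \mathbb{C}^m,\qquad a\mapsto (\iota_1(a),\dots,\iota_m(a)).
\]
This is an injective ring homomorphism whose image is a discrete subgroup of $\mathbb{C}^m$ (its image is a lattice in the Minkowski space $K\otimes_{\mathbb{Q}}\mathbb{R}\cong \mathbb{R}^{r_1}\times\mathbb{C}^{r_2}\hookrightarrow \mathbb{C}^m$; discreteness is precisely the content of Minkowski's lattice embedding of the ring of integers). The set $X(\mathscr{O}_K;S_1,\dots,S_m)$ maps under $\Phi$ into $S_1\times\cdots\times S_m$, which is compact. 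A discrete subset of $\mathbb{C}^m$ meeting a compact set does so in a finite set, and $\Phi$ is injective, so $X(\mathscr{O}_K;S_1,\dots,S_m)$ is finite. Tensoring up, for $X=\mathbb{A}^N$ one applies this to each of the $N$ coordinates and takes the product, which is again finite; intersecting with the closed subscheme $X$ only shrinks the set.

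The only genuinely substantive input is the discreteness of $\Phi(\mathscr{O}_K)$ in $\mathbb{C}^m$, which I would either cite from standard algebraic number theory (Minkowski embedding) or prove directly: if $a\in\mathscr{O}_K$ has $|\iota_i(a)|\leq C$ for all $i$, then all coefficients of the monic minimal polynomial of $a$ over $\mathbb{Z}$ are bounded in terms of $C$ and $[K:\mathbb{Q}]$ (they are elementary symmetric functions of the conjugates, and every conjugate of $a$ is some $\iota_j(a)$ after enlarging the field if necessary), so there are only finitely many possibilities for that polynomial and hence for $a$. I do not anticipate any real obstacle here; the lemma is elementary and the embedding-into-$\mathbb{A}^N$ reduction is routine. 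The one point to be careful about is that the $S_i$ are compact subsets of the \emph{complex-analytic space} $X(\mathbb{C})^{\an}$ rather than abstract compact sets, so I should note explicitly that the coordinate functions $x_1,\dots,x_N$ on $\mathbb{A}^N$ restrict to continuous functions on $X(\mathbb{C})^{\an}$, whence the image of each $S_i$ in $\mathbb{C}^N$ under the coordinate map is compact.
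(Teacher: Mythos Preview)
Your proposal is correct and follows essentially the same approach as the paper: reduce to $X=\mathbb{A}^N$ via a closed embedding, use the Minkowski-type embedding $\mathscr{O}_K^N\hookrightarrow \prod_i \mathbb{C}^N$ with discrete image, and intersect with the compact product $\prod_i S_i$. The paper's proof is just a terser version of what you wrote.
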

\begin{proof}
As $X$ is affine we may without loss of generality take it to be affine space $\mathbb{A}^n$. Thus we have compact sets $S_i\subset \mathbb{C}^n$ and wish to show that the set $x\in \mathscr{O}_K^n$ such that $\iota_i(x)$ lies in $S_i$ for all $i$ is finite.

	Consider the inclusion $$\mathscr{O}_K^n\hookrightarrow \prod_{\iota: \mathscr{O}_K\hookrightarrow \mathbb{C}} \mathbb{C}^n$$ given by the product of the embeddings $\iota$. This map has discrete image. On the other hand, $X(\mathscr{O}_K; S_1, \cdots, S_m)$ is the intersection of this image with the compact set $\prod_i S_i$, whence it is both discrete and compact, hence finite.
\end{proof}
We now prove a weak form of \autoref{thm:main-theorem-integral-points}. We will deduce the strong form in \autoref{subsec:esbt}.
\begin{theorem} \label{thm:main-theorem-integral-points-weak}
	Let $R\subset \mathbb{C}$ be a finitely-generated $\mathbb{Z}$-algebra, and $f: X\to S$ a smooth projective morphism of smooth $R$-schemes. Fix a point $s\in S(\mathbb{C})$ and a positive integer $r\in \mathbb{Z}_{>0}$. If the $p$-curvature of the isomonodromy foliation on $\mathscr{M}_{\dR}(X/S, r)$ mod $p$ vanishes for infinitely many primes $p$, then the orbit of any point of $M_B(X_s, r)(\overline{\mathbb{Z}})$ under the action of $\pi_1(S(\mathbb{C})^{\an},s)$ is finite.
\end{theorem}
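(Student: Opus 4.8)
The plan is to run the three-step strategy sketched in \autoref{subsubsec:non-abelian-analogue}, assembling inputs that are, by this point in the paper, all available; the only real work is to check that the various semistability hypotheses line up and to promote the argument from $\mathbb{Z}$-points to $\overline{\mathbb{Z}}$-points. First I would dispose of the trivial case $r=1$, where $M_B(X_s,1)$ is a point, and assume $r\ge 2$. Since $R\subset\mathbb{C}$ is a finitely generated $\mathbb{Z}$-algebra it is automatically a domain, of characteristic zero and finite type over $\mathbb{Z}$; set $K=\operatorname{Frac}(R)\subset\mathbb{C}$. Vanishing of the $p$-curvature of the isomonodromy foliation on all of $\mathscr{M}_{\dR}(X/S,r)\bmod p$ a fortiori gives its vanishing on the open substack $\mathscr{M}^{\on{ss}}_{\dR}(X/S,r)\bmod p$, for the infinitely many $p$ in question. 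Thus \autoref{thm:p-curvature-vanishing-implies-theta-vanishing} applies and gives $\Theta_{X_K/S_K}\equiv 0$ on $\mathscr{M}^{\on{ss}}_{\Dol}(X_K/S_K,r)$; base-changing along $K\hookrightarrow\mathbb{C}$ and using compatibility of $\Theta$ with base change, $\Theta_{X_{\mathbb{C}}/S_{\mathbb{C}}}\equiv 0$ on $\mathscr{M}^{\on{ss}}_{\Dol}(X_{\mathbb{C}}/S_{\mathbb{C}},r)$. It is essential that this vanishing holds on the \emph{entire} semistable locus, not merely along one leaf --- this is what makes the $\overline{\mathbb{Z}}$-case work below.

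Next, fix an arbitrary $[\rho]\in M_B(X_s,r)(\overline{\mathbb{Z}})$; it is defined over the ring of integers $\mathscr{O}_K$ of some number field $K$, with distinct embeddings $\iota_1,\dots,\iota_m\colon K\hookrightarrow\mathbb{C}$. Each $\iota_i([\rho])$ is (the conjugacy class of) a semisimple representation, hence corresponds under Corlette--Simpson non-abelian Hodge theory to a polystable Higgs bundle with vanishing Chern classes; moreover isomonodromic deformation preserves semisimplicity, so the entire leaf of the isomonodromy foliation through $\iota_i([\rho])$ lies in $\mathscr{M}^{\on{ss}}_{\Dol}(X_{\mathbb{C}}/S_{\mathbb{C}},r)$. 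By the first step, $\Theta_{X/S}$ vanishes on each such leaf, so \autoref{thm:compactness-thm} shows that the orbit $\pi_1(S(\mathbb{C})^{\an},s)\cdot\iota_i([\rho])$ has compact closure $S_i\subset M_B(X_s,r)(\mathbb{C})^{\an}$. (In fact the same reasoning gives relative compactness of \emph{every} $\pi_1(S(\mathbb{C})^{\an},s)$-orbit on $M_B(X_s,r)(\mathbb{C})$, since every $\mathbb{C}$-point is semisimple.) Note also that $M_B(X_s,r)=\hom(\pi_1(X_s),\sl_r)\sslash\sl_r$ is affine of finite type over $\mathbb{Z}$ and that the $\pi_1(S(\mathbb{C})^{\an},s)$-action, coming from the action of $\out(\pi_1(X_s))$ on generators, is by automorphisms defined over $\mathbb{Z}$; hence the whole orbit of $[\rho]$ stays in $M_B(X_s,r)(\mathscr{O}_K)$ and Galois conjugation commutes with the $\pi_1$-action, so $\pi_1(S(\mathbb{C})^{\an},s)\cdot\iota_i([\rho])=\iota_i\bigl(\pi_1(S(\mathbb{C})^{\an},s)\cdot[\rho]\bigr)$.

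Finally I would invoke \autoref{lemma:compact-and-discrete} with $X=M_B(X_s,r)$, the number field $K$, and the compact sets $S_1,\dots,S_m$ just produced: the set $M_B(X_s,r)(\mathscr{O}_K;S_1,\dots,S_m)$ is finite, and every element of the orbit of $[\rho]$ lies in it (its $\iota_i$-image lies in $S_i$ for all $i$), so the orbit is finite. This completes the argument. The step I expect to require the most care --- though it is conceptual rather than computational --- is the bookkeeping around semistability: one must ensure simultaneously that \autoref{thm:p-curvature-vanishing-implies-theta-vanishing} produces vanishing of $\Theta$ on the \emph{whole} semistable Dolbeault locus (so that \autoref{thm:compactness-thm} is applicable to every Galois conjugate of $[\rho]$, not just $[\rho]$ itself) and that the leaves through all these conjugates really do stay in the semistable locus; together with the (routine but indispensable) integrality of the outer-automorphism action, these are exactly the points that let the compactness-plus-discreteness principle of \autoref{lemma:compact-and-discrete} close the proof.
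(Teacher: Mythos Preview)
Your proposal is correct and follows essentially the same approach as the paper's proof: apply \autoref{thm:p-curvature-vanishing-implies-theta-vanishing} to get $\Theta_{X/S}\equiv 0$, then for each Galois conjugate of an $\mathscr{O}_K$-point use \autoref{thm:compactness-thm} to get compact orbit closures, and conclude by \autoref{lemma:compact-and-discrete}. You supply a bit more detail than the paper does---the $r=1$ case, the semistability bookkeeping, and the integrality of the outer-automorphism action---but the argument is the same.
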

\begin{proof}
By \autoref{thm:p-curvature-vanishing-implies-theta-vanishing}, we have that $\Theta_{X/S}$ vanishes identically. In fact this is all we will use below.

Let $K$ be a number field and $\mathscr{O}_K$ its ring of integers; let $\iota_1, \cdots, \iota_n: K\hookrightarrow \mathbb{C}$ be its distinct complex embeddings. Let $[\rho]\in M_B(X_s, r)(\mathscr{O}_K)$ be a point. For each embedding $\iota_i:K\hookrightarrow\mathbb{C}$, let $\rho_{\iota_i}$ be the corresponding semisimple complex representation.
	
For each $i$, set $S_i$ to be the closure of $\pi_1(S(\mathbb{C})^{\an},s)\cdot [\rho_{\iota_i}]$ in the analytic topology of $M_B(X_s, r)(\mathbb{C})$. The set $S_i$ is compact by \autoref{thm:compactness-thm} (using that $\Theta_{X/S}$ vanishes identically). On the other hand, the orbit of $[\rho_{\iota_i}]$ consists of $\mathscr{O}_K$-points for each $i$. Hence the orbit is finite by \autoref{lemma:compact-and-discrete}, as desired.
\end{proof}
\subsection{The Ekedahl--Shepherd-Barron--Taylor conjecture}\label{subsec:esbt}
\begin{lemma}\label{lemma:smooth-integral-point}
	Let $X/\mathbb{C}$ be a smooth projective variety and let $W\subset M_B(X, r)_{\overline{\mathbb{Z}}}$ be the closure of an irreducible component of the geometric generic fiber. Suppose $W(\overline{\mathbb{Z}})$ is non-empty. Then $W(\overline{\mathbb{Z}})$ is dense in $W(\mathbb{C})$.
\end{lemma}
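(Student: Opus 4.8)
The plan is to reduce to the case of relative curves over a ring of integers and there exploit that $\overline{\mathbb{Z}}$ is absolutely integrally closed. First I would make the following reductions. The scheme $M_B(X,r)$ is affine and of finite type over $\mathbb{Z}$, and the irreducible component $V:=W_{\overline{\mathbb{Q}}}$ (with its reduced structure) is defined over a number field; after enlarging that number field $K$ I may assume the given point $P_0\in W(\overline{\mathbb{Z}})$ lies in $W_0(\mathcal{O}_K)$, where $W_0\subset M_B(X,r)_{\mathcal{O}_K}$ is the schematic closure of $V$ — an integral, flat, affine, finite-type $\mathcal{O}_K$-scheme with geometrically integral generic fibre $V$ of some dimension $d$, and $W=W_0\times_{\mathcal{O}_K}\overline{\mathbb{Z}}$. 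Two elementary points are crucial: (i) $P_0$ is a \emph{section} of $W_0\to\mathrm{Spec}\,\mathcal{O}_K$, so this morphism is surjective and (being flat with integral source of relative dimension $d$) has all fibres non-empty of pure dimension $d$; (ii) a subset of $V(\overline{\mathbb{Q}})$ is Zariski-dense in $V$ iff it is Zariski-dense in $V_{\mathbb{C}}$ — expand a hypothetical defining equation over $\mathbb{C}$ in a $\overline{\mathbb{Q}}$-basis of its coefficients to descend it — so it suffices to show $W(\overline{\mathbb{Z}})\subseteq V(\overline{\mathbb{Q}})$ is Zariski-dense in $V$. Throughout I would use the mechanism: \emph{if $D$ is a module-finite $\overline{\mathbb{Z}}$-algebra with $D\otimes_{\overline{\mathbb{Z}}}\overline{\mathbb{Q}}\neq 0$, then $D$ admits a $\overline{\mathbb{Z}}$-algebra homomorphism to $\overline{\mathbb{Z}}$}; indeed, choosing a minimal prime $\mathfrak{q}$ of $D$ contracting to $(0)$, the domain $D/\mathfrak{q}$ is module-finite over $\overline{\mathbb{Z}}$ with fraction field finite over $\overline{\mathbb{Q}}$, hence equal to $\overline{\mathbb{Q}}$, so $D/\mathfrak{q}\subseteq\overline{\mathbb{Z}}$.

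Next I would reduce to the case $\dim V=1$. If $d=0$ the statement is trivial. If $d\geq 1$, then by Bertini over the infinite field $K$ a general $(d-1)$-tuple of $K$-hyperplanes through $P_0$ cuts $V$ in a geometrically integral curve $C_\lambda$ through $P_0$, and as the hyperplanes vary these curves cover a Zariski-dense subset of $V$. The closure $\mathcal{C}_\lambda$ of $C_\lambda$ in $M_B(X,r)_{\mathcal{O}_K}$ is contained in $W_0$ and contains $P_0$ as an $\mathcal{O}_K$-point, so $\mathcal{C}_\lambda(\overline{\mathbb{Z}})\subseteq W(\overline{\mathbb{Z}})$ is non-empty. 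Granting the curve case — that each $\mathcal{C}_\lambda(\overline{\mathbb{Z}})$ is infinite — one sees $W(\overline{\mathbb{Z}})$ cannot lie in a proper closed $Z\subsetneq V$: otherwise each $C_\lambda$ would lie in $Z$ (as $C_\lambda\cap Z$ would then be finite while $\mathcal{C}_\lambda(\overline{\mathbb{Z}})\subseteq Z$ is infinite), contradicting density of the union of the $C_\lambda$.

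It remains to treat the curve case: $\mathcal{C}$ a flat, integral, affine, finite-type $\mathcal{O}_K$-scheme with geometrically integral generic fibre a curve and with $\mathcal{C}(\overline{\mathbb{Z}})\neq\emptyset$; show $\mathcal{C}(\overline{\mathbb{Z}})$ is infinite. Replacing $\mathcal{C}$ by its normalisation (the induced map on $\overline{\mathbb{Z}}$-points is surjective, by the mechanism applied to the fibres of the finite normalisation morphism) I may assume $\mathcal{C}$ normal; it remains surjective over $\mathrm{Spec}\,\mathcal{O}_K$ with fibres pure of dimension one. The crux is to produce a \emph{finite} morphism $h\colon\mathcal{C}\to\mathbb{A}^1_{\mathcal{O}_K}$; granting this, for each $a\in\mathcal{O}_K$ the fibre $h^{-1}(a)$ is a module-finite $\mathcal{O}_K$-scheme with non-empty generic fibre (as $h$ is surjective), hence has an $\overline{\mathbb{Z}}$-point, and these yield $\overline{\mathbb{Z}}$-points of $\mathcal{C}$ whose $h$-values exhaust the infinite set $\mathcal{O}_K$, so $\mathcal{C}(\overline{\mathbb{Z}})$ is infinite. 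I expect the construction of $h$ to be the main obstacle: over a general base a Noether normalisation exists only after inverting finitely many primes, and passing to $\mathcal{O}_K[1/N]$ would be fatal here (an $\mathcal{O}_K[1/N]$-integral point need not be $\overline{\mathbb{Z}}$-integral). This is precisely where the hypothesis $W(\overline{\mathbb{Z}})\neq\emptyset$ does essential work — via observation (i) it forces $\mathcal{C}\to\mathrm{Spec}\,\mathcal{O}_K$ to be surjective with fibres of pure dimension one, which is exactly the setting in which relative Noether normalisation over the Dedekind base $\mathcal{O}_K$ produces an honest finite morphism to $\mathbb{A}^1_{\mathcal{O}_K}$ with no primes inverted. (The recurring appeal to absolute integral closedness of $\overline{\mathbb{Z}}$ is what makes finite covers of the base acquire $\overline{\mathbb{Z}}$-sections over $\overline{\mathbb{Z}}$-points, and hence delivers genuine rather than merely $S$-integral points.)
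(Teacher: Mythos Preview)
Your approach is correct in outline and takes a genuinely different route from the paper's one-line citation of Rumely's local--global principle \cite{rumely1986arithmetic} (in its density form, due to Moret-Bailly: an irreducible flat finite-type $\mathcal{O}_K$-scheme surjecting onto $\mathrm{Spec}\,\mathcal{O}_K$ with geometrically irreducible generic fibre has Zariski-dense $\overline{\mathbb{Z}}$-points). You instead reduce to curves via Bertini and then the crux becomes producing a finite morphism $h\colon \mathcal{C}\to\mathbb{A}^1_{\mathcal{O}_K}$. You correctly flag this as the main obstacle, but your assertion that pure one-dimensionality of all fibres is ``exactly the setting'' in which relative Noether normalization over a Dedekind base goes through without inverting primes is itself a substantial theorem---due to Gabber, Liu, and Lorenzini---not a standard fact; it needs to be cited rather than asserted. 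Granting that input, the rest of your argument (the ``mechanism'' exploiting that $\overline{\mathbb{Z}}$ is absolutely integrally closed, lifting through normalization and through $h$) is clean and correct. But the proof is not more elementary than the paper's: the hard capacity-theoretic content of Rumely's argument has simply been exchanged for the equally deep, if more algebraic, Gabber--Liu--Lorenzini normalization. Indeed, what you have written is essentially the well-known observation that their normalization theorem yields an alternative proof of Rumely's.
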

\begin{proof}[Proof of \autoref{lemma:smooth-integral-point}]
Immediate from \cite[Theorem 1]{rumely1986arithmetic}.
 \end{proof} 
We learned of this application of Rumely's theorem \cite{rumely1986arithmetic} from Esnault's Park City lectures \cite{esnaultparkcity}, and it is closely related to \cite{de2024integrality}.

\begin{lemma}\label{lemma:riemannian-tangent}
Let $(M,g)$ be a connected Riemannian real-analytic manifold, and $G$ a group acting on $M$ by analytic isometries. If $G$ fixes a point $x\in M$, and acts trivially on $T_{M, x}$, then $G$ acts trivially on $M$.
\end{lemma}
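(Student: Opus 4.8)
The plan is to reduce immediately to the case of a single isometry: it suffices to show that if $\phi\colon M\to M$ is an analytic isometry with $\phi(x)=x$ and $d\phi_x=\operatorname{id}_{T_{M,x}}$, then $\phi=\operatorname{id}_M$; applying this to each $g\in G$ then gives the lemma, since by hypothesis every $g\in G$ fixes $x$ and acts trivially on $T_{M,x}$.

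To prove the reduced statement, I would use the fact that Riemannian isometries intertwine the exponential map: for any $y\in M$ and any $v\in T_{M,y}$ for which $\exp_y(v)$ is defined, one has
\[
\phi(\exp_y(v))=\exp_{\phi(y)}(d\phi_y(v)).
\]
This is standard, as $\phi$ carries geodesics to geodesics and preserves initial velocities. Consider the set
\[
F:=\{\,y\in M : \phi(y)=y \text{ and } d\phi_y=\operatorname{id}_{T_{M,y}}\,\}.
\]
It is nonempty since $x\in F$, and it is clearly closed in $M$ (being the locus where the continuous maps $y\mapsto(\phi(y),d\phi_y)$ and $y\mapsto(y,\operatorname{id})$ agree). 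For openness: if $y\in F$, choose a normal neighborhood $U$ of $y$, i.e.\ $\exp_y$ is a diffeomorphism from a star-shaped neighborhood of $0\in T_{M,y}$ onto $U$. For $z=\exp_y(v)\in U$ the identity above gives $\phi(z)=\exp_{y}(d\phi_y v)=\exp_y(v)=z$, so $\phi|_U=\operatorname{id}_U$, and differentiating yields $d\phi_z=\operatorname{id}$ for all $z\in U$; hence $U\subset F$. Since $M$ is connected, $F=M$, i.e.\ $\phi=\operatorname{id}_M$.

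I expect no real obstacle here; the only points requiring a word of care are that $d g_x=\operatorname{id}$ for \emph{every} $g\in G$ is exactly what ``$G$ acts trivially on $T_{M,x}$'' means, and that the openness argument uses a genuine normal (geodesically convex) neighborhood so that every nearby point is of the form $\exp_y(v)$. Note also that only smoothness of $(M,g)$ and of the action is used; the real-analyticity hypothesis is not needed for this argument (alternatively, one could invoke analytic continuation: $\phi$ and $\operatorname{id}$ are analytic maps agreeing to infinite order at $x$, hence on all of the connected $M$).
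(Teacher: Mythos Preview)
Your proof is correct and follows essentially the same idea as the paper: equivariance of the exponential map forces the action to be trivial near $x$, and then one propagates to all of $M$. The only difference is in the propagation step---the paper invokes real-analyticity of the action directly, whereas you run the standard open-closed argument via normal neighborhoods, which (as you note) works already in the smooth category and makes the analyticity hypothesis superfluous.
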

\begin{proof}
	$G$ acts trivially on a small neighborhood of $x$ (as the exponential map is equivariant for the action of $G$). But now it must act trivially on all of $M$, by real-analyticity of the action.
\end{proof}
We are now ready to prove the main results of this article.
\begin{proof}[Proof of \autoref{thm:main-theorem-integral-points} and \autoref{thm:main-theorem-esbt}]
We prove that, if the $p$-curvature of the isomonodromy foliation on $\mathscr{M}_{\dR}(X/S, r)$ mod $p$ vanishes for infinitely many $p$, then for each component $W\subset M_B(X_s, r)_{\mathbb{C}}$ containing an $\overline{\mathbb{Z}}$-point, the action of $\pi_1(S(\mathbb{C})^{\an},s)$ on the orbit of $W$ factors through a finite group. This contains the statements of both \autoref{thm:main-theorem-integral-points} and \autoref{thm:main-theorem-esbt}.

	By assumption each such geometric component $W$ of $M_B(X_s, r)_{\mathbb{C}}$ has a $\overline{\mathbb{Z}}$-point, whence by \autoref{lemma:smooth-integral-point}, the  reduced locus of $W$ contains a \emph{smooth} $\overline{\mathbb{Z}}$-point. For each such component $W$, fix a smooth $\overline{\mathbb{Z}}$-point $[\rho_W]$ contained in $W$.
	
	By \autoref{thm:main-theorem-integral-points-weak}, the orbit of $[\rho_W]$ under $\pi_1(S(\mathbb{C})^{\an},s)$ is finite. Thus there exists a finite index subgroup $\Gamma\subset \pi_1(S(\mathbb{C})^{\an},s)$ fixing all the $[\rho_W]$, hence acting on $T_{M_{B,0}(X_s, r), [\rho_W]}$ for each $[\rho_W]$. By \autoref{thm:triholomorphic} this action preserves a Riemannian metric, whence it is unitary. 
	
	Now let $\{[\rho_W]^{\sigma}\}_\sigma$ be the set of Galois conjugates of $[\rho_W]$. The action of $\Gamma$ on $$\bigoplus_\sigma T_{M_{B,0}(X_s, r), [\rho_W]^\sigma}$$ is both unitary and integral, hence factors through a finite group by e.g.~\cite[Theorem 7.2.1]{ll-geometric}. Thus there exists a further finite index subgroup $\Gamma'\subset \Gamma$ acting trivially on $T_{M_{B,0}(X_s, r), [\rho_W]}$ for each $W$.
	
	But now the action of $\Gamma'$ on $W$ is trivial by \autoref{lemma:riemannian-tangent}, as desired.
	
	 The statement about algebraicity of leaves in \autoref{thm:main-theorem-esbt} follows from \cite[Theorem 7.7.3]{p-painleve}.
\end{proof}
In fact we have proven the following statement of pure complex geometry:
\begin{theorem}\label{thm:theta-vanishing}
	Let $f: X\to S$ be a smooth projective morphism of smooth complex varieties. Suppose that the lifting of tangent vectors $\Theta_{X/S}\equiv 0$ on $M_{\Dol}(X/S, r)$. Fix $s\in S$. Then
	\begin{enumerate}
		\item The action of $\pi_1(S(\mathbb{C})^{\an}, s)$ on $M_B(X_s, r)(\overline{\mathbb{Z}})$ factors through a finite group.
		\item If each irreducible component of $M_B(X_s, r)$ contains a $\overline{\mathbb{Z}}$-point, then the action of $\pi_1(S(\mathbb{C})^{\an}, s)$ on $M_B(X_s, r)(\mathbb{C})$ factors through a finite group.
	\end{enumerate}
\end{theorem}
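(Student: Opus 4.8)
The plan is to observe that \autoref{thm:theta-vanishing} was already established in the course of proving \autoref{thm:main-theorem-integral-points} and \autoref{thm:main-theorem-esbt}: in that argument the hypothesis on $p$-curvature was used \emph{only} to deduce, via \autoref{thm:p-curvature-vanishing-implies-theta-vanishing}, that $\Theta_{X/S}\equiv 0$ on $M_{\Dol}(X/S,r)$. Taking this vanishing as the hypothesis, I would rerun the same argument. First, since $M_B(X_s,r)$ is a scheme of finite type over $\mathbb{Z}$ --- it is the $\sl_r$-character scheme of the finitely presented group $\pi_1(X_s)$ --- its base change to $\mathbb{C}$ has only finitely many irreducible components; let $W_1,\dots,W_k$ be those containing a $\overline{\mathbb{Z}}$-point. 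It suffices to prove that for each $i$ the action of $\pi_1(S(\mathbb{C})^{\an},s)$ on the orbit of $W_i$ (a finite union of components) factors through a finite group: then part (1) follows because every $\overline{\mathbb{Z}}$-point lies in $\bigcup_i \mathrm{orbit}(W_i)$ and one takes the product of the resulting finite quotients, while part (2) follows because its hypothesis forces $\{W_i\}$ to be the set of all components.

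Next, by \autoref{lemma:smooth-integral-point} (an application of Rumely's theorem) each $W_i$ contains a \emph{smooth} $\overline{\mathbb{Z}}$-point $[\rho_{W_i}]$, defined over the ring of integers $\mathscr{O}_K$ of some number field $K$ and lying in no other component. Because $\Theta_{X/S}\equiv 0$, \autoref{thm:compactness-thm} shows that for every embedding $\iota\colon K\hookrightarrow\mathbb{C}$ the orbit $\pi_1(S(\mathbb{C})^{\an},s)\cdot[\rho_{W_i}^{\iota}]$ has compact closure; since this orbit consists of $\mathscr{O}_K$-points, \autoref{lemma:compact-and-discrete} forces it to be finite. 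Hence some finite-index subgroup $\Gamma\subset\pi_1(S(\mathbb{C})^{\an},s)$ fixes all of the $[\rho_{W_i}]$ (and therefore fixes each component $W_i$); and since $\pi_1(S(\mathbb{C})^{\an},s)$ acts on the $\mathbb{Z}$-scheme $M_B(X_s,r)$, it commutes with $\gal(\overline{\mathbb{Q}}/\mathbb{Q})$ on $\overline{\mathbb{Z}}$-points, so $\Gamma$ also fixes every Galois conjugate $[\rho_{W_i}]^{\sigma}$, these again being smooth.

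Now $\Gamma$ acts linearly on each $T_{M_{B,0}(X_s,r),[\rho_{W_i}]^{\sigma}}$, preserving the lattice coming from the integral point. By \autoref{thm:triholomorphic} (again using $\Theta_{X/S}\equiv 0$) the $\pi_1(S(\mathbb{C})^{\an},s)$-action preserves the Riemannian metric on $M_{B,0}(X_s,r)$, so each such tangent action is unitary; assembling the Galois conjugates, the action of $\Gamma$ on $\bigoplus_{\sigma} T_{M_{B,0}(X_s,r),[\rho_{W_i}]^{\sigma}}$ is simultaneously unitary and integral, hence factors through a finite group by \cite[Theorem 7.2.1]{ll-geometric}. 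Passing to a further finite-index subgroup $\Gamma'\subset\Gamma$, we may assume $\Gamma'$ acts trivially on $T_{M_{B,0}(X_s,r),[\rho_{W_i}]}$ for each $i$. Finally, applying \autoref{lemma:riemannian-tangent} to the real-analytic isometric action of $\Gamma'$ on the connected smooth reduced locus of $W_i$ --- fixing $[\rho_{W_i}]$ with trivial derivative there --- shows $\Gamma'$ acts trivially on that locus, hence, by density and continuity, on all of $W_i$. Thus the $\pi_1(S(\mathbb{C})^{\an},s)$-action on the orbit of $W_i$ factors through the finite group $\pi_1(S(\mathbb{C})^{\an},s)/\Gamma'$, which is all that was needed.

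As for the main difficulty: essentially no new work is required beyond the inputs already in place, and the point of this statement is to isolate the ``pure complex geometry'' content extracted from the proof of the main theorems. The one step that genuinely relies on the new structures of the paper, rather than on formal group theory and finiteness of the component set, is the passage from finiteness of each individual $\pi_1$-orbit to finiteness of the whole group action on a component: this forces one to combine the hyperkähler metric of \autoref{thm:triholomorphic} (hence unitarity of the tangent representation) with the integrality of the character scheme, precisely paralleling Katz's Step 3 in the abelian case; everything else is bookkeeping with finitely many components and finite-index subgroups.
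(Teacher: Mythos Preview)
Your proposal is correct and follows essentially the same argument as the paper, which indeed notes that \autoref{thm:theta-vanishing} was already established in the proof of \autoref{thm:main-theorem-integral-points} and \autoref{thm:main-theorem-esbt}, where the $p$-curvature hypothesis is used only through \autoref{thm:p-curvature-vanishing-implies-theta-vanishing}. Your write-up is slightly more careful than the paper's in making explicit why $\Gamma$ also fixes the Galois conjugates $[\rho_{W_i}]^\sigma$ (via commutation of the $\pi_1$-action with $\gal(\overline{\mathbb{Q}}/\mathbb{Q})$), which the paper leaves implicit.
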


We summarize some cases in which we have proven the Ekedahl--Shepherd-Barron--Taylor conjecture, \autoref{conj:esbt}, for isomonodromy foliations:
\begin{corollary}\label{cor:main-theorem-esbt-cases}
		Let $f: X\to S$ be a smooth projective morphism, and fix a point $s\in S(\mathbb{C})$. Suppose that the $p$-curvature of the isomonodromy foliation on $\mathscr{M}_{\dR}(X/S, r)$ vanishes for infinitely many $p$, and either 
	\begin{enumerate}
		\item $r=2$, or
		\item $M_B(X_s,r)_{\mathbb{C}}$ is irreducible, or
	   \item $f$ has relative dimension $1$.

	\end{enumerate}
	Then the action of $\pi_1(S(\mathbb{C})^{\an},s)$ on $M_B(X_s, r)(\mathbb{C})$ factors through a finite group.
\end{corollary}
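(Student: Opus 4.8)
The plan is to deduce this directly from \autoref{thm:main-theorem-esbt} (equivalently, from \autoref{thm:theta-vanishing}): the hypothesis that the $p$-curvature of the isomonodromy foliation vanishes for infinitely many $p$ is literally the hypothesis of that theorem, so it suffices, in each of the three cases (1)--(3), to verify the one remaining hypothesis of \autoref{thm:main-theorem-esbt}, namely that every irreducible component of $M_B(X_s,r)_{\mathbb{C}}$ contains a $\overline{\mathbb{Z}}$-point --- i.e.\ \autoref{conj:integral-points} for this particular $X_s$ and this $r$. Thus the whole corollary is a matter of checking the integral-point condition in each of the three situations, the serious analytic and arithmetic content being already packaged into \autoref{thm:main-theorem-esbt}.

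In case (1), $r=2$, this is exactly \cite{coccia2025density}, which proves \autoref{conj:integral-points} in rank $2$; so \autoref{thm:main-theorem-esbt} applies without further work. In case (2), when $M_B(X_s,r)_{\mathbb{C}}$ is irreducible, I would simply note that the trivial representation $\pi_1(X_s)\to \sl_r$ gives a point of $M_B(X_s,r)(\mathbb{Z})\subseteq M_B(X_s,r)(\overline{\mathbb{Z}})$, and since there is a unique irreducible component, that component contains this point; hence \autoref{thm:main-theorem-esbt} again applies. (One could alternatively invoke \cite{de2024integrality} here, but this is not needed since we only require existence, not density, of $\overline{\mathbb{Z}}$-points on each component.)

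In case (3), $f$ of relative dimension $1$, the idea is to reduce to case (2) by showing that the $\sl_r$-character variety of a smooth projective curve is irreducible. Writing $g$ for the genus of $X_s$: if $g=0$ then $M_B(X_s,r)$ is a point; if $g=1$ then $\hom(\pi_1(X_s),\sl_r)_{\mathbb{C}}$ is the variety of commuting pairs of elements of $\sl_r$, which is irreducible by Richardson's theorem on commuting varieties of connected reductive groups; and if $g\ge 2$ then the representation scheme $\hom(\pi_1(X_s),\sl_r)_{\mathbb{C}}$ is irreducible (indeed rational) by the classical results on representation varieties of surface groups into simply connected semisimple groups (Rapinchuk--Benyash-Krivetz--Chernousov, building on Li). In each case the GIT quotient $M_B(X_s,r)_{\mathbb{C}}=\hom(\pi_1(X_s),\sl_r)\sslash\sl_r$ is the image of an irreducible affine scheme under a surjective morphism, hence irreducible, and we are in case (2).

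I do not expect a genuine obstacle: the only point requiring a little care is that the irreducibility inputs must be taken for $\sl_r$ rather than $\gl_r$ (so, for instance, Richardson's group-theoretic commuting variety rather than the Lie-algebra commuting variety in the $g=1$ case), and that what one verifies is irreducibility of $M_B(X_s,r)$ over $\mathbb{C}$, which is precisely what \autoref{thm:theta-vanishing} and \autoref{lemma:smooth-integral-point} require.
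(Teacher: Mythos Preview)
Your argument for cases (1) and (2) is essentially identical to the paper's own proof: in both, one simply verifies the integral-point hypothesis of \autoref{thm:main-theorem-esbt}, using \cite{coccia2025density} for $r=2$ and the trivial representation for the irreducible case.

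For case (3) you take a genuinely different route. The paper does \emph{not} reduce to case (2); instead it invokes \autoref{thm:relative-curves}, which shows directly (via \autoref{thm:p-curvature-vanishing-implies-theta-vanishing} and Chen's formula, \autoref{thm:chen-formula}) that the Kodaira--Spencer map of $X/S$ vanishes, hence the family is isotrivial --- a strictly stronger conclusion than finiteness of the monodromy action on $M_B$. Your approach, by contrast, appeals to the classical irreducibility of $\hom(\pi_1(\Sigma_g),\sl_r)_{\mathbb{C}}$ (Richardson for $g=1$, Rapinchuk--Benyash-Krivetz--Chernousov / Li for $g\ge 2$) to place case (3) under case (2), and then runs the full machinery of \autoref{thm:main-theorem-esbt}, including the non-abelian Hodge index theorem and the integral-points argument. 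This is correct, and has the virtue of making case (3) a formal corollary of case (2) once one accepts the representation-theoretic input; what it costs is that you invoke the heavier analytic apparatus of \autoref{sec:NAHI} in a situation where the paper gets away with only the arithmetic step plus Chen's formula, and you lose the isotriviality conclusion.
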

\begin{proof}
In cases (1) and (2) we claim the hypotheses of \autoref{thm:main-theorem-esbt} are satisfied. Indeed, in the case $M_B(X_s, r)_\mathbb{C}$ is irreducible the trivial representation gives rise to a $\overline{\mathbb{Z}}$-point. In the case $r=2$, the density of $\overline{\mathbb{Z}}$-points follows for example from \cite{coccia2025density} (where it ultimately comes down to Corlette-Simpson's classification of rank $2$ local systems on quasi-projective varieties \cite{corlette2008classification}).

In case (3), this follows from \autoref{thm:relative-curves}.
\end{proof}
\begin{remark}\label{rem:coccia}
	In \cite[Conjecture 1.1.1]{coccia2025density}, Coccia and the second-named author conjecture that the hypothesis of \autoref{thm:main-theorem-esbt} on existence of integral points is always satisfied, and in fact that integral points are potentially dense in the Betti moduli space.
\end{remark}
\subsection{Motivic points}\label{subsec:motivic-points}
In this section we sketch an alternate proof of a special case of \autoref{thm:main-theorem-esbt}, using the results of this paper and our results from \cite{p-painleve}. We are grateful to Michael Groechenig for asking if an approach along these lines might work.
\begin{theorem}\label{thm:motivic-points}
With notation and assumptions as in \autoref{thm:main-theorem-integral-points}, let $W\subset M_B(X_s, r)$ be an irreducible component. Suppose $[\rho]\in W$ is an irreducible Gauss-Manin connection on $X_s$, i.e.~there exists a smooth projective morphism $g: Y\to X_s$ such that the irreducible representation $\rho_{\mathbb{C}}$ is the monodromy representation associated to $R^ig_*\mathbb{C}$ for some $i$. Finally, suppose that the $p$-curvature of the isomonodromy foliation on $\mathscr{M}_{\dR}(X/S, r)$ mod $p$ vanishes for almost all $p$.

Then the orbit of each point of $W(\mathbb{C})$ under the action of $\pi_1(S(\mathbb{C})^{\an},s)$ is finite.
\end{theorem}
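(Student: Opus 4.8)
The plan is to deduce \autoref{thm:motivic-points} from the structural results of this paper by running the endgame of the proof of \autoref{thm:main-theorem-esbt} on the single component $W$, using \cite{p-painleve} in place of the compactness input to establish finiteness of the orbit of the Gauss--Manin point $[\rho]$. First, since the $p$-curvature of the isomonodromy foliation on $\mathscr{M}_{\dR}(X/S,r)$ vanishes mod $p$ for almost all $p$, \autoref{thm:p-curvature-vanishing-implies-theta-vanishing} yields $\Theta_{X/S}\equiv 0$ on $\mathscr{M}_{\Dol}^{\on{ss}}(X_K/S_K,r)$, hence $\Theta_{X_{\mathbb{C}}/S_{\mathbb{C}}}\equiv 0$ on $M_{\Dol}(X_{\mathbb{C}}/S_{\mathbb{C}},r)$ by base change. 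In particular \autoref{thm:triholomorphic} applies: the action of $\pi_1(S(\mathbb{C})^{\an},s)$ on $M_{B,0}(X_s,r)$ is triholomorphic and preserves the hyperk\"ahler metric $g$ of \autoref{thm: mbetti-hk-metric}.

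Next I would analyze $[\rho]$. Since $[\rho]$ is an \emph{irreducible} representation, it is a smooth point of the reduced locus of $W$, so it lies in $M_{B,0}(X_s,r)$. Since $[\rho]$ underlies a geometric (Gauss--Manin) local system, its monodromy preserves a $\mathbb{Z}$-lattice; after twisting by a rank-one local system whose $r$-th power trivializes the determinant (of finite order, or at worst with monodromy in $\overline{\mathbb{Z}}^{\times}$), the trace functions become algebraic integers, so $[\rho]$ is a $\overline{\mathbb{Z}}$-point of $M_B(X_s,r)$. The crucial input from \cite{p-painleve} is that, since $[\rho]$ is a Gauss--Manin connection on $X_s$ and the $p$-curvature of the isomonodromy foliation vanishes mod $p$ for almost all $p$, the leaf of the isomonodromy foliation through $[\rho]$ is algebraic; equivalently, the orbit $\pi_1(S(\mathbb{C})^{\an},s)\cdot[\rho]$ is finite. (One could instead obtain this finiteness from \autoref{thm:main-theorem-integral-points-weak}, since $[\rho]$ is a $\overline{\mathbb{Z}}$-point, but the point of this section is to route the argument through \cite{p-painleve}.)

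With these two facts in hand, the argument proceeds exactly as in the proof of \autoref{thm:main-theorem-esbt}, with $[\rho]$ playing the role of the smooth $\overline{\mathbb{Z}}$-point $[\rho_W]$. Because the orbit of $[\rho]$ is finite, a finite-index subgroup $\Gamma\subset\pi_1(S(\mathbb{C})^{\an},s)$ fixes $[\rho]$ and hence acts on $T_{M_{B,0}(X_s,r),[\rho]}$; by \autoref{thm:triholomorphic} this action preserves $g$, so it is unitary. Running over the Galois conjugates $\{[\rho]^{\sigma}\}_{\sigma}$, the $\Gamma$-action on $\bigoplus_{\sigma}T_{M_{B,0}(X_s,r),[\rho]^{\sigma}}$ is simultaneously unitary and integral (it preserves the $\mathbb{Z}$-lattice coming from the $\overline{\mathbb{Z}}$-structure on $M_B(X_s,r)$ along $[\rho]$ and its conjugates), hence factors through a finite group by \cite[Theorem 7.2.1]{ll-geometric}. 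Thus a further finite-index subgroup $\Gamma'\subset\Gamma$ acts trivially on $T_{M_{B,0}(X_s,r),[\rho]}$, and \autoref{lemma:riemannian-tangent} forces $\Gamma'$ to act trivially on the connected component of $M_{B,0}(X_s,r)$ containing $[\rho]$, and therefore on the reduced locus of $W$. Hence the $\pi_1(S(\mathbb{C})^{\an},s)$-action on $W$ factors through a finite group; in particular the orbit of every point of $W(\mathbb{C})$ is finite.

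The main obstacle is extracting the precise statement from \cite{p-painleve}: one must check that the hypotheses ``$[\rho]$ is an irreducible Gauss--Manin connection on $X_s$'' and ``vanishing $p$-curvature of the isomonodromy foliation for almost all $p$'' match those under which \cite{p-painleve} proves algebraicity of the isomonodromy leaf --- in particular that the geometric origin of $[\rho]$ spreads out over an \'etale neighborhood of $s$, so that one genuinely has a point of the relevant moduli stack through which a leaf with vanishing $p$-curvature passes, and that algebraicity of that leaf translates into finiteness of the $\pi_1$-orbit of $[\rho]$. Granting this, the remaining steps --- triholomorphicity, integrality of geometric points, and the unitary-plus-integral rigidity --- are formal consequences of results already established above.
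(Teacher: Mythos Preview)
Your argument is essentially correct, but it takes a genuinely different route from the paper's. After invoking \cite{p-painleve} to obtain finiteness of the orbit of $[\rho]$, you feed this back into the Section~\ref{sec:NAHI} machinery (triholomorphicity, the hyperk\"ahler metric, \autoref{lemma:riemannian-tangent}) exactly as in the proof of \autoref{thm:main-theorem-esbt}. The paper's proof, by contrast, deliberately avoids Section~\ref{sec:NAHI}: having fixed $[\rho]$ after a finite cover, it analyzes the formal completion of $M_B(X/S,r)$ along the resulting section, identifies $\mathscr{I}/\mathscr{I}^2$ with $R^1f_*(\End(\widetilde{\mathscr{E}})_{\dR})$, and uses that this is a $\mathbb{Z}$-VHS and Higgs--de Rham fixed point (by \cite{p-painleve}) so that Katz's original argument gives it finite monodromy; then each $\widehat{\mathscr{O}}/\mathscr{I}^n$ has vanishing $p$-curvatures and unipotent monodromy, so the Chudnovsky--Andr\'e--Bost theorem forces triviality of the $\pi_1$-action on the whole formal neighborhood, hence on $W$.

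The trade-off: your approach is shorter and reuses the paper's main theorems, but it is essentially the proof of \autoref{thm:main-theorem-esbt} with \cite{p-painleve} substituted for the compactness step---so it does not furnish the \emph{alternate} proof the section advertises. The paper's approach shows that \cite{p-painleve} together with the \emph{abelian} $p$-curvature results (Katz, Chudnovsky--Andr\'e--Bost) already suffice, with no appeal to the non-abelian Hodge index theorems; this is what motivates the closing remarks about seeking a more elementary proof of \autoref{cor:main-theorem-esbt-cases}(2). It is also why the hypothesis is ``almost all $p$'' rather than ``infinitely many $p$'': the formal-neighborhood step genuinely needs the $p$-curvature conjecture for unipotent connections, which requires almost all $p$.
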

\begin{remark}
\autoref{thm:motivic-points} is weaker than \autoref{thm:main-theorem-esbt}	in a number of ways: it requires $p$-curvature vanishing for almost all $p$ (as opposed to only infinitely many), it requires the $[\rho]$ be irreducible, and most seriously, it requires that $[\rho]$ be a Gauss-Manin connection.
\end{remark}

\begin{proof}[Sketch of proof of \autoref{thm:motivic-points}]
Let $(\mathscr{E},\nabla)$ be the flat bundle associated to $\rho_{\mathbb{C}}$. By the assumption on the vanishing of the $p$-curvature of the isomonodromy foliation, the leaf of the isomonodromy foliation through $(\mathscr{E},\nabla)$ is $\omega(p)$-integral in the sense of \cite{p-painleve}\footnote{This is a general feature of leaves of foliations with vanishing $p$-curvature. It follows for example from the formal power series expansion of the leaf as in \cite[(3.7) and 3.4.2]{bost-foliations}.}; as it is a Gauss-Manin connection, this leaf is thus algebraic by the main theorem of that paper, \cite[Theorem 1.2.4]{p-painleve}. In particular the conjugacy class of  $\rho_{\mathbb{C}}$ has finite $\pi_1(S(\mathbb{C})^{\an},s)$-orbit.

By replacing $S$ with a finite \'etale cover we may thus assume that $[\rho_{\mathbb{C}}]$ is fixed by $\pi_1(S(\mathbb{C})^{\an},s)$ and hence yields an algebraic section $t$ of $M_{\dR}(X/S)$, classified by some flat bundle $(\widetilde{\mathscr{E}},\widetilde{\nabla})$ on $X$. The completion of $M_B(X/S, r)$ along $t$ has structure sheaf $\widehat{\mathscr{O}}$, a pro-flat vector bundle on $S$ with connection; modulo $p$ this connection has vanishing $p$-curvature for almost all $p$ by assumption. Let $\mathscr{I}\subset\widehat{\mathscr{O}}$ be the ideal sheaf cutting out $t$. As $(\mathscr{E},\nabla)$ is irreducible, $\mathscr{I}/\mathscr{I}^2$ is isomorphic to $R^1f_*(\on{End}(\widetilde{\mathscr{E}})_{\dR})$. This last is, as follows from the proof of \cite[Theorem 1.2.4]{p-painleve} (as explained in \cite[\S14]{p-painleve}), a $\mathbb{Z}$-VHS and Higgs-de Rham fixed point, which are precisely the hypotheses needed to run Katz's proof of the $p$-curvature for Gauss-Manin connections \cite{katz-p-curvature}. Hence the flat bundle $\mathscr{I}/\mathscr{I}^2$ has finite monodromy. By passing to a cover we may assume it is in fact trivial as a flat bundle, whence the same is true for $\mathscr{I}^n/\mathscr{I}^{n+1}$ for all $n$.

Thus for each $n$, $\widehat{\mathscr{O}}/\mathscr{I}^n$ is a flat bundle with vanishing $p$-curvatures and unipotent monodromy. But the $p$-curvature conjecture is known for such bundles (by e.g. \cite{chudnovsky2006applications, andre2004conjecture, bost-foliations}); hence it has trivial monodromy, as a unipotent representation with finite image is trivial. Equivalently, the action of $\pi_1(S(\mathbb{C})^{\an},s)$ on the formal local ring of $M_B(X_s, r)$ at $[\rho_{\mathbb{C}}]$ is trivial, whence it is trivial on all of $W$.
\end{proof}
\begin{remark}
	It is natural to try to run the argument above without the assumption that $\rho_{\mathbb{C}}$ is irreducible. This seems (to us) challenging: on the coarse moduli space $M_B$ one cannot identify the tangent space in this case, while at the stacky level we do not see how to use the results by \cite{chudnovsky2006applications, andre2004conjecture, bost-foliations} to deduce finiteness. It would be nice to find a way to execute this argument, however, as it would give a more elementary proof of a weak form of \autoref{cor:main-theorem-esbt-cases}(2) by applying it to the case of the trivial representation.
\end{remark}
\begin{remark}
It is natural to try to replace the argument from  \cite{chudnovsky2006applications, andre2004conjecture, bost-foliations}, or the use of the K\"ahler metric in the proof of \autoref{thm:main-theorem-esbt}, by the use of Goldman-Millson theory to try to deduce finiteness of the $\pi_1(S(\mathbb{C})^{\an},s)$-action on (a component of) $M_B(X_s, r)$ from the analogous statement on tangent spaces, as Goldman-Millson theory identifies a formal neighborhood of $[\rho]$ in $\mathscr{M}_B(X_s, r)_{\mathbb{C}}$ with a quadratic cone in its tangent space. Unfortunately this identification depends on the complex structure on $X_s$ and in general is not $\pi_1(S(\mathbb{C})^{\an},s)$-invariant.

For an example, one may consider the universal curve $\mathscr{C}_g\to \mathscr{M}_g$. The Torelli group acts trivially on the tangent space to the trivial representation in $\mathscr{M}_B(\Sigma_{g,n}, r)$. On the other hand, in general it acts nontrivially on the character variety and hence on the formal local ring at the trivial representation.
\end{remark}

\bibliographystyle{alpha}
\bibliography{bibliography-esbt}

\end{document}